\documentclass[a4paper, 12pt]{amsart}

\usepackage{amsmath,amssymb,enumitem,verbatim,stmaryrd,xcolor,microtype,graphicx,aliascnt,fancyvrb}

\usepackage[normalem]{ulem}
\usepackage[english]{babel} 
\usepackage[top=3.5cm,bottom=3.5cm,left=3.2cm,right=3.2cm]{geometry}
\usepackage[bookmarksdepth=2,linktoc=page,colorlinks,linkcolor={red!80!black},citecolor={red!80!black},urlcolor={blue!80!black},pdftitle={Quiver matroids},pdfauthor={Manoel Jarra, Oliver Lorscheid and Eduardo Vital}]{hyperref}
\usepackage{float}

\usepackage{tikz}\usetikzlibrary{matrix,arrows,decorations.markings}
\usepackage{tikz-cd}

\allowdisplaybreaks 

\usepackage{mathptmx}                                        
\usepackage{etoolbox}\makeatletter\patchcmd{\@startsection}{\@afterindenttrue}{\@afterindentfalse}{}{}\makeatother    
\patchcmd{\section}{\scshape}{\bfseries}{}{}\makeatletter\renewcommand{\@secnumfont}{\bfseries}\makeatother           
\usepackage[backgroundcolor=orange!30!white,linecolor=orange!80!white,textsize=footnotesize]{todonotes}\setlength{\marginparwidth}{2,5cm} \makeatletter \providecommand \@dotsep{5} \def\listtodoname{List of Todos} \def\listoftodos{\@starttoc{tdo}\listtodoname} \makeatother 

\addto\extrasenglish{}  
\addto\extrasenglish{}  
\theoremstyle{plain}
\newtheorem{thm}{Theorem}[section] 
\newaliascnt{lemma}{thm}\newtheorem{lemma}[lemma]{Lemma}\aliascntresetthe{lemma}
\newaliascnt{cor}{thm}\newtheorem{cor}[cor]{Corollary}\aliascntresetthe{cor}
\newaliascnt{prop}{thm}\newtheorem{prop}[prop]{Proposition}\aliascntresetthe{prop}
\newtheorem{thmA}{Theorem} 
\newaliascnt{propA}{thmA}\newtheorem{propA}[propA]{Proposition}\aliascntresetthe{propA}
\newaliascnt{conjA}{thmA}\newtheorem{conjA}[conjA]{Conjecture}\aliascntresetthe{conjA}

\newtheorem*{thm*}{Theorem}
\newtheorem*{lem*}{Lemma}
\newtheorem*{cor*}{Corollary}
\newtheorem*{problem*}{Problem}

\theoremstyle{definition}
\newaliascnt{df}{thm}\newtheorem{df}[df]{Definition}\aliascntresetthe{df}
\newaliascnt{rem}{thm}\newtheorem{rem}[rem]{Remark}\aliascntresetthe{rem}
\newaliascnt{ex}{thm}\newtheorem{ex}[ex]{Example}\aliascntresetthe{ex}
\newaliascnt{notation}{thm}\newtheorem{notation}[notation]{Notation}\aliascntresetthe{notation}

\newtheorem*{df*}{Definition}
\newtheorem*{ex*}{Example}
\newtheorem*{rem*}{Remark}
\newtheorem*{warning*}{Warning}
\newtheorem*{notation*}{Notation}

\usepackage{etoolbox}
\makeatletter
\patchcmd{\@startsection}{\@afterindenttrue}{\@afterindentfalse}{}{}             
\patchcmd{\part}{\bfseries}{\bfseries\LARGE}{}{}
\patchcmd{\section}{\scshape}{\bfseries}{}{}\renewcommand{\@secnumfont}{\bfseries} 
\patchcmd{\@settitle}{\uppercasenonmath\@title}{\large}{}{}
\patchcmd{\@setauthors}{\MakeUppercase}{}{}{}
\addto{\captionsenglish}{} 
\addto{\captionsenglish}{} 
\addto{\captionsenglish}{} 
\makeatother

\usepackage{fancyhdr}

\pagestyle{fancy}
\fancyhead{}
\fancyfoot{}
\fancyhead[OR,EL]{\footnotesize \thepage}
\fancyhead[OC]{\footnotesize Quiver matroids}
\fancyhead[EC]{\footnotesize Manoel Jarra, Oliver Lorscheid and Eduardo Vital}
\setlength{\headheight}{12pt}

\DeclareRobustCommand{\gobblefour}[5]{}    

\DeclareFontFamily{OT1}{pzc}{}                                
\DeclareFontShape{OT1}{pzc}{m}{it}{<-> s * [1.10] pzcmi7t}{}
\DeclareMathAlphabet{\mathpzc}{OT1}{pzc}{m}{it}
\DeclareSymbolFont{sfoperators}{OT1}{bch}{m}{n} \DeclareSymbolFontAlphabet{\mathsf}{sfoperators} \makeatletter\def\operator@font{\mathgroup\symsfoperators}\makeatother 
\DeclareSymbolFont{cmletters}{OML}{cmm}{m}{it}              
\DeclareSymbolFont{cmsymbols}{OMS}{cmsy}{m}{n}
\DeclareSymbolFont{cmlargesymbols}{OMX}{cmex}{m}{n}
\DeclareMathSymbol{\myjmath}{\mathord}{cmletters}{"7C}     \let\jmath\myjmath 
\DeclareMathSymbol{\myamalg}{\mathbin}{cmsymbols}{"71}     
\DeclareMathSymbol{\mycoprod}{\mathop}{cmlargesymbols}{"60}
\DeclareMathSymbol{\myalpha}{\mathord}{cmletters}{"0B}     \let\alpha\myalpha 
\DeclareMathSymbol{\mybeta}{\mathord}{cmletters}{"0C}      \let\beta\mybeta
\DeclareMathSymbol{\mygamma}{\mathord}{cmletters}{"0D}     \let\gamma\mygamma
\DeclareMathSymbol{\mydelta}{\mathord}{cmletters}{"0E}     \let\delta\mydelta
\DeclareMathSymbol{\myepsilon}{\mathord}{cmletters}{"0F}   \let\epsilon\myepsilon
\DeclareMathSymbol{\myzeta}{\mathord}{cmletters}{"10}      \let\zeta\myzeta
\DeclareMathSymbol{\myeta}{\mathord}{cmletters}{"11}       \let\eta\myeta
\DeclareMathSymbol{\mytheta}{\mathord}{cmletters}{"12}     \let\theta\mytheta
\DeclareMathSymbol{\myiota}{\mathord}{cmletters}{"13}      \let\iota\myiota
\DeclareMathSymbol{\mykappa}{\mathord}{cmletters}{"14}     \let\kappa\mykappa
\DeclareMathSymbol{\mylambda}{\mathord}{cmletters}{"15}    \let\lambda\mylambda
\DeclareMathSymbol{\mymu}{\mathord}{cmletters}{"16}        \let\mu\mymu
\DeclareMathSymbol{\mynu}{\mathord}{cmletters}{"17}        \let\nu\mynu
\DeclareMathSymbol{\myxi}{\mathord}{cmletters}{"18}        \let\xi\myxi
\DeclareMathSymbol{\mypi}{\mathord}{cmletters}{"19}        \let\pi\mypi
\DeclareMathSymbol{\myrho}{\mathord}{cmletters}{"1A}       \let\rho\myrho
\DeclareMathSymbol{\mysigma}{\mathord}{cmletters}{"1B}     \let\sigma\mysigma
\DeclareMathSymbol{\mytau}{\mathord}{cmletters}{"1C}       \let\tau\mytau
\DeclareMathSymbol{\myupsilon}{\mathord}{cmletters}{"1D}   \let\upsilon\myupsilon
\DeclareMathSymbol{\myphi}{\mathord}{cmletters}{"1E}       \let\phi\myphi
\DeclareMathSymbol{\mychi}{\mathord}{cmletters}{"1F}       \let\chi\mychi
\DeclareMathSymbol{\mypsi}{\mathord}{cmletters}{"20}       \let\psi\mypsi
\DeclareMathSymbol{\myomega}{\mathord}{cmletters}{"21}     \let\omega\myomega
\DeclareMathSymbol{\myvarepsilon}{\mathord}{cmletters}{"22}\let\varepsilon\myvarepsilon
\DeclareMathSymbol{\myvartheta}{\mathord}{cmletters}{"23}  \let\vartheta\myvartheta
\DeclareMathSymbol{\myvarpi}{\mathord}{cmletters}{"24}     \let\varpi\myvarpi
\DeclareMathSymbol{\myvarrho}{\mathord}{cmletters}{"25}    \let\varrho\myvarrho
\DeclareMathSymbol{\myvarsigma}{\mathord}{cmletters}{"26}  \let\varsigma\myvarsigma
\DeclareMathSymbol{\myvarphi}{\mathord}{cmletters}{"27}    \let\varphi\myvarphi

\DeclareMathOperator{\rep}{Rep}
\DeclareMathOperator{\res}{res}

\DeclareMathOperator{\Spec}{Spec}

\DeclareMathOperator{\Bands}{Bands}
\DeclareMathOperator{\BandSpaces}{BandSpaces}
\DeclareMathOperator{\Rings}{Rings}
\DeclareMathOperator{\BSch}{BSch}
\DeclareMathOperator{\BAff}{BAff}
\DeclareMathOperator{\Hom}{Hom}

\DeclareMathOperator{\Proj}{Proj}
\DeclareMathOperator{\Mat}{Mat}
\DeclareMathOperator{\Fl}{Fl}

\DeclareMathOperator{\Gr}{Gr}

\DeclareMathOperator{\colim}{colim}

\DeclareMathOperator{\im}{im}

\DeclareMathOperator{\Mod}{{Mod}}

\DeclareMathOperator{\Top}{{Top}}

\DeclareMathOperator{\OBlpr}{{OBlpr}}

\DeclareMathOperator{\Idylls}{{Idylls}}
\DeclareMathOperator{\Vect}{{Vect}}
\DeclareMathOperator{\Sets}{Sets}

\DeclareMathOperator*{\hypersum}{\,\raisebox{-2.2pt}{\larger[2]{$\boxplus$}}\,}
\DeclareMathOperator{\sign}{{sign}}

\DeclareMathOperator{\pl}{{pl}}
\DeclareMathOperator{\SL}{{SL}}
\DeclareMathOperator{\rk}{{rk}}

\newcommand{\cat}[1]{\mathsf{#1}}
\newcommand\A{{\mathbb A}}

\newcommand\C{{\mathbb C}}
\newcommand\F{{\mathbb F}}
\newcommand\G{{\mathbb G}}
\newcommand\K{{\mathbb K}}

\newcommand\N{{\mathbb N}}
\renewcommand\P{{\mathbb P}}

\newcommand\R{{\mathbb R}}
\newcommand\uA{{\underline A}}
\newcommand\uB{{\underline B}}
\newcommand\uE{{\underline E}}
\newcommand\uM{{\underline M}}
\newcommand\uN{{\underline N}}
\newcommand\uS{{\underline S}}
\newcommand\uT{{\underline T}}
\newcommand\uPhi{{\underline\Phi}}


\renewcommand\S{{\mathbb S}}
\newcommand\T{{\mathbb T}}

\newcommand\Z{{\mathbb Z}}

\newcommand\br{{\mathbf{r}}}

\newcommand\bfx{{\mathbf{x}}}

\newcommand\bfy{{\mathbf{y}}}
\newcommand\bfz{{\mathbf{z}}}

\newcommand\cB{{\mathcal B}}
\newcommand\cC{{\mathcal C}}

\newcommand\cL{{\mathcal L}}
\newcommand\cM{{\mathcal M}}
\newcommand\cN{{\mathcal N}}
\newcommand\cO{{\mathcal O}}

\newcommand\cS{{\mathcal S}}

\newcommand\cU{{\mathcal U}}
\newcommand\cV{{\mathcal V}}

\newcommand\cX{{\mathcal X}}
\newcommand\cY{{\mathcal Y}}

\newcommand\fb{{\mathfrak{b}}}

\newcommand\fm{{\mathfrak m}}
\newcommand\fp{{\mathfrak p}}
\newcommand\fq{{\mathfrak q}}

\newcommand\bx{{\mathbf x}}
\newcommand\by{{\mathbf y}}

\newcommand\Fun{{\F_1}}

\newcommand\Funpm{{\F_1^\pm}}

\renewcommand\int{\textup{int}}

\newcommand\id{\textup{id}}

\newcommand\Tits{\textup{Tits}}

\newcommand\oblpr{\textup{oblpr}}
\newcommand\band{\textup{band}}

\newcommand\univ{\textup{univ}}

\newcommand\qr{\mathpzc{qr}}
\renewcommand\geq{\geqslant}
\renewcommand\leq{\leqslant}
\newcommand{\gen}[1]{\langle #1 \rangle}

\newcommand{\norm}[1]{| #1 |}

\newcommand{\bandquot}[2]{#1\!\sslash\!#2}
\newcommand{\bandgenquot}[2]{#1\!\sslash\!\gen{#2}}
\newcommand{\smallmat}[4]{\big[\begin{smallmatrix} #1 & #2 \\ #3 & #4 \end{smallmatrix}\big]}
\newcommand{\smallvector}[2]{\big[\begin{smallmatrix} #1 \\ #2 \end{smallmatrix}\big]}
\newcommand{\smalltrivector}[3]{\Big[\begin{smallmatrix} #1 \\ #2 \\ #3 \end{smallmatrix}\Big]}

\setcounter{tocdepth}{1}   
\VerbatimFootnotes   

\title{Quiver matroids \\[10pt] \normalsize Matroid morphisms, quiver Grassmannians,\\ their Euler characteristics and $\Fun$-points}



\author{Manoel Jarra}
\address{\rm Manoel Jarra, University of Groningen, the Netherlands, and IMPA, Rio de Janeiro, Brazil}
\email{{manoel.jarra@impa.br}}

\author{Oliver Lorscheid}
\address{\rm Oliver Lorscheid, University of Groningen, the Netherlands}
\email{{o.lorscheid@rug.nl}}

\author{Eduardo Vital}
\address{\rm Eduardo Vital, Bielefeld University, Germany}
\email{{evital@math.uni-bielefeld.de}}


\begin{document}

\begin{abstract}
 In this paper, we introduce morphisms for matroids with coefficients (in the sense of Baker and Bowler) and quiver matroids. We investigate their basic properties, such as functoriality, duality, minors and cryptomorphic characterizations in terms of vectors, circuits and bases (a.k.a.\ Grassmann-Pl\"ucker functions). We generalize quiver matroids to quiver matroid bundles and construct their moduli space, which is an $\Fun$-analogue of a complex quiver Grassmannian. Eventually we introduce a suitable interpretation of $\Fun$-points for these moduli spaces, so that in `nice' cases their number is equal to the Euler characteristic of the associated complex quiver Grassmannian.
\end{abstract}

\maketitle

\thispagestyle{empty} 

\begin{small} \tableofcontents \end{small}

\section*{Introduction}

This text provides a general framework for a m\'elange of ideas around matroid theory, quiver representations and $\Fun$-geometry. More precisely, this concerns the following.

\subsection*{Matroid quotients and flag matroids}
A matroid quotient is a strong map that is the identity on the respective underlying sets. Matroid quotients have been generalized to oriented matroids in \cite{LasVergnas75} (see also \cite[Def.~7.7.2]{Bjorner-LasVergnas-Sturmfels-White-Ziegler99}), to valuated matroids in and \cite{Brandt-Eur-Zhang21} and to matroids with coefficients in idylls in \cite{Jarra-Lorscheid24}.

Flag matroids have been introduced in \cite{Borovik-Gelfand-Vince-White97} as a sequence of matroid quotients, and they have been generalized to valuated flag matroids in \cite{Haque12} and \cite{Brandt-Eur-Zhang21} and to flag matroids with coefficients in idylls in \cite{Jarra-Lorscheid24}, including cryptomorphic descriptions in terms of circuits, vectors and Grassmann-Pl\"ucker functions, duality and minors, and corresponding moduli spaces, which are flag varieties over the regular partial field.

\subsection*{Quiver representations and quiver Grassmannians}
\label{subsection - Quiver repr. and quiver Grassmannians}
A quiver representation over a field is a collection of vector spaces, indexed by the vertices of the quiver, together with a collection of linear maps, indexed by the arrows of the quiver. Quiver representations have been widely studied because of their central relevance for the representation theory of finite dimensional algebras; for instance, see \cite{Assem-Simson-Skowronski06}.

The quiver Grassmannian of a quiver representation is the collection of all subrepresentations with a given dimension vector. The Caldero-Chapoton formula (\cite{Caldero-Chapoton06}) expresses the structure constants of cluster algebras in terms of the Euler characteristics of complex quiver Grassmannians, which turns an intricate combinatorial problem into a geometric problem. While the quiver Grassmannians of a wild quiver assume every isomorphism type of a projective scheme (\cite{Reineke2013}, \cite{Ringel18}), quiver Grassmannians of tame quivers are varieties that have a Schubert decomposition into affine spaces (\cite{Lorscheid-Weist19}). In particular, the Euler characteristic corresponds to the number of Schubert cells in the latter case.

Flag varieties resurface as a particular type of quiver Grassmannians: the underlying quiver is of (equi-oriented) type $A_n$ and the quiver representation consists of a sequence of identity self-maps of a linear $n$-space. A point of this flag variety corresponds to a flag of linear subspaces, which is the same thing as a flag matroid over the base field.

\subsection*{\texorpdfstring{$\Fun$}{F1}-models}
A general heuristic for $\Fun$-geometry is that the number of $\Fun$-points of an $\Fun$-variety $X$ should be equal to the Euler characteristic of the base extension of $X$ to $\C$ (this goes back to \cite[Section 6.2]{Soule04}; for details see~\cite[Section 4]{Lorscheid16}). This provides a new strategy to study the Euler characteristic of a quiver Grassmannian in terms of $\Fun$-points of a suitable $\Fun$-model.\footnote{For the purpose of this introduction, we use $\Fun$ as a place holder for different theories that are used in the cited papers.} 

Quiver representations over $\Fun$ have been studied in \cite{Szczesny11}, \cite{Jun-SistkoEuler}, \cite{Jun-Sistko23}, \cite{Jun-Sistko23b}, \cite{Kleinau24}. As a particular example of a quiver Grassmannian, flag varieties over $\Fun$ have been shown to be the moduli space of flag matroids (\cite{Baker-Lorscheid21}, \cite{Jarra-Lorscheid24}). Models of quiver Grassmannians over $\Fun$ have been considered in \cite{Lorscheid16} where also the connection to Euler characteristics of the complex quiver Grassmannian was studied in special cases. Tropical quiver Grassmannians have been considered in \cite{Iezzi-Schleis23}.

\subsection*{Outline of the contents of this text}
We provide a common footing for all of the aforementioned concepts. This includes the development of: 
\begin{itemize}
 \item \emph{morphisms of matroids with coefficients}, as a common generalization of strong maps of usual matroids (\autoref{propC}), quotients of oriented matroids and $k$-linear maps where $k$ is a field;
 \item \emph{quiver matroids}, which is a common generalization of quiver representations and flag matroids;
 \item \emph{quiver Grassmannians over $\Fun$}, which generalize Grassmannians and flag varieties over $\Fun$.
\end{itemize}
Our results include:
\begin{itemize}
 \item functoriality (\autoref{propA}), duality (\autoref{propB}), pre-images (\autoref{propD}) and minors (\autoref{propF}) for matroid morphisms;
 \item cryptomorphic characterizations of matroid morphisms and quiver matroids in terms of circuits, vectors and Grassmann-Pl\"ucker functions (\autoref{thmE});
 \item the quiver Grassmannian over $\Fun$ as the moduli space of quiver matroid bundles (\autoref{thmG});
 \item the Euler characteristic of complex quiver Grassmannians as $\Fun$-points of their $\Fun$-models for `nice' representations (\autoref{thmH}).
\end{itemize}
In the following, we explain these contents in more detail.

\subsection*{Morphisms of matroids}

For the purpose of this introduction, we assume that the reader is familiar with Baker-Bowler theory (as in \cite{Baker-Bowler19}); we refer to \autoref{Baker-Bowler theory} for a summary. An \emph{idyll} is a tract whose nullset is closed under addition. An idyll is \emph{perfect} if vectors are orthogonal to covectors for all matroids over the idyll, which is an important property for our purposes; in particular, the notions of weak and strong matroids agree for perfect idylls (\cite[Thm. 3.46]{Baker-Bowler19}). Examples of perfect idylls are fields and partial fields, the Krasner hyperfield $\K$, the tropical hyperfield $\T$, the sign hyperfield $\S$ and, more generally, stringent hyperfields (\cite{Bowler_Pendavingh_2019}). More generally, every idyll can be completed to a perfect idyll (\cite{Baker_Zhang_2023}).

Let $F$ be a perfect idyll. A matrix $\Phi$ with entries $\Phi_{i,j}\in F$ is \emph{submonomial} if every row and every column contains at most one nonzero entry. If $\Phi$ is indexed by $T\times S$, then it defines a map $\Phi:F^S\to F^T$ by the rule $(\Phi\cdot X)_i=\sum_{j\in S} \Phi_{i,j} X_j$, which defines an element of $F$ since the sum contains at most one nonzero term. Note that the composition of such maps is well-defined since the product $\Psi\cdot\Phi$ of submonomial matrices $\Phi$ and $\Psi$ is again submonomial. 

Consider $F$-matroids $N$ and $M$ with respective underlying sets $S$ and $T$ and respective vector sets $\cV_N\subseteq F^S$ and $\cV_M\subseteq F^T$. A \emph{morphism $\Phi:N\to M$ of $F$-matroids} (or, for short, an \emph{$F$-morphism}) is a submonomial matrix $\Phi$ indexed by $T\times S$ such that $\Phi\cdot \cV_N\subseteq\cV_M$. It is clear from the definition that morphisms of $F$-matroids are composable, which yields the category $\Mat_F$ of $F$-matroids.

\subsubsection*{Push-forward along an idyll morphism}

Let $f:F\to G$ be a morphism of perfect idylls and $\Phi$ a submonomial matrix with coefficients in $F$. The \emph{push-forward of $\Phi$ along $f$} is the matrix $f_\ast\Phi$ with coefficients $(f_\ast\Phi)_{i,j}=f(\Phi_{i,j})$, which are elements of $G$. The following is \autoref{prop-functoriality}.

\begin{propA}[functoriality]\label{propA}
 The push-forward of an $F$-morphism $\Phi:N\to M$ along a morphism $f:F\to G$ of perfect idylls is a $G$-morphism $f_\ast\Phi:f_\ast N\to f_\ast M$. This defines a functor $f_\ast:\Mat_F\to\Mat_{G}$.
\end{propA}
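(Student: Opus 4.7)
The plan is to verify the two ingredients directly: that $f_\ast\Phi$ is submonomial, and that $(f_\ast\Phi)\cdot\cV_{f_\ast N}\subseteq\cV_{f_\ast M}$. Submonomiality is immediate because $f$ is an idyll morphism, hence $f(0)=0$; the support of $f_\ast\Phi$ is thus contained in that of $\Phi$, so at most one nonzero entry occurs per row and per column. Likewise $f(1)=1$ gives $f_\ast(\Id_N)=\Id_{f_\ast N}$, handling one half of the functoriality claim.

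The heart of the argument is the compatibility identity
\[
 (f_\ast\Phi)\cdot f_\ast(v)\;=\;f_\ast(\Phi\cdot v)\qquad\text{for all }v\in F^S.
\]
At each coordinate $i\in T$, the submonomial shape of $\Phi$ reduces the sum $\sum_{j}\Phi_{i,j}v_j$ to the single (possibly zero) summand $\Phi_{i,j_0}v_{j_0}$; applying $f$ and using multiplicativity yields $f(\Phi_{i,j_0})f(v_{j_0})$, which by the same submonomial accounting on $f_\ast\Phi$ equals $((f_\ast\Phi)\cdot f_\ast v)_i$. The analogous coordinate-wise argument applied to the convolution $(\Psi\cdot\Phi)_{i,j}=\sum_{k}\Psi_{i,k}\Phi_{k,j}$ gives $f_\ast(\Psi\cdot\Phi)=f_\ast\Psi\cdot f_\ast\Phi$, finishing the functoriality claim once the vector condition is proved.

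For the vector condition, I would combine the identity above with the description of $f_\ast N$ from the earlier development, in which its vector set is generated, via the closure operations available in the perfect idyll $G$, by $\{f_\ast v\mid v\in\cV_N\}$. The hypothesis $\Phi\cdot\cV_N\subseteq\cV_M$ immediately gives $(f_\ast\Phi)\cdot f_\ast v=f_\ast(\Phi\cdot v)\in\cV_{f_\ast M}$ for each such generator. The main obstacle I foresee is propagating this from the generating set to all of $\cV_{f_\ast N}$; here I would argue that a submonomial matrix acts as a partial reindexing-and-rescaling of coordinates, so it transports any modular relation coordinate by coordinate and therefore commutes with the modular elimination steps that build vectors from the generating set. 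Perfectness of $F$ and $G$ ensures that these closure operations give the intended vector sets and that no discrepancy between weak and strong vectors arises, completing the proof.
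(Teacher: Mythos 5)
Your first two paragraphs are fine: $f_\ast\Phi$ is submonomial because $f(0)=0$, and the coordinate-wise identity $(f_\ast\Phi)\cdot f_\ast(v)=f_\ast(\Phi\cdot v)$ (and its analogue for products of submonomial matrices) is exactly the kind of computation the paper also relies on. The gap is in your last paragraph. There is no result — in this paper or in Baker--Bowler theory — describing $\cV_{f_\ast N}$ as the closure of $\{f_\ast v\mid v\in\cV_N\}$ under ``modular elimination steps.'' Vectors are \emph{defined} as $\cV_{f_\ast N}=(\cC^\ast_{f_\ast N})^\perp$, an orthogonality condition, not a generation process; in general $\cV_{f_\ast N}$ contains many elements that are not push-forwards of elements of $\cV_N$ and are not obtained from them by any elimination procedure. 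So the step ``propagate from the generating set to all of $\cV_{f_\ast N}$'' has nothing to propagate along, and the appeal to perfectness does not repair this.

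The way around this — and the route the paper takes — is to dualize the membership problem into an orthogonality problem. By \autoref{thm: cryptomorphisms}, $\Phi:N\to M$ is a morphism if and only if $\Phi\cdot\cC_N\perp\cC_M^\ast$, and the circuits and cocircuits of a push-forward matroid \emph{are} controlled: by \cite[Lemma 3.39]{Baker-Bowler19} they are, up to units of $G$, the push-forwards of the circuits and cocircuits of the original matroid. Since $f$ carries $N_F$ into $N_G$, the identity $f^+\bigl(Z\cdot(\Phi\cdot Y)\bigr)=f_\ast(Z)\cdot\bigl(f_\ast(\Phi)\cdot f_\ast(Y)\bigr)$ shows that orthogonality is preserved, giving $f_\ast\Phi\cdot\cC_{f_\ast N}\perp\cC_{f_\ast M}^\ast$; one then concludes by the reverse implication of \autoref{thm: cryptomorphisms}, which is where perfectness of $G$ is genuinely used. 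Your draft should be reorganized around this circuit-cocircuit criterion rather than around a (nonexistent) generation statement for vector sets.
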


\subsubsection*{Duality}

We denote the transpose of a submonomial matrix $\Phi$ by $\Phi^t$, which is also a submonomial matrix.

\begin{propA}[duality]\label{propB}
 The transpose of an $F$-morphism $\Phi:N\to M$ is an $F$-morphism $\Phi^t:M^\ast\to N^\ast$. This defines a contravariant endofunctor on $\Mat_F$ whose square is the identity. 
\end{propA}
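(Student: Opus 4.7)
\textbf{Proof plan for \autoref{propB}.} The plan is first to verify that $\Phi^t$ is an $F$-morphism $M^\ast\to N^\ast$, and then to deduce contravariant functoriality and involutivity from standard matrix identities together with the Baker-Bowler self-duality $M^{\ast\ast}=M$ of $F$-matroids. Since the transpose of a submonomial matrix is again submonomial, and since by the definition of the dual $F$-matroid one has $\cV_{M^\ast}=\cC_M$ and $\cV_{N^\ast}=\cC_N$, the first claim reduces to the inclusion $\Phi^t\cdot\cC_M\subseteq\cC_N$.

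The main step is a formal adjointness identity $\langle\Phi^t Y,X\rangle=\langle Y,\Phi X\rangle$ valid for all $Y\in F^T$ and $X\in F^S$. I would argue this as follows: the double sum $\sum_{(i,j)\in T\times S}\Phi_{i,j}\,Y_i\,X_j$ has support contained in the matching $\{(i,j):\Phi_{i,j}\neq 0\}$ by submonomiality of $\Phi$, hence is a well-defined formal sum in $F$; evaluating it row-first produces $\langle Y,\Phi X\rangle$ and evaluating it column-first produces $\langle\Phi^t Y,X\rangle$. Now I fix $Y\in\cC_M$. For every $X\in\cV_N$, the morphism hypothesis gives $\Phi X\in\cV_M$, and perfectness of $F$ places $\langle Y,\Phi X\rangle$ in the nullset $N_F$; by adjointness, $\langle\Phi^t Y,X\rangle\in N_F$ for every vector $X$ of $N$. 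Invoking the characterization of $\cC_N$ in a perfect idyll as the orthogonal complement of $\cV_N$, I conclude $\Phi^t Y\in\cC_N=\cV_{N^\ast}$, as required.

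For the second half of the statement, contravariance is immediate from $(\Psi\cdot\Phi)^t=\Phi^t\cdot\Psi^t$ together with $\id^t=\id$, which continue to hold for submonomial matrices (since the transpose of submonomial is submonomial and both compositions are defined on the nose). The identity $(-)^t\circ(-)^t=\id_{\Mat_F}$ combines $(\Phi^t)^t=\Phi$ with the Baker-Bowler duality $M^{\ast\ast}=M$. The delicate point will be the passage from ``orthogonal to every vector'' to ``covector'' used for $\Phi^t Y$: this is precisely where the hypothesis that $F$ be perfect is genuinely consumed, and in writing out the proof I would cite the appropriate orthogonality theorem for $F$-matroids over perfect idylls to justify this converse implication.
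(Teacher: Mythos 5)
Your overall strategy -- the adjointness identity $(\Phi\cdot X)\cdot Y = X\cdot(\Phi^t\cdot Y)$ combined with perfectness -- is exactly the engine of the paper's proof (its Lemma on inner products, fed into the circuit/cocircuit cryptomorphism). However, the duality bookkeeping in your argument is wrong at three places, and as written the proof does not go through. First, $\cV_{M^\ast}\neq\cC_M$: by definition $\cV_{M^\ast}=(\cC_{M^\ast}^\ast)^\perp=(\cC_M)^\perp=\cV_M^\ast$, i.e.\ the vectors of the dual are the \emph{covectors} of $M$, not its circuits. Second, your perfectness step fails: you take $Y\in\cC_M$ and $\Phi\cdot X\in\cV_M$ and claim $Y\cdot(\Phi\cdot X)\in N_F$, but circuits are themselves vectors, and perfectness asserts $\cV_M\perp\cV_M^\ast$ (vectors orthogonal to \emph{covectors}), not $\cV_M\perp\cV_M$. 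Third, the ``characterization of $\cC_N$ as the orthogonal complement of $\cV_N$'' is false: for perfect $F$ one has $\cV_N^\perp=\cV_N^\ast=\cC_N^\perp$, not $\cC_N$.

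The repair is short and keeps your structure: take $Y\in\cV_{M^\ast}=\cV_M^\ast$ arbitrary; for every $X\in\cV_N$ the hypothesis gives $\Phi\cdot X\in\cV_M$, perfectness gives $Y\perp\Phi\cdot X$, and adjointness gives $\Phi^t\cdot Y\perp X$; since $\cC_N\subseteq\cV_N$ this yields $\Phi^t\cdot Y\in\cC_N^\perp=\cV_N^\ast=\cV_{N^\ast}$, which is precisely the definition of $\Phi^t:M^\ast\to N^\ast$ being a morphism. This corrected route is in fact slightly more direct than the paper's, which instead reduces both morphism conditions to the symmetric statement $\Phi\cdot\cC_N\perp\cC_M^\ast$ via the cryptomorphism theorem. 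Your second half (contravariance from $(\Psi\Phi)^t=\Phi^t\Psi^t$ and involutivity from $(\Phi^t)^t=\Phi$ together with $M^{\ast\ast}=M$) is fine.
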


\subsubsection*{Relation to strong maps}
The \emph{augmentation} of a set $S$ is the pointed set $\underline{S}=S\sqcup\{0\}$ with base point $0$. Let $\Phi$ be a submonomial matrix with coefficients in $F$ and indexed by $T\times S$. The \emph{underlying map} of $\Phi$ is the map $\uPhi:\uS\to\uT$ with $\underline\Phi(j)=i$ if $\Phi_{i,j}\neq0$ and $\underline\Phi(j)=0$ if $j=0$ or if $\Phi_{i,j}=0$ for all $i$. Note that $\uPhi:\uS\to\uT$ is \emph{$\Fun$-linear}, in the sense that it preserves base points and that for every $i\in T$, there is at most one $j\in S$ with $\uPhi(j)=i$.

We denote the underlying matroid of an $F$-matroid $M$ by $\underline{M}$. Note that $\uM$ has the same ground set as $M$, and that strong maps of matroids are, by definition, maps between augmented ground sets.

\begin{propA}[relation to strong maps]\label{propC}
 Let $N$ and $M$ be $\K$-matroids with respective underlying sets $S$ and $T$. A submonomial matrix $\Phi$ with entries in $\K$ is a morphism $\Phi:N\to M$ of $\K$-matroids if and only if $\uPhi:\uN\to\uM$ is a strong map. In particular, the underlying map $\underline\Phi:\underline S\to\underline T$ of an $F$-morphism $\Phi:N\to M$ (where $F$ is any perfect idyll) is a strong map $\uPhi:\uN\to\uM$ between the underlying matroids. 
\end{propA}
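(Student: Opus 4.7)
The plan is to handle the Krasner case $F=\K$ directly by a combinatorial translation, and then bootstrap to an arbitrary perfect idyll $F$ using the functoriality established in \autoref{propA}.

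For the $\K$-case, I would first unpack what a submonomial matrix over $\K$ encodes. Since $\K=\{0,1\}$, such a matrix $\Phi$ indexed by $T\times S$ is literally the incidence matrix of the $\Fun$-linear map $\underline\Phi\colon\underline S\to\underline T$, and for any $v\in\K^S$ one has $\supp(\Phi\cdot v)=\underline\Phi(\supp v)\cap T$. I would then invoke the standard Baker--Bowler identification between $\K$-matroids and classical matroids, under which the vectors $\cV_N\subseteq\K^S$ are precisely the characteristic functions of unions of circuits (``cycles'') of $\underline N$. Under this dictionary, $\Phi\cdot\cV_N\subseteq\cV_M$ translates to the condition that $\underline\Phi$ sends each cycle of $\underline N$ to a cycle of $\underline M$, and by distributing over circuits it suffices to check this on the circuits of $\underline N$.

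The remaining ingredient in the $\K$-case is the classical cryptomorphic characterization of strong maps via circuits: an $\Fun$-linear pointed map $f\colon\underline S\to\underline T$ is a strong map $\underline N\to\underline M$ if and only if $f(C)\cap T$ is a union of circuits (equivalently, a dependent set) of $\underline M$ for every circuit $C$ of $\underline N$. This cryptomorphism is well-known and can be derived from the flat-preimage definition by a standard argument. Combining it with the previous translation yields the iff for $\K$.

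The general case then follows from functoriality. Every perfect idyll $F$ admits a canonical morphism $\pi\colon F\to\K$ sending every nonzero element to $1$, and by \autoref{propA} the push-forward $\pi_\ast\Phi$ is a $\K$-morphism $\underline N=\pi_\ast N\to\pi_\ast M=\underline M$ whose underlying map coincides with $\underline\Phi$. Applying the ``only if'' direction of the $\K$-case to $\pi_\ast\Phi$ concludes that $\underline\Phi$ is a strong map between the underlying matroids. The main obstacle I anticipate is the cryptomorphic characterization of strong maps in terms of circuit images: while classical in spirit, it must be formulated with care for the restricted class of $\Fun$-linear (rather than arbitrary pointed) maps and matched precisely with the Baker--Bowler description of vectors of a $\K$-matroid. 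If desired, this step may be bypassed by working directly with the flat-preimage definition, using that in the perfect case flats are exactly the complements of covector supports and that covectors are orthogonal to vectors.
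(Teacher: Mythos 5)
Your strategy is essentially the one the paper follows: the $\K$-case is settled in \autoref{classical strong maps} and \autoref{thm: K-matroids and F1-linear spaces} by reducing the condition $\Phi\cdot\cV_N\subseteq\cV_M$ to the circuit-level statement $\underline\Phi(\cC_{\underline N})\perp\cC^*_{\underline M}$ and then invoking the classical cryptomorphism for $\Fun$-linear strong maps, which the paper does not treat as folklore but proves from the flat-preimage definition (\autoref{cryptomorphism - classical strong maps} and \autoref{duality - classical strong maps}); the statement for a general perfect idyll is then obtained by pushing forward along the unique morphism $F\to\K$, exactly as you propose (\autoref{functoriality and classical strong maps}). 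The only methodological difference is that the paper passes through \autoref{thm: cryptomorphisms} (whose proof uses perfection of $\K$ to get from circuits back to all vectors), whereas you use the concrete description of $\K$-vectors as characteristic functions of unions of circuits and the fact that images distribute over unions; over $\K$ these amount to the same computation, and your version is, if anything, slightly more elementary.

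One assertion must be corrected: the parenthetical ``(equivalently, a dependent set)'' is false. A nonempty union of circuits is dependent, but a dependent set need not be a union of circuits, and the characterization of strong maps fails if one substitutes ``dependent'' for ``union of circuits''. Concretely, let $S=T=\{1,2\}$, let $N=U^1_2$ (unique circuit $\{1,2\}$), let $M=U^0_1\oplus U^1_1$ (element $1$ a loop, element $2$ a coloop), and let $\sigma=\id$. Then $\sigma(\{1,2\})=\{1,2\}$ is dependent in $M$ but is not a union of circuits of $M$ (the only circuit is $\{1\}$), and indeed $\sigma$ is not a strong map, since it meets the cocircuit $\{2\}$ of $M$ in exactly one element. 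Your main condition (image of every circuit is a union of circuits) is the correct one and carries the argument, so this is a local error rather than a gap, but the claimed equivalence should be deleted.
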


\begin{rem*}
 Since $1$ is the unique nonzero element of $\K$, a submonomial matrix $\Phi$ with coefficients in $\K$ is uniquely determined by its underlying map $\uPhi$. For the same reason, a $\K$-matroid $M$ is uniquely determined by its underlying matroid $\uM$. This means that \autoref{propC} provides a faithful embedding of $\Mat_\K$ into the category of usual matroids together with strong maps.
 
 While every matroid is the underlying matroid $\uM$ of a (uniquely determined) $\K$-matroid $M$, not every strong map comes from a $\K$-morphism. More precisely, a strong map $\sigma:\uN\to\uM$ comes from a $\K$-morphism if and only if it is $\Fun$-linear. 
 
 This means that isomorphisms, restrictions, contractions, quotients, and compositions thereof, come from $\K$-morphism. Strong maps that identify distinct parallel elements do not come from $\K$-morphisms.
\end{rem*}

\subsubsection*{Further examples}

If $k$ is a field, then $\cV_N$ is a linear subspace of $k^S$. A $k$-morphism is a $k$-linear map $\Phi:k^S\to k^T$ that maps the subspace $\cV_N$ to $\cV_M$, that maps coordinate vectors of $k^S$ to coordinate vectors of $k^T$ and whose kernel is spanned by coordinate vectors of $k^S$.

A $\T$-matroid is a tropical linear space $\cV_N$ in $\T^S$. A $\T$-morphism is a $\T$-linear map $\Phi:\T^S\to\T^T$ that preserves the corresponding tropical linear spaces $\cV_N$ and $\cV_M$, that maps coordinate vectors to coordinate vectors and whose non-empty fibres all have the same dimension; see also \cite[Section 2]{Iezzi-Schleis23}.

Quotients of oriented matroids are $\S$-morphisms; see \cite[Def. 7.7.2]{Bjorner-LasVergnas-Sturmfels-White-Ziegler99}.

\begin{warning*}
 There is a tension between linear algebra and matroid theory, which stems from the fact for a field $k$, the vector set of a $k$-matroid represents the dual of its underlying matroid. This culminates in the most confusing fact that a (submonomial) linear map represents a strong map in the opposite direction. 

 To explain, the rank of a $k$-matroid $M$ is equal to the \emph{codimension} of its vector set $\cV_M\subseteq k^T$. More to the point, $\cV_M$ represents the dual $\underline M^\ast$ of the underlying matroid $\underline M$ of $M$. The \emph{dimension} of $\cV_M$ equals the rank of $\underline M^\ast$.
 
 A $k$-morphism $\Phi:N\to M$ between $k$-matroids $N$ and $M$ defines a strong map $\underline \Phi:\underline N\to\underline M$ between the respective underlying matroids \emph{in the same direction}. The matroids \emph{represented by $\cV_N$ and $\cV_M$} are, however, the duals of $\underline N$ and $\underline M$, respectively. Therefore $\Phi$ induces the strong map $\underline\Phi^t:\underline M^\ast\to \underline N^\ast$ between the matroids represented by $\cV_N$ and $\cV_M$ \emph{in the opposite direction}.
\end{warning*}

\subsubsection*{Pre-image along a matroid morphism}
Let $M$ be an $F$-matroid on $T$ and $\Phi$ a submonomial matrix with coefficients in $F$ indexed by $T\times S$.

\begin{propA}[pre-image]\label{propD}
 There is a unique $F$-matroid $N$ on $S$ with vector set
 \[
  \cV_N \ = \ \Phi^{-1}(\cV_M) \ = \ \big\{ V\in F^S \, \big| \, \Phi\cdot V\in\cV_M \big\}.
 \]
\end{propA}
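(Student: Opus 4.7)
The plan is to dispatch uniqueness immediately: since an $F$-matroid is determined by its vector set, the equality $\cV_N = \Phi^{-1}(\cV_M)$ pins down $N$ at most once. The content is existence.

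For existence, I would exploit the submonomial structure of $\Phi$ to reduce to standard $F$-matroid operations. Partition $S = S' \sqcup S_0$ with $S' = \{j \in S \mid \uPhi(j) \neq 0\}$, and correspondingly $T = T' \sqcup T_0$ with $T' = \uPhi(S')$. Submonomiality forces $\uPhi|_{S'} \colon S' \to T'$ to be a bijection. For any $V \in F^S$, the product $\Phi \cdot V$ is supported on $T'$, with $(\Phi V)_{\uPhi(j)} = \Phi_{\uPhi(j),j} \cdot V_j$ for each $j \in S'$, while the entries $V_j$ with $j \in S_0$ do not contribute to $\Phi V$ at all. In particular, $\Phi V \in \cV_M$ if and only if $(\Phi V)|_{T'}$ lies in the vector set of the deletion $M \setminus T_0$, with $V_{S_0}$ otherwise free.

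Accordingly, I would construct $N$ in three moves. First, form the deletion $M' := M \setminus T_0$, an $F$-matroid on $T'$. Second, observe that $\Phi|_{T' \times S'}$ is a submonomial matrix with bijective underlying map $\uPhi|_{S'}$ and nonzero entries $\Phi_{\uPhi(j),j} \in F^\times$; it is therefore invertible in $\Mat_F$ (its inverse being the transpose with reciprocal entries), and transporting $M'$ across this isomorphism yields an $F$-matroid $N'$ on $S'$ with vector set $\cV_{N'} = (\Phi|_{T' \times S'})^{-1}(\cV_{M'})$. Third, set $N := N' \oplus L$, where $L$ is the $F$-matroid on $S_0$ of rank zero, whose vector set is all of $F^{S_0}$. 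A direct unpacking of the direct-sum vector set against the description derived above then yields $\cV_N = \Phi^{-1}(\cV_M)$.

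The main obstacle is essentially bookkeeping rather than mathematical content: the three constituent operations---deletion, transport along an $F$-matroid isomorphism, and direct sum---must be known to preserve the strong $F$-matroid property in the perfect-idyll framework adopted here. Each is standard in Baker-Bowler theory (and, since $F$ is perfect, automatically respects the equivalence of weak and strong matroids), but given the paper's own conventions for matroids with coefficients, it would be cleanest to either invoke these facts from \cite{Baker-Bowler19} or record them briefly in the present setup before applying the decomposition above.
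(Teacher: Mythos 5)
Your proof is correct, but it reaches the conclusion by a genuinely different route than the paper. The paper constructs $\Phi^{-1}(M)$ in one stroke as the class of the explicit Grassmann--Pl\"ucker function $\Phi^{-1}(\mu)(\mathbf z)=\bigl(\prod_\ell\Phi_{z_\ell}\bigr)\cdot\widetilde{(\mu|_{\im\uPhi\cap T})}(\uPhi(\mathbf z))$ --- which, unwound, is exactly your matroid $N'\oplus L$: the product of units is your monomial rescaling, the restriction to $\im\uPhi\cap T$ is your deletion of $T_0$, and the tilde (vanishing when some $\uPhi(z_\ell)=0$) is your direct sum with the rank-zero matroid on $S_0$. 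Where the two arguments diverge is in verifying $\cV_N=\Phi^{-1}(\cV_M)$: you track vector sets directly through the three constituent operations, using that vectors of a restriction are the restrictions of vectors supported there (Anderson), that monomial matrices act bijectively on vector sets, and that vector sets of direct sums factor; the paper instead deduces the inclusion $\Phi\cdot\cV_{\Phi^{-1}(M)}\subseteq\cV_M$ from its factorization theorem (``$\Phi:N\to M$ is a morphism iff $\Phi^{-1}(M)$ is a quotient of $N$'') and gets the reverse inclusion by a trick you avoid entirely: for $X\in\Phi^{-1}(\cV_M)$ it invokes the Baker--Bowler matroid whose circuit set is the single projective class of $X$, shows $\Phi$ is a morphism out of it via the cryptomorphism theorem, and concludes $X$ is a vector of the quotient $\Phi^{-1}(M)$. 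Your route is more elementary and self-contained (it needs only standard minor/direct-sum facts, which, as you note, should be recorded or cited); the paper's route is longer but earns the factorization theorem along the way, which is a structural result it wants independently. Your explicit treatment of uniqueness (a matroid over a perfect idyll is determined by its vector set, since the circuits are the minimal-support nonzero vectors) is also sound and makes explicit what the paper leaves implicit.
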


We write $\Phi^{-1}(M)$ for $N$ and call it the \emph{pre-image of $M$ along $\Phi$}. See \autoref{subsection: pre-image} for a characterization of the pre-image in terms of Grassmann-Pl\"ucker functions.

\subsubsection*{Cryptomorphisms}

Let $N$ and $M$ be $F$-matroids with respective underlying sets $S$ and $T$, respective circuit sets $\cC_N$ and $\cC_M$, and respective Grassmann-Pl\"ucker functions $\nu$ and $\mu$. We denote the cocircuits of $M$ by $\cC_M^\ast$. Let $w$ and $r$ be the respective ranks of $N$ and $M$. The following summarizes \autoref{thm: cryptomorphisms} and \autoref{factorization of surjective strong maps through quotients}.

\begin{thmA}[cryptomorphisms]\label{thmE}
 Let $\Phi$ be a submonomial $F$-matrix, indexed by $T\times S$. Then the following are equivalent:
 \begin{enumerate}
  \item $\Phi:N\to M$ is a morphism, i.e.\ $\Phi\cdot\cV_N\subseteq\cV_M$;
  \item $\Phi^{-1}(M)$ is a quotient of $N$, i.e.\ $\cV_N\subseteq\cV_{\Phi^{-1}(M)}$;
  \item $\Phi\cdot\cC_N\perp\cC_M^\ast$;
  \item for all $y_0,\dotsc,y_{w}\in S$ and $x_2,\dotsc,x_{r}\in T$,
  \[
   \sum_{k=0}^{w} \ (-1)^k \cdot \nu(y_0,\dotsc,\widehat{y_k},\dotsc,y_{w}) \cdot \Phi_{\uPhi(y_k),y_k} \cdot \mu\big(\underline{\Phi}(y_k),x_2,\dotsc,x_{r}\big) \quad \in \quad N_F,
  \]
  with the convention that $\Phi_{0,y_k}=\mu(0,x_2,\dotsc,x_r\big)=0$.
 \end{enumerate}
\end{thmA}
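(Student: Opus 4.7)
The plan is to prove the four conditions equivalent in three essentially independent verifications, $(1)\!\Leftrightarrow\!(2)$, $(1)\!\Leftrightarrow\!(3)$, and $(1)\!\Leftrightarrow\!(4)$, the last being the main content.

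The equivalence $(1)\!\Leftrightarrow\!(2)$ is a tautology given \autoref{propD}: by construction $\cV_{\Phi^{-1}(M)}=\{V\in F^S:\Phi\cdot V\in\cV_M\}$, so the inclusion $\cV_N\subseteq\cV_{\Phi^{-1}(M)}$ is literally $\Phi\cdot\cV_N\subseteq\cV_M$. The equivalence $(1)\!\Leftrightarrow\!(3)$ uses perfectness of $F$ on both sides. Since $F$ is perfect, $\cV_M$ coincides with the orthogonal $(\cC_M^\ast)^\perp$, so $\Phi\cdot V\in\cV_M$ iff $\Phi\cdot V\perp\cC_M^\ast$. On the $N$-side, again by perfectness, every vector of $N$ arises from circuits by (iterated) modular elimination; because $\Phi$ is submonomial, at most one summand contributes to any coordinate of $T$, so $\Phi$ distributes over these sums without cancellation pathologies. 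Consequently, testing (3) on circuits is equivalent to testing (1) on all vectors.

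For $(1)\!\Leftrightarrow\!(4)$, the strategy is to extract the identity in (4) from a ``universal'' pushforward of a GP-style vector. Fixing $y_0,\dotsc,y_w\in S$, one considers the $F^S$-expression
\[
 V \ = \ \sum_{k=0}^{w} (-1)^k \cdot \nu(y_0,\dotsc,\widehat{y_k},\dotsc,y_w) \cdot e_{y_k},
\]
which is a standard circuit-shaped combination encoded by the GP function of $N$ and thus lies in $\cV_N$. Applying $\Phi$ gives
\[
 \Phi\cdot V \ = \ \sum_{k=0}^{w} (-1)^k \cdot \nu(y_0,\dotsc,\widehat{y_k},\dotsc,y_w) \cdot \Phi_{\uPhi(y_k),y_k} \cdot e_{\uPhi(y_k)}\quad\in\ F^T,
\]
and pairing this with the $(r-1)$-tuple $(x_2,\dotsc,x_r)$ via the dual GP relation for $\mu$ (i.e.\ testing orthogonality against the cocircuit of $M$ associated to that tuple) yields precisely the sum in condition (4). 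This shows $(1)\Rightarrow(4)$. For the converse, the same family of identities, as $(y_0,\dotsc,y_w,x_2,\dotsc,x_r)$ ranges over $S^{w+1}\times T^{r-1}$, encodes orthogonality $\Phi\cdot\cC_N\perp\cC_M^\ast$ in the Baker--Bowler cryptomorphism between $\nu$ and $\cC_N$ and between $\mu$ and $\cC_M^\ast$, and then $(3)\Rightarrow(1)$ closes the loop.

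The main obstacle is the bookkeeping around the submonomial structure in the GP identity. When $\uPhi(y_k)=0$ the corresponding term must vanish, which is enforced by the stated convention $\Phi_{0,y_k}=0=\mu(0,x_2,\dotsc,x_r)$; one has to check that this is exactly what the pushforward of $V$ produces, i.e.\ that the basis vector $e_{y_k}$ is genuinely killed rather than contributing a spurious term. When two distinct indices $y_k,y_{k'}$ share an image in $T$, the submonomial property forces $y_k=y_{k'}$ in nondegenerate configurations, so no unwanted collisions occur. Once these conventions are checked, the final step reduces to the cryptomorphism for matroid quotients (with ground set identified along $\uPhi$), which is the content of \autoref{factorization of surjective strong maps through quotients} and the known quotient case of the GP identity in the idyll setting.
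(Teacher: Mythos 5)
Your overall decomposition matches the paper's, but the argument you give for $(3)\Rightarrow(1)$ is not valid over a general perfect idyll, and this is the one place where perfectness does real work. Vectors of $N$ are \emph{defined} as $(\cC_N^\ast)^\perp$; they are not obtained from circuits by ``iterated modular elimination,'' and $\cV_M$ is not closed under any addition that would let $\Phi$ ``distribute over these sums'' (that picture is correct for fields, where $\cV_M$ is a subspace, but fails for general idylls). The correct mechanism is the transpose adjunction $(\Phi\cdot X)\cdot Y=X\cdot(\Phi^t\cdot Y)$ (\autoref{inner product}): from $\Phi\cdot\cC_N\perp\cC_M^\ast$ one gets $\cC_N\perp\Phi^t\cdot\cC_M^\ast$, hence $\Phi^t\cdot\cC_M^\ast\subseteq\cC_N^\perp=\cV_N^\ast$; perfectness gives $\cV_N\perp\cV_N^\ast$, hence $\cV_N\perp\Phi^t\cdot\cC_M^\ast$, and applying the adjunction again yields $\Phi\cdot\cV_N\subseteq(\cC_M^\ast)^\perp=\cV_M$. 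Without this double-orthogonality step your equivalence $(1)\Leftrightarrow(3)$ does not close.

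Two further points. First, treating $(1)\Leftrightarrow(2)$ as a tautology ``given \autoref{propD}'' is circular in the paper's development: the identity $\cV_{\Phi^{-1}(M)}=\Phi^{-1}(\cV_M)$ is itself proved \emph{from} the factorization theorem and the cryptomorphism, because $\Phi^{-1}(M)$ is defined by a Grassmann--Pl\"ucker function, not by its vector set. The paper instead proves $(1)\Leftrightarrow(2)$ directly by comparing the flag Pl\"ucker relations for $(\nu,\Phi^{-1}(\mu))$ with condition (4), which costs only a common monomial factor $\prod\Phi_{z_\ell}$ (plus a separate reduction when $T\not\subseteq\im(\uPhi)$). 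Second, your sketch of $(1)\Leftrightarrow(4)$ correctly identifies the fundamental circuit/cocircuit pairing as the engine, but the converse direction needs the case analysis the paper carries out: for tuples $(y_0,\dotsc,y_w)$ and $(x_2,\dotsc,x_r)$ where no index yields simultaneously a basis of $\uN$ and of $\uM$, the sum is $0$ for trivial reasons, and otherwise one must exhibit the specific circuit of $N$ and cocircuit of $M$ (with the $\sign(\bz_{\widehat{\fb(j)}},z_{\fb(j)},\bx)$ bookkeeping and the bijection induced by $\F_1$-linearity of $\uPhi$) whose pairing reproduces the sum up to a unit. These are fixable, but as written the plan would not compile into a proof.
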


\subsubsection*{Minors}
Let $N$ and $M$ be $F$-matroids with respective ground sets $S$ and $T$ and $\Phi:N\to M$ an $F$-morphism. For subsets $A\subseteq S$ and $B\subseteq T$ with complements $A^c=S-A$ and $B^c=T-B$, we define the following submatrices of $\Phi$:
\[
 \Phi|(A\to B) \ = \ (\Phi_{i,j})_{(i,j)\in B\times A} \quad \text{and} \quad \Phi/(A\to B) \ = \ (\Phi_{i,j})_{(i,j)\in B^c\times A^c}.
\]

\begin{propA}[minors]\label{propF}
 If $\uPhi(\uA)\subseteq\uB$, then 
 \[
  \Phi\vert{(A \to B)}:N\vert A\to M\vert B \qquad \text{and} \qquad \Phi/{(A\to B)}:N/A\to M/B
 \]
 are $F$-morphisms (called the \emph{restriction} and \emph{contraction} of $\Phi$ to $A\to B$, respectively). They satisfy $\Phi^t|(B^c\to A^c) = \big(\Phi / (A\to B) \big)^t$ and $f_\ast\big(\Phi\vert{(A\to B)}\big)=(f_\ast\Phi)\vert{(A\to B)}$ for any morphism $f: F \to G$ of perfect idylls.
\end{propA}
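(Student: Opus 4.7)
The approach is to verify each of the four conclusions by direct bookkeeping on the entries of the submonomial matrices, with essentially a single combinatorial ingredient feeding into the two main verifications. The hypothesis $\underline\Phi(\underline A)\subseteq\underline B$ unpacks to the statement
\[
\text{for every } j\in A \text{ and every } i\in B^c, \quad \Phi_{i,j}=0,
\]
because any $(i,j)$ with $\Phi_{i,j}\neq 0$ satisfies $\underline\Phi(j)=i$, so $j\in A$ forces $i\in B$. Since any submatrix of a submonomial matrix is submonomial, both $\Phi|(A\to B)$ and $\Phi/(A\to B)$ are admissible submonomial matrices giving maps $F^A\to F^B$ and $F^{A^c}\to F^{B^c}$, respectively.

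For the restriction, recall from \cite{Jarra-Lorscheid24} that $\cV_{N|A}$ is obtained by taking those vectors $V\in\cV_N$ with $V|_{A^c}=0$ and restricting them to $A$, and analogously for $\cV_{M|B}$. Starting from such a $V\in\cV_N$, the combinatorial input above yields $(\Phi V)_i=\sum_{j\in A}\Phi_{i,j}V_j=0$ for every $i\in B^c$, so $\Phi V\in\cV_M$ is supported on $B$, placing $(\Phi V)|_B$ in $\cV_{M|B}$. A direct comparison of entries shows $(\Phi V)|_B=\Phi|(A\to B)\cdot V|_A$, which proves the first claim.

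For the contraction, $\cV_{N/A}=\{V|_{A^c}:V\in\cV_N\}$ and likewise for $\cV_{M/B}$. Given any lift $V\in\cV_N$ of $W\in\cV_{N/A}$, the vector $\Phi V$ lies in $\cV_M$, so its restriction $(\Phi V)|_{B^c}$ lies in $\cV_{M/B}$. For $i\in B^c$ one computes
\[
(\Phi V)_i\ =\ \sum_{j\in A^c}\Phi_{i,j}V_j\ +\ \sum_{j\in A}\Phi_{i,j}V_j\ =\ \sum_{j\in A^c}\Phi_{i,j}W_j\ =\ \bigl(\Phi/(A\to B)\cdot W\bigr)_i,
\]
where the second sum vanishes termwise by the combinatorial input; in particular the resulting image is independent of the chosen lift, and equals $\Phi/(A\to B)\cdot W$, which shows this vector lies in $\cV_{M/B}$.

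The remaining two assertions are formal identities between entries. Both $\Phi^t|(B^c\to A^c)$ and $(\Phi/(A\to B))^t$ are indexed by $A^c\times B^c$ with $(j,i)$-entry equal to $\Phi_{i,j}$, and both $f_\ast(\Phi|(A\to B))$ and $(f_\ast\Phi)|(A\to B)$ are indexed by $B\times A$ with $(i,j)$-entry equal to $f(\Phi_{i,j})$. The only point requiring care is aligning the Jarra-Lorscheid descriptions of $M|A$ and $M/A$ with the vector sets used above; once those identifications are in hand, each step of the proof reduces to an elementary calculation on matrix entries.
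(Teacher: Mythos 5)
Your proof is correct. The restriction argument is essentially the paper's own: extend a vector of $N|A$ by zero to a vector of $N$ (Anderson, Prop.~4.5), push it through $\Phi$, observe via the hypothesis $\uPhi(\uA)\subseteq\uB$ that the image vanishes off $B$, and restrict back. Where you genuinely diverge is the contraction. The paper does not compute anything there: it dualizes ($\Phi^t:M^\ast\to N^\ast$ is a morphism by the duality corollary), applies the already-proved restriction statement to $\Phi^t|(B^c\to A^c):M^\ast|B^c\to N^\ast|A^c$, and dualizes back using the identity $\Phi^t|(B^c\to A^c)=\big(\Phi/(A\to B)\big)^t$ together with $(M\backslash A)^\ast=M^\ast/A$. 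You instead argue directly on vectors, using the full strength of Anderson's Prop.~4.4 --- namely that $\cV_{N/A}$ consists \emph{exactly} of the restrictions $V|_{A^c}$ of vectors $V\in\cV_N$, so that every $W\in\cV_{N/A}$ admits a lift --- and then check entrywise that $(\Phi V)|_{B^c}=\Phi/(A\to B)\cdot W$. Your route is more self-contained at this step (no appeal to duality of morphisms) but requires the surjectivity half of the vector characterization of contractions, which the paper's route avoids; conversely the paper's route leans on \autoref{duality for strong maps} and the perfectness of $F$ built into it. Your treatment of the two formal identities (the transpose identity and compatibility with push-forward) as entrywise bookkeeping is exactly how the paper handles them (the first is \autoref{rem: contraction and deletion}; the second is not spelled out but is immediate).
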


\subsection*{Quiver matroids}

Let $F$ be a perfect idyll and $Q$ a quiver with vertex set $Q_0$ and arrow set $Q_1$. A \emph{$(Q,F)$-matroid} is a tuple $(M_v)_{v\in Q_0}$ of $F$-matroids $M_v$ together with a tuple $(M_\alpha)_{\alpha\in Q_1}$ of $F$-morphisms $M_\alpha:M_{s(\alpha)}\to M_{t(\alpha)}$ where $s(\alpha)$ is the source of $\alpha$ and $t(\alpha)$ is the target of $\alpha$. We write $M = \big((M_v),(M_\alpha)\big)$ for this $(Q,F)$-matroid.

\subsubsection*{Examples}
Particular instances of quiver matroids include:
\begin{itemize}
 \item Flag $F$-matroids (in the sense of \cite{Jarra-Lorscheid24}) are $(Q,F)$-matroids $M$ for which $Q$ is of the form
 \[
  \begin{tikzcd}
   1 & 2 \ar[l] & \ar[l] \dotsb  & s \ar[l]
  \end{tikzcd}
 \]
 and for which $M_\alpha$ is the square identity matrix (for every $\alpha$); see \autoref{flag matroids}.
 \item Let $k$ be a field and $Q$ a quiver. A $(Q,k)$-matroid corresponds to a subrepresentation of a $Q$-representation $\Lambda$ over $k$ together with a choice of basis $E_v$ of $\Lambda_v$ (for every $v\in Q_0$) such that $\Lambda_\alpha$ is submonomial with respect to the bases $E_{s(\alpha)}$ and $E_{t(\alpha)}$ (for every $\alpha\in Q_1$); see \autoref{ex: matroid morphisms}.
 \item In the case of the tropical hyperfield $\T$, we gain the notion of valuated quiver matroid, which is a tuple of tropical linear spaces together with submonomial matrices that preserve them; see \autoref{ex: matroid morphisms}. 
\end{itemize}

\subsubsection*{Duality}
Let $M$ be an $(Q,F)$-matroid and $Q^\ast$ the \emph{opposite quiver}, which has the same vertices and arrows, but with reversed direction. Then the \emph{dual of $M$} is the $(Q^\ast,F)$-matroid $M^\ast$ with $(M^\ast)_v:=(M_v)^\ast$ for $v\in Q_0^\ast=Q_0$ and $(M^\ast)_{\alpha}:=M_{\alpha}^t$ for $\alpha\in Q_1^\ast = Q_1$.

\subsubsection*{Minors}
Let $M$ be an $(Q,F)$-matroid and $E_v$ the underlying set of $M_v$ for $v\in Q_0$. Let $\Lambda_v\subseteq \underline{E_v}$ be a pointed subset (for every $v\in Q_0$) such that $\underline{M_\alpha}(\Lambda_s)\subseteq \Lambda_t$ for every $\alpha\in Q_1$ with source $s$ and target $t$. Define $A_v := \Lambda_v - \{0\}$ and $\Lambda := (\Lambda_v)_{v \in Q_0}$.

The \emph{restriction of $M$ to $\Lambda$} is the $(Q,F)$-matroid $M|\Lambda$ with $(M|\Lambda)_v=M_v|A_v$ (for $v\in Q_0$) and $(M|\Lambda)_\alpha=M_\alpha|(A_s\to A_t)$ (for $\alpha\in Q_1$ with source $s$ and target $t$).

The \emph{contraction of $M$ by $\Lambda$} is the $(Q,F)$-matroid $M/\Lambda$ with $(M/\Lambda)_v=M_v/A_v$ (for $v\in Q_0$) and $(M/\Lambda)_\alpha=M_\alpha/(A_s\to A_t)$ (for $\alpha\in Q_1$ with source $s$ and target $t$).

For more details, see \autoref{Section: Duality and minors of quiver matroids}.

\subsubsection*{Relation to \texorpdfstring{$\Fun$}{F1}-representations of quivers}
An $\Fun$-representation of $Q$ is a tuple $\Lambda=\big(({\Lambda}_v)_{v\in Q_0},(\Lambda_\alpha)_{\alpha\in Q_1}\big)$ where $\Lambda_v$ is a finite pointed set with base point $0$ (for $v\in Q_0$) and $\Lambda_\alpha:{\Lambda}_{s(\alpha)}\to{\Lambda}_{t(\alpha)}$ (for $\alpha\in Q_1$) is an \emph{$\Fun$-linear map}, i.e.\ $\Lambda_\alpha(0)=0$ and $\#\Lambda_\alpha^{-1}(e)\leq1$ for $e\neq0$.

It is a general phenomenon that (postulated) objects in $\Fun$-geometry surface as certain types of matroids, which are usually of a particularly simple shape. Quiver representations form a first such instance (for this paper): an $\Fun$-representation $\Lambda$ defines a $(Q,\K)$-matroid $\cat{M}_Q(\Lambda)$ whose matroids $\cat{M}_Q(\Lambda)_v$ (for $v\in Q_0$) are characterized by their vector sets $\cV_{\cat{M}_Q(\Lambda)_v}=\K^{\Lambda_v-\{0\}}$ and whose submonomial matrices $\cat{M}_Q(\Lambda)_\alpha$ (for $\alpha\in Q_1$) have coefficients
\[
\big(\cat{M}_Q(\Lambda_\alpha)\big)_{i,j} \ = \  \begin{cases}
                                       1 & \text{if }\Lambda_\alpha(j)=i, \\
                                       0 & \text{if }\Lambda_\alpha(j)\neq i.
                                      \end{cases}
\]
In other words, $\cat{M}_Q(\Lambda)_\alpha$ is the unique submonomial matrix with coefficients in $\K$ such that $\underline{\cat{M}_Q(\Lambda)_\alpha}=\Lambda_\alpha$.

Conversely, every $(Q,\K)$-matroid $M$ has an \emph{underlying $\Fun$-representation} $\cat{U}_Q(M)$ with pointed sets $\cat{U}_Q(M)_v=\underline{E_v}$ (for $v\in Q_0$), where $E_v$ is the ground set of $M_v$, and $\cat{U}_Q(M)_\alpha=\underline{M_\alpha}$. We have $\cat{U}_Q \circ \cat{M}_Q(\Lambda)=\Lambda$ for every $\Fun$-representation $\Lambda$ of $Q$, which, in fact, extends to an adjunction between the category of $\Fun$-representations of $Q$ and $(Q,\K)$-matroids with respect to a suitable notion of morphisms (see \autoref{Cor: adjunction}).

\subsection*{Quiver Grassmannians}
Let $\Lambda$ be an $\Fun$-representation of $Q$ and $\textbf{r}$ a \emph{rank vector for $Q$}, which is a tuple $\textbf{r}=(r_v)_{v\in Q_0}$ of integers $r_v\geq0$. The \emph{rank} of a $(Q,F)$-matroid $M$ is the tuple $\underline{\rk}(M)=\big(\rk(M_v)\big)_{v\in Q_0}$. 

As a point set over $F$, the quiver Grassmannian of $(\Lambda, F)$-matroids of rank $\textbf{r}$ is 
\[
 \Gr_{\textbf{r}}(\Lambda)(F) \ = \ \big\{ \text{$(Q,F)$-matroids $M$ of rank $\textbf{r}$ with $\cat{U}_Q(M)=\Lambda$} \big\}
\]
where $\cat{U}_Q(M)$ is the underlying $\Fun$-representation of $M$.

\subsubsection*{Relation to complex quiver Grassmannians}
An $\Fun$-representation $\Lambda$ determines a complex representation $\Lambda_\C$ of $Q$ with vector spaces $\Lambda_{\C,v}=\C^{E_v}$ where $E_v=\Lambda_v-\{0\}$ (for $v\in Q_0$) and complex matrices $\Lambda_{\C,\alpha}$ (for $\alpha\in Q_1$) with coefficients
\[
 (\Lambda_{\C,\alpha})_{i,j} \ = \ \begin{cases}
                                    1 & \text{if }\Lambda_\alpha(j)=i, \\
                                    0 & \text{if }\Lambda_\alpha(j)\neq i
                                   \end{cases}
\]
for $(i,j)\in E_{t(\alpha)}\times E_{s(\alpha)}$. The quiver Grassmannian $\Gr_{\textbf{r}}(\Lambda)(\C)$ in the above sense agrees with the usual complex quiver Grassmannian 
\[
 \Gr_{{\textbf{r}}^\ast}(\Lambda_\C)(\C) \ = \ \big\{ \text{subrepresentations $N$ of $\Lambda_\C$ with $\underline{\dim}(N)={\textbf{r}}^\ast$} \big\},
\]
where ${\textbf{r}}^\ast=(\# E_v-r_v)_{v\in Q_0}$. Note that we need to pass to the dimension vector ${\textbf{r}}^\ast$ since the rank of a $\C$-matroid $M$ with ground set $E$ equals the codimension of its vectors $\cV_{M}$ as a subspace of $\C^{E}$.

Note further that we do not need to invoke subrepresentations in the definition of $\Gr_{\textbf{r}}(\Lambda)(F)$ since (the vector set of) a $\C$-matroid $M$ with ground set $E$ is by definition a subspace $\cV_{M}$ of $\C^{E}$, i.e.\ a $(Q,\C)$-matroid $N$ in $\Gr_{\textbf{r}}(\Lambda)(\C)$ is naturally a \emph{subrepresentation} of $\Lambda_\C$, or a \emph{quotient} if we adhere to the terminological conventions of matroid theory.

In a similar vein, $\Gr_{\br}(\Lambda)(\T)$ stays in a natural bijection with the quiver Dressian of \cite{Iezzi-Schleis23}.

\subsubsection*{The quiver Grassmannian as a moduli space}

The push-forward along an perfect idyll morphism $f:F\to G$ defines a map 
\[
 f_\ast: \ \Gr_{\textbf{r}}(\Lambda)(F) \ \longrightarrow \ \Gr_{\textbf{r}}(\Lambda)(G), 
\]
which turns $\Gr_{\textbf{r}}(\Lambda)$ into a functor from $\Idylls$ to $\Sets$. This functor is represented by an $\Fun$-variety in a suitable sense. For this, we employ the theory of band schemes, as introduced in \cite{Baker-Jin-Lorscheid24}; see \autoref{section: The moduli space of quiver matroids} for details. 

More accurately, we first extend the functor $\Gr_\br(\Lambda)$ to band schemes in the following way. A \emph{matroid bundle} over a band scheme is a suitable generalization of a subbundle of $\cO_X^n$ in usual algebraic geometry, where $X$ is a scheme and $n$ is the cardinality of the ground set $E$. This notion extends to $\Lambda$-matroid bundles on a band scheme $X$ in the same way as $F$-matroids generalize to $(\Lambda,F)$-matroids. This leads to the definition of $\Gr_\br(\Lambda)(X)$ as the collection of all $\Lambda$-matroid bundles on $X$ with rank vector $\br$.

\begin{thmA}\label{thmG}
 The functor $\Gr_\br(\Lambda)$ has the structure of a band scheme. In other words, the band scheme $\Gr_{\mathbf{r}}(\Lambda)$ is the fine moduli space of $\Lambda$-matroid bundles. 
\end{thmA}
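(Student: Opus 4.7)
The strategy is to realise $\Gr_\br(\Lambda)$ as a closed sub-band-scheme of the product $\cP := \prod_{v \in Q_0} \Gr_{r_v}(\Lambda_v)$, where each $\Gr_{r_v}(\Lambda_v)$ is the band scheme parametrising $\Lambda_v$-matroid bundles of rank $r_v$ (as supplied by the single-vertex matroid bundle theory of \cite{Baker-Jin-Lorscheid24}). The $X$-points of $\cP$, for a band scheme $X$, are then tuples $(M_v)_{v \in Q_0}$ of matroid bundles on $X$ with the prescribed underlying pointed sets $\Lambda_v$ and ranks $r_v$, and there are universal Grassmann-Pl\"ucker coordinates $\nu^{(v)}$ on each factor, obtained by pullback.

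For each arrow $\alpha \in Q_1$ with source $s$ and target $t$, the submonomial matrix $\Phi_\alpha := \cat{M}_Q(\Lambda)_\alpha$ is fixed by $\Lambda$. The requirement that $\Phi_\alpha$ define a morphism of matroid bundles $M_s \to M_t$ is, by \autoref{thmE}(4), equivalent to the vanishing (in the null-set sense) of all the universal expressions
\[
 e^{(\alpha)}_{y_0,\dotsc,y_{r_s};\,x_2,\dotsc,x_{r_t}} \ := \ \sum_{k=0}^{r_s} \, (-1)^k \cdot \nu^{(s)}\big(y_0,\dotsc,\widehat{y_k},\dotsc,y_{r_s}\big) \cdot (\Phi_\alpha)_{\underline{\Phi_\alpha}(y_k),\,y_k} \cdot \nu^{(t)}\big(\underline{\Phi_\alpha}(y_k),\,x_2,\dotsc,x_{r_t}\big)
\]
as the indices range over $\Lambda_s - \{0\}$ and $\Lambda_t - \{0\}$, respectively. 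I would then define $\Gr_\br(\Lambda)$ as the closed sub-band-scheme of $\cP$ obtained by adjoining all such $e^{(\alpha)}$ to the null ideal of the band of $\cP$, ranging over all arrows $\alpha \in Q_1$. Functoriality of the pushforward (\autoref{propA}) guarantees that these relations behave well under base change among band schemes.

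For the fine moduli interpretation I would verify that, for any band scheme $X$, morphisms $X \to \Gr_\br(\Lambda)$ correspond bijectively to $\Lambda$-matroid bundles of rank vector $\br$ on $X$. One direction is built in: restricting the tautological tuple on $\cP$ along the closed immersion and assembling via the matrices $\Phi_\alpha$ yields a universal quiver matroid bundle on $\Gr_\br(\Lambda)$. Conversely, any $\Lambda$-matroid bundle on $X$ yields, via the universal properties of the factors $\Gr_{r_v}(\Lambda_v)$, a morphism $X \to \cP$, and the pulled-back $e^{(\alpha)}$'s land in the null set of $X$ — by \autoref{thmE}(4), exactly because each $\Phi_\alpha$ is a morphism on $X$-sections — so this morphism factors through $\Gr_\br(\Lambda)$. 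The main obstacle is precisely this last step: one must confirm that the cryptomorphism of \autoref{thmE}(4), formulated over perfect idylls, upgrades to the correct universal band-scheme condition, i.e.\ that the relations $e^{(\alpha)}$ are neither too weak (admitting spurious tuples whose $F$-points over some perfect idyll fail to be quiver matroids) nor too strong (excluding genuine $\Lambda$-matroid bundles), and that this upgrade is compatible with the band structure morphisms arising in \cite{Baker-Jin-Lorscheid24}.
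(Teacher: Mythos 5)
Your proposal matches the paper's construction: $\Gr_\br(\Lambda)$ is defined there as the closed subscheme of $\prod_{v}\Gr(r_v,E_v)$ inside $\P(\Lambda)$ cut out (via the multigraded $\Proj$ construction) by exactly your relations $e^{(\alpha)}$ --- the quiver Pl\"ucker relations \eqref{qr4} --- and the moduli bijection is proved the same way, by pulling back the universal bundle in one direction and, conversely, assembling the characteristic morphisms $X\to\P(\Lambda)$ and checking on the affine charts that they factor through the closed immersion. The ``main obstacle'' you flag at the end is dissolved in the paper by a definitional choice rather than an upgrade argument: a morphism of matroid bundles is \emph{defined} by the Grassmann--Pl\"ucker condition of \autoref{thmE}(4) (not via vectors, which are ill-behaved for bundles), so the relations $e^{(\alpha)}$ are by definition neither too weak nor too strong, and consistency with the vector-theoretic notion of $F$-matroid morphism over a point $\Spec F$ is exactly \autoref{ex: morphisms over points}.
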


\subsection*{Euler characteristic and \texorpdfstring{$\Fun$}{F1}-rational points}

The following heuristic reasoning leads to the expectation that the number of $\Fun$-points of a sufficiently nice $\Fun$-variety $X$ is equal to the (topological) Euler characteristic of its $\C$-points $X(\C)$ (with respect to the complex topology). 

Namely, assume that $X$ is an $\Fun$-model of a smooth projective scheme $X_\Z$ for which the number $N(q)=\#X_\Z(\F_q)$ of $\F_q$-points (for prime powers $q$) is a polynomial in $q$. Then we expect that the number of $\Fun$-points of $X$ is
\[
 N(1) \ \ = \ \ \lim_{q\to1} \ N(q).
\]

As a consequence of the comparison result of singular cohomology with $l$-adic cohomology and of Deligne's proof of the Weil conjectures, 
\[
 N(q) \ = \ \sum_{i=0}^{\dim X} b_{2i} \cdot q^i
\]
where $b_i$ are the Betti numbers of the complex manifold $X(\C)$, with $b_{2i+1}=0$ for odd indices (see \cite[Section 7]{Lorscheid16} for more details). Thus the number $N(1)$ of $\Fun$-points of $X$ equals the Euler characteristic of $X(\C)$:
\[
 \chi\big(X(\C)\big) \ = \ \sum_{i=0}^{2\dim X} (-1)^i \cdot b_i \ = \ \sum_{i=0}^{\dim X} b_{2i} \cdot 1^i\ = \ N(1) \ = \ \lim_{q\to 1} \ \# X(\F_q).
\]

This heuristic is reflected by the following expectation on the Grassmannian over $\Fun$: an $r$-dimensional subspace of an $\Fun$-vector space $\Lambda$ (which is a pointed set) is a pointed subset $N\subseteq\Lambda$ with $\#(N-\{0\})=r$. In other words, $N-\{0\}$ is an $r$-subset of $\Lambda-\{0\}$. This leads to the expectation that 
\[
 \Gr(r,n)(\Fun) \ = \ \big\{ \text{$r$-subsets of $\{1,\dotsc,n\}$} \big\}
\]
has $\binom nr$ elements, which equals the Euler characteristic of $\Gr(r,n)(\C)$ as well as the limit
\[
 \lim_{q\to1} \ \# \Gr(r,n)(\F_q) \ \ = \ \ \lim_{q\to1} \ \ \frac{[n]_q!}{[r]_q!\cdot [n-r]_q!} \ \ = \ \ \binom nr,
\]
where $[n]_q=q^{n-1}+\dotsc+q+1$ is the $q$-number and $[n]_q!=[n]_q\cdots[1]_q$ is the $q$-factorial.

These expectations are not met, however, if we interpret the notion of $\Fun$-points of $X$ in the usual way, i.e.\ as morphisms $\Spec\Fun\to X$. For example, $X=\P^n_\Fun$ has $2^{n+1}-1$ such morphisms (say, when considered as monoid schemes\footnote{Note that finite type monoid schemes form a full subcategory of virtually any other approach to $\Fun$-geometry, so the number of $\Fun$-rational points of $\P^n$ does not depend on the notion of $\Fun$-geometry that we use.}), but the Euler characteristic of $\P^n(\C)$ is $n+1$.

In many instances, our expectations on $\Fun$-points are met by certain $\K$-points of a suitable $\Fun$-model. To explain, a band scheme $X$ comes with a set $X(\C)$ of $\C$-points and a set $X(\K)$ of $\K$-points, which are both equipped with a natural topology. The expected $\Fun$-points agree with the closed $\K$-points in the following examples (cf. \cite{Lorscheid-Thas23}, \cite[Section 3.4]{Baker-Jin-Lorscheid24}):
\begin{itemize}
 \item $\P^n$ has $n+1$ closed $\K$-points, which recovers the Euler characteristic of $\P^n(\C)$.
 \item The Weyl group $\mathfrak{S}_n$ of $\SL_n$ stays in bijection with the closed points of $\SL_n(\K)$.
 \item The closed $\K$-points of a flag variety correspond bijectively to the simplices (of the corresponding type) of the Coxeter complex for $\mathfrak{S}_n$, and their number agrees with the Euler characteristic of the corresponding complex flag variety.
\end{itemize}

We call the set of closed $\K$-points of $X$ the \emph{Tits space} and denote it by $X^\Tits$. The relation between the Euler characteristic and the Tits space generalizes to all quiver Grassmannians of sufficiently `nice' $\Fun$-representations. This includes the following cases (see \autoref{thm: Euler characteristics for tree modules} and \autoref{cor: Euler characteristic for quivers with at most one cycle}).

\begin{thmA}\label{thmH}
 Let $Q$ be a finite quiver and $\Lambda$ an $\Fun$-representation. Assume that one of the following conditions holds:
 \begin{enumerate}
  \item The coefficient quiver $\Gamma$ of $\Lambda$ is a tree (see \autoref{subsection: the Euler characteristic and the Tits space} for the definition of $\Gamma$).
  \item $Q$ is a quiver of simply laced (extended) Dynkin type and $\Lambda_\C$ is irreducible.
 \end{enumerate}
 Then
 \[
  \chi\big(\Gr_{\br}(\Lambda)(\C)\big) \ = \ \# \Gr_{\br}(\Lambda)^\Tits.
 \]
 for every dimension vector ${\br}$ for $Q$.
\end{thmA}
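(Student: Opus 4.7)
The plan is to reduce both sides of the claimed equality to a common combinatorial count, namely the number of successor-closed subquivers of the coefficient quiver $\Gamma$ of appropriate prescribed cardinality. The argument decouples into a description of $\Gr_{\br}(\Lambda)^\Tits$ on the one side, and the invocation of known affine pavings of complex quiver Grassmannians on the other.

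First I would give a combinatorial description of $\Gr_{\br}(\Lambda)^\Tits$. By \autoref{propC} and its subsequent remark, a $(Q,\K)$-matroid $M$ with $\cat{U}_Q(M) = \Lambda$ is the same datum as a compatible tuple of ordinary matroids $\underline{M_v}$ on $E_v = \Lambda_v - \{0\}$ of the prescribed ranks. Following the guiding examples ($\P^n$, $\SL_n$, flag varieties) and the band-scheme framework of \cite{Baker-Jin-Lorscheid24}, the closed $\K$-points should correspond to the ``coordinate-like'' choices: tuples $\beta = (\beta_v)_{v \in Q_0}$ with $\beta_v \subseteq E_v$ of the correct dual cardinality, where each $\underline{M_v}$ is the most degenerate matroid supported by $\beta_v$. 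The compatibility condition of \autoref{propC} along each arrow $\alpha$ translates to $\Lambda_\alpha(\beta_{s(\alpha)}) \subseteq \beta_{t(\alpha)} \cup \{0\}$, i.e.\ that $\beta$ is a successor-closed subquiver of $\Gamma$ of dimension $\br^\ast$.

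For the Euler characteristic side, in case (i) where $\Gamma$ is a tree, the standard torus action on $\Gr_{\br^\ast}(\Lambda_\C)(\C)$ has fixed points indexed precisely by successor-closed subquivers of $\Gamma$ of dimension $\br^\ast$, and the Bialynicki-Birula decomposition yields a cellular decomposition into affine spaces; hence the Euler characteristic equals the number of such subquivers. In case (ii), for simply laced (extended) Dynkin quivers with indecomposable $\Lambda_\C$, the affine Schubert paving of \cite{Lorscheid-Weist19} is indexed by the same combinatorial data. Combining the two descriptions yields the theorem, matching the arguments underlying \autoref{thm: Euler characteristics for tree modules} and \autoref{cor: Euler characteristic for quivers with at most one cycle}.

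The main obstacle will be the precise identification of the closed $\K$-points of the band scheme $\Gr_{\br}(\Lambda)$ from \autoref{thmG} with successor-closed subquivers of $\Gamma$. This requires unpacking the band-scheme topology and showing that closedness forces the $\K$-matroid structure into its most degenerate, coordinate-like form, so that the $\Fun$-linearity condition of \autoref{propC} reduces to the elementary successor-closedness condition. A secondary issue is ensuring that the torus decomposition of tree-module Grassmannians is available in the form we need; if no direct reference suits, one rederives it via the Bialynicki-Birula theorem applied to the diagonal torus on $\bigsqcup_v \C^{E_v}$, whose fixed points are transparently indexed by successor-closed subquivers.
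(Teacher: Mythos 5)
Your overall architecture matches the paper's: both sides are reduced to counting $\Fun$-subrepresentations of $\Lambda$ of dimension $\br^\ast$ (equivalently, coordinate $(\Lambda,\K)$-matroids, equivalently your successor-closed subquivers of $\Gamma$), with the Euler characteristic handled by torus-action/Jun--Sistko techniques. However, the step you yourself flag as ``the main obstacle'' --- that \emph{every} closed $\K$-point of $\Gr_{\br}(\Lambda)$ is a coordinate matroid --- is left unresolved, and it is precisely the nontrivial content of the paper's proof. A priori a $(\Lambda,\K)$-matroid whose constituent matroids have several bases could still be closed in the fine topology; nothing in your sketch rules this out. The paper closes this gap with the initial-matroid machinery: for a nice grading $\partial$, the tuple $\min_\partial(M)$ is again a $(\Lambda,\K)$-matroid (\autoref{lemma: grading on morphisms of matroids}, \autoref{prop - del(M) is a quiver matroid}) and lies in the closure of $M$; iterating over a nice sequence of gradings that distinguishes elements (whose existence for trees and primitive cycles is Jun--Sistko's result) produces a coordinate matroid in the closure of $M$ by \autoref{lemma: distinguishes elements}, so a closed point must already be a coordinate matroid. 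Without an argument of this kind your count of $\Gr_{\br}(\Lambda)^\Tits$ is only a lower bound.

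A secondary inaccuracy: the ``diagonal torus on $\bigsqcup_v \C^{E_v}$'' does not act on the quiver Grassmannian; only the subtorus of weight vectors compatible with the maps $\Lambda_\alpha$ (i.e.\ nice gradings) does, and a single such grading need not separate the fixed points. This is exactly why the paper (following Cerulli Irelli, Haupt and Jun--Sistko) needs a \emph{sequence} of nice gradings and an iterated Bia\l ynicki-Birula argument, and why the tree/primitive-cycle hypothesis enters. Note also that the paper treats case (2) not via the Schubert paving of Lorscheid--Weist but by observing that irreducibility over an (extended) Dynkin quiver forces the coefficient quiver to be a tree or a primitive cycle, reducing to case (1); your route through the Schubert decomposition would additionally require matching its indexing set with successor-closed subquivers, which is not automatic.
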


\subsubsection*{Geometric sketch of proof}

Every dimension ${\textbf{r}^*}$ subrepresentation of $\Lambda$, which is a collection of pointed subsets $\Omega_v$ of $\Lambda_v$ (for $v\in Q_0$) that is stable under the maps $\Lambda_\alpha$ (for $\alpha\in Q_1$), determines a so-called \emph{coordinate matroid} $M$, which has $B_v=\Lambda_v-\Omega_v$ as its unique basis at $v$. By \autoref{thmG}, $M$ corresponds to a point $x_M$ of $\Gr_{\textbf{r}}(\Lambda)(\K)$. Since $x_M$ has a unique nonzero coordinate at $B_v$ for each $v$, it is closed in $\Gr_{\textbf{r}}(\Lambda)(\K)$, which establishes an injection 
\[
 \big\{\text{dimension ${\textbf{r}^*}$ subrepresentations of $\Lambda$}\big\} \ \longrightarrow \ \Gr_{\textbf{r}}(\Lambda)^\Tits.
\]

Under the assumptions of \autoref{thmH}, there is a sequence of torus actions that allows us to establish an equality between these sets and between their cardinalities and the Euler characteristic of $\Gr_{\textbf{r}}(\Lambda)(\C)$. 

Namely, Jun and Sistko's work \cite{Jun-SistkoEuler} provides a \emph{nice sequence of gradings of $\Lambda$ that distinguishes elements}. These gradings determine the weights of actions 
\[
 \theta_i: \ \G_m \times X_i \ \longrightarrow \ X_i
\]
(for $i=1,\dotsc,n$) of the multiplicative group scheme $\G_m$ on $X_1=\Gr_{\textbf{r}}(\Lambda)$ and on the subspace $X_{i+1}$ of fixed points of $\theta_{i}$ (for $i=1,\dotsc,n$), with $X_{n+1}$ being finite.

By a theorem of Bia\l ynicki-Birula (\cite{Bialynicki-Birula73}), the Euler characteristic of $\Gr_{\textbf{r}}(\Lambda)(\C)$ is equal to the Euler characteristic of the $\C^\times$-fixed points $X_2(\C)$ of $\theta_{1,\C}$, and thus, after repeating the argument $n$ times, equal to the Euler characteristic, or cardinality, of the finite set $X_{n+1}(\C)$.

Since the torus actions are defined in terms of gradings of $\Lambda$, it follows from first principles that $X_{n+1}(\C)$ corresponds to the dimension ${\textbf{r}^*}$ subrepresentations of $\Lambda$. 

In order to show that every point of $\Gr_{\textbf{r}}(\Lambda)^\Tits$ stems from a subrepresentation of $\Lambda$, we consider an arbitrary point $x$ in $\Gr_{\textbf{r}}(\Lambda)(\K)$. Since $\K$ is contained in the tropical hyperfield $\T$, we can consider $x$ as an element of the tropical quiver Grassmannian $\Gr_{\textbf{r}}(\Lambda)(\T)$. If $x$ does not correspond to a point in $X_{n+1}$, then there is an $i$ such that $\G_m(\T)$ acts via $\theta_{i,\T}$ non-trivially on $x\in X_i(\T)$. Since $\G_m(\T)=\R_{>0}$ (as multiplicative group), we can consider the limit
\[
 x_0 \ = \ \lim_{t\to0} \ \theta_{i,\T}(t,x),
\]
which is a point of $\Gr_{\textbf{r}}(\Lambda)(\K)$ with strictly more zero coefficients than $x$. Thus $x_0\neq x$ is in the closure of $x$, which shows that $x$ is not closed and not in $\Gr_{\textbf{r}}(\Lambda)^\Tits$. \qed

\begin{rem*}
 As explained in this proof sketch, the finite set $X_{n+1}(\C)$ stays in a natural bijection with the Tits space $\Gr_\br(\Lambda)^\Tits$. This fact is not specific to $\C$: since the nice sequence of gradings of $\Lambda$ is defined over $\Fun$, also the subschemes $X_i$ of $\Gr_\br(\Lambda)$ and the torus actions $\theta_i:\G_m\times X_i\to X_i$ are defined over $\Fun$, which leads to a natural identification of $X_{n+1}(F)$ with $\Gr_\br(\Lambda)^\Tits$ for every field $F$. Therefore we find that
 \[
  \lim_{q\to1} \ \#\,\Gr_\br(\Lambda)(\F_q) \ \ = \ \ \lim_{q\to1} \big( \# \, X_{n+1}(\F_q) + n(q)\cdot(q-1) \big) \ \ = \ \ \# \, \Gr_\br(\Lambda)^\Tits,
 \]
 where $n(q)$ is the number of non-trivial torus orbits of the actions $\theta_1,\dotsc,\theta_n$, which each have $q-1$ elements. This establishes the expectations on the number of $\Fun$-rational points for the quiver Grassmannians $\Gr_\br(\Lambda)$ under consideration.
\end{rem*}

\subsection*{Remark on generalizations}
While the definitions and concepts that we develop in this paper are sufficient to illustrate the interplay of matroid theory, $\Fun$-geometry and quiver representations, they lean themselves to generalizations in different directions, which might be of independent interest for a broader framework for matroid theory and quiver Grassmannians. At this point, we restrict ourselves to the following brief comments.

\subsubsection*{Idylls with involutions}
Baker and Bowler introduce $F$-matroids for idylls (and more generally, tracts) $F$ together with an involution $i:F\to F$, which allows, for instance, to take Hermitian complements of complex matroids, opposed to orthogonal complements. All results of this paper generalize readily to perfect idylls and tracts with involution; also cf.\ \autoref{subsection: morphisms for idylls with involution}.

\subsubsection*{Morphisms of matroid bundles}
The notion of morphisms of matroid bundles over band schemes has to be seen as a provisional concept. The problem is that the composition of two such morphisms are in general not a morphism in the sense of this text anymore. This effect occurs already for non-perfect idylls; cf.\ \autoref{rem: composition of morphisms}.

The failure of composability can be fixed in terms of the so-called \emph{perfection} of an idyll $F$, which is a perfect idyll that has the same class of (weak) $F$-matroids, but more morphisms. In particular, the composition of $F$-morphisms is defined over the perfection of $F$. A similar, but more subtle, line of thought leads to a satisfactory notion of morphisms for matroid bundles. All this is part of follow-up work.

\subsubsection*{Arbitrary matrices as morphisms}
Even for perfect idylls $F$, there is a more ample class of morphisms of $F$-matroids, which are given by $F$-matrices that are not necessarily submonomial. This notion of morphism is required to preserve certain $F^+$-semimodules $\cV_M^+\subseteq (F^+)^E$ of ``additive'' vectors for an $F$-matroid $M$.

This larger class of morphisms contains all strong maps (and more) in the case of the Krasner hyperfield $F=\K$. In case of a field $F$, it is equal to the class of all linear maps that preserve the given subspaces of vectors. Also this is part of follow-up work.

\subsubsection*{Quiver Grassmannians for $F$-representations}
In the definition of quiver Grassmannians, it is not necessary that the chosen ambient representation $\Lambda$ is $\Fun$-linear. For a fixed quiver $Q$ and a perfect idyll $F$, any choice of a rank vector $\br=(r_v)_{v\in Q_0}$, of a collection $(\Lambda_v)_{v\in Q_0}$ of finite ground sets $\Lambda_v$ and of a collection $(\Lambda_\alpha)_{\alpha\in Q_1}$ of $\Lambda_{t(\alpha)}\times \Lambda_{s(\alpha)}$-matrices $\Lambda_\alpha$ with coefficients in $F$ leads to the definition of a quiver Grassmannian $\Gr_\br(\Lambda)$ in terms of the quiver Pl\"ucker relations (cf.\ \autoref{subsection: classical quiver Grassmannians}), which is a band scheme defined over $F$. 

If the matrices $\Lambda_\alpha$ are submonomial, then for every perfect idyll $P$ together with a structure map $F\to P$, the rational point set $\Gr_\br(\Lambda)(P)$ consists of all $(Q,P)$-matroids $M$ with rank vector $\underline{\rk}(M)=\br$ and with $M_\alpha=\Lambda_\alpha$ for all $\alpha\in Q_1$. 

Since there is a unique morphism $F\to\K$, this leads to the definition of the \emph{Tits space} $\Gr_\br(\Lambda)^\Tits$ as the subspace of closed points of $\Gr_\br(\Lambda)(\K)$, which should be thought of as the $\Fun$-points of $\Gr_\br(\Lambda)$.

This more ample notion of quiver Grassmannians over $F=\C$ applies, in particular, to irreducible representations $\Lambda_\C$ of tame quiver (of extended Dynkin type $ADE$). A \emph{basis for $\Lambda_\C$} is a collection of bases $\Lambda_v$ for $\Lambda_{\C,v}$ (for all $v\in Q_0$). A basis for $\Lambda_\C$ identifies $\Lambda_{\C,\alpha}$ with a matrix $\Lambda_\alpha$ (for all $\alpha\in Q_1$). The verification of sample cases leads us to expect the following generalization of \autoref{thmH}.

\begin{conjA}
 Let $Q$ be a tame quiver, $\br$ a rank vector for $Q$ and $\Lambda_\C$ an irreducible representation of $Q$. Then there exists a basis for $\Lambda_\C$ such that 
 \[
  \chi\big(\Gr_{\br^\ast}(\Lambda_\C)\big) \ = \ \# \Gr_\br(\Lambda)^\Tits.
 \]
\end{conjA}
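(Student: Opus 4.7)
The plan is to adapt the Bia\l ynicki-Birula and tropical-limit strategy used in the proof of Theorem H to the setting of arbitrary irreducible complex representations of a tame quiver. In the $\Fun$-linear case covered by Theorem H, Jun and Sistko's sequence of gradings of $\Lambda$ supplies the $\G_m$-actions that implement the induction; the present conjecture predicts that an analogous sequence exists for any irreducible $\Lambda_\C$ once a suitable basis has been fixed.

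First I would invoke the Auslander-Reiten classification of indecomposable representations of tame quivers (Dlab-Ringel, Ringel), which partitions them into preprojective, preinjective, and regular (tubular) families. For each family one constructs a canonical basis in which every $\Lambda_\alpha$ takes a sparse normal form: for preprojective and preinjective modules the matrices have a tree-like coefficient quiver inherited from the Coxeter orbit of a simple, while for regular modules at a tube parameter $\lambda\in\P^1$ the matrices take a Jordan-type form with a single block carrying the coupling coefficient $\lambda$. With this basis fixed, one seeks a sequence of weight functions on the vertex bases defining $\G_m$-actions $\theta_i$ on $\Gr_\br(\Lambda)$ whose successive fixed-point subschemes $X_{i+1}\subseteq X_i$ strictly drop in dimension until a finite set $X_{n+1}$ is reached, in the spirit of \cite{Jun-SistkoEuler}.

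Next I would apply Bia\l ynicki-Birula iteratively over $\C$ to obtain $\chi\bigl(\Gr_{\br^\ast}(\Lambda_\C)\bigr)=\#X_{n+1}(\C)$, and reproduce the tropical argument from the sketch of Theorem H: view any $\K$-point $x\in\Gr_\br(\Lambda)(\K)$ as a $\T$-point via the inclusion $\K\hookrightarrow\T$, and whenever some $\theta_{i,\T}$ acts nontrivially on $x$, take the limit $x_0=\lim_{t\to 0}\theta_{i,\T}(t,x)$ to produce a $\K$-point with strictly more zero coordinates and hence a proper specialization. This shows that every closed $\K$-point lies in $X_{n+1}(\K)$, while the direct construction of closed points from combinatorial subrepresentation data provides the reverse inclusion, yielding $\#\Gr_\br(\Lambda)^\Tits=\#X_{n+1}(\C)$.

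The principal obstacle is the regular case. When $\Lambda_\C$ lies in a tube with parameter $\lambda\in\P^1$, the matrices $\Lambda_\alpha$ depend genuinely on $\lambda$, and a diagonal $\G_m$-action on the chosen basis does not preserve the representation structure but degenerates $\Lambda_\C$ to a boundary representation at $\lambda=0$ or $\lambda=\infty$. Making the strategy work then requires showing that this degeneration preserves the Euler characteristic of the complex Grassmannian (a Cerulli Irelli-style attracting-cell argument) and that the boundary representation is submonomial enough in the chosen basis for the tropical specialization argument to terminate in finitely many steps. Verifying this compatibility, and in particular ruling out jumps in the Tits cardinality under specialization of $\lambda$, is the real crux of the conjecture.
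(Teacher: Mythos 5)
The statement you are addressing is stated in the paper as a \emph{conjecture}, not a theorem: the authors explicitly say only that ``the verification of sample cases leads us to expect'' it, and they provide no proof. Your proposal is therefore not being measured against an argument in the paper, and, as you yourself concede in your final paragraph, it is not a proof either --- it is a strategy outline that stops exactly at the point where the known techniques break down.

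The gap is concrete. The entire proof of \autoref{thm: rational points and Euler characteristics} (and hence of \autoref{thmH}) is built on $\Lambda$ being an $\Fun$-representation, i.e.\ on every $\Lambda_\alpha$ being submonomial with $0/1$ entries: the nice gradings of Jun--Sistko, \autoref{lemma: grading on morphisms of matroids} (which pushes a morphism of $\K$-matroids to a morphism of initial matroids), and \autoref{prop: coordinate matroids and F1-subrepresentations} (which identifies coordinate quiver matroids with $\Fun$-subrepresentations) all require this. For an irreducible regular representation in a homogeneous tube of a tame quiver --- already for the Kronecker quiver with $\Lambda_{\alpha_1}=I$ and $\Lambda_{\alpha_2}$ a Jordan block with eigenvalue $\lambda$ --- no choice of basis makes all arrow matrices simultaneously submonomial, so $\Lambda$ is not $\Fun$-linear and one must work with the more general quiver Grassmannians over $F=\C$ sketched only informally in the paper's ``Remark on generalizations.'' In that setting the diagonal $\G_m$-actions you propose do not preserve the representation but degenerate it, and neither the invariance of $\chi$ under this degeneration nor the identification of the closed $\K$-points of the degenerate model with the Tits space of the original one is established by anything in the paper; your appeal to a ``Cerulli Irelli-style attracting-cell argument'' and to ruling out ``jumps in the Tits cardinality'' names the missing steps without supplying them. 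Until those are carried out, the conjecture remains open, and your text should be presented as a plan of attack rather than a proof.
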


\subsection*{Acknowledgements}
The authors thank Jaiung Jun and Alex Sistko for their explanations on their technique to compute Euler characteristics of complex quiver Grassmannians. They thank Matt Baker and Victoria Schleis for helpful conversations. E.~Vital was funded by the Deutsche Forschungsgemeinschaft (DFG, German Research Foundation) – Project-ID 491392403 – TRR 358, and by the  National Council for Scientific and Technological Development - CNPq - Brazil, Proc.\ 200268/2022-8.

\section{Background}

\subsection{Vector spaces over \texorpdfstring{$\F_1$}{F1}}
\label{F1-vector spaces}

A \emph{vector space over $\Fun$} is a pointed set, whose base point we denote typically by $0$. An \emph{$\Fun$-linear map} between $\Fun$-vector spaces $V$ and $W$ is a base point preserving map $f:V\to W$ such that $\#f^{-1}(w)\leq1$ for all $w\neq0$. This defines the category $\Vect_\Fun$.

The \emph{adjoint} of an $\F_1$-linear map $f: V \to W$ is the $\F_1$-linear map $f^t: W \to V$ given by 
\[
f^t(w) \ = \ \begin{cases}
            v & \text{if} \;\; w \neq 0 \text{ and } f(v) = w\\
            0 & \text{otherwise}.
            \end{cases}
\]

\subsection{Strong maps of classical matroids}
\label{subsection - Strong maps of classical matroids}

Let $M$ be a matroid with ground set $E$. We denote the sets of circuits of $M$ by $\cC_M$ and its set of cocircuits by $\cC_M^\ast$.

\begin{df}
 A \emph{vector} of $M$ is a union of circuits of $M$. A \emph{covector} of $M$ is a vector of the dual matroid $M^*$. We denote the set of vectors of $M$ by $\mathcal{V}_M$ and its set of covectors by $\mathcal{V}^*_M$. 
\end{df}

Since a circuit is the same as the complement of a cohyperplane (\cite[Prop.\ 2.1.6]{Oxley11}), a vector is nothing else than the complement of a coflat. For our comparison of usual matroid theory with linear algebra and Baker-Bowler theory (cf.\ \autoref{Baker-Bowler theory}), it is more convenient to work with vectors than with coflats.

An alternative characterization of vectors is as follows. Two subsets $A$ and $B$ of $E$ are \emph{orthogonal}, denoted by $A \perp B$, if $\# (A \cap B) \neq 1$. Two collections $\mathcal{X}$ and $\mathcal{Y}$ of subsets of $E$ are \emph{orthogonal}, denoted by $\mathcal{X} \perp \mathcal{Y}$, if $X \perp Y$ for all $X \in \mathcal{X}$ and $Y \in \mathcal{Y}$. For $\mathcal{X} \subseteq 2^E$, we define $\mathcal{X}^\perp := \{Z \subseteq E \mid X \perp Z \text{ for all } X \in \mathcal{X}\}$. Since circuits are orthogonal to cocircuits (\cite[Prop.\ 2.1.11]{Oxley11}), we have that
\[
 \mathcal{V}_M^\ast \ = \ \mathcal{C}_M^\perp \ = \ \cV_M^\perp.
\]

\begin{notation}\label{classical matroid extension by 0}
We denote the closure operator of $M$ by $\textup{cl}_M$ and define the matroid $\widetilde{M} := M \oplus U_1^0$ on the set $\underline{E}$. In other words, $\widetilde{M}$ is obtained from $M$ by adding a loop at $0$.
\end{notation}

Let $N$ and $M$ be classical matroids with respective underlying sets $S$ and $T$. A \emph{strong map} $\sigma$ from $N$ to $M$, denoted by $\sigma: N \rightarrow M$, is a base point preserving map $\sigma: \underline{S} \rightarrow \underline{T}$ such that $\sigma^{-1}(F)$ is a flat of $\widetilde{N}$ for any flat $F$ of $\widetilde{M}$ (see \cite[Def. 8.1.1]{Kung86}). If $S=T$ and $\sigma = \textup{id}_{\underline{S}}$ is a strong map, $M$ is called a \emph{quotient} of $N$. By \cite[Prop. 8.1.3]{Kung86}, a base point preserving map $\sigma: \underline{S} \rightarrow \underline{T}$ is a strong map if, and only if, $\sigma\big( \textup{cl}_N(A)\big) \subseteq \textup{cl}_{\widetilde{M}} \big(\sigma(A)\big)$ for every $A \subseteq S$.

\begin{df}
An \emph{$\F_1$-linear strong map} $\sigma: N \rightarrow M$ is a strong map  $\sigma: \underline{S} \rightarrow \underline{T}$ that is $\F_1$-linear.
\end{df}

Examples of strong maps are isomorphisms of matroids, restrictions $M\vert A\to M$, contractions $M\to M/A$, quotient maps and the identification of parallel elements. Every strong map is the composition of these elementary types of strong maps (see \cite[pp. 227-228]{Kung86}). Among these types of strong maps, isomorphisms, restrictions, contractions and quotient maps are $\Fun$-linear. Nontrivial identifications of parallel elements are not.

In fact, the factorization theorem for strong maps (\cite[Thm 8.2.7]{Kung86}) asserts that every quotient map is the composition of a restriction followed by a contraction. This fact does not generalize to other idylls (see \cite[Cor.\ 3.5]{Richter-Gebert93}), which means that quotient maps play a central role as a building block of matroid morphisms; also cf.\ \autoref{factorization of classical strong map by quotient} and \autoref{factorization of surjective strong maps through quotients}.

\begin{df}[{\cite[p.\ 228]{Kung86}}]
\label{pre-image matroid}
Let $\sigma: \underline{S} \rightarrow \underline{T}$ be a base point preserving map and $M$ a matroid on $T$. We define $\sigma^{-1}(M)$ as the matroid on $S$ whose rank function is given by
\[
\rk_{\sigma^{-1}(M)}(A) := \rk_{\widetilde{M}}\big(\sigma(A)\big).
\]
\end{df}

\begin{rem}
\label{pre-image and restriction}
Note that $M| (\im(\sigma) - \{0\})$ is obtained from $\sigma^{-1}(M)$ by identifying parallel elements, deleting loops and relabeling elements.
\end{rem}

\begin{prop}
\label{factorization of classical strong map by quotient}
Let $\sigma: \underline{S} \rightarrow \underline{T}$ be a base point preserving map. Then $\sigma: N \rightarrow M$ is a strong map if, and only if, $\sigma^{-1}(M)$ is a quotient of $N$.
\end{prop}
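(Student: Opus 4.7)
The plan is to reduce the statement to a simple description of the flats of $\widetilde{\sigma^{-1}(M)}$ as preimages of flats of $\widetilde{M}$ under $\sigma$. Once this description is in hand, both directions of the claimed equivalence become tautological.

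First, I would promote the rank identity $\rk_{\sigma^{-1}(M)}(A)=\rk_{\widetilde{M}}(\sigma(A))$ of \autoref{pre-image matroid} from subsets $A\subseteq S$ to all subsets $A\subseteq\underline{S}$. This only requires the observations that the base point is a loop in both $\widetilde{\sigma^{-1}(M)}$ and $\widetilde{M}$ and that $\sigma(0)=0$; so adjoining $0$ to $A$ on the left, or to $\sigma(A)$ on the right, does not change the rank. Thus
\[
 \rk_{\widetilde{\sigma^{-1}(M)}}(A)\ =\ \rk_{\widetilde{M}}\big(\sigma(A)\big) \quad \text{for every } A\subseteq\underline{S}.
\]

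Next, I would read off the closure operator of $\widetilde{\sigma^{-1}(M)}$. For $A\subseteq\underline{S}$ and $e\in\underline{S}$ the identity above gives
\[
 e\in\textup{cl}_{\widetilde{\sigma^{-1}(M)}}(A)\ \Longleftrightarrow\ \rk_{\widetilde{M}}\big(\sigma(A\cup\{e\})\big)=\rk_{\widetilde{M}}\big(\sigma(A)\big)\ \Longleftrightarrow\ \sigma(e)\in\textup{cl}_{\widetilde{M}}\big(\sigma(A)\big),
\]
that is, $\textup{cl}_{\widetilde{\sigma^{-1}(M)}}(A)=\sigma^{-1}\big(\textup{cl}_{\widetilde{M}}(\sigma(A))\big)$. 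From this formula it follows that the flats of $\widetilde{\sigma^{-1}(M)}$ are precisely the sets of the form $\sigma^{-1}(F)$ with $F$ a flat of $\widetilde{M}$: if $F$ is such a flat then $\sigma(\sigma^{-1}(F))\subseteq F$ yields $\textup{cl}_{\widetilde{\sigma^{-1}(M)}}(\sigma^{-1}(F))\subseteq \sigma^{-1}(\textup{cl}_{\widetilde{M}}(F))=\sigma^{-1}(F)$, so $\sigma^{-1}(F)$ is a flat; conversely, any flat $A$ of $\widetilde{\sigma^{-1}(M)}$ equals its own closure and therefore has the form $\sigma^{-1}(F')$ for $F'=\textup{cl}_{\widetilde{M}}(\sigma(A))$.

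With this description the equivalence of the proposition is immediate. By definition, $\sigma:N\to M$ is strong iff $\sigma^{-1}(F)$ is a flat of $\widetilde{N}$ for every flat $F$ of $\widetilde{M}$; by the previous paragraph this is equivalent to demanding that every flat of $\widetilde{\sigma^{-1}(M)}$ be a flat of $\widetilde{N}$, which is the statement that $\textup{id}_{\underline{S}}:N\to\sigma^{-1}(M)$ is a strong map, i.e.\ that $\sigma^{-1}(M)$ is a quotient of $N$. The only real obstacle is the verification of the closure formula $\textup{cl}_{\widetilde{\sigma^{-1}(M)}}=\sigma^{-1}\circ\textup{cl}_{\widetilde{M}}\circ\sigma$; everything else is bookkeeping.
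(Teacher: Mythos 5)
Your proof is correct and follows essentially the same route as the paper, which simply asserts the key fact that the flats of $\widetilde{\sigma^{-1}(M)}$ are exactly the sets $\sigma^{-1}(F)$ for $F$ a flat of $\widetilde{M}$ and declares the proposition to follow. You supply the verification of that fact (via the rank identity and the closure formula $\textup{cl}_{\widetilde{\sigma^{-1}(M)}}=\sigma^{-1}\circ\textup{cl}_{\widetilde{M}}\circ\sigma$), which the paper omits.
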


\begin{proof}
Using the fact that
\[
\big\{\text{flats of } \widetilde{\big(\sigma^{-1}(M)\big)}\big\} = \big\{\sigma^{-1}(F) \;\big|\; F \text{ is a flat of } \widetilde{M}\big\},
\]
the Proposition follows. 
\end{proof}

\begin{rem}
\label{F1-linear and pre-image}
By \autoref{factorization of classical strong map by quotient}, a strong map $\sigma: N \rightarrow M$ is $\F_1$-linear if, and only if, it does not identify parallel elements.
\end{rem}

The following result generalizes \cite[Prop. 8.1.6]{Kung86} from matroid quotients to all strong maps.

\begin{prop}
\label{cryptomorphism - classical strong maps}
Let $\sigma: \underline{S} \rightarrow \underline{T}$ be a base point preserving map. Then $\sigma: N \rightarrow M$ is a strong map if, and only if, $\sigma^{-1}: 2^T \rightarrow 2^S$ sends cocircuits of $M$ to covectors of $N$.
\end{prop}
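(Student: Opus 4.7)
The plan is to leverage the factorization \autoref{factorization of classical strong map by quotient} together with the classical cocircuit characterization of matroid quotients (\cite[Prop.~8.1.6]{Kung86}) in order to reduce the proposition to a statement about cocircuits of the preimage matroid $M' := \sigma^{-1}(M)$. More precisely, $\sigma$ is a strong map if and only if $M'$ is a quotient of $N$, and $M'$ is a quotient of $N$ if and only if $\cC^*_{M'}\subseteq\cV^*_N$; thus it suffices to show that $\sigma^{-1}(\cC^*_M)\subseteq\cV^*_N$ is equivalent to $\cC^*_{M'}\subseteq\cV^*_N$.

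The crux is therefore to compare the cocircuits of $M'$ with the preimages under $\sigma$ of the cocircuits of $M$. Concretely, I would establish:
\begin{enumerate}[(a)]
 \item every $D'\in\cC^*_{M'}$ has the form $\sigma^{-1}(C)$ for some $C\in\cC^*_M$;
 \item conversely, $\sigma^{-1}(C)\in\cV^*_{M'}$ for every $C\in\cC^*_M$.
\end{enumerate}
With (a) and (b) in hand both implications follow: if $\sigma^{-1}(\cC^*_M)\subseteq\cV^*_N$, then (a) shows that every cocircuit of $M'$ is a covector of $N$, which gives the strong map via the reduction; conversely, if $\sigma$ is strong then $\cC^*_{M'}\subseteq\cV^*_N$, and (b) writes each $\sigma^{-1}(C)$ as a union of cocircuits of $M'$, hence as a union of covectors of $N$, which is itself a covector of $N$ (since covectors are unions of cocircuits and unions of unions are unions).

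To prove (a) and (b), I would use \autoref{pre-image and restriction}: the matroid $M'$ arises from $M|(\im\sigma-\{0\})$ by replacing each $y\in\im\sigma-\{0\}$ with its parallel class $\sigma^{-1}(y)$ and adjoining the elements of $\sigma^{-1}(0)-\{0\}$ as loops. Because loops lie in no cocircuit and parallel elements always appear together in a cocircuit, the cocircuits of $M'$ are precisely the sets $\sigma^{-1}(D_0)$ for $D_0\in\cC^*_{M|\im\sigma}$. The cocircuits of the restriction $M|\im\sigma$ are the inclusion-minimal nonempty traces $C\cap\im\sigma$ with $C\in\cC^*_M$; since $\sigma^{-1}(C\cap\im\sigma)=\sigma^{-1}(C)$, this yields (a). For (b), for any $C\in\cC^*_M$ the set $\im\sigma-(C\cap\im\sigma)=(T-C)\cap\im\sigma$ is a flat of $M|\im\sigma$ (flats of a restriction being intersections of flats with the restricted set), so $C\cap\im\sigma$ is a covector of $M|\im\sigma$, and pulling back by $\sigma$ exhibits $\sigma^{-1}(C)$ as a union of cocircuits of $M'$. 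The main obstacle is the careful bookkeeping of how the non-surjectivity of $\sigma$, the parallel classes $\sigma^{-1}(y)$, and the loops $\sigma^{-1}(0)-\{0\}$ interact with cocircuits; once this dictionary is fixed the rest of the argument is formal.
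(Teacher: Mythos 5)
Your argument is correct, but it takes a genuinely different route from the paper. The paper proves both implications directly from the closure-theoretic definition of strong maps: for the forward direction it shows $\sigma^{-1}(Z)\perp\cC_N$ for each cocircuit $Z$ of $M$ via the inclusion $\sigma(\textup{cl}_N(C-\{x\}))\subseteq\textup{cl}_{\widetilde M}(\sigma(C-\{x\}))$, and for the converse it verifies $\sigma(\textup{cl}_N(A))\subseteq\textup{cl}_{\widetilde M}(\sigma(A))$ by extracting a cocircuit of $N$ inside $\sigma^{-1}(Z)$ and passing to the complementary hyperplane. You instead reduce everything to the quotient case by combining \autoref{factorization of classical strong map by quotient} with Kung's cocircuit characterization of quotients, and then translate between cocircuits of $\sigma^{-1}(M)$ and preimages of cocircuits of $M$ via the simplification described in \autoref{pre-image and restriction}. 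This is not circular, since the quotient case is an established classical result that the paper explicitly identifies as the special case being generalized. What your route buys is structural clarity: it isolates the new content of the proposition as a statement about how cocircuits behave under parallel extension and addition of loops, and it anticipates the paper's own later strategy of factoring morphisms through preimages (\autoref{factorization of surjective strong maps through quotients}). What it costs is self-containedness (the quotient case is outsourced to Kung) and some bookkeeping with loops and parallel classes that the direct argument avoids; in particular, your items (a) and (b) do need the full hyperplane/flat correspondence between $\sigma^{-1}(M)$ and $M|(\im\sigma\cap T)$, including the observation that preimages of loops of $M$ are also loops of $\sigma^{-1}(M)$, and the minor point that $\sigma^{-1}(C)=\emptyset$ (when $C$ misses $\im\sigma$) is the empty covector. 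These are all routine, so I regard the plan as complete.
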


\begin{proof}
Assume that $\sigma: N \rightarrow M$ is a strong map. Let $Z$ be a cocircuit of $M$. Next we show that $\sigma^{-1}(Z) \in \mathcal{V}^*_N$. Suppose that there exists $C \in \mathcal{C}_N$ such that $C \cap \sigma^{-1}(Z) = \{x\}$ is a singleton. As $\mathcal{C}^*_M = \mathcal{C}^*_{\widetilde{M}}$, the set $H := \underline{T} - Z$ is a hyperplane of $\widetilde{M}$ that contains $\sigma(C-\{x\})$. Thus
\[
\sigma(x) \in \sigma(C) \subseteq \sigma\big( \textup{cl}_N(C-\{x\})\big) \subseteq \textup{cl}_{\widetilde{M}} \big(\sigma(C-\{x\})\big) \subseteq H,
\]
which is a contradiction, because $\sigma(x) \in Z$.

Assume $\sigma^{-1}(\mathcal{C}^*_M) \subseteq \mathcal{V}^*_N$. Let $A \subseteq S$. Next we prove that $\sigma\big( \textup{cl}_N(A)\big) \subseteq \textup{cl}_{\widetilde{M}} \big(\sigma(A)\big)$. Suppose that there exists $x \in \textup{cl}_N(A)$ such that $\sigma(x) \in \underline{T} - \textup{cl}_{\widetilde{M}} \big(\sigma(A)\big)$. As $\textup{cl}_{\widetilde{M}} \big(\sigma(A)\big)$ is the intersection of all hyperplanes of $\widetilde{M}$ that contain $\sigma(A)$, there exists a hyperplane $H$ of $\widetilde{M}$ such that $\sigma(A) \subseteq H$ and $\sigma(x) \notin H$. Thus $Z:= \underline{T} - H$ is a cocircuit of $\widetilde{M}$ with $\sigma(x) \in Z$ and $\sigma(A) \cap Z  = \emptyset$. 

As $\mathcal{C}^*_{\widetilde{M}} = \mathcal{C}^*_M$, it follows that $\sigma^{-1}(Z) \in \mathcal{V}^*_N$, which implies that there exists a cocircuit $Y$ of $N$ such that $x \in Y \subseteq \sigma^{-1}(Z)$. Thus $L := S - Y$ is a hyperplane of $N$ with $x \notin L$. Because $A \subseteq L$, it follows that $\textup{cl}_N(A) \subseteq L$ and, consequently, $x \notin \textup{cl}_N(A)$, which is a contradiction.
\end{proof}

\begin{cor} \label{duality - classical strong maps}
Let $\sigma: \underline{S} \rightarrow \underline{T}$ be an $\F_1$-linear map. Then the following are equivalent:
\begin{enumerate}
    \item \label{duality - classical strong maps 1} $\sigma: N \rightarrow M$ is a strong map;
    \item \label{duality - classical strong maps 2} $\sigma^t: M^* \rightarrow N^*$ is a strong map;
    \item \label{duality - classical strong maps 3} $\sigma(\cC_N) \perp \cC^*_{M}$.
\end{enumerate}
\end{cor}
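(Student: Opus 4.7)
The plan is to derive all three equivalences from the cryptomorphic characterization established in \autoref{cryptomorphism - classical strong maps}, which asserts that a base point preserving map $\sigma$ defines a strong map $N \to M$ if and only if $\sigma^{-1}(\cC^*_M) \subseteq \cV^*_N$. The $\Fun$-linearity hypothesis on $\sigma$ is used to translate statements about preimages into statements about direct images.

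For (1) $\Leftrightarrow$ (3): by the cryptomorphism, (1) is equivalent to $\sigma^{-1}(Z) \perp C$ for every $C \in \cC_N$ and every $Z \in \cC^*_M$. Since $Z \subseteq T$ contains no base point and $\sigma$ is $\Fun$-linear, the restriction of $\sigma$ to $\sigma^{-1}(Z)$ is injective, so $x \mapsto \sigma(x)$ gives a bijection between $C \cap \sigma^{-1}(Z)$ and $\sigma(C) \cap Z$. In particular these two sets have the same cardinality, so $\sigma^{-1}(Z) \perp C$ holds if and only if $\sigma(C) \perp Z$, which is exactly condition (3).

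For (2) $\Leftrightarrow$ (3): first one checks that the adjoint $\sigma^t$ of an $\Fun$-linear map is again $\Fun$-linear, so the cryptomorphism applies to $\sigma^t : M^* \to N^*$ as well. Using $\cC^*_{N^*} = \cC_N$ and $\cV^*_{M^*} = \cV_M$, condition (2) rewrites as $(\sigma^t)^{-1}(C) \perp Z$ for all $C \in \cC_N$ and $Z \in \cC^*_M$. A direct inspection of the definition of the adjoint shows that $(\sigma^t)^{-1}(C) = \sigma(C) \cap T$: for $y \in T$, $\sigma^t(y) \in C$ holds precisely when $y$ lies in $\sigma(C)$. Since $0 \notin Z$, this gives $(\sigma^t)^{-1}(C) \perp Z$ if and only if $\sigma(C) \perp Z$, i.e.\ condition (3).

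The argument is essentially bookkeeping around $\Fun$-linearity, and no serious obstacle is anticipated. The two key identities -- the bijection $C \cap \sigma^{-1}(Z) \leftrightarrow \sigma(C) \cap Z$ and the equality $(\sigma^t)^{-1}(C) = \sigma(C) \cap T$ -- both reduce to the fact that an $\Fun$-linear map is injective on preimages of nonzero elements.
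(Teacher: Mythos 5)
Your proposal is correct and follows essentially the same route as the paper: both arguments reduce all three conditions to the cryptomorphic characterization of strong maps via $\sigma^{-1}(\cC^*_M)\subseteq\cV^*_N$ and then use the $\Fun$-linearity of $\sigma$ to establish the cardinality identities $\#\big(C\cap\sigma^{-1}(Z)\big)=\#\big((\sigma^t)^{-1}(C)\cap Z\big)=\#\big(\sigma(C)\cap Z\big)$. The only difference is organizational (you prove $(1)\Leftrightarrow(3)$ and $(2)\Leftrightarrow(3)$, while the paper proves $(1)\Leftrightarrow(2)$ and $(2)\Leftrightarrow(3)$), which is immaterial.
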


\begin{proof} 
As $\sigma$ is $\F_1$-linear
\[
\#\big(X \cap \sigma^{-1}(Y) \big) = \#\big((\sigma^t)^{-1}(X) \cap Y\big)
\]
for all $X \subseteq S$ and $Y \subseteq T$. In particular, $\sigma^{-1}(\mathcal{C}^*_M) \perp \mathcal{C}_N$ if, and only if, $\mathcal{C}_{M^*} \perp (\sigma^\mathfrak{t})^{-1}(\mathcal{C}^*_{N^*})$. Thus by \autoref{cryptomorphism - classical strong maps}, we conclude that \eqref{duality - classical strong maps 1} is equivalent to \eqref{duality - classical strong maps 2}.

Now, note that $\sigma(X) - \{0_T\} = (\sigma^\mathfrak{t})^{-1}(X)$ for each $X \subseteq S$. Therefore the equivalence between (\ref{duality - classical strong maps 2}) and (\ref{duality - classical strong maps 3}) follows from \autoref{cryptomorphism - classical strong maps}.
\end{proof}

\subsection{Baker-Bowler theory}
\label{Baker-Bowler theory}
We recall the theory of matroids with coefficients introduced by Baker and Bowler in \cite{Baker-Bowler19}.

\medskip\noindent\textbf{Idylls.} 
A \emph{pointed abelian group} is a (multiplicatively written) commutative monoid $F$ with a distinguished element $0$ such that $F^\times=F-\{0\}$ and $0\cdot a=0$ for all $a\in F$. An \emph{idyll} is a pointed abelian group $F$ together with a subset $N_F$ (called the \emph{nullset}) of the group semiring $\N[F^\times]$ that satisfies the following axioms:
    \begin{enumerate}[label=({I\arabic*})]
     \item $N_F$ is a proper ideal of $\N[F^\times]$, i.e. $0\in N_F$, \ $1 \notin N_F$, \ $a+b\in N_F$ and $ac\in N_F$ for all $a,b\in N_F$ and $c\in\N[F^\times]$.\label{idyll: T3}
     \item There is a unique element $-1 \in F^\times$ with $1+(-1) \in N_F$; \label{idyll: T2}
\end{enumerate}
Note that these axioms imply that every element $a\in F$ has an \emph{additive inverse} $-a=(-1)\cdot a$, which is the unique element with $a+(-a)\in N_F$. This further implies that $(-1)^2=1$ and $a\in N_F$ only if $a=0$.

A \emph{morphism of idylls} is a multiplicative map $f: F \rightarrow G$ with $f(0)=0$ and $f(1)=1$ such that $\sum f(a_i)\in N_G$ for all $\sum a_i\in N_F$. This defines the category $\Idylls$.
    
\begin{ex}[Fields as idylls]\label{exa: fields as idylls}
 A field $k$ is naturally an idyll if we replace its addition by the nullset
    $$ \textstyle
        N_k:=\left\{ \sum a_i \in \N[k^\times] \mid \sum a_i = 0 \text{ in } k\right\}.
    $$
 By abuse of notation, we denote this idyll by the same symbol $k$. More to the point, the category of fields embeds as a full subcategory into $\Idylls$, which allows us to consider fields as particular types of idylls.   
\end{ex}
    
\begin{ex}\label{exam: regular partial field}
The \emph{regular partial field} is the multiplicative pointed group 
$$
\F_1^{\pm}=\{0,1,-1\} \quad \text{ with nullset }  \quad  
N_{\F_1^{\pm}}=\{m\cdot 1 + n\cdot (-1) \mid m=n\}.
$$
This idyll is the initial object of the category $\Idylls$. Given an idyll $F$, the unique morphism $\F_1^\pm\rightarrow F$ is determined by $1_{\F_1^\pm} \mapsto 1_F$ and $-1_{\F_1^\pm}\mapsto -1_F$.
\end{ex}        

\begin{ex}[Hyperfields as idylls] Roughly speaking, a \emph{hyperfield} is a field where the standard addition is replaced by a multivaluated \emph{hyperaddition}, see \cite{Davvaz2007} for details. Given a hyperfield $H$ with hyperaddition $\boxplus : H \times H \rightarrow 2^H$, we consider $H$ as an idyll with respect to the nullset $N_H := \big\{ \sum a_i \in \N[H^\times] \mid 0 \in \hypersum a_i\big\}$.

Some examples of interest are:
\begin{itemize}
    \item The \emph{Krasner hyperfield} is $\K = \{0, 1\}$ with hyperaddition given by $a\boxplus 0 = \{a\}$ and $1\boxplus 1 = \{0,1\}$. As an idyll, $-1_{\K} = 1$ and its nullset is $\N - \{1\}$. This hyperfield is the final object in the category $\Idylls$.
    \item The \emph{tropical hyperfield} is $\T = \R_{\geq 0}$ with the usual multiplication, whose hyperaddition is given by $a \boxplus b = \textup{max}\{a, b\}$ for $b \neq a$, and $a \boxplus a = [0, a]$. As an idyll, $-1_{\T} = 1$ and its nullset is 
    \[\textstyle
    N_\T = \big\{ \sum a_i \, \big| \, \text{ the maximum of } \{a_i\} \text{ appears at least twice}\big\}.
    \]
    \item The \emph{phase hyperfield} is $\P = S^1 \cup \{0\}$, where $S^1 = \{z \in \C \mid z \text{ has norm } 1\}$, with usual multiplication and hyperaddition given by
    \[
    a \boxplus b = \left\{ \frac{\alpha a + \beta b}{||\alpha a + \beta b ||} \; \bigg| \; \alpha,\beta \in \R_{>0}\right\}
    \]
    for $b \neq -a$, and $a \boxplus -a = \{-a, 0, a\}$. As an idyll, $-1_{\P} = -1$ and its nullset is
    $$ \textstyle
        N_{\P}=\left\{\sum x_i \in \N[S^1] \;\big|\; \sum \alpha_i x_i = 0 \text{ in } \C, \text{ for some } \alpha_i \text{ in } \R_{>0}\right\}.
    $$ 
\end{itemize}
\end{ex}

\begin{rem}
     Historically, the notion of \emph{hypergroup} was introduced by Marty in 1934 \cite{Marty1934}. Around 1940, more general aspects of \emph{hyperstructure} were considered, when Krasner defined the notion of \emph{hyperring} and \emph{hyperfield}. For instance, Jun develop algebraic geometry over hyperrings in \cite{JUN2018}, and the second author studies tropical geometry over the tropical hyperfield in \cite{Lorscheid19}.
\end{rem}

\begin{notation}
\label{notation - tuples}
For an idyll $F$, a set $E$, an integer $r$, a tuple ${\bf x} = (x_1,\dots,x_r) \in E^r$ and a permutation $\sigma\in \mathfrak{S}_r$, we define:
\begin{itemize}
    \item $|{\bf x}|:=\{x_1,\dots,x_r\} \subseteq E$; 
    \item the sign $\sign(\sigma)\in \{\pm1\}$, considered as an element of $F$;
    \item  ${\bf x}^\sigma:=(x_{\sigma(1)},\dots,x_{\sigma(r)})$;
    \item ${\bf x}_{\widehat{k}}:=(x_1,\dots,\widehat{x}_k,\dots,x_r) \in E^{r-1}$.
\end{itemize}
\end{notation}

\medskip\noindent\textbf{Matroids over idylls.} Let $F$ be an idyll, $E$ a finite set and $r$ an integer. A \emph{Grassmann-Pl\"ucker function} of rank $r$ on $E$ with coefficients in $F$ is a nonzero function $\varphi: E^r \rightarrow F$ satisfying the following two properties:
\begin{enumerate}[label=({GP\arabic*})]
 \item $\varphi$ is \emph{alternating}, i.e. $ \varphi({\bf x})  = \sign(\sigma)\cdot \varphi({\bf x}^{\sigma})$ for all $\sigma\in \mathfrak{S}_r$ and $\varphi({\bf x})=0$ whenever $\#\norm\bx<r$;\label{def: mat GP1}
 \item for all $x_2,\dotsc,x_r,y_0,\dotsc,y_r\in E$,
    $$
        \sum_{k=0}^{r} \ (-1)^k \cdot \varphi(y_0,\dotsc,\widehat{y_k},\dotsc,y_r)\cdot\varphi(y_k,x_2,\dotsc,x_r) \ \in \ N_F.
    $$ \label{def: mat GP2}
\end{enumerate}
The relations in \ref{def: mat GP2} are called \emph{Pl\"ucker relations}. Two Grassmann-Pl\"ucker functions $\varphi_1$ and $\varphi_2$ are \emph{equivalent} if 
$\varphi_1 = a\cdot\varphi_2$ for some $a$ in $F^\times$. An \emph{$F$-matroid} of rank $r$ on $E$ over $F$ is an equivalence class $[\varphi]$ of a Grassmann-Pl\"ucker function $\varphi: E^r \to F$. 

\begin{ex}
\label{ex: usual matroid and Krasner-matroid} 
Given a $\K$-matroid $M = [\varphi]$ of rank $r$ on the set $E$, the collection
$$
    \cB = \big\{\{x_1,\dots,x_r\} \subseteq E \mid \varphi(x_1,\dots,x_r) \neq 0\big\}
$$
is the set of bases of a classical matroid, called the \emph{underlying matroid} of $M$ and denoted by $\underline{M}$. By \cite[Remark 3.19]{Baker-Bowler19}, the map
\[
\begin{array}{ccc}
\{\K\text{-matroids on }E\} &\longrightarrow& \{\text{classical matroids on }E\}\\
M & \longmapsto & \underline{M}
\end{array}
\]
is a bijection. This shows that matroids over idylls are generalizations of classical matroids. 
\end{ex}

\medskip\noindent\textbf{Functoriality.}
Given a morphism of idylls $f: F \rightarrow G$ and a Grassmann-Pl\"ucker function $\varphi: E^r \rightarrow F$ with coefficients in $F$, the composition $f \circ \varphi: E^r \rightarrow G$ is a Grassmann-Pl\"ucker function with coefficients in $G$. For an $F$-matroid $M = [\varphi]$, we define the \emph{push-forward} of $M$ \emph{along} $f$ as the $G$-matroid $f_*(M) := [f \circ \varphi]$.

\begin{df}
Given an $F$-matroid $M$, its \emph{underlying matroid} is the classical matroid $\underline{M} := \underline{f_*(M)}$, where $f:F\to \K$ the unique idyll morphism into $\K$. 
\end{df}

\medskip\noindent\textbf{Duality.} Let $F$ be an idyll and $M=[\varphi]$ an $F$-matroid of rank $r$ on the set $E$. Fix a total order on $E$. For ${\bf y} \in E^n$ with $|\mathbf{y}| = E$, let $\sigma$ be the permutation for which $y_{\sigma(1)}<\cdots<y_{\sigma(n)}$, and define $\sign({\bf y}) := \sign(\sigma) \in \{1, -1\}$.

The \emph{dual} of $M$ is the $F$-matroid $M^* := [\varphi^*]$, where $\varphi^*: E^{n-r}\rightarrow F$ is the Grassmann-Pl\"ucker function given by
\[
\varphi^*({\bf x}) \ := \ \begin{cases}
                        0, \text{ if there exists } i \neq j \text{ with } x_i = x_j\\
                        \sign({\bf x},{\bf x}')\cdot \varphi({\bf x}'), \text{ otherwise},
                        \end{cases}
\]
where ${\bf x}' \in E^{r}$ is such that $E=|{\bf x}| \sqcup |{\bf x}'|$. The dual of $M$ is independent of the order on $E$ and satisfies $(M^*)^* = M$.

\medskip\noindent\textbf{Circuits and vectors.} 
Let $X=(X_e)_{e\in E}$ be an element of $F^E$. The \emph{support} of $X$ is the set $\underline{X} := \{e\in E \mid X_e \neq 0\}$. More generally, for a subset $\cS$ of $F^E$, its support is $\underline{\cS}:=\{\underline{X}\mid X \in \cS\}$. 

The \emph{inner product} of two elements $X$ and $Y$ of $F^E$ is defined as
\[
 X\cdot Y \ = \ \sum X_e\cdot Y_e \in \N[F^\times].
\]
We say that $X$ is \emph{orthogonal} to $Y$, denoted by $X\perp Y$, if $X\cdot Y$ is in $N_F$. Two subsets $\cX$ and $\cY$ of $F^E$ are \emph{orthogonal}, written as $\cX \perp \cY$, if $X\perp Y$ for each $X$ in $\cX$ and $Y$ in $\cY$. The \emph{orthogonal complement} of a subset $\cS \subseteq F^E$ is $\cS^\perp:=\big\{ X \in F^E \mid \{X\} \perp \cS \big\}$. 

Let $M=[\varphi:E^r\to F]$ be an $F$-matroid and define $\Xi_M$ as the collection of all $\by=(y_0,\dotsc,y_r)\in E^{r+1}$ for which $\#\norm\by=r+1$ and for which $\varphi(y_0,\dotsc,\widehat{y_k},\dotsc,y_r)\neq0$ for some $k=0,\dotsc,r$. We define the \emph{fundamental circuit of $\by\in\Xi_M$ with respect to $\varphi$} as
\[
 X_{\by}(x) \ = \ \begin{cases}
                    (-1)^i\cdot\varphi(y_0,\dotsc,\widehat{y_i},\dotsc,y_r) & \text{if }x=y_i, \\
                    0                                                       & \text{otherwise}.
                    \end{cases}
\]
The set of \emph{$F$-circuits} of $M$ is 
$$
    \cC_M \ = \ \big\{ a\cdot X_{\by} \, \big| \, a \in F^\times, \ \by\in \Xi_M \big\}.
$$
The set of \emph{$F$-cocircuits} of $M$ is $\cC^*_M := \cC_{M^*}$. We define the set of \emph{$F$-vectors} and of \emph{$F$-covectors} of $M$ as
$$
    \cV_M:= (\cC^*_M)^\perp \quad \text{and} \quad \cV^*_M:=\cC_M^\perp,
$$
respectively. The circuits of $\underline M$ are exactly the supports 
\[
\underline{X} \ = \ \{e\in E\mid X_e\neq0\}
\]
of the $F$-circuits $X\in \cC_M$ of $M$, i.e. the elements of $\underline{\cC_M}$.

An idyll $F$ is \emph{perfect} if $\cV_M \perp \cV^*_M$ for all $F$-matroids $M$. Examples of perfect idylls are: fields, the regular partial field $\F^\pm_1$, and the hyperfields $\K$ and $\T$. The phase hyperfield $\P$ is not perfect. For more details, see \cite{Baker-Bowler19}.

\begin{ex}
Let $k$ be a field, $E$ a finite set and $r$ an integer. By \cite[Example 3.30]{Baker-Bowler19}, the following maps are bijections:
\[
\begin{array}{ccccc}
\left\{\begin{matrix} \text{subspaces of } k^E\\ \text{ of dimension } r \end{matrix}\right\}  
& \longleftarrow 
& \left\{\begin{matrix} k\text{-matroids on } $E$ \\ \text{ of rank } r
        \end{matrix}\right\} 
        & \longrightarrow 
        & \left\{\begin{matrix} \text{subspaces of } k^E \text{ of}\\ \text{dimension } \#E-r \end{matrix}\right\}\\
        &&\\
\mathcal{V}^*_M = (\mathcal{V}_M)^\perp & \longmapsfrom & M & \longmapsto & \mathcal{V}_M.
\end{array}
\] 
\end{ex}

\medskip\noindent\textbf{Minors.}
Let $M = [\varphi]$ be an $F$-matroid of rank $r$ on $E$ and $A \subseteq E$ a subset.
\begin{enumerate}
    \item (Contraction) Let $\ell$ be the rank of $A$ in $\underline{M}$ and $\{a_1, \dotsc, a_\ell\}$ a maximal independent subset of $A$. We define $\varphi/A: E^{r-\ell} \rightarrow F$ by
    \[
    (\varphi/A) (x_1,\dotsc,x_{r-\ell}) \ = \ \varphi(x_1,\dotsc,x_{r-\ell},a_1, \dotsc, a_\ell).
    \]
    \item (Deletion) Let $k$ be the rank of $E\backslash A$ in $\underline{M}$ and $\{b_1, \dotsc, b_{r-k}\} \subseteq A$ a basis of $\underline{M} / (E \backslash A)$. We define $\varphi \backslash A: (E\backslash A)^k \rightarrow F$ by
    \[
    (\varphi \backslash A)(y_1,\dotsc,y_k) \ = \ \varphi(y_1,\dotsc,y_k,b_1, \dotsc, b_{r-k}).
    \]
    \item (Restriction) Let $q$ be the rank of $A$ in $\underline{M}$. Define $\varphi|A := \varphi \backslash (E \backslash A): A^q \rightarrow F$.
\end{enumerate}
By \cite[Lemma 4.4]{Baker-Bowler19}, $\varphi/A$, $\varphi \backslash A$ and $\varphi|A$ are Grassmann-Pl\"ucker functions, and the $F$-matroids $M/A:=[\varphi/A]$ (\emph{contraction} of $M$ by $A$), $M\backslash A := [\varphi \backslash A]$ (\emph{deletion} of $A$ from $M$) and $M|A := [\varphi |A]$ (\emph{restriction} of $M$ to $A$) do not depend on any choices. By \cite[Thm. 3.29]{Baker-Bowler19}, $(M/A)^* = M^* \backslash A$ and $(M \backslash A)^* = M^* / A$.

\section{Category of \texorpdfstring{$F$}{F}-matroids}
\label{section: category of F-matroids}

\subsection{Submonomial matrices}
\label{subsection: submonomial matrices}

Let $F$ be a perfect idyll and $T$ and $S$ finite sets. 

\begin{df}
 An {$F$}\emph{-matrix} (indexed by $T\times S$) is a matrix $\Phi=(\Phi_{i,j})_{(i,j)\in T\times S}$ with entries $\Phi_{i,j}\in F$. An $F$-matrix $\Phi$ is \emph{submonomial} if every row and column of $\Phi$ contains at most one nonzero entry. We denote the \emph{transpose} of $\Phi$ by $\Phi^t$, which has entries $\Phi^t_{i,j} :=\Phi_{j,i}$.
\end{df}

Note that the product $\Phi \cdot X$ of a submonomial $F$-matrix $\Phi$ indexed by $T\times S$ with $X \in F^S$ is a well-defined element of $F^T$, since the sum in the definition of $(\Phi \cdot X)_i=\sum_{j\in S} \Phi_{i,j} X_j$ contains at most one nonzero term. Due to this, we also consider a submonomial $F$-matrix $\Phi$ as a map $\Phi: F^S \to F^T$. For a subset $\mathcal{X} \subseteq F^S$, we use $\Phi \cdot \mathcal{X}$ to denote the image $\{\Phi\cdot X \mid X \in \mathcal{X}\} \subseteq F^T$.

Let $\Phi$ and $\Psi$ be submonomial $F$-matrices indexed by $T\times S$ and $W\times T$, respectively. The product $\Psi\cdot\Phi$ is well-defined and corresponds to the composition $\Psi \circ \Phi: F^S\to F^W$ of the maps $\Phi:F^S\to F^T$ and $\Psi:F^T\to F^W$. The transpose $\Phi^t$ of $\Phi$ is also submonomial, and thus defines a map $\Phi^t:F^T\to F^S$.

\begin{df}
\label{def: underlying map of submonomial matrix}
The \emph{underlying map} of a submonomial $F$-matrix $\Phi$ indexed by $T \times S$ is the $\F_1$-linear map $\uline\Phi:\uline{S}\to \uline{T}$ given by
\[
\uline\Phi(j) \ := \ \begin{cases}
                    i & \text{if $j\neq0$ and $\Phi_{i,j}\neq 0$;}\\
                    0 & \text{if $j=0$, or if $j\neq0$ and $\Phi_{i,j}=0$ for all $i\in T$.}
                    \end{cases}
\]
which satisfies $(\underline{\Phi})^t = \underline{(\Phi^t)}$ (see \autoref{F1-vector spaces}). For $s \in \underline{S}$, we define
\[
\Phi_s \ := \ \begin{cases}
            \Phi_{\underline{\Phi}(s), s} & \text{if $s \neq 0$ and $\Phi_{\underline{\Phi}(s),s} \neq 0$;}\\
            0 &\text{if $s=0$, or if $s\neq0$ and $\Phi_{i,s}=0$ for all $i\in T$.}
            \end{cases}
\]
\end{df}

\begin{rem}\label{conta tecnica}
Note that since $\Phi$ is submonomial, there is at most one $i$ for every given $j$ such that $\Phi_{i,j}\neq 0$. Note further that $(\Phi^t \cdot Z)_s = \Phi_s \cdot Z_{\underline{\Phi}(s)}$ for $Z \in F^T$ and $s \in S$.
\end{rem}

\subsection{Morphisms of matroids with coefficients}
\label{subsection: morphisms of matroids with coefficients} 

\begin{df}[Category of $F$-matroids]
Let $N$ and $M$ be $F$-matroids with respective underlying sets $S$ and $T$. A \emph{morphism} of $F$-matroids from $N$ to $M$ is a submonomial $F$-matrix $\Phi$ indexed by $T \times S$ such that $\Phi \cdot \cV_N \subseteq \cV_M$. We write $\Phi: N \to M$ for a morphism $\Phi$ from $N$ to $M$. This defines the category $\Mat_F$ of $F$-matroids.
\end{df}

\begin{ex}\label{ex: matroid morphisms}
 A morphism of $\K$-matroids is a particular type of strong map, which includes isomorphisms, restrictions, contractions and quotients, but not the identification of parallel elements. A rigorous discussion of this relation can be found in \autoref{subsection: Morphisms of K-matroids and strong maps}.

 In the case of a field $k$, a submonomial matrix $\Phi$ describes a linear map $\Phi: k^S \rightarrow k^T$ that sends coordinate vectors of $k^S$ (i.e.\ vectors with only one nonzero coefficient) to coordinate vectors of $k^T$ or $0$. Therefore a morphism $\Phi:N\to M$ of $k$-matroids (with respective ground sets $S$ and $T$) is a submonomial linear map $\Phi: k^S \rightarrow k^T$ with $\Phi\cdot\cV_N\subseteq\cV_M$, where $\cV_N$ and $\cV_M$ are the vector sets of $M$ and $N$, respectively.
 
 Similarly, a morphism $\Phi:N\to M$ of $\T$-matroids (with respective ground sets $S$ and $T$) is a submonomial linear map $\T^S\to\T^T$ such that $\Phi\cdot\cV_N\subseteq\cV_M$.
 
 A quotient $M$ of an oriented matroid $N$ is the same thing as an $\S$-morphism $\Phi:N\to M$ that is given by a square identity matrix $\Phi$. Examples of other $\S$-morphisms are
 \[
  \smallmat 01{-1}0: M \to M' \qquad \text{and} \qquad \smallmat1000: M \to N
 \]
 where $M$, $M'$ and $N$ are the $\S$-matroids with respective vector sets
 \[\textstyle
  \cV_M \ = \ \big\{ \smallvector00, \smallvector 11, \smallvector{-1}{-1} \big\}, \quad \cV_{M'} \ = \ \big\{ \smallvector00, \smallvector 1{-1}, \smallvector{-1}1 \big\}, \quad \cV_N \ = \ \big\{ \smallvector00, \smallvector 10, \smallvector{-1}0 \big\}.
 \]
 Thus the underlying matroid of both $M$ and $M'$ is $U^1_2$ and the underlying matroid of $N$ is $U^1_1\oplus U^0_1$.
\end{ex}
 
Recall that the inner product of two tuples $X,Y\in F^E$ is defined as the element $X\cdot Y=\sum_{e\in E} X_eY_e$ of $\N[F^\times]$.
 
\begin{lemma}
\label{inner product}
Let $\Phi$ be a submonomial $F$-matrix indexed by $T \times S$. Let $X \in F^S$ and $Y \in F^T$. Then $(\Phi\cdot X)\cdot Y = X \cdot (\Phi^t \cdot Y)$.
\end{lemma}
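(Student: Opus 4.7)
The plan is to expand both sides as formal sums in $\N[F^\times]$ and observe that, after using the submonomiality hypothesis to identify the nonzero terms, the two sides reduce to the same multiset of products. Concretely, unfolding the left-hand side gives
\[
 (\Phi \cdot X) \cdot Y \ = \ \sum_{i \in T} (\Phi \cdot X)_i \cdot Y_i,
\]
and by submonomiality each $(\Phi \cdot X)_i$ is either $0$ or equal to a single product $\Phi_{i,j} X_j$ for the unique $j \in S$ with $\Phi_{i,j} \neq 0$. Hence, discarding zero summands, the nonzero contributions to the left-hand side are precisely the products $\Phi_{i,j} X_j Y_i$ indexed by pairs $(i,j) \in T \times S$ in the support of $\Phi$.

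Dually, I would rewrite the right-hand side using \autoref{conta tecnica}:
\[
 X \cdot (\Phi^t \cdot Y) \ = \ \sum_{j \in S} X_j \cdot (\Phi^t \cdot Y)_j \ = \ \sum_{j \in S} X_j \cdot \Phi_j \cdot Y_{\uPhi(j)},
\]
and use the column-wise submonomiality of $\Phi$ (equivalently, the row-wise submonomiality of $\Phi^t$) to see that each nonzero $j$-th summand equals $X_j \cdot \Phi_{\uPhi(j),j} \cdot Y_{\uPhi(j)}$, which runs precisely over the pairs $(i,j) = (\uPhi(j), j)$ in the support of $\Phi$.

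Both expansions therefore reduce to the same formal sum $\sum_{(i,j) \,:\, \Phi_{i,j} \neq 0} \Phi_{i,j} X_j Y_i$ in $\N[F^\times]$, and equality follows from commutativity of multiplication in $F$. I do not anticipate any real obstacle: this is essentially an adjunction-type identity at the level of inner products, and the submonomiality hypothesis enters only to guarantee the well-definedness of $\Phi \cdot X$ and $\Phi^t \cdot Y$ as honest elements of $F^T$ and $F^S$, which is already built into the setup.
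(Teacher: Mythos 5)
Your proposal is correct and follows essentially the same route as the paper: both expand the two inner products in $\N[F^\times]$, use submonomiality to identify the surviving terms with the support of $\Phi$, and conclude by reindexing. The paper's version is just a one-line chain of equalities performing exactly this reindexing via $\uPhi$ and $\uPhi^t$.
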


\begin{proof}
 This follows from a direct computation:
 \[
  (\Phi\cdot X)\cdot Y  \ = \ 
  \sum_{e\in T} \Phi_{e,\underline\Phi^t(e)} \cdot X_{\underline\Phi^t(e)} \cdot Y_{e} \ = \ 
  \sum_{e\in S} X_e\cdot\Phi^t_{e,\underline{\Phi}(e)} \cdot Y_{\underline\Phi(e)} \ = \ 
  X\cdot (\Phi^t\cdot Y).
  \qedhere
 \]
\end{proof}

\begin{df}
\label{df: pointed Grassmann-Plucker function}
Given a function $\mu: E^r \rightarrow F$ into a perfect idyll $F$, we define
\[
\begin{array}{clll}
\widetilde{\mu}: & \underline{E}^r & \longrightarrow & \quad \; F \\
& {\bf x} & \longmapsto & \begin{cases}
                        \mu({\bf x}) &\text{if }  x_i \neq 0 \text{ for all } i\\
                        0 &\text{otherwise}.
                        \end{cases}     
\end{array}
\]
If $M = [\mu]$ is an $F$-matroid on $E$, then $\widetilde{M} := [\widetilde{\mu}]$ is an $F$-matroid on $\underline{E}$ with $\underline{\widetilde{M}} = \underline{M} \oplus U_1^0$, i.e. $\widetilde{M}$ is obtained from $M$ by adding a loop (cf. \autoref{classical matroid extension by 0}).
\end{df}

\begin{thm}\label{thm: cryptomorphisms}
Let $N = [\nu]$ be an $F$-matroid of rank $w$ on $S$. Let $M = [\mu]$ be an $F$-matroid of rank $r$ on $T$ and $\Phi$ a submonomial $F$-matrix indexed by $T\times S$. Then the following are equivalent:
 \begin{enumerate}
  \item \label{crypto1} $\Phi$ is a morphism from $N$ to $M$;
  \item \label{crypto2} $\Phi \cdot \cC_N\perp\cC_M^\ast$;
  \item \label{crypto3} for all $y_0,\dotsc,y_w\in S$ and $x_2,\dotsc,x_r \in T$,
  \[
   \sum_{k=0}^w \ (-1)^k \cdot \nu(y_0,\dotsc,\widehat{y_k},\dotsc,y_w) \cdot {\Phi_{\uPhi(y_k),y_k}} \cdot \widetilde{\mu} \big(\underline{\Phi}(y_k),x_2,\dotsc,x_r\big) \ \in \ N_F.
  \]
 \end{enumerate}
\end{thm}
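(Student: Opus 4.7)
The plan is to prove (1)$\Leftrightarrow$(2) using the orthogonality machinery of perfect idylls together with \autoref{inner product}, and to prove (2)$\Leftrightarrow$(3) by a direct computation on fundamental circuits, after a convenient parametrization of cocircuits. For the former, I would first observe that $\cV_N^\perp=\cV_N^*$: the inclusion $\cV_N^*\subseteq\cV_N^\perp$ is precisely the perfection hypothesis on $F$, while $\cV_N^\perp\subseteq\cC_N^\perp=\cV_N^*$ follows from $\cC_N\subseteq\cV_N$ and the definition of covectors. Combined with $\cV_M=(\cC_M^*)^\perp$ (by definition) and \autoref{inner product}, both (1) and (2) translate into the single statement $\Phi^t\cdot\cC_M^*\subseteq\cV_N^*$, via the chain
\[
\Phi\cdot\cV_N\subseteq\cV_M\;\Longleftrightarrow\;\cV_N\perp\Phi^t\cdot\cC_M^*\;\Longleftrightarrow\;\Phi^t\cdot\cC_M^*\subseteq\cV_N^\perp=\cV_N^*,
\]
and the analogous chain with $\cC_N$ replacing $\cV_N$ (using $\cC_N^\perp=\cV_N^*$ directly).

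For (2)$\Leftrightarrow$(3), I would use two reductions. Every circuit of $N$ is a nonzero scalar multiple of a fundamental circuit $X_\by$ with $\by=(y_0,\dots,y_w)\in S^{w+1}$, and every cocircuit of $M$ is a nonzero scalar multiple of the function $x\mapsto\widetilde\mu(x,x_2,\dots,x_r)$ for some $(x_2,\dots,x_r)\in T^{r-1}$; conversely, each such function is either a cocircuit of $M$ or identically zero (the latter when $\{x_2,\dots,x_r\}$ is dependent in $\uM$). Since orthogonality is invariant under units and automatic against the zero function, (2) is equivalent to requiring $(\Phi\cdot X_\by)\cdot\big(x\mapsto\widetilde\mu(x,x_2,\dots,x_r)\big)\in N_F$ for all such $\by$ and $(x_2,\dots,x_r)$. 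Since $\Phi$ is submonomial and $\by$ has distinct entries, $(\Phi\cdot X_\by)(i)$ is nonzero only when $i=\uPhi(y_k)$ for a unique $k$, in which case it equals $\Phi_{\uPhi(y_k),y_k}\cdot(-1)^k\cdot\nu(\by_{\widehat k})$. Substituting into the inner product yields precisely the alternating sum in (3), with the convention $\widetilde\mu(0,x_2,\dots,x_r)=0$ absorbing the terms where $\uPhi(y_k)=0$.

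The main obstacle is the cocircuit parametrization invoked above: one must unpack the duality formula $\mu^*(\bx)=\sign(\bx,\bx')\cdot\mu(\bx')$ and combine the sign $(-1)^k$ appearing in the fundamental-circuit formula for $M^*$ with the sign arising from this duality identity, verifying that they assemble into a $k$-independent global unit. Only then does a cocircuit $Y_\bz$ of $M$ agree with $x\mapsto\widetilde\mu(x,x_2,\dots,x_r)$ up to a single scalar, as needed for the reduction above. This is a standard but somewhat delicate exercise in sign bookkeeping.
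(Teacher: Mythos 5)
Your proposal is correct and follows essentially the same route as the paper's proof: (1)$\Leftrightarrow$(2) rests on \autoref{inner product} together with perfectness of $F$ (the paper writes this as two separate implications rather than your symmetric reduction of both conditions to $\Phi^t\cdot\cC_M^*\subseteq\cV_N^*$, but the content is identical), and (2)$\Leftrightarrow$(3) is the same computation with fundamental circuits, the duality formula for $\mu^*$, and the $k$-independent sign $(-1)^{n-r}\sign(\bz,\bx)$ that you correctly single out as the delicate step. The one point to add when writing this up is the direction (2)$\Rightarrow$(3) for degenerate tuples $\by$ with $\#|\by|\leq w$, which your distinct-entries reduction does not reach: there the sum in (3) either vanishes termwise or consists of exactly two surviving terms that are additive inverses of each other, so it still lies in $N_F$.
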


\begin{proof}
We first prove the equivalence between \eqref{crypto1} and \eqref{crypto2}. Assume \eqref{crypto1}. As $\cC_N \subseteq \cV_N$, it follows that $\Phi \cdot \cC_N \subseteq \cV_M = (\cC_M^*)^\perp$, which implies \eqref{crypto2}.

Assume \eqref{crypto2}. By \autoref{inner product}, $\cC_N \perp \Phi^t \cdot \cC_M^*$, which implies $\cV_N^* = (\cC_N)^\perp \supseteq \Phi^t \cdot \cC_M^*$. As $F$ is perfect, one has $\cV_N = (\cV_N^*)^\perp \subseteq (\Phi^t \cdot \cC_M^*)^\perp$, i.e. $\cV_N \perp \Phi^t \cdot \cC_M^*$. Again by \autoref{inner product}, $\Phi\cdot \cV_N \perp \cC_M^*$, which implies $\Phi\cdot \cV_N \subseteq (\cC_M^*)^\perp = \cV_M$. Thus \eqref{crypto1} follows.

Next we prove the equivalence between \eqref{crypto2} and \eqref{crypto3}. Assume \eqref{crypto3}. Let $Z \in \mathcal{C}^*_M = \mathcal{C}_{M^*}$. By \cite[p.\ 841]{Baker-Bowler19}, there exists $z_0 \in \underline{Z}$, a basis $\{z_1, \dotsc, z_{n-r}\}$ of $\underline{M}^*$ containing $\underline{Z} - z_0$, and $\alpha \in F^\times$ such that
\[
 Z_{z_i} = \alpha\cdot (-1)^i \cdot \mu^*(z_0, \dotsc, \widehat{z_i}, \dotsc, z_{n-r})
\]
for all $i$. Let $\textbf{z} := (z_0, \dotsc, z_{n-r}) \in T^{n-r+1}$ and $\textbf{x} \in T^{r-1}$ such that $|{\bf z}| \sqcup |{\bf x}| = T$. Similarly, for $Y \in \mathcal{C}_N$, there exist $y_0 \in \underline{Y}$, a basis $B = \{y_1, \dotsc, y_w\}$ of $\underline{N}$ containing $\underline{Y} - y_0$, and $\chi \in F^{\times}$ such that
\[
Y_{y_j} = \chi\cdot (-1)^j \cdot \nu({\bf y}_{\widehat{j}})
\]
for all $j = 0, \dotsc, w$, where $\textbf{y} := (y_0, \dotsc, y_w)$. Let $U := |{\bf z}| \cap \underline{\Phi}\big(|{\bf y}|)$ and define
\[
  I \ := \ \big\{i \in \{0,\dotsc,n-r\}\, \big| \, z_i\in U\big\} \qquad \text{and} \qquad J \ := \ \big\{j \in \{0,\dotsc,w\} \,\big|\, \underline{\Phi}(y_j) \in U\big\}.  
\]
As $\underline{\Phi}$ is $\F_1$-linear, it induces a bijection $\mathfrak{b}: J\rightarrow I$ such that $\underline{\Phi}(y_j) = z_{\mathfrak{b}(j)}$, for all $j\in J$. By \autoref{inner product} and \autoref{conta tecnica}, we have (using the shorthand notation from \autoref{notation - tuples} and \autoref{def: underlying map of submonomial matrix}):

\[
\begin{aligned}
      & (\chi {\alpha})^{-1} \cdot \big(Z \cdot (\Phi \cdot Y) \big) = (\chi {\alpha})^{-1} \cdot \big(Y \cdot (\Phi^t \cdot Z) \big)
\\      
        = & (\chi {\alpha})^{-1} \cdot \underset{s \in S}{\sum} Y_s \cdot (\Phi^t \cdot Z)_s = (\chi {\alpha})^{-1} \cdot \underset{j \in J}{\sum} \  Y_j \cdot (\Phi^t \cdot Z)_j
\\
        = & (\chi \alpha)^{-1} \cdot \sum_{j \in J} {\bigl(} {\chi\cdot (-1)^j \nu({\textbf{y}}_{\widehat{j}})}{\bigr)} \cdot {\bigl(}\Phi_{y_j} \cdot \alpha \cdot (-1)^{\fb(j)}\mu^*(\bfz_{\widehat{\fb(j)}}){\bigr)}
\\
       = & \underset{j \in J}{\sum} \ (-1)^{\big(j + \fb(j) \big)} \cdot \nu({\textbf{y}}_{\widehat{j}}) \cdot {\Phi_{y_j}} \cdot \sign({\textbf{z}}_{\widehat{{\mathfrak{b}(j)}}}, z_{\mathfrak{b}(j)},\bfx) \cdot \mu( z_{\mathfrak{b}(j)}, \bfx)
\\
      = & \underset{j\in J}{\sum} \ (-1)^{\big( j + \text{$\mathfrak{b}$}(j)\big)} \cdot \sign({\bf z}_{\widehat{\text{$\mathfrak{b}$}(j)}}, z_{\text{$\mathfrak{b}$}(j)}, {\bf x}) \cdot \nu({\bf y}_{\widehat{j}}) \cdot {\Phi_{y_j}} \cdot \mu( \underline{\Phi}(y_j), {\bf x}),
\end{aligned}
\]
because $Y_s \cdot (\Phi^t \cdot Z)_s = 0$ for $s \notin J$. Furthermore, for all $j\in J$ it follows that 
\[
\begin{aligned}
      (-1)^{(n-r)} \cdot \sign({\bf z},{\bf x}) & = (-1)^{2(n-r) - \mathfrak{b}(j)} \cdot \sign({\bf z}_{\widehat{\mathfrak{b}(j)}}, z_{\mathfrak{b}(j)}, {\bf x})
      \\
       &=  (-1)^{\mathfrak{b}(j)} \cdot \sign({\bf z}_{\widehat{\mathfrak{b}(j)}},z_{\mathfrak{b}(j)},{\bf x}).
\end{aligned}
\]
Thus
\[
\begin{aligned}
      & (-1)^{(n-r)} \cdot \sign({\bf z},{\bf x}) \cdot (\chi {\alpha})^{-1} \cdot\big( Y \cdot (\Phi^t\cdot Z) \big)
\\
      = & \underset{j \in J}{\sum} (-1)^j \cdot \nu({\bf y}_{\widehat{j}}) \cdot {\Phi_{y_j}} \cdot \mu\big( \underline{\Phi}(y_j),{\bf x}\big)
\\
      = & \underset{j=0}{\overset{w}{\sum}} (-1)^j \cdot \nu({\bf y}_{\widehat{j}}) \cdot {\Phi_{y_j}} \cdot \widetilde{\mu}\big(\underline{\Phi}(y_j),{\bf x}\big).
\end{aligned}
\]
This implies that $Z \cdot (\Phi \cdot Y) = Y \cdot (\Phi^t \cdot Z) \in N_F$. Therefore \eqref{crypto2} follows.

Assume \eqref{crypto2}. Let ${\bf y} =(y_0, \dotsc, y_w) \in S^{w+1}$ and ${\bf x} = (x_1, \dotsc, x_{r-1}) \in T^{r-1}$. There are two cases to analyze:

\medskip\noindent\textbf{Case 1.} 
If there is no $\ell$ in $\{0, \dotsc, w\}$ such that $\big\{\underline{\Phi}(y_\ell)\big\}\cup|{\bf x}|$ is a basis of $\underline{M}$ and $|{\bf y}_{\widehat{\ell}}|$ is a basis of $\underline{N}$, then $\nu({\bf y}_{\widehat{j}}) \cdot \widetilde{\mu}\big(\underline{\Phi}(y_\ell),{\bf x}\big) = 0$ for all $j \in \{0, \dotsc, w\}$. Thus

\[
\underset{k = 0}{\overset{w}{\sum}} (-1)^k ~
\nu({\bf y}_{\widehat{k}}) ~ {\Phi_{y_k}} ~ \widetilde{\mu} \big(\underline{\Phi}(y_k),{\bf x}\big) =  0 \ \in \ N_F.
\]

\medskip\noindent\textbf{Case 2.} 
If there is an $\ell$ in $\{0, \dotsc, w\}$ such that $\big\{\underline{\Phi}(y_\ell)\big\}\cup|{\bf x}|$ is a basis of $\underline{M}$ and $|{\bf y}_{\widehat{\ell}}|$ is a basis of $\underline{N}$, then $\{z_1, \dotsc, z_{n-r}\} := T-\big\{\{\underline{\Phi}(y_\ell)\}\cup |{\bf x}|\big\}$ is a basis of $\underline{M}^*$. Define $z_0 := \underline{\Phi}(y_\ell)$ and ${\bf z} := (z_0, \dotsc, z_{n-r}) \in T^{n-r+1}$. Then
\[
H_z \ = \ \begin{cases}
        (-1)^i \cdot \mu^*(\textbf{z}_{\widehat{i}}) & \text{if}\ z=z_i \text{ for some}\ i\in\{0, \dotsc, n-r\}\\
        0 &\text{otherwise},
        \end{cases}
\]
defines a circuit of $M^*$, and 
\[
G_y \ = \ \begin{cases}
        (-1)^j \cdot \nu({\bf y}_{\widehat{j}}) & \text{if}\ y=y_j \text{ for some}\ j\in\{0, \dotsc, w\}\\
        0 &\text{otherwise}, 
        \end{cases}
\]
defines a circuit of $N$.

Note that 
\[
H_{z_i} = (-1)^{i} \cdot \sign(\textbf{z}_{\widehat{i}}, z_i, {\bf x}) \cdot \mu(z_i, {\bf x}),
\]
for all $i \in \{0, \dotsc, n-r\}$. Let $U := |\textbf{z}| \cap \underline{\Phi}(|{\bf y}|)$ and define
\[
  I \ := \ \big\{i \in \{0,\dotsc,n-r\}\, \big| \, z_i\in U\big\} \qquad \text{and} \qquad J \ := \ \big\{j \in \{0,\dotsc,w\} \,\big|\, \underline{\Phi}(y_j)\in U\big\}.  
\]
There exists a bijection $\mathfrak{b}: J\rightarrow I$ such that $\underline{\Phi}(y_j)=z_{\mathfrak{b}(j)}$, for each $j\in J$. Because $G \cdot (\Phi^t \cdot H) = H \cdot (\Phi \cdot G) \in N_F$ (see \autoref{inner product}), it follows that
\[
\begin{aligned}
    \underset{k = 0}{\overset{w}{\sum}}  (-1)^k ~ \nu({\bfy}_{\widehat{k}}) ~ {\Phi_{y_j}} ~ \widetilde{\mu}\big(\underline{\Phi}(y_k), \bfx \big) 
    \; = \; & \underset{j\in J}{\sum}  (-1)^j ~ \nu(\textbf{y}_{\widehat{j}}) ~ {\Phi_{y_j}} ~ \mu(z_{\fb(j)}, {\bfx})
    \\
    =  \; & (-1)^{(n-r)} ~ \sign(\textbf{z}, \textbf{x}) ~ \underset{j\in J}{\sum} G_{y_j} ~ (\Phi^t \cdot H)_{z_{\fb(j)}} 
    \\
    = \; & (-1)^{(n-r)} ~ \sign(\mathbf{z},{\bf x}) ~ \underset{s \in S}{\sum} G_s ~ { (\Phi^t \cdot H)_s} \\
    =  \; & (-1)^{(n-r)} ~ \sign({\bf z}, {\bf x}) ~ G \cdot (\Phi^t \cdot H) \ \in \ N_F,
\end{aligned}
\]
which concludes the proof.  
\end{proof}

\begin{cor}
\label{duality for strong maps}
 Let $N$ be an $F$-matroid on $S$ and $M$ an $F$-matroid on $T$. Let $\Phi$ be a submonomial $F$-matrix indexed by $T\times S$. Then the map $\Phi$ is a morphism from $N$ to $M$ if, and only if, $\Phi^t$ is a morphism from $M^*$ to $N^*$.
\end{cor}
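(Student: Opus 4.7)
The plan is to deduce the corollary directly from the cryptomorphism theorem (equivalence of (1) and (2) in \autoref{thm: cryptomorphisms}) together with the adjoint property of submonomial matrices established in \autoref{inner product}. The point is that the orthogonality condition $\Phi\cdot\cC_N\perp\cC_M^\ast$ is manifestly symmetric in $\Phi$ and $\Phi^t$ once one moves the matrix across the inner product.

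First, I would unpack what $\Phi^t: M^\ast\to N^\ast$ being a morphism means via part (2) of \autoref{thm: cryptomorphisms}. By definition of the dual, $\cC_{M^\ast}=\cC_M^\ast$ and $\cC_{N^\ast}^\ast=\cC_{(N^\ast)^\ast}=\cC_N$, so the condition reads $\Phi^t\cdot\cC_M^\ast\perp\cC_N$. Thus the task reduces to showing
\[
\Phi\cdot\cC_N\ \perp\ \cC_M^\ast \qquad\Longleftrightarrow\qquad \cC_N\ \perp\ \Phi^t\cdot\cC_M^\ast.
\]

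The key step is \autoref{inner product}: for any $X\in F^S$ and $Y\in F^T$, one has $(\Phi\cdot X)\cdot Y = X\cdot(\Phi^t\cdot Y)$ as elements of $\N[F^\times]$. Applying this with $X\in\cC_N$ and $Y\in\cC_M^\ast$ shows that $(\Phi\cdot X)\cdot Y\in N_F$ for all such $X,Y$ if and only if $X\cdot(\Phi^t\cdot Y)\in N_F$ for all such $X,Y$, which is exactly the displayed equivalence.

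Combining with \autoref{thm: cryptomorphisms}, we conclude that $\Phi:N\to M$ is a morphism iff $\Phi\cdot\cC_N\perp\cC_M^\ast$ iff $\Phi^t\cdot\cC_M^\ast\perp\cC_N$ iff $\Phi^t:M^\ast\to N^\ast$ is a morphism. I do not anticipate a genuine obstacle here: the proof is essentially two lines once one spells out the definitions, and the only subtlety is making sure the perfection hypothesis on $F$ is not being smuggled in (it is not, since we only invoke the part of \autoref{thm: cryptomorphisms} that equates being a morphism with the circuit-cocircuit orthogonality condition, which itself was proved using perfection in the forward direction of (1)$\Leftrightarrow$(2)).
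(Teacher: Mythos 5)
Your proof is correct and is essentially identical to the paper's: both reduce the statement to the circuit--cocircuit orthogonality criterion of \autoref{thm: cryptomorphisms} and then transpose $\Phi$ across the inner product via \autoref{inner product}. The only cosmetic difference is that you spell out the identifications $\cC_{M^\ast}=\cC_M^\ast$ and $\cC_{N^\ast}^\ast=\cC_N$ explicitly, which the paper leaves implicit.
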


\begin{proof}
Let $Z \in \mathcal{C}^*_M$ and $Y \in \mathcal{C}_N$. By \autoref{inner product}, one has ${Y \cdot (\Phi^t \cdot Z)} = Z \cdot (\Phi \cdot Y)$. In particular, $Y \perp (\Phi^t \cdot Z)$ if, and only if, $Z \perp (\Phi \cdot Y)$. Thus the result follows from \autoref{thm: cryptomorphisms}.
\end{proof}

\begin{ex}[Quotients]
\label{ex: quotients}
 Let $N$ and $M$ be $F$-matroids on the same ground set $S=T$. If $\Phi = \id_S$ is the identity matrix indexed by $S \times S$, then the condition \eqref{crypto3} of \autoref{thm: cryptomorphisms} correspond to the \textit{flag Pl\"ucker relations} present in \cite[Def.~2.1]{Jarra-Lorscheid24}. Thus $\id_S$ is a morphism from $N$ to $M$ if, and only if, $M$ is a \textit{quotient} of $N$. In particular, this recovers quotients of classical matroids for $F = \K$, of oriented matroids for $F=\S$ and of valuated matroids for $F=\T$.
\end{ex}

\begin{ex}[Contractions and restrictions]
\label{ex: contractions and restrictions}
Let $M$ be an $F$-matroid on $E$ and $A\subseteq E$.
\begin{enumerate}
    \item \label{contraction} (Contraction)  Let $c_A := (\delta_{ij})_{(i,j) \in (E-A) \times E}$, where $\delta_{ij} = 1$ for $i = j$, and $\delta_{ij} = 0$ for $i \neq j$. Note that $c_A$ is a submonomial $F$-matrix. Let $V = (v_e)_{e \in E} \in \cV_M$. By \cite[Prop.~4.4]{Anderson19}, $c_A \cdot V = (v_e)_{e \in E - A}$ is an element of $\cV_{M / A}$. Therefore $c_A: M \rightarrow M/A$ is a morphism.
    \item \label{restriction} (Restriction) Let $r_A := (\delta_{ij})_{(i,j) \in E \times (E-A)}$. Then $r_A$ is a submonomial matrix and a morphism $r_A: M|A \to M$, which follows from \autoref{duality for strong maps} since $r_A=c_A^t$.
\end{enumerate}
\end{ex}

\subsection{Morphisms of \texorpdfstring{$\K$}{K}-matroids and strong maps}
\label{subsection: Morphisms of K-matroids and strong maps}

By \autoref{ex: usual matroid and Krasner-matroid}, the map
\[
\begin{array}{ccc}
\{\K\text{-matroids}\} &\longrightarrow& \{\text{classical matroids}\}\\
M & \longmapsto & \underline{M}
\end{array}
\]
is a bijection. Next we show a similar bijection between morphisms of $\K$-matroids and $\F_1$-linear strong maps of classical matroids.

\begin{lemma}
\label{classical strong maps}
Let $N$ and $M$ be $\K$-matroids on the sets $S$ and $T$, respectively, and $\Phi: N \rightarrow M$ a morphism. Then the $\F_1$-linear map $\underline{\Phi}: \underline{S} \rightarrow \underline{T}$ is a classical strong map $\underline{\Phi}: \underline{N} \rightarrow \underline{M}$.
\end{lemma}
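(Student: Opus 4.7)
The plan is to reduce the statement to the classical cryptomorphism for strong maps via cocircuits (\autoref{cryptomorphism - classical strong maps}), using the circuit-based cryptomorphism for matroid morphisms (\autoref{thm: cryptomorphisms}). More specifically, I would show that the hypothesis $\Phi \cdot \mathcal{C}_N \perp \mathcal{C}^*_M$ (in the $\K$-sense) forces $\underline{\Phi}^{-1}$ to send classical cocircuits of $\underline{M}$ to classical covectors of $\underline{N}$.

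First I would unwind what the data look like for $\K$. Since $\K^\times = \{1\}$, every $\K$-circuit $Y \in \mathcal{C}_N$ has the form $Y = \mathbf{1}_C$ for a unique classical circuit $C$ of $\underline{N}$ (cf.\ \autoref{ex: usual matroid and Krasner-matroid}), and every $\K$-cocircuit $Z \in \mathcal{C}^*_M$ has the form $Z = \mathbf{1}_D$ for a unique classical cocircuit $D$ of $\underline{M}$. The inner product $A \cdot B = \#(\underline{A} \cap \underline{B}) \in \N$ lies in $N_\K = \N \setminus \{1\}$ if and only if $\#(\underline{A} \cap \underline{B}) \neq 1$, that is, if and only if the supports $\underline{A}$ and $\underline{B}$ are orthogonal in the classical sense.

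Next I would compute the support of $\Phi^t \cdot Z$ for $Z = \mathbf{1}_D$. Because $\Phi$ is submonomial, \autoref{conta tecnica} gives $(\Phi^t \cdot Z)_j = \Phi_j \cdot Z_{\underline{\Phi}(j)}$, which is nonzero precisely when $\underline{\Phi}(j) \in D$; hence $\underline{\Phi^t \cdot Z} = \underline{\Phi}^{-1}(D)$. Combining this with \autoref{inner product} and with \autoref{thm: cryptomorphisms} applied to the hypothesis that $\Phi \colon N \to M$ is a morphism, we obtain, for every circuit $C$ of $\underline{N}$ and cocircuit $D$ of $\underline{M}$,
\[
 \#\bigl(C \cap \underline{\Phi}^{-1}(D)\bigr) \ = \ Y \cdot (\Phi^t \cdot Z) \ = \ (\Phi \cdot Y) \cdot Z \ \in \ N_\K,
\]
so $\#\bigl(C \cap \underline{\Phi}^{-1}(D)\bigr) \neq 1$. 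Thus $\underline{\Phi}^{-1}(D)$ is orthogonal to every classical circuit of $\underline{N}$, i.e.\ $\underline{\Phi}^{-1}(D) \in \mathcal{V}^*_{\underline N}$. By \autoref{cryptomorphism - classical strong maps}, $\underline{\Phi} \colon \underline{N} \to \underline{M}$ is a classical strong map.

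The argument is essentially a translation between two equivalent languages, and I do not expect any serious obstacle. The only point requiring some attention is the bookkeeping of supports under the submonomial matrix $\Phi^t$ and the elementary observation that $\K$-valued orthogonality coincides with classical orthogonality of the underlying supports; once these are in place, the two cryptomorphic characterizations match up directly.
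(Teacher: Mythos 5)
Your proof is correct and follows essentially the same route as the paper's: both reduce the claim to the orthogonality cryptomorphisms via the observation that $\K$-orthogonality of tuples is classical orthogonality of supports, that supports of $\K$-circuits are classical circuits, and the adjunction identity of \autoref{inner product}. The only cosmetic difference is that you invoke \autoref{cryptomorphism - classical strong maps} directly by computing $\underline{\Phi^t\cdot Z}=\underline{\Phi}^{-1}(\underline{Z})$, whereas the paper computes $\underline{\Phi\cdot Y}=\underline{\Phi}(\underline{Y})\cap T$ and cites \autoref{duality - classical strong maps}, which is itself just the transposed form of the same statement.
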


\begin{proof}
Two tuples $X, Y$ in $\K^T$ are orthogonal if, and only if, $\#(\underline{X} \cap \underline{Y}) \neq 1$, i.e. if, and only if, $\underline{X} \perp \underline{Y}$ (cf. \autoref{subsection - Strong maps of classical matroids}). As $\underline{\mathcal{C}_P} = \mathcal{C}_{\underline{P}}$ for any $\K$-matroid $P$ and $\underline{\Phi \cdot Z} = \underline{\Phi}(\underline{Z}) \cap T$ for any $Z \in \K^S$, the Lemma follows from \autoref{thm: cryptomorphisms} and \autoref{duality - classical strong maps}.
\end{proof}

\begin{thm}
\label{thm: K-matroids and F1-linear spaces}
Let $N$ and $M$ be $\K$-matroids on the sets $S$ and $T$, respectively. Then the map
\[
\begin{array}{rcl}
\{\text{morphisms } N \rightarrow M\} &\longrightarrow& \{\F_1\text{-linear strong maps } \underline{N} \rightarrow \underline{M}\}\\
&&\\
\Phi: N \rightarrow M & \longmapsto & \underline{\Phi}: \underline{N} \rightarrow \underline{M}
\end{array}
\]
is a bijection.
\end{thm}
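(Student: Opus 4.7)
The plan is to verify injectivity and surjectivity of the forward map separately; well-definedness is already given by \autoref{classical strong maps}.

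For injectivity, I would observe that because $\K^\times=\{1\}$, a submonomial $\K$-matrix $\Phi$ indexed by $T\times S$ is completely determined by its underlying map $\uPhi:\uS\to\uT$: one has $\Phi_{i,j}=1$ precisely when $\uPhi(j)=i\neq 0$, and $\Phi_{i,j}=0$ otherwise. In particular $\Phi$ is recoverable from $\uPhi$, so distinct morphisms have distinct images.

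For surjectivity, given an $\F_1$-linear strong map $\sigma:\uN\to\uM$, I would take $\Phi$ to be the unique submonomial $\K$-matrix indexed by $T\times S$ with $\Phi_{i,j}=1$ iff $\sigma(j)=i\neq 0$ and $\Phi_{i,j}=0$ otherwise. Since $\sigma$ is $\F_1$-linear, each row and column of $\Phi$ carries at most one nonzero entry, so $\Phi$ is indeed submonomial and satisfies $\uPhi=\sigma$ by construction. The remaining (and only substantive) step is to check that this $\Phi$ defines a morphism, i.e.\ $\Phi\cdot\cV_N\subseteq\cV_M$.

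My plan for this check is to run the argument of \autoref{classical strong maps} in reverse, using the equivalence \eqref{crypto1}$\Leftrightarrow$\eqref{crypto2} of \autoref{thm: cryptomorphisms}: it suffices to verify $\Phi\cdot\cC_N\perp\cC_M^\ast$. This translates to a classical orthogonality via the three identifications recorded in the proof of \autoref{classical strong maps}, namely $X\perp Y$ in $\K^T$ iff $\uline X\perp \uline Y$ in the classical sense, $\uline{\cC_P}=\cC_{\uline P}$ for every $\K$-matroid $P$, and $\uline{\Phi\cdot Z}=\sigma(\uline Z)\cap T$ for $Z\in\K^S$. Together these turn $\Phi\cdot\cC_N\perp\cC_M^\ast$ into $\sigma(\cC_{\uN})\perp\cC_{\uM}^\ast$, which is exactly condition \eqref{duality - classical strong maps 3} of \autoref{duality - classical strong maps}; since $\sigma$ is by hypothesis an $\F_1$-linear strong map, this condition holds, and we conclude that $\Phi$ is a morphism with $\uPhi=\sigma$. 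I do not anticipate any real obstacle: every ingredient has already been assembled, and the bijection is essentially the translation of the dictionary between $\K$-combinatorics and the classical combinatorics of supports from objects to arrows.
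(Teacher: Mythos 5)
Your proposal is correct and follows essentially the same route as the paper: injectivity from the fact that a submonomial $\K$-matrix is determined by its underlying map, and surjectivity by building $\Phi$ from $\sigma$ and verifying the morphism condition via \autoref{thm: cryptomorphisms} together with the support dictionary and \autoref{duality - classical strong maps}. No gaps.
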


\begin{proof}
Note that for a morphism $\Phi: N  \rightarrow M$, 
\[
\Phi_{i,j} \ =  \ \begin{cases}
                1 &\text{if } \; \underline{\Phi}(j)  = i \neq 0\\
                0 &\text{otherwise.}
                \end{cases}
\]
Thus if $\Phi: N \rightarrow M$ and $\Phi': N \rightarrow M$ are morphisms with $\underline{\Phi} = \underline{\Phi'}$, then $\Phi = \Phi'$.

Fix an $\F_1$-linear strong map $\sigma: \underline{N} \rightarrow \underline{M}$ and, for $(i,j) \in T \times S$, let
\[
\Psi_{(i,j)} \ := \ \begin{cases}
                    1 &\text{if } \sigma(j) = i \neq 0\\
                    0 &\text{otherwise.}
                    \end{cases} 
\]
Note that $\Psi := (\Psi_{i,j})_{(i,j) \in T \times S}$ is a submonomial $\K$-matrix that satisfies $\underline{\Psi \cdot X} = \sigma(\underline{X}) \cap T$ for each $X$ in $\K^S$. As $\underline{\cC_N} = \cC_{\underline{N}}$ and $\underline{\cV_M} = \cV_{\underline{M}}$, by \autoref{duality - classical strong maps} and \autoref{thm: cryptomorphisms}, $\Psi: N \rightarrow M$ is a morphism. As $\underline{\Psi} = \sigma$, the result follows. 
\end{proof}

\subsection{Factorization of morphisms through quotients}
\label{subsection: pre-image}

In this section we prove \autoref{factorization of surjective strong maps through quotients}, which is the analogue of \autoref{factorization of classical strong map by quotient} for morphisms of $F$-matroids.

\begin{df}
Let $\Phi = (\Phi_{i, j})_{(i, j) \in T \times S}$ be a submonomial $F$-matrix and $\mu$ a Grassmann-Pl\"ucker function on the set $T$ with coefficients in $F$. Let $d$ be the rank of $\underline{[\mu]}|\big(\im(\underline{\Phi}) \cap T\big)$ and define $\Phi^{-1}(\mu): S^d \rightarrow F$ by
\[
\Phi^{-1}(\mu)(z_1,\dotsc,z_r) \ := \ \Big(\underset{\ell = 1}{\overset{d}{\prod}} \Phi_{\uPhi(z_\ell),z_\ell} \Big) \cdot \widetilde{\big(\mu|_{\im(\underline{\Phi}) \cap T}\big)}\big(\underline{\Phi}(z_1),\dots,\underline{\Phi}(z_d)\big).
\]
\end{df}

\begin{prop}
$\Phi^{-1}(\mu)$ is a Grassmann-Pl\"ucker function on $S$.
\end{prop}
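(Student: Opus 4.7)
The plan is to derive each Grassmann-Pl\"ucker axiom for $\Phi^{-1}(\mu)$ from the corresponding property of the loop-extension $\widetilde{\mu'}$, where $\mu' := \mu|_{A}$ and $A := \im(\underline{\Phi}) \cap T$. By the restriction construction recalled in \autoref{Baker-Bowler theory}, $\mu':A^d\to F$ is a Grassmann-Pl\"ucker function of rank $d$; by \autoref{df: pointed Grassmann-Plucker function} its loop-extension $\widetilde{\mu'}:\underline{A}^d\to F$ is one as well, and this will be the workhorse. Note also that $\underline{\Phi}$ maps $\underline{S}$ into $\underline{A}$, so the expression $\widetilde{\mu'}\big(\underline{\Phi}(z_1),\dotsc,\underline{\Phi}(z_d)\big)$ is always defined.

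First I would check that $\Phi^{-1}(\mu)$ is nonzero: choose a basis $(a_1,\dotsc,a_d)$ for the underlying matroid of $[\mu']$, so that $\mu'(a_1,\dotsc,a_d)\neq 0$, and lift each $a_i\in A\subseteq\im(\underline{\Phi})$ to some $z_i\in S$ with $\underline{\Phi}(z_i)=a_i$. By the very definition of $\underline{\Phi}$, each factor $\Phi_{\underline{\Phi}(z_i),z_i}$ lies in $F^\times$, so the defining product for $\Phi^{-1}(\mu)(z_1,\dotsc,z_d)$ is nonzero.

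For axiom \ref{def: mat GP1}, the product $\prod_\ell \Phi_{\underline{\Phi}(z_\ell),z_\ell}$ is symmetric in its arguments, so the alternating behaviour of $\Phi^{-1}(\mu)$ is inherited directly from that of $\widetilde{\mu'}$. Moreover, if $z_i=z_j$ for some $i\ne j$ then $\underline{\Phi}(z_i)=\underline{\Phi}(z_j)$, and $\widetilde{\mu'}$ vanishes either by alternation (when this common value is nonzero in $A$) or by the loop-extension convention (if the common value is $0$); consequently $\Phi^{-1}(\mu)$ vanishes whenever $\#|\bfz|<d$.

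For axiom \ref{def: mat GP2}, the key observation is that upon expanding $\Phi^{-1}(\mu)(y_0,\dotsc,\widehat{y_k},\dotsc,y_d)\cdot\Phi^{-1}(\mu)(y_k,x_2,\dotsc,x_d)$, the factor $\Phi_{\underline{\Phi}(y_k),y_k}$ missing from the first function is contributed by the second, so the global prefactor $\prod_{\ell=0}^{d}\Phi_{\underline{\Phi}(y_\ell),y_\ell}\cdot\prod_{\ell=2}^{d}\Phi_{\underline{\Phi}(x_\ell),x_\ell}$ is independent of $k$ and pulls out of the alternating sum. What is left is precisely the Pl\"ucker relation for $\widetilde{\mu'}$ at $\big(\underline{\Phi}(y_0),\dotsc,\underline{\Phi}(y_d)\big)$ and $\big(\underline{\Phi}(x_2),\dotsc,\underline{\Phi}(x_d)\big)$, which lies in $N_F$; multiplying by a scalar in $F$ keeps us in $N_F$ by the ideal axiom \ref{idyll: T3}. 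The only piece of real bookkeeping is verifying that the $\Phi$-prefactor truly is $k$-independent; no deeper obstacle arises.
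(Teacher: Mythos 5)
Your proposal is correct and follows essentially the same route as the paper: both reduce the Pl\"ucker relations for $\Phi^{-1}(\mu)$ to those of the loop-extension $\widetilde{\mu|_{\im(\underline\Phi)\cap T}}$ by observing that the product of the coefficients $\Phi_{\underline\Phi(y_\ell),y_\ell}$ and $\Phi_{\underline\Phi(x_m),x_m}$ is independent of $k$ and factors out of the alternating sum. You simply spell out the non-vanishing and alternation checks that the paper asserts without detail.
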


\begin{proof}
Note that $\Phi^{-1}(\mu)$ is not identically zero when $d>0$ and is alternating. Let $\psi := \widetilde{\big(\mu|_{\im(\underline{\Phi}) \cap T}\big)}$, ${\bf y} \in S^{d+1}$ and ${\bf x} \in S^{d-1}$. For the purpose of this proof, if ${\bf z} = (z_1, \dotsc, z_m)$ is a tuple of elements of $S$, we write $\underline{\Phi}({\bf z})$ for $\big(\underline{\Phi}(z_1),\dotsc,\underline{\Phi}(z_m)\big)$. Then
\[
\begin{aligned}
    & ~ \sum_{k=1}^{d+1}(-1)^k ~ \Phi^{-1}(\mu)({\bf y}_{\widehat{k}}) ~ \Phi^{-1}(\mu)(y_k,{\bf x}) ~ 
\\
    = & ~ \sum_{k=1}^{d+1}(-1)^k ~ \Big(\underset{\ell \neq k}{\prod} \Phi_{y_\ell} \Big) ~ \psi\big(\underline{\Phi}({\bf y}_{\widehat{k}})\big) ~ \Phi_{y_k} ~ \Big(\underset{m = 1}{\overset{d-1}{\prod}} \Phi_{x_m} \Big) ~ \psi\big(\underline{\Phi}(y_k),\underline{\Phi}({\bf x})\big)
\\
    = & ~ \Big(\underset{\ell = 1}{\overset{d+1}{\prod}} \Phi_{y_\ell} \Big) ~ \Big(\underset{m = 1}{\overset{d-1}{\prod}} \Phi_{x_m} \Big) ~ \sum_{k=1}^{d+1}(-1)^k ~ \psi\big(\underline{\Phi}({\bf y}_{\widehat{k}})\big) ~ \psi\big(\underline{\Phi}(y_k),\underline{\Phi}({\bf x})\big),
\end{aligned}
\]
which is in $N_F$, because $\psi$ is a Grassmann-Pl\"ucker function.
\end{proof}

\begin{df}
\label{df: pre-image}
Let $M$ be the $F$-matroid $[\mu]$ on $T$. The \textit{pre-image} of $M$ under $\Phi$ is the $F$-matroid $\Phi^{-1}(M) := [\Phi^{-1}(\mu)]$ on $S$. 
\end{df}

\begin{rem}
Note that $\Phi^{-1}(M)$ is well defined, because $\alpha \Phi^{-1}(\mu) = \Phi^{-1}( \alpha \mu)$ for all $\alpha \in F^\times$.
\end{rem}

\begin{prop}
The underlying matroids $\underline{\Phi^{-1}(M)}$ and $\underline{\Phi}^{-1}(\underline{M})$ are equal.
\end{prop}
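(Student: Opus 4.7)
The strategy is to compare bases. Since both $\underline{\Phi^{-1}(M)}$ and $\underline{\Phi}^{-1}(\underline{M})$ are matroids on the same ground set $S$, it suffices to check that they have the same collection of bases.

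First I would verify that both matroids have rank $d$, where $d := \rk\big(\underline{M}\vert(\im(\underline{\Phi}) \cap T)\big)$. For $\underline{\Phi^{-1}(M)}$ this is immediate from the definition of $\Phi^{-1}(\mu)$, which is a Grassmann-Pl\"ucker function on $S^d$. For $\underline{\Phi}^{-1}(\underline{M})$, the rank of the full ground set is $\rk_{\widetilde{\underline{M}}}\big(\underline{\Phi}(S)\big) = \rk_{\widetilde{\underline{M}}}\big(\im(\underline{\Phi})\big)$; since $\widetilde{\underline{M}} = \underline{M}\oplus U_1^0$ attaches $0$ as a loop, this equals $\rk_{\underline{M}}\big(\im(\underline{\Phi}) \cap T\big) = d$.

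Next I would unwind when a size-$d$ subset $B = \{z_1,\dotsc,z_d\} \subseteq S$ is a basis of each. For $\underline{\Phi^{-1}(M)}$, $B$ is a basis iff
\[
 \Phi^{-1}(\mu)(z_1,\dotsc,z_d) \ = \ \Big(\prod_{\ell=1}^d \Phi_{\underline{\Phi}(z_\ell), z_\ell}\Big) \cdot \widetilde{\big(\mu|_{\im(\underline{\Phi}) \cap T}\big)}\big(\underline{\Phi}(z_1),\dotsc,\underline{\Phi}(z_d)\big)
\]
is nonzero. Non-vanishing of the first factor is, by \autoref{def: underlying map of submonomial matrix}, equivalent to $\underline{\Phi}(z_\ell) \neq 0$ for every $\ell$. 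Given this, the tilde-extension in the second factor reduces to $\mu$ itself, and its non-vanishing forces the $\underline{\Phi}(z_\ell)$ to be distinct (by the alternating property \ref{def: mat GP1}) and $\{\underline{\Phi}(z_1),\dotsc,\underline{\Phi}(z_d)\}$ to be a basis of $\underline{M}\vert(\im(\underline{\Phi}) \cap T)$.

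For $\underline{\Phi}^{-1}(\underline{M})$, the condition that $B$ be a basis is $\rk_{\widetilde{\underline{M}}}\big(\underline{\Phi}(B)\big) = d = |B|$. Because rank is bounded by cardinality, this forces $\underline{\Phi}|_B$ to be injective with $\underline{\Phi}(B)$ independent in $\widetilde{\underline{M}}$; since $0$ is a loop of $\widetilde{\underline{M}}$, independence plus size $d$ is equivalent to $0 \notin \underline{\Phi}(B)$ together with $\underline{\Phi}(B)$ being a basis of $\underline{M}\vert(\im(\underline{\Phi}) \cap T)$. This matches the previous characterization, so the bases coincide and the two matroids are equal.

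The proof is essentially bookkeeping, and the only subtle point is the consistent handling of the augmentation: the tilde-extension $\widetilde{(\mu|_{\im(\underline{\Phi}) \cap T})}$ on the Baker-Bowler side, and the loop $U_1^0$ in $\widetilde{\underline{M}}$ on the classical side, both play the same role of annihilating contributions where $\underline{\Phi}$ sends something to $0$. No deeper obstacle is expected.
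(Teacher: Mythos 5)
Your argument is correct and is essentially the paper's own proof: the paper simply asserts that $\{z_1,\dotsc,z_d\}$ is a basis of $\underline{\Phi^{-1}(M)}$ if and only if $\{\underline{\Phi}(z_1),\dotsc,\underline{\Phi}(z_d)\}$ is a basis of $\underline{M}\vert\big(\im(\underline{\Phi})\cap T\big)$, and your proposal supplies exactly the bookkeeping (non-vanishing of the monomial factor, the alternating property, and the loop at $0$ in $\widetilde{\underline{M}}$) that justifies this equivalence on both sides.
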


\begin{proof}
It follows from the fact that $\{z_1, \dotsc, z_d\}$ is a basis of $\underline{\Phi^{-1}(M)}$ if, and only if, $\{\underline{\Phi}(z_1), \dotsc, \underline{\Phi}(z_d)\}$ is a basis of $\underline{M}|\big(\im(\underline{\Phi})\cap T\big)$.
\end{proof}

\begin{thm}
\label{factorization of surjective strong maps through quotients}
Let $\Phi = (\Phi_{i,j})_{(i,j) \in T \times S}$ be a submonomial $F$-matrix, and $N$, $M$ two $F$-matroids with respective underlying sets $S$ and $T$. Then $\Phi: N \rightarrow M$ is a morphism if, and only if, $\Phi^{-1}(M)$ is a quotient of $N$.
\end{thm}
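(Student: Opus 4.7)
The plan is to translate both conditions into explicit Pl\"ucker relations via \autoref{thm: cryptomorphisms} and then compare them. By \autoref{ex: quotients}, the statement ``$\Phi^{-1}(M)$ is a quotient of $N$'' is equivalent to the identity $\id_S:N\to\Phi^{-1}(M)$ being a morphism. Applying condition (3) of \autoref{thm: cryptomorphisms} to this identity morphism, substituting \autoref{df: pre-image}, and writing $\mu|_A$ in terms of $\mu$ via a fixed basis $\{b_1,\dotsc,b_{r-d}\}\subseteq T\setminus A$ of $\underline{M}/A$ (where $A=\im\underline\Phi\cap T$ and $r,d$ are the ranks of $M$ and $\Phi^{-1}(M)$), the quotient condition unfolds to: for all $y_0,\dotsc,y_w\in S$ and $x_2,\dotsc,x_d\in S$,
\[
\Big(\prod_{\ell=2}^d\Phi_{x_\ell}\Big)\sum_{k=0}^w(-1)^k\,\nu({\bf y}_{\widehat{k}})\,\Phi_{y_k}\,\widetilde\mu\big(\underline\Phi(y_k),\underline\Phi(x_2),\dotsc,\underline\Phi(x_d),b_1,\dotsc,b_{r-d}\big)\in N_F.
\]
On the other hand, condition (3) of \autoref{thm: cryptomorphisms} expresses ``$\Phi:N\to M$ is a morphism'' as the analogous Pl\"ucker relation, but with the tuple $(\underline\Phi(x_2),\dotsc,\underline\Phi(x_d),b_1,\dotsc,b_{r-d})$ replaced by an arbitrary $(x'_2,\dotsc,x'_r)\in T^{r-1}$.

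The forward direction ($\Rightarrow$) follows by immediate specialization: given $x_2,\dotsc,x_d\in S$, substitute $x'_\ell:=\underline\Phi(x_\ell)$ for $\ell\leq d$ and $x'_\ell:=b_{\ell-d}$ for $\ell>d$ into the morphism relation, and then multiply by $\prod_\ell\Phi_{x_\ell}$, which preserves $N_F$ since the latter is an ideal of $\N[F^\times]$. The edge case where some $\Phi_{x_\ell}=0$ makes the quotient relation trivially satisfied.

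The reverse direction ($\Leftarrow$) is the main obstacle, since the quotient relations only supply Pl\"ucker relations for tuples of the specific form above, whereas the morphism condition requires them for every $(x'_2,\dotsc,x'_r)\in T^{r-1}$. My plan is to reduce to the case $A=T$ (so $d=r$ and the $b_j$'s disappear). For this reduction I would first establish that $\Phi:N\to M$ is a morphism if and only if $\Phi$, viewed as a submonomial $A\times S$ matrix, is a morphism $N\to M|A$. This uses that every column of $\Phi$ has support in $A$ together with the standard identification $\mathcal{V}_{M|A}\cong\{V\in\mathcal{V}_M\mid\supp V\subseteq A\}$ from the Baker-Bowler theory of minors. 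Since $\Phi^{-1}(M|A)=\Phi^{-1}(M)$ by construction, we may assume $A=T$. In this reduced case every $x'_\ell\in T=\im\underline\Phi$ has the form $\underline\Phi(x_\ell)$ for some $x_\ell\in S$ with $\Phi_{x_\ell}\in F^\times$, so the quotient relation applied to this choice of $x_\ell$'s yields the morphism relation after multiplication by the invertible element $(\prod_\ell\Phi_{x_\ell})^{-1}\in F^\times$.
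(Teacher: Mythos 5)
Your proposal is correct and follows essentially the same route as the paper: the forward direction is the same specialization-and-multiply computation (the paper factors $\prod_\ell\Phi_{z_\ell}$ out of the quotient relation rather than multiplying it into the morphism relation, which is the same identity read in the other direction), and your reverse direction mirrors the paper's two cases, with the reduction to $\im\underline\Phi\cap T=T$ performed up front via the equivalence $\Phi:N\to M$ morphism $\Leftrightarrow$ $\Phi:N\to M|A$ morphism (which is exactly \autoref{thm: minors of morphisms}\,(2) together with composition with $r_A$ from \autoref{ex: contractions and restrictions}), instead of at the end as in the paper's Case 2.
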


\begin{proof}
    Let $w$ be the rank of $N$ and $d$ the rank of $\Phi^{-1}(M)$. Assume that $\Phi: N \rightarrow M$ is a morphism. Let ${\bf y} \in S^{w+1}$ and ${\bf z} \in S^{d-1}$. Thus
\[
\begin{aligned}
    \underset{k = 1}{\overset{w+1}{\sum}} (-1)^k ~ \nu({\bf y}_{\widehat{k}}) ~ \Phi^{-1}(\mu)(y_k,{\bf z}) ~ = ~ \Big(\underset{\ell = 1}{\overset{d-1}{\prod}} \Phi_{z_\ell} \Big) ~ \underset{k = 1}{\overset{w+1}{\sum}} (-1)^k ~ \nu({\bf y}_{\widehat{k}}) ~ \Phi_{y_k} ~ \widetilde{\mu}\big(\underline{\Phi}(y_k), \underline{\Phi}({\bf z})\big)
\end{aligned}
\]
is in $N_F$, which implies that $\Phi^{-1}(M)$ is a quotient of $N$.

Conversely, assume that $\Phi^{-1}(M)$ is a quotient of $N$. The proof is divided into two cases.

\medskip\noindent\textbf{Case 1:} Assume $T \subseteq \im(\underline{\Phi})$. Let ${\bf y} \in S^{w+1}$ and ${\bf x} \in T^{d-1}$. Note that $\underline{\Phi^t}(\mathbf{x}) \in S^{d-1}$ satisfies $\underline{\Phi}\big(\underline{\Phi^t}(\mathbf{x})\big) = \mathbf{x}$.
\[
\begin{aligned}
    &\underset{k = 1}{\overset{w+1}{\sum}} (-1)^k ~ \nu({\bf y}_{\widehat{k}}) ~ \Phi_{y_k} ~ \widetilde{\mu}\big(\underline{\Phi}(y_k), {\bf x}\big) \\ ~ = ~ & \Big(\underset{\ell = 1}{\overset{d-1}{\prod}} \Phi_{\underline{\Phi^t}(x_\ell)} \Big)^{-1} ~ \underset{k = 1}{\overset{w+1}{\sum}} (-1)^k ~ \nu({\bf y}_{\widehat{k}}) ~ \Phi^{-1}(\mu)\big(y_k, \underline{\Phi^t}({\bf x})\big)
\end{aligned}
\]
is in $N_F$, which implies that $\Phi: N \rightarrow M$ is a morphism.

\medskip\noindent\textbf{Case 2:} Assume $T \not\subseteq \im(\underline{\Phi})$. Let $\Psi := \Phi|\big(S \rightarrow \im(\underline{\Phi}) \cap T\big)$. Using \textbf{Case 1}, we conclude that  $\Psi: N \to M|\big(\im(\underline{\Phi}) \cap T\big)$ is a morphism. By \autoref{ex: contractions and restrictions} \eqref{restriction}, the map $r_{\im(\underline{\Phi}) \cap T}: M|\big(\im(\underline{\Phi})\cap T\big) \rightarrow M$ is a morphism. Then $\Phi = r_{\im(\underline{\Phi})\cap T} \cdot \Psi: N \rightarrow M$ is a morphism.
\end{proof}

Next result justifies the choice of the term ``pre-image'' for $\Phi^{-1}(M)$ in \autoref{df: pre-image}.

\begin{prop}
\label{vectors of pre-image}
Let $\Phi = (\Phi_{i, j})_{(i, j) \in T \times S}$ be a submonomial $F$-matrix and $M$ an $F$-matrix on $T$. Then $\mathcal{V}_{\Phi^{-1}(M)} = \Phi^{-1}(\mathcal{V}_M)$.
\end{prop}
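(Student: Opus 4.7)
The plan is to establish the two inclusions separately. The inclusion $\mathcal{V}_{\Phi^{-1}(M)} \subseteq \Phi^{-1}(\mathcal{V}_M)$ is immediate from \autoref{factorization of surjective strong maps through quotients}: since $\Phi^{-1}(M)$ is a quotient of itself, that theorem gives that $\Phi : \Phi^{-1}(M) \to M$ is a morphism, so $\Phi \cdot \mathcal{V}_{\Phi^{-1}(M)} \subseteq \mathcal{V}_M$ by definition.

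For the reverse inclusion $\Phi^{-1}(\mathcal{V}_M) \subseteq \mathcal{V}_{\Phi^{-1}(M)}$, I would fix $V \in F^S$ with $\Phi V \in \mathcal{V}_M$; since $F$ is perfect, $\mathcal{V}_{\Phi^{-1}(M)} = (\mathcal{C}^*_{\Phi^{-1}(M)})^\perp$, so it suffices to verify $V \cdot Z \in N_F$ for each cocircuit $Z$ of $\Phi^{-1}(M)$. Writing $Z$ as a fundamental cocircuit attached to a tuple $\mathbf{y} \in S^{n-d+1}$ (with $n = \#S$ and $d = \rk \Phi^{-1}(M)$) and letting $\mathbf{z}' = (z_1', \dotsc, z_{d-1}')$ enumerate $S - |\mathbf{y}|$, I would expand $Z_{y_i} = (-1)^i \Phi^{-1}(\mu)^*(\mathbf{y}_{\widehat{i}})$ via the definitions of the dual Grassmann--Pl\"ucker function and of $\Phi^{-1}(\mu)$. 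Using submonomiality and $\F_1$-linearity of $\underline{\Phi}$, the calculation should reduce to
\[
  V \cdot Z \ = \ \gamma \cdot \sum_{t \in T} (\Phi V)_t \cdot \mu(t, \mathbf{x})
\]
for some $\gamma \in F^\times$ and
\[
  \mathbf{x} \ := \ \big(\underline{\Phi}(z_1'), \dotsc, \underline{\Phi}(z_{d-1}'), b_1, \dotsc, b_{r-d}\big) \ \in \ T^{r-1},
\]
where $T' := \im(\underline{\Phi}) \cap T$ and $\{b_1, \dotsc, b_{r-d}\}$ is the basis of $\underline{M}/T'$ used in the definition of $\mu|T'$.

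The closing observation is that $W \in F^T$ defined by $W_t := \mu(t, \mathbf{x})$ is a cocircuit of $M$: the non-vanishing of $Z$ forces $\underline{\Phi}(|\mathbf{z}'|)$ to be an independent set of rank $d-1$ in $M|T'$, so appending the basis of $\underline{M}/T'$ yields an independent set $|\mathbf{x}|$ of rank $r-1$ in $\underline{M}$, whence $W$ is (up to sign) a fundamental cocircuit of $M$. Perfectness of $F$ then gives $(\Phi V) \cdot W \in N_F$, and therefore $V \cdot Z = \gamma \cdot (\Phi V) \cdot W \in N_F$.

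The main obstacle will be the bookkeeping in the computation of $V \cdot Z$: identifying the vanishing terms (those with $\underline{\Phi}(y_i) = 0$, those with $t \in T' - \underline{\Phi}(|\mathbf{y}|)$ where $\mu|T'$ has a repeated argument, and those with $t \in T - T'$ where $\Phi V$ vanishes) and tracking the signs and $\Phi$-scalars into $\gamma$. The substantive step is appending the basis $\{b_i\}$ to lift $\mu|T'$-values to $\mu$-values on $T$, so that $W$ becomes a cocircuit of the ambient $M$ rather than merely of $M|T'$; once that is in place, the conclusion via perfectness is immediate.
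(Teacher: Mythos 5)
Your argument is correct, but for the substantive inclusion $\Phi^{-1}(\cV_M)\subseteq\cV_{\Phi^{-1}(M)}$ it takes a genuinely different route from the paper's. The paper avoids computation entirely: given a nonzero $V$ with $\Phi\cdot V\in\cV_M$, it invokes Baker--Bowler's construction of an $F$-matroid $N$ whose circuit set is exactly $F^\times\cdot V$, notes that $\Phi\cdot\cC_N\perp\cC_M^\ast$ makes $\Phi:N\to M$ a morphism by \autoref{thm: cryptomorphisms}, deduces from \autoref{factorization of surjective strong maps through quotients} that $\Phi^{-1}(M)$ is a quotient of $N$, and concludes $V\in\cC_N\subseteq\cV_{\Phi^{-1}(M)}$ from the fact that circuits of a matroid are vectors of each of its quotients. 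You instead verify $V\perp Z$ for each cocircuit $Z$ of $\Phi^{-1}(M)$ by unwinding the definitions of $\Phi^{-1}(\mu)$ and of fundamental cocircuits; this essentially replays, in a special case, the sign bookkeeping already carried out in the proof of \autoref{thm: cryptomorphisms}, and the structural points you flag are all sound: the terms indexed by $S-|\mathbf{y}|$ vanish because $\mu(t,\mathbf{x})$ acquires a repeated argument, those over $T-\im(\uPhi)$ vanish because $\Phi\cdot V$ does, nonvanishing of $Z$ forces $\gamma\in F^\times$, and $W_t=\mu(t,\mathbf{x})$ is either zero or a scalar multiple of a fundamental cocircuit of $M$ since $|\mathbf{x}|$ is independent of size $r-1$. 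The paper's proof is shorter and reuses machinery already in place (at the price of citing the single-circuit matroid construction and the quotient-circuits-are-vectors fact); yours is self-contained modulo the bookkeeping. One cosmetic remark: perfectness of $F$ is not actually needed at either point where you invoke it --- $\cV_{\Phi^{-1}(M)}=(\cC^\ast_{\Phi^{-1}(M)})^\perp$ is the \emph{definition} of the vector set, and $(\Phi\cdot V)\perp W$ holds because $\Phi\cdot V\in\cV_M=(\cC_M^\ast)^\perp$ by hypothesis.
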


\begin{proof}
As any $F$-matroid is a quotient of itself, by \autoref{factorization of surjective strong maps through quotients}, it follows that the map $\Phi: \Phi^{-1}(M) \rightarrow M$ is a morphism. Thus $\Phi \cdot \mathcal{V}_{\Phi^{-1}(M)} \subseteq \mathcal{V}_M$.

Let $X \in \Phi^{-1}(\mathcal{V}_M) - \{0\}$. By \cite[Thm.~3.12 and Thm.~3.21]{Baker-Bowler19}, there exists an $F$-matroid $N$ such that $\mathcal{C}_N = \{\alpha X \mid \alpha \in F^\times\}$. As $\Phi \cdot \mathcal{C}_N \subseteq \mathcal{V}_M$, which is equivalent to $\Phi \cdot \mathcal{C}_N \perp \mathcal{C}^*_M$, we conclude, by \autoref{thm: cryptomorphisms}, that $\Phi: N \rightarrow M$ is a morphism. Thus by \autoref{factorization of surjective strong maps through quotients}, $\Phi^{-1}(M)$ is a quotient of $N$. By \cite[Rem.~2.12]{Jarra-Lorscheid24}, we have $\mathcal{C}_N \subseteq \mathcal{V}_{\Phi^{-1}(M)}$.
\end{proof}

\subsection{Minors of morphisms}
In this subsection we show how to construct minors of morphisms.

\begin{df}
    Let $\Phi$ be a submonomial $F$-matrix indexed by $T\times S$, and $A \subseteq S$ and $B\subseteq T$ subsets such that $\underline{\Phi}(\underline{A}) \subseteq \underline{B}$. We define:
    \[
    \begin{array}{lccl}
    \text{(\textit{Contraction} of }\Phi \text{ by }A\to B\text{)} & \Phi / (A\to B) & := &   (\Phi_{i,j})_{(i,j) \in B^c \times A^c};\\
    \text{(\textit{Restriction} of }\Phi\text{ to }A \rightarrow B\text{)} & \Phi | (A\to B) & := & (\Phi_{i,j})_{(i,j) \in B \times A}.\\
    \end{array}
    \]
\end{df}

\begin{rem}
\label{rem: contraction and deletion} 
The deletion of a morphism is the restriction of its transpose to the complementary subsets. More explicitly, consider a submonomial $F$-matrix $\Phi$ indexed by $T\times S$ and subsets $A \subseteq S$ and $B\subseteq T$ such that $\underline{\Phi}(\underline{A}) \subseteq \underline{B}$. Since $\underline{\Phi}^t(\underline{B^c}) \subseteq \underline{A^c}$ for the respective complements $A^c=S-A$ and $B^c=T-B$ of $A$ and $B$, we define:
\[
    \begin{array}{lccl}
    \text{(\textit{Deletion} of }A \to B \text{ from }\Phi^t\text{)} & \Phi^t \backslash (A\to B) & := &  \Phi^t|(B^c \to A^c).
    \end{array}
    \]
Taking transposes yields the following alternative presentations of the deletion
\[
 \Phi^t \backslash (A\to B) \ = \ \big(\Phi|(A^c\to B^c)\big)^t \ = \ \big(\Phi /  (A\to B)\big)^t.
\]
\end{rem}

\begin{thm}
\label{thm: minors of morphisms}
Let $N$ and $M$ be two $F$-matroids on $S$ and $T$, respectively, $\Phi: N \rightarrow M$ a morphism, and $A \subseteq S$ and $B \subseteq T$ subsets such that $\underline{\Phi}(\underline{A}) \subseteq \underline{B}$. Then
\begin{enumerate}
    \item \label{contraction of morphism} $\Phi/(A \rightarrow B): N/A \rightarrow M/B$ is a morphism;
    \item \label{restriction of morphism} $\Phi|(A \rightarrow B): N|A \rightarrow M|B$ is a morphism;
    \item \label{deletion of morphism} $\Phi^t\backslash(A \rightarrow B): M^* \backslash B \rightarrow N^* \backslash A$ is a morphism.   
\end{enumerate}
\end{thm}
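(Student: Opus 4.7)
The plan is to establish (2) directly from the Grassmann--Pl\"ucker cryptomorphism \autoref{thm: cryptomorphisms}(3), and then deduce (3) and (1) by transpose/duality via \autoref{duality for strong maps}.

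For (2), first note that $\Phi|(A\to B)$ is automatically submonomial, since any submatrix of a submonomial matrix is. Write $w = \rk(N)$, $w_A = \rk(N|A)$, $r = \rk(M)$, $r_B = \rk(M|B)$, and fix the basis completions $\textbf{a} = (a_1, \dotsc, a_{w-w_A}) \subseteq S\setminus A$ of $\underline{N}/A$ and $\textbf{b} = (b_1, \dotsc, b_{r-r_B}) \subseteq T\setminus B$ of $\underline{M}/B$ that appear in the definitions of $\nu|A$ and $\mu|B$. For $y_0, \dotsc, y_{w_A} \in A$ and $x_2, \dotsc, x_{r_B} \in B$, the hypothesis $\uPhi(\uA)\subseteq\uB$ yields $\underline{\Phi|(A\to B)}(y_k) = \uPhi(y_k)$; unfolding the definitions of $\nu|A$ and $\mu|B$ then shows that the Grassmann--Pl\"ucker condition for $\Phi|(A\to B): N|A \to M|B$ is exactly the partial sum over $k = 0, \dotsc, w_A$ of the Grassmann--Pl\"ucker relation \autoref{thm: cryptomorphisms}(3) for $\Phi$ applied to the enlarged tuples $(y_0, \dotsc, y_{w_A}, \textbf{a}) \in S^{w+1}$ and $(x_2, \dotsc, x_{r_B}, \textbf{b}) \in T^{r-1}$.

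The crux is that every omitted term $k = w_A+1, \dotsc, w$ vanishes: in such a term the $\nu$-factor evaluates $\nu$ on a $w$-tuple whose underlying set is contained in $A \cup (|\textbf{a}|\setminus\{a_{k-w_A}\})$, which has rank at most $w_A + (w - w_A - 1) = w - 1$ in $\underline{N}$, since removing one element from a basis of $\underline{N}/A$ drops the rank of the contraction by one. This tuple is therefore not a basis of $\underline{N}$, so $\nu$ vanishes on it by the basis description of $\underline{[\nu]}$ in \autoref{ex: usual matroid and Krasner-matroid} (or directly by \ref{def: mat GP1} if entries repeat). Consequently the partial sum equals the full Grassmann--Pl\"ucker sum, which lies in $N_F$ because $\Phi: N \to M$ is a morphism. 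This proves (2).

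For (3), apply (2) to the dual morphism $\Phi^t : M^* \to N^*$ provided by \autoref{duality for strong maps}, together with the complementary subsets $B^c \subseteq T$ and $A^c \subseteq S$; the $\F_1$-linearity of $\uPhi$ translates $\uPhi(\uA)\subseteq\uB$ into $\underline{\Phi^t}(\underline{B^c})\subseteq\underline{A^c}$. The resulting morphism $\Phi^t|(B^c\to A^c): M^*|B^c \to N^*|A^c$ coincides with $\Phi^t\backslash(A\to B): M^*\backslash B \to N^*\backslash A$ by \autoref{rem: contraction and deletion}. For (1), transpose (3) and invoke \autoref{duality for strong maps} one more time: this produces a morphism $(\Phi^t\backslash(A\to B))^t = \Phi/(A\to B)$ from $(N^*\backslash A)^* = N/A$ to $(M^*\backslash B)^* = M/B$. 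The main obstacle is the rank-deficiency vanishing argument in the second paragraph; everything else amounts to bookkeeping with transposes and $F$-matroid duality.
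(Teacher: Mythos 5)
Your proof is correct, and its overall architecture (prove the restriction case first, then obtain deletion and contraction by transposing and invoking \autoref{duality for strong maps} and \autoref{rem: contraction and deletion}) coincides with the paper's. The genuine difference is in how part (2) is established. The paper argues with vectors: it takes $V\in\cV_{N|A}$, extends it by zero to a vector of $N$ (citing Anderson's result that vectors of $N$ supported on $A$ restrict to, and extend from, vectors of $N|A$), pushes it through $\Phi$, and restricts back to $B$. You instead work entirely on the Grassmann--Pl\"ucker side of \autoref{thm: cryptomorphisms}: you pad the tuples for $N|A$ and $M|B$ with the fixed basis completions $\textbf{a}$ of $\underline N/A$ and $\textbf{b}$ of $\underline M/B$ from the definition of the minors, observe that the Pl\"ucker sum for $\Phi|(A\to B)$ is the partial sum over $k\le w_A$ of the Pl\"ucker sum for $\Phi$ on the padded tuples, and kill the remaining terms by the rank bound $\rk_{\underline N}\big(A\cup(|\textbf{a}|\setminus\{a_{k-w_A}\})\big)\le w-1$, which forces the corresponding $\nu$-value to vanish. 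Both arguments are sound; yours has the advantage of staying self-contained within the paper's own cryptomorphism and avoiding the external input from Anderson on extension of vectors along restrictions, at the cost of the bookkeeping with padded tuples. One could also note that your computation is essentially the same padding manipulation the paper itself uses in the proof of \autoref{factorization of surjective strong maps through quotients}, so it fits the paper's toolkit well.
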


\begin{proof}
We first prove \eqref{restriction of morphism}. Let $V = (v_a)_{a \in A} \in \cV_{N|A}$. By \cite[Prop. 4.5]{Anderson19}, there exists $X = (x_s)_{s \in S} \in \cV_N$ such that $x_e = v_e$ for $e \in A$, and $x_e = 0$ for $e \in S-A$. As $\Phi: N \to M$ is a morphism, $\Phi \cdot V \in \cV_M$. Let $Y = (y_t)_{t \in T} := \Phi \cdot V$. As $\underline{\Phi}(\underline{A}) \subseteq \underline{B}$, one has that $y_t = 0$ for $t \in T-B$. Again, by \cite[Prop. 4.5]{Anderson19}, we conclude that $Z := (y_b)_{b \in B} \in \cV_{M|B}$. As $Z = \big(\Phi| (A \to B)\big) \cdot V$, the result follows.

Item \eqref{deletion of morphism} follows from \autoref{duality for strong maps} and \eqref{restriction of morphism}.

Next we prove \eqref{contraction of morphism}. By \autoref{duality for strong maps}, $\Phi^t: M^* \to N^*$ is a morphism. As $\underline{\Phi^t}(\underline{B^c}) \subseteq \underline{A^c}$, it follows that $\Phi^t|(B^c \to A^c): M^* | B^c \to N^* | A^c$ is a morphism, by \eqref{restriction of morphism}. Thus the result follows from \autoref{rem: contraction and deletion} and another application of \autoref{duality for strong maps}.
\end{proof}

\subsection{Functoriality}
The next Proposition shows that morphisms of matroids with coefficients are well behaved with respect to idyll morphisms.

Let $f: F \rightarrow G$ be a morphism of perfect idylls and $\Phi$ a submonomial $F$-matrix indexed by $T\times S$. 

\begin{df}
The \textit{push-forward} of $\Phi$ by $f$ is the submonomial $G$-matrix
\[
f_*(\Phi) := \big(f(\Phi_{i,j})\big)_{(i,j) \in T \times S}.
\]
\end{df}

\begin{prop}
\label{prop-functoriality}
Let $N$ and $M$ be $F$-matroids with respective underlying sets $S$ and $T$. If $\Phi: N \rightarrow M$ is a morphism of $F$-matroids, then $f_*(\Phi): f_*(N) \rightarrow f_*(M)$ is a morphism of $G$-matroids. Therefore $f_*$ defines a functor from $\cat{Mat}_F$ to $\cat{Mat}_G$.
\end{prop}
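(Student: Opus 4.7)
The plan is to verify the morphism condition via the Plücker-style cryptomorphism \autoref{thm: cryptomorphisms}\eqref{crypto3}, since this is the characterization that interacts most cleanly with an idyll morphism. First I would observe that $f_*(\Phi)$ is automatically submonomial: because $f$ is a morphism of idylls, $f(a) \cdot f(a^{-1}) = f(1) = 1$ for $a \in F^\times$, so $f$ sends $F^\times$ into $G^\times$ and hence preserves the ``nonzero'' pattern of $\Phi$. In particular $\underline{f_*(\Phi)} = \underline{\Phi}$ and $f_*(\Phi)_s = f(\Phi_s)$ for every $s \in \underline{S}$. Moreover, if $\mu : T^r \to F$ is a Grassmann-Plücker function representing $M$, then $f \circ \mu$ represents $f_*(M)$, and one checks directly from the definition of $\widetilde{(\cdot)}$ that $\widetilde{f \circ \mu} = f \circ \widetilde{\mu}$ (both sides vanish exactly when some input is $0$, and agree otherwise since $f(0) = 0$).

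Next, given that $\Phi : N \to M$ is a morphism, choose Grassmann-Plücker functions $\nu$ and $\mu$ representing $N$ and $M$, and apply \autoref{thm: cryptomorphisms}\eqref{crypto3} to produce, for every $y_0,\dots,y_w \in S$ and $x_2,\dots,x_r \in T$, an element
\[
\sigma \ := \ \sum_{k=0}^w (-1)^k \cdot \nu(y_0,\dots,\widehat{y_k},\dots,y_w) \cdot \Phi_{\underline{\Phi}(y_k), y_k} \cdot \widetilde{\mu}\big(\underline{\Phi}(y_k), x_2, \dots, x_r\big) \ \in \ N_F.
\]
Since $f$ is multiplicative, additive on the formal sums defining $N_F$, and sends $N_F$ into $N_G$, applying $f$ termwise to $\sigma$ yields
\[
\sum_{k=0}^w (-1)^k \cdot (f\circ\nu)(y_0,\dots,\widehat{y_k},\dots,y_w) \cdot f_*(\Phi)_{\underline{f_*(\Phi)}(y_k), y_k} \cdot \widetilde{f\circ\mu}\big(\underline{f_*(\Phi)}(y_k), x_2, \dots, x_r\big) \ \in \ N_G,
\]
where I use the identifications from the first paragraph. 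This is exactly condition \eqref{crypto3} for $f_*(\Phi) : f_*(N) \to f_*(M)$, so \autoref{thm: cryptomorphisms} yields that $f_*(\Phi)$ is a morphism of $G$-matroids.

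For functoriality, $f_*$ clearly preserves identities since $f(1) = 1$. Compositionality $f_*(\Psi \cdot \Phi) = f_*(\Psi) \cdot f_*(\Phi)$ reduces to the multiplicativity of $f$ on each entry: as both $\Psi$ and $\Phi$ are submonomial, the $(i,j)$-entry of the product is either $0$ or a single product $\Psi_{i,k}\Phi_{k,j}$, which $f$ respects. I do not expect any essential obstacle here; the only delicate point is the compatibility $\widetilde{f\circ\mu} = f\circ\widetilde{\mu}$ and the fact that $f$ preserves nonzero-ness, both of which follow from $f$ being a morphism of pointed monoids with $f(1) = 1$.
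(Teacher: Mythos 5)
Your proof is correct, but it takes a different route through \autoref{thm: cryptomorphisms} than the paper does. You verify condition \eqref{crypto3} (the Pl\"ucker-type relations): since $f$ sends $F^\times$ into $G^\times$ (so $\underline{f_*(\Phi)}=\underline{\Phi}$ and $f_*(\Phi)$ stays submonomial), commutes with the extension $\widetilde{(\cdot)}$, and induces a semiring map $\N[F^\times]\to\N[G^\times]$ carrying $N_F$ into $N_G$, the relation for $(\nu,\Phi,\mu)$ pushes forward termwise to the relation for $(f\circ\nu, f_*(\Phi), f\circ\mu)$. The paper instead verifies condition \eqref{crypto2}: it shows $f^+\big(Z\cdot(\Phi\cdot Y)\big)=f_*(Z)\cdot\big(f_*(\Phi)\cdot f_*(Y)\big)$, so orthogonality of $\Phi\cdot\cC_N$ with $\cC_M^\ast$ is preserved, and then invokes \cite[Lemma 3.39]{Baker-Bowler19} to relate the circuits and cocircuits of $f_*(N)$ and $f_*(M)$ to the push-forwards of those of $N$ and $M$. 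Your argument buys self-containedness --- it needs no external fact about how circuits behave under push-forward, only the definition $f_*(M)=[f\circ\mu]$ --- at the cost of manipulating the longer Grassmann--Pl\"ucker expression; the paper's argument is shorter but leans on the cited lemma. One small point worth making explicit in your write-up: the factor $(-1)^k$ is preserved because $f(-1_F)=-1_G$, which follows from applying $f$ to $1+(-1)\in N_F$ and using the uniqueness of the additive inverse of $1$ in $G$.
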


\begin{proof}
Let $f^+: \N[F^\times] \to \N[G^\times]$ be the morphism of semirings induced by $f$. For a set $E$, we also use $f_*$ to denote the map $F^E \to G^E$ given by $f_*\big( (x_e)_{e \in E}\big) := \big( f(x_e)\big)_{e \in E}$. 

Note that 
\[
f^+\big(Z \cdot (\Phi \cdot Y)\big) \; =\; f_*(Z) \cdot \big(f_*(\Phi) \cdot f_*(Y) \big)
\]
for all $Z \in F^T$ and $Y \in F^S$. In particular, if $Z \perp \Phi\cdot Y$, then $f_*(Z) \perp f_*(\Phi) \cdot f_*(Y)$. Therefore the result follows from \autoref{thm: cryptomorphisms} and \cite[Lemma 3.39]{Baker-Bowler19}.
\end{proof}

\begin{cor}
\label{functoriality and classical strong maps}
Let $M$ and $N$ be $F$-matroids on the sets $T$ and $S$, respectively, and $\Phi: N \rightarrow M$ a morphism. Then $\underline{\Phi}: \underline{S} \rightarrow \underline{T}$ is an $\F_1$-linear strong map $\underline{\Phi}: \underline{N} \rightarrow \underline{M}$.
\end{cor}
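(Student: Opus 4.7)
The plan is to reduce the statement to the already-established case $F=\K$ via functoriality. Since $\K$ is the terminal object in $\Idylls$, there is a unique morphism of idylls $f: F \to \K$. By \autoref{prop-functoriality}, the push-forward $f_\ast\Phi: f_\ast N \to f_\ast M$ is a morphism of $\K$-matroids. By construction, the underlying classical matroids satisfy $\underline{f_\ast N}=\underline N$ and $\underline{f_\ast M}=\underline M$.

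The main point to verify is that passing to the underlying map commutes with push-forward, i.e.\ $\underline{f_\ast\Phi}=\underline\Phi$ as maps $\underline S\to\underline T$. By \autoref{def: underlying map of submonomial matrix}, this amounts to showing that $(f_\ast\Phi)_{i,j}\neq 0$ if and only if $\Phi_{i,j}\neq 0$. Since $f$ is multiplicative with $f(1)=1$, any nonzero $a\in F^\times$ has a multiplicative inverse, whence $f(a)\cdot f(a^{-1})=1$ in $\K$ forces $f(a)\neq 0$; conversely $f(0)=0$ by definition. Thus $f$ restricts to a map $F^\times\to\K^\times$, and the underlying maps of $\Phi$ and $f_\ast\Phi$ coincide.

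Putting this together, $\underline\Phi=\underline{f_\ast\Phi}: \underline{f_\ast N}\to\underline{f_\ast M}$, where $f_\ast\Phi$ is a $\K$-morphism. By \autoref{classical strong maps}, the underlying map of any $\K$-morphism is an $\F_1$-linear strong map of the underlying classical matroids. Hence $\underline\Phi:\underline N\to\underline M$ is an $\F_1$-linear strong map, as claimed.

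I do not anticipate any real obstacles here; the content of the corollary is entirely bundled in (a) the functoriality of the push-forward from \autoref{prop-functoriality} and (b) the translation between $\K$-morphisms and $\F_1$-linear strong maps from \autoref{classical strong maps}. The only mildly non-trivial step is checking that idyll morphisms are nonzero-preserving, which follows at once from multiplicativity and the normalization $f(1)=1$.
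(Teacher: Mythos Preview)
Your proof is correct and follows essentially the same approach as the paper: push forward along the unique idyll morphism $f:F\to\K$, invoke \autoref{prop-functoriality} to get a $\K$-morphism $f_\ast\Phi$, observe that $\underline{f_\ast\Phi}=\underline\Phi$ and $\underline{f_\ast N}=\underline N$, $\underline{f_\ast M}=\underline M$, and conclude via \autoref{classical strong maps}. Your additional justification that idyll morphisms preserve nonzero elements is a welcome detail that the paper leaves implicit.
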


\begin{proof}
Let $f: F \rightarrow \K$ be the unique idyll morphism from $F$ to $\K$. By \autoref{prop-functoriality}, $f_*(\Phi): f_*(N) \rightarrow f_*(M)$ is a morphism. As $\underline{\Phi} = \underline{f_*(\Phi)}$ and $\underline{f_*(P)} = \underline{P}$ for any $F$-matroid $P$, the Corollary follows from \autoref{classical strong maps}.
\end{proof}

\subsection{Remark on morphisms for idylls with involution}
\label{subsection: morphisms for idylls with involution}

Baker and Bowler formulate their theory of matroids with coefficients for idylls $F$ with an involution $x\mapsto\overline x$ (which is an automorphism $F\to F$ of order $2$). The most prominent example for such an involution are the complex numbers $\C$ together with the complex conjugate function $z \mapsto \overline z$. In this case, the dual of a $\C$-matroid $M$ with vectors $\cV_M\subseteq \C^T$ is the $\C$-matroid $M^{\ast}$ whose vectors $\cV_{M^{\ast}}$ is the Hermitian complement of $\cV_M$ in $\C^T$.

The presence of an involution changes the notion of orthogonality and therefore the notion of the dual of an $F$-matroid. This extends to morphisms of $F$-matroids: the dual of a morphism $\Phi:N\to M$ of $F$-matroids (with respect to an involution $x\mapsto \overline x$) is described by the \emph{conjugate transpose} $\overline\Phi^t: M^{\ast}\to N^{\ast}$ of $\Phi$.

It is readily verified that the conjugate transpose is indeed a morphism of $F$-matroids, and that the cryptomorphic descriptions of morphisms and minor constructions extend to matroids over idylls with involution. We omit further details.


\section{Quiver \texorpdfstring{$F$}{F}-matroids}
\label{section: quiver matroids}

\subsection{Quiver representations}
\label{subsection: quiver representations}
A \emph{quiver} is a directed graph which allows loops and multiples arrows. More precisely, a quiver $Q$ is a tuple $(Q_0, Q_1, s, t)$ where $Q_0$ is the set of \emph{vertices}, $Q_1$ is the set of \emph{arrows}, and $s,t: Q_1 \rightarrow Q_0$ are functions mapping an arrow to its \emph{source} and \emph{target}, respectively. We denote an arrow of $Q$ by $s(\alpha) \xrightarrow{\;\;\alpha\;\;} t(\alpha)$.

Given a quiver $Q$, let $\cat{C}(Q)$ be the free category generated by $Q$. A \emph{representation} of $Q$ over a category $\cat{C}$ is a (covariant) functor 
$$
    \Gamma: \cat{C}(Q) \longrightarrow \cat{C}.
$$
Let $\Lambda:\cat{C}(Q)\rightarrow \cat{C}$ and $\Gamma:\cat{C}(Q)\rightarrow \cat{C}$ be two representations of $Q$. A \emph{morphism} of representations is a natural transformation $\Lambda \rightarrow \Gamma$. A \emph{subrepresentation} of $\Gamma$ is an isomorphism class of subfunctors of $\Gamma$ \footnote{A subfunctor of $\Gamma$ is a pair $(\Gamma', \tau)$, where $\Gamma'$ is a functor and $\tau: \Gamma' \rightarrow \Gamma$ a natural transformation such that $\tau_v: \Gamma'v \rightarrow \Gamma v$ is a monomorphism for all $v \in \textup{ob}\big(\cat{C}(Q)\big) = Q_0$. Two subfunctors $(\Gamma'_1, \tau_1)$ and $(\Gamma'_2, \tau_2)$ are isomorphic if there exists a natural isomorphism $T: \Gamma'_1 \rightarrow \Gamma'_2$ such that $\tau_1 = \tau_2 \circ T$.}.

We denote by $\cat{Rep}_{\cat{C}}(Q)$ the category of representations of $Q$ over $\cat{C}$.

\begin{ex}[Representations of quivers over vector spaces]
Let $k$ be a field, $\Vect_k$ the category of $k$-vector spaces, and $Q$ a quiver. A representation of $Q$ over $\Vect_k$ corresponds to the data $\Gamma=\big((V_v)_{v\in Q_0},(\varphi_\alpha)_{\alpha\in Q_1}\big)$, where $V_v$ is a $k$-vector space for each $v \in Q_0$ and $\varphi_\alpha: V_{s(\alpha)}\rightarrow V_{t(\alpha)}$ is a $k$-linear map for each $\alpha \in Q_1$. The \emph{dimension vector} of $\Gamma$ is the tuple $\underline{\dim}(\Gamma) := (\dim V_v)_{v \in Q_0}$.
    
Let $\Lambda = (V'_v,\varphi'_\alpha)$ and $\Gamma=(V_v,\varphi_\alpha)$ be two representations of $Q$ over $\Vect_k$. A morphism of representations $f: \Lambda \rightarrow \Gamma$ is a collection $(f_v:V'_v\rightarrow V_v)_{v\in Q_0}$ of $k$-linear maps, such that $f_{t(\alpha)}\circ \varphi'_\alpha = \varphi_\alpha \circ f_{s(\alpha)}$ for each arrow $\alpha$ in $Q_1$. The morphism $f=(f_v)$, of representations, is a monomorphism, an epimorphism or an isomorphism if each $f_v$ is, respectively. 

Each subrepresentation of $\Gamma = (V_v,\varphi_\alpha)$ is isomorphic to a unique representation $\Gamma' = (V'_v,\varphi'_\alpha)$ for which $V_v' \subseteq V_v$ is a linear subspace for all $v \in Q_0$ and for which $\varphi'_\alpha$ is the restriction of $\varphi_\alpha$ to a map $V_{s(\alpha)}' \rightarrow V_{t(\alpha)}'$ for all $\alpha \in Q_1$. For more details on representations of quivers over vector spaces, we refer the reader to \cite{Kirillov2016} and  \cite{Schiffler2014}. 
\end{ex}

\subsection{Quiver \texorpdfstring{$F$}{F}-matroids}

\begin{df}[Quiver $F$-Matroid]\label{def: Quiver F-Matroid}
Let $F$ be a perfect idyll and $Q=(Q_0,Q_1,s,t)$ a quiver. A \emph{quiver $F$-matroid of type $Q$}, or \emph{$(Q, F)$-matroid} for short, is a representation of $Q$ in $\cat{Mat}_F$. More explicitly, this is a tuple 
\[
M = \big((M_v)_{v\in Q_0}, (M_\alpha)_{\alpha \in Q_1}),
\]
where $M_v$ is an $F$-matroid for all $v \in Q_0$, and $M_\alpha: M_{s(\alpha)} \rightarrow M_{t(\alpha)}$ is a morphism of $F$-matroids for all $\alpha \in Q_1$. The \emph{rank vector} of $M$ is the tuple $\underline{\rk}(M) := \big(\rk(M_v)\big)_{v \in Q_0}$.
\end{df}

\begin{notation}
For $v \in Q_0$, we use $\mu_v: E_v^{r_v} \rightarrow F$ to denote a Grassmann-Pl\"ucker function such that $M_v = [\mu_v]$. In particular, $E_v$ is the underlying set of $M_v$ and $r_v$ is the rank of $M_v$.
\end{notation}

By \autoref{thm: cryptomorphisms}, the term
\[
 \sum_{k=0}^{r_{s(\alpha)}} (-1)^k \cdot \mu_{s(\alpha)}(y_0,\dotsc,\widehat{y_k},\dotsc,y_{r_{s(\alpha)}}) \cdot {M_{\alpha,\underline{M_{\alpha}}(y_k),y_k}} \cdot \widetilde{\mu_{t(\alpha)}} \big(\underline{M_{\alpha}}(y_k),x_2,\dotsc,x_{r_{t(\alpha)}}\big)
\]
is contained in $N_F$ for all $\alpha \in Q_1$, all $y_0,\dotsc,y_{r_{s(\alpha)}} \in E_{s(\alpha)}$ and all $x_2,\dotsc,x_{r_{t(\alpha)}} \in E_{t(\alpha)}$.

\begin{ex}[Flag $F$-matroids as quiver $F$-matroids] \label{flag matroids}
Let $F$ be a perfect idyll and 
$$
    M = (M_1, \dots, M_s)
$$
a sequence of $F$-matroids on the same set $E$. The sequence $M$ is a \emph{flag $F$-matroid} if $M_i$ is a quotient of $M_j$ for all $i < j$. By \cite[Thm.~2.17]{Jarra-Lorscheid24} and \autoref{ex: quotients}, $M$ is a flag $F$-matroid if, and only if,
$$
    \begin{tikzcd}[column sep =40pt]
         M_1 & \arrow[swap, "\id_E"]{l} M_2 & \arrow[swap,"\id_E"]{l} \cdots & \arrow[swap, "\id_E"]{l} M_{s-1} & \arrow[swap, "\id_E"]{l}   M_s,
    \end{tikzcd}
$$
is a quiver $F$-matroid.
\end{ex}

\subsection{Relation to quiver representations over \texorpdfstring{$\Fun$}{F1}}
\label{subsction: relation do quiver representations over f1}
Let $F$ be a perfect idyll. In the following, we denote the underlying set of an $F$-matroid $M$ by $E_M$.

\begin{df}
The assignment
\begin{itemize}  
    \item $M \mapsto \underline{E_M}$ for $F$-matroids
    \item $(\Phi: N \rightarrow M) \mapsto (\underline{\Phi}: \underline{E_N} \rightarrow \underline{E_M})$ for morphisms
\end{itemize}
defines the \emph{underlying set functor} $\cat{U}_F: \cat{Mat}_F \rightarrow \Vect_{\F_1}$.

The \emph{underlying $\Fun$-representation} of a $(Q, F)$-matroid $M$ is the representation of $Q$ over $\cat{Vect}_{\F_1}$ given by $\cat{U}_F(M) := \big((\underline{E_{M_v}}), (\underline{M_\alpha})\big)$.
\end{df}

Let $V$ be a pointed finite set, $E= V-\{0\}$ and $n=\# E$. Let $\mu_V: E^n\rightarrow \K$ be the unique Grassmann-Pl\"ucker function of rank $n$ on $E$, which sends ${\bf x}$ to $1$ if $|{\bf x}| = E$, and to $0$ otherwise. Given an $\F_1$-linear map $g: V \to W$, let $g_\K$ be the unique submonomial $\K$-matrix such that $\underline{g_\K} = g$, which is indexed by $(W - \{0\}) \times (V - \{0\})$ and whose entries are
\[
g_{\K, w, v} \ := \ \begin{cases}
                    1 & \text{if } g(v) = w,\\
                    0 &\text{if } g(v) \neq w.
                    \end{cases}
\]

\begin{lemma}
Let $g: V \to W$ be an $\F_1$-linear map and $M = [\mu]$ a $\K$-matroid on $W - \{0\}$. Then $g_\K: [\mu_V] \rightarrow M$ is a morphism of $\K$-matroids.
\end{lemma}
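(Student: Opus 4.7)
The plan is to invoke \autoref{factorization of surjective strong maps through quotients} and show that $g_\K^{-1}(M)$ is a quotient of $[\mu_V]$. The key observation enabling this reduction is that $[\mu_V]$ is the free $\K$-matroid on $E = V-\{0\}$: indeed, $\mu_V$ has rank $n=\#E$ and is nonzero exactly on orderings of $E$, so its underlying matroid is $U_n^n$.

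Next I would compute that $\cV_{[\mu_V]}=\{0\}$. Since $[\mu_V]^*$ has rank $0$, evaluating the definition of fundamental circuits on $\mu_V^*$ shows that $\cC^*_{[\mu_V]} = \cC_{[\mu_V]^*}$ consists precisely of the coordinate vectors $e_i\in\K^E$ (one per $i\in E$). Orthogonality of an $X\in\cV_{[\mu_V]}$ to each $e_i$ gives $X_i \in N_\K$ for all $i$; but $N_\K\cap\{0,1\}=\{0\}$, so $X=0$.

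With $\cV_{[\mu_V]}=\{0\}$ the inclusion $\cV_{[\mu_V]}\subseteq\cV_{g_\K^{-1}(M)}$ is automatic, which by \autoref{ex: quotients} says exactly that $g_\K^{-1}(M)$ is a quotient of $[\mu_V]$. Then \autoref{factorization of surjective strong maps through quotients} immediately yields that $g_\K\colon[\mu_V]\to M$ is a morphism of $\K$-matroids. The submonomial hypothesis is already built into $g_\K$ by construction, so no further check is required. I do not expect any substantive obstacle: the whole argument reduces to recognizing $[\mu_V]$ as the free matroid and invoking the factorization theorem. (Alternatively, one could verify condition \eqref{crypto3} of \autoref{thm: cryptomorphisms} directly, using that any $(n+1)$-tuple in $E$ of size $n$ must repeat an element, so that only the two summands corresponding to the repeated indices can be nonzero, giving a term of the form $(1+1)\cdot a\in N_\K$; but the quotient route is cleaner.)
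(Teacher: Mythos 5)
Your argument is correct, but it takes a genuinely different route from the paper. The paper verifies condition \eqref{crypto3} of \autoref{thm: cryptomorphisms} directly: since $\mathbf{y}$ has $\#V = n+1$ entries from the $n$-element set $V-\{0\}$, two entries coincide, so only the two corresponding summands survive and they are equal, yielding $2a \in N_\K$ --- exactly the pigeonhole argument you relegate to your parenthetical remark. Your main route instead identifies $[\mu_V]$ as the free matroid $U_n^n$ and computes $\cC^*_{[\mu_V]} = \{e_i \mid i \in E\}$, hence $\cV_{[\mu_V]} = \{0\}$; both computations check out. One simplification, though: once you know $\cV_{[\mu_V]} = \{0\}$, the detour through $g_\K^{-1}(M)$ and \autoref{factorization of surjective strong maps through quotients} is unnecessary, because the definition of a morphism is literally $g_\K \cdot \cV_{[\mu_V]} \subseteq \cV_M$, and $g_\K \cdot \{0\} = \{0\} \subseteq \cV_M$ holds trivially. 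Your approach buys a stronger and more conceptual statement (every submonomial $\K$-matrix out of the free matroid is a morphism to any target), at the cost of the auxiliary computation of the cocircuits; the paper's computation is shorter and stays entirely at the level of Grassmann--Pl\"ucker functions, which matches the style of the surrounding section.
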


\begin{proof}
Let $r$ be the rank of $M$. Let $\mathbf{y} \in (V - \{0\})^{\#V}$ and $\mathbf{x} \in (W - \{0\})^{r-1}$. Note that there exist $\ell \neq m$ such that $y_\ell = y_m$, as $\mu_V(\mathbf{y}_{\widehat{k}}) = 0$ for $k \neq \ell, m$, it follows that
\[
\begin{aligned}
    & \; \underset{k = 1}{\overset{\#V}{\sum}} \mu_V(\mathbf{y}_{\widehat{k}}) ~ {g_{\K, y_k}} ~ \widetilde{\mu} \big(g(y_k), \mathbf{x}\big)
\\
    = & \; \mu_V(\mathbf{y}_{\widehat{\ell}}) ~ {g_{\K, y_\ell}} ~ \widetilde{\mu} \big(g(y_\ell), \mathbf{x}\big)  \; + \; \mu_V(\mathbf{y}_{\widehat{m}}) ~ {g_{\K, y_m}} ~ \widetilde{\mu} \big(g(y_m), \mathbf{x}\big)
\\
    = & \; 2 \cdot \; \mu_V(\mathbf{y}_{\widehat{\ell}}) ~ {g_{\K, y_\ell}} ~ \widetilde{\mu} \big(g(y_\ell), \mathbf{x}\big),
\end{aligned}
\]
which is in $N_\K = \N - \{1\}$. Thus the result follows.
\end{proof}

\begin{df}
The assignment
\begin{itemize}
    \item $V \mapsto [\mu_V]$ for finite pointed sets
    \item $(g: V \to W) \mapsto (g_\K: [\mu_V] \rightarrow [\mu_W])$ for $\F_1$-linear maps
\end{itemize}
defines the functor $\cat{M}: \Vect_{\F_1} \rightarrow \cat{Mat}_\K$.
\end{df}

\begin{thm}\label{thm: adjunction}
The functor $\cat{M}$ is left adjoint to $\cat{U}_\K$.
\end{thm}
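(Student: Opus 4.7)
The goal is to construct, for each pointed set $V$ and each $\K$-matroid $M$, a natural bijection
\[
 \eta_{V,M}: \ \Hom_{\cat{Mat}_\K}\big(\cat{M}(V),M\big) \ \xrightarrow{\ \sim\ } \ \Hom_{\Vect_{\F_1}}\big(V,\cat{U}_\K(M)\big).
\]
I would define $\eta_{V,M}$ by $\Phi \mapsto \underline{\Phi}$, with proposed inverse $g \mapsto g_\K$. The forward map is well defined and functorial by \autoref{functoriality and classical strong maps}; the inverse is well defined by the Lemma immediately preceding the statement, which establishes that $g_\K:[\mu_V]\to M$ is a morphism of $\K$-matroids for every $\F_1$-linear map $g$, and it is functorial by the construction of $\cat{M}$ on morphisms.

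To verify that these two assignments are mutually inverse, I would use the formal identities $\underline{g_\K}=g$ (immediate from the definition of $g_\K$) and $(\underline{\Phi})_\K=\Phi$, where the second uses that a submonomial $\K$-matrix is uniquely determined by its underlying map since $\K^\times=\{1\}$. The conceptual reason this all works so cleanly is that $\cat{M}(V)$ is the free $\K$-matroid on $V-\{0\}$: its vector set is trivial, $\cV_{[\mu_V]}=\{0\}$, because its cocircuits are precisely the coordinate indicators. Hence the condition $g_\K\cdot \cV_{[\mu_V]}\subseteq \cV_M$ that appears in the Lemma holds vacuously, and no extra constraint is imposed on the underlying $\F_1$-linear map.

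Naturality in $V$ and $M$ then amounts to checking that for an $\F_1$-linear map $h:V'\to V$ and a $\K$-morphism $\Psi:M\to M'$, one has $\underline{\Psi\cdot \Phi\cdot \cat{M}(h)}=\underline{\Psi}\circ \underline{\Phi}\circ h$, which follows from the fact that the underlying map of a product of submonomial matrices is the composition of the underlying maps (see \autoref{def: underlying map of submonomial matrix}). I do not expect a real obstacle here: combined with \autoref{thm: K-matroids and F1-linear spaces}, which already identifies morphisms of $\K$-matroids with $\F_1$-linear strong maps of their underlying matroids, the whole statement reduces to the single observation that strong maps \emph{out of} a free matroid impose no condition beyond base-point preservation and $\F_1$-linearity—because every subset of $V$ containing the base point is a flat of $\widetilde{\underline{\cat{M}(V)}}$. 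The one thing worth stating carefully in the write-up is exactly this vacuity, so that the reader sees why the adjunction is, from the matroid side, genuinely free.
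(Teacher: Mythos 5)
Your proof is correct and follows essentially the same route as the paper: both exhibit the bijection $\Phi\mapsto\underline{\Phi}$ with inverse $g\mapsto g_\K$, relying on the preceding Lemma for well-definedness of the inverse and on the fact that a submonomial $\K$-matrix is uniquely determined by its underlying map. Your side observation that $\cV_{\cat{M}(V)}=\{0\}$, so that the morphism condition out of $\cat{M}(V)$ is vacuous, is a valid (and arguably cleaner) justification of that Lemma than the paper's direct computation with Grassmann--Pl\"ucker functions, but the overall structure of the adjunction argument is identical.
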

\begin{proof}
Let $M$ be a $\K$-matroid and $V$ a finite pointed set. We define the maps 
\begin{equation*}
    \begin{tikzcd}
        \textup{Hom}\big(\cat{M}(V), M\big)
        \arrow[r, shift left, "\alpha"] &
        \arrow[l, shift left, "\gamma"] 
        \textup{Hom}\big(V, \cat{U}_\K(M)\big)
    \end{tikzcd}
\end{equation*}
by
\begin{equation}\label{eq: adjunction cat}
    \alpha(\Phi) := \underline{\Phi}: V \rightarrow \underline{E_{M}} = \cat{U}_\K(M) \quad \text{and} \quad \gamma(g) := g_\K: \cat{M}(V) \rightarrow M.
\end{equation}
As $\alpha$ and $\gamma$ are mutually inverse, the result follows.
\end{proof}

\begin{rem}
The functor $\cat{U}_\K$ also has a right adjoint $\cat{M}_0$, which sends a finite pointed set $V$ to the unique $\K$-matroid of rank $0$ on $V - \{0\}$. Both $\cat{M}$ and $\cat{M}_0$ are right inverses to $\cat{U}_\K$.
\end{rem}

Let $Q$ be a quiver. Then the functors $\cat{U}_\K$ and $\cat{M}$ induce functors 
$$
    \begin{tikzcd}[row sep = -3pt, column sep = 15pt, %
    /tikz/column 1/.append style={anchor=base east}
    ,/tikz/column 2/.append style={anchor=base west}]
        \cat{U}_{Q}:\rep_{\cat{Mat}_\K}(Q)\arrow[r]  & \rep_{\cat{Vect}_\Fun}(Q)
        \\
        \Gamma  \arrow[r,mapsto] & \cat{U}_\K\circ \Gamma
    \end{tikzcd}
    \quad \text{ and } \quad 
    \begin{tikzcd}[row sep = -3pt, column sep = 15pt, %
    /tikz/column 1/.append style={anchor=base east}
    ,/tikz/column 2/.append style={anchor=base west}]
        \cat{M}_Q:\rep_{\cat{Vect}_\Fun}(Q) \arrow[r] &\rep_{\cat{Mat}_\K}(Q)
        \\
        \Lambda \arrow[r,mapsto] & \cat{M}\circ \Lambda.
    \end{tikzcd}
$$
The functors $\cat{U}_Q$ and $\cat{M}_Q$ satisfy the following corollary.

\begin{cor}\label{Cor: adjunction}
The functor $\cat{M}_Q$ is left adjoint to $\cat{U}_Q$.
\end{cor}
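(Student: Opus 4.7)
The strategy is to lift the pointwise adjunction $\cat{M}\dashv\cat{U}_\K$ of \autoref{thm: adjunction} to the functor categories $\rep_{\Vect_\Fun}(Q)$ and $\rep_{\Mat_\K}(Q)$ by applying it at every vertex $v\in Q_0$, and then to verify that the resulting bijection on vertexwise collections of morphisms restricts to a bijection on natural transformations, i.e.\ that compatibility with the arrows of $Q$ is preserved in both directions.

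Fix an $\Fun$-representation $\Lambda$ of $Q$ and a $(Q,\K)$-matroid $\Gamma$. A morphism $\Phi\colon\cat{M}_Q(\Lambda)\to\Gamma$ is a family $(\Phi_v\colon\cat{M}(\Lambda_v)\to\Gamma_v)_{v\in Q_0}$ of $\K$-matroid morphisms satisfying $\Gamma_\alpha\cdot\Phi_{s(\alpha)}=\Phi_{t(\alpha)}\cdot\cat{M}(\Lambda_\alpha)$ for every $\alpha\in Q_1$, while a morphism $g\colon\Lambda\to\cat{U}_Q(\Gamma)$ is a family $(g_v\colon\Lambda_v\to\underline{E_{\Gamma_v}})_{v\in Q_0}$ of $\Fun$-linear maps with $\underline{\Gamma_\alpha}\circ g_{s(\alpha)}=g_{t(\alpha)}\circ\Lambda_\alpha$ for every $\alpha\in Q_1$. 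Applying the pointwise bijections from \eqref{eq: adjunction cat} of \autoref{thm: adjunction}, I define
\[
 A_Q(\Phi) \ := \ (\underline{\Phi_v})_{v\in Q_0} \qquad \text{and} \qquad G_Q(g) \ := \ (g_{v,\K})_{v\in Q_0},
\]
which are mutually inverse on vertexwise families.

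The core of the argument is to show these maps respect naturality in the arrows. If $\Phi$ is natural and $\alpha\in Q_1$ has source $s$ and target $t$, then taking underlying maps of both sides of $\Gamma_\alpha\cdot\Phi_{s}=\Phi_{t}\cdot\cat{M}(\Lambda_\alpha)$ yields $\underline{\Gamma_\alpha}\circ\underline{\Phi_{s}}=\underline{\Phi_{t}}\circ\underline{\cat{M}(\Lambda_\alpha)}=\underline{\Phi_{t}}\circ\Lambda_\alpha$, where the last equality is the definition of $\cat{M}$ on $\Fun$-linear maps. Conversely, if $g$ is natural, then the two $\K$-matroid morphisms $\Gamma_\alpha\cdot g_{s,\K}$ and $g_{t,\K}\cdot\cat{M}(\Lambda_\alpha)$ from $\cat{M}(\Lambda_s)$ to $\Gamma_t$ have the same underlying $\Fun$-linear map (namely $\underline{\Gamma_\alpha}\circ g_s=g_t\circ\Lambda_\alpha$), and by \autoref{thm: K-matroids and F1-linear spaces} this forces them to coincide; hence $G_Q(g)$ is natural.

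Naturality of the bijection in $\Lambda$ and $\Gamma$ then follows vertexwise from \autoref{thm: adjunction}. I expect the one genuinely load-bearing step to be the direction $g\mapsto G_Q(g)$: because we cannot check equalities of $\K$-matroid morphisms by direct computation with their entries, we must invoke the faithful embedding of morphisms of $\K$-matroids into $\Fun$-linear strong maps provided by \autoref{thm: K-matroids and F1-linear spaces} in order to promote the naturality square in $\Vect_\Fun$ back to one in $\Mat_\K$.
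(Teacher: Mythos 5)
Your proof is correct and follows essentially the same route as the paper's: both lift the vertexwise adjunction of \autoref{thm: adjunction} to representations of $Q$. You are in fact more careful than the paper, which only asserts that the vertexwise maps $\alpha_Q=(\alpha_v)$ and $\gamma_Q=(\gamma_v)$ are mutually inverse without explicitly checking that they preserve the naturality squares; your verification of this (via $\underline{\Psi\cdot\Phi}=\underline{\Psi}\circ\underline{\Phi}$ in one direction and the fact that a morphism of $\K$-matroids is determined by its underlying $\Fun$-linear map in the other) correctly fills in the step the paper leaves implicit.
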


\begin{proof} Let $\Lambda$ be a representation of $Q$ over $\cat{Vect}_\Fun$ and $\Gamma$ a $(Q, \K)$-matroid. We define 
$$
    \begin{tikzcd}
        \textup{Hom}\big(\cat{M}_Q(\Lambda), \Gamma\big)
        \arrow[r, shift left, "\alpha_Q"] &
        \arrow[l, shift left, "\gamma_Q"] 
       \textup{Hom}\big(\Lambda, \cat{U}_Q(\Gamma)\big) 
    \end{tikzcd}
$$
by $\alpha_Q = \big(\alpha_v)_{v\in Q_0}$ and $\gamma_Q = \big(\gamma_v)_{v\in Q_0}$, with $\alpha_v$ and $\gamma_v$ defined as in \eqref{eq: adjunction cat}, for each $v$. As $\gamma_v\circ \alpha _v$ and $\alpha _v\circ\gamma_v$ are identity maps, it follows that  $\alpha_Q\circ \gamma_Q$ and $\gamma_Q\circ\alpha_Q$ are identity maps as well, which finishes the proof.   
\end{proof}

\begin{ex}[Quiver matroids of type $D_4$]\label{ex: quiver matroids of type D4}
Let $Q$ be a $D_4$-quiver and $\Lambda$ the $\Fun$-representation of $Q$ as illustrated in \autoref{fig: F1-representation Lambda}: the points vertically above $v_i$ (for $i=0, 1, 2, 3$) correspond the nonzero elements of $\Lambda_{v_i}$; the arrows above $\alpha_i$ (for $i=1,2,3$) illustrate the map $\Lambda_{\alpha_i}:\Lambda_{v_0}\to\Lambda_{v_i}$; the elements of $\Lambda_{v_0}$ without an emerging arrow above $\alpha_i$ are mapped to $0$. More to the point, \autoref{fig: F1-representation Lambda} illustrates the coefficient quiver of $\Lambda$; cf.\ \autoref{ex - last example}.

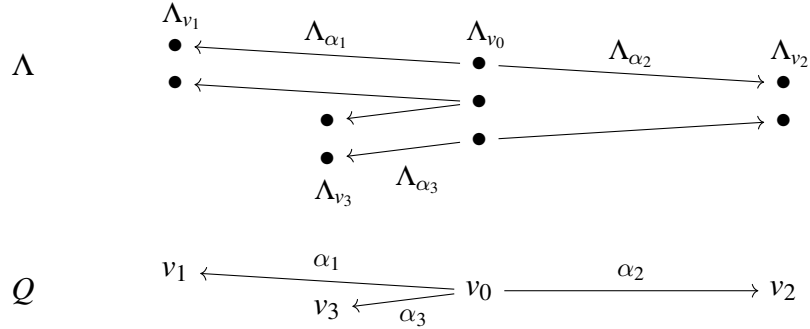
\begin{figure}[htb] 
 \[
  \begin{tikzpicture}[scale=1, x=2cm, y=0.5cm, font=\normalsize, vertices/.style={draw, fill=black, circle, inner sep=0pt},decoration={markings,mark=at position 0.5 with {\arrow{>}}}]
   \draw (1,6.5) node (2) {$\bullet$};
   \draw (1,5.5) node (5) {$\bullet$};
   \draw (2,4.5) node (7) {$\bullet$};
   \draw (2,3.5) node (9) {$\bullet$};
   \draw (3,6.0) node (1) {$\bullet$};
   \draw (3,5.0) node (4) {$\bullet$};
   \draw (3,4.0) node (6) {$\bullet$};
   \draw (5,5.5) node (3) {$\bullet$};
   \draw (5,4.5) node (8) {$\bullet$};
   \draw[->] (1) -- (2);
   \draw[->] (1) -- (3);
   \draw[->] (4) -- (5);
   \draw[->] (4) -- (7);
   \draw[->] (6) -- (8);
   \draw[->] (6) -- (9);
   \draw (3,0.0) node (v0) {$v_0$};
   \draw (1,0.5) node (v1) {$v_1$};
   \draw (5,0.0) node (v2) {$v_2$};
   \draw (2,-0.5) node (v3) {$v_3$};
   \draw[->] (v0) -- node[above] {\footnotesize $\alpha_1$} (v1);
   \draw[->] (v0) -- node[above] {\footnotesize $\alpha_2$} (v2);
   \draw[->] (v0) -- node[pos=0.4,below] {\footnotesize $\alpha_3$} (v3);
   \draw (0,6) node (G) {$\Lambda$};
   \draw (0,0) node (Q) {$Q$};
   \draw (3.05,6.8) node (G) {\small $\Lambda_{v_0}$};
   \draw (1.05,7.3) node (G) {\small $\Lambda_{v_1}$};
   \draw (5.05,6.3) node (G) {\small $\Lambda_{v_2}$};
   \draw (2.05,2.6) node (G) {\small $\Lambda_{v_3}$};
   \draw (2,6.8) node (G) {\small $\Lambda_{\alpha_1}$};
   \draw (4,6.3) node (G) {\small $\Lambda_{\alpha_2}$};
   \draw (2.6,2.95) node (G) {\small $\Lambda_{\alpha_3}$};
   \end{tikzpicture}
  \] 
  \caption{The $\Fun$-representation $\Lambda$ of type $D_4$} 
 \label{fig: F1-representation Lambda}     
\end{figure}

Next we determine all $(\Lambda,\K)$-matroids $M$ with rank vector $\br=(r_0,r_1,r_2,r_3)=(2,1,1,1)$.  Thus $M_{v_i}$ has corank $1$ for every $i=0, 1, 2, 3$ and therefore a unique circuit, which is the unique nonzero vector of $M_{v_i}$. In the following, we list the different choices for nonzero vectors (written as row vectors), ordered by the isomorphism type of $M_{v_i}$ (for $i\neq0$ and $i=0$, respectively): 
\[
 \begin{tabular}{l|ll||l|l}
  \text{type of $M_{v_i}$} & \text{nonzero vectors}
  & \qquad &
  \text{type of $M_{v_0}$} & \text{nonzero vectors} 
  \\
  \hline &&&& \ \\[-10pt]
  $U_1^1 \oplus U_1^0$    & $(1, 0), \quad (0, 1)$ 
  & \qquad &
  $U_2^2 \oplus U_1^0$    & $(1,0,0), \quad (0,1,0), \quad (0,0,1)$ 
  \\[5pt]
  $U_2^1$                 & $(1, 1)$ 
  & \qquad &
  $U_2^1 \oplus U_1^1$    & $(1,1,0), \quad (1,0,1), \quad (0,1,1)$ 
  \\[5pt]
  && \qquad &
  $U_3^2$                 & $(1,1,1)$ 
 \end{tabular}
\]

Let $\Phi_i$ be the unique submonomial $\K$-matrix such that $\underline{\Phi_i} = \Lambda_{\alpha_i}$ (for $i = 1, 2, 3$), i.e.\ 
\[
 \Phi_1 \ = \ \big[\begin{smallmatrix} 1&0&0\\0&1&0 \end{smallmatrix}\big], \qquad
 \Phi_2 \ = \ \big[\begin{smallmatrix} 1&0&0\\0&0&1 \end{smallmatrix}\big], \qquad
 \Phi_3 \ = \ \big[\begin{smallmatrix} 0&1&0\\0&0&1 \end{smallmatrix}\big].
\]
A quadruple of corank $1$ matroids $(M_{v_0},M_{v_1}, M_{v_2},M_{v_3})$ with respective nonzero vectors $(w_0,w_1, w_2, w_3)$ as above is a $(\Lambda,\K)$-matroid if and only if $\Phi_i\cdot w_0\subseteq\cV_{M_{v_i}}=\{0,w_i\}$. We illustrate such quadruples in \autoref{fig: quiver matroids of type D4} as follows: each vertex $w$ within the colored area labeled by $v_i$ represents a corank $1$ matroid $M_w$ on $\Lambda_{v_i}-\{0\}$ whose label $w$ is the unique nonzero vector of $M_w$. An arrow from a vertex with label $w$ in the green area (labeled by $v_0$) to a vertex with label $w'$ in a blue area (labeled by some $v_i$) indicates that $\Phi_i: M_w \to M_{w'}$ is a morphism. Therefore $(\Lambda, \K)$-matroids of rank $\textbf{r}$ correspond to connected full subquivers of $Q'$ with exactly one vertex per colored area.

\newcounter{tikz-counter}
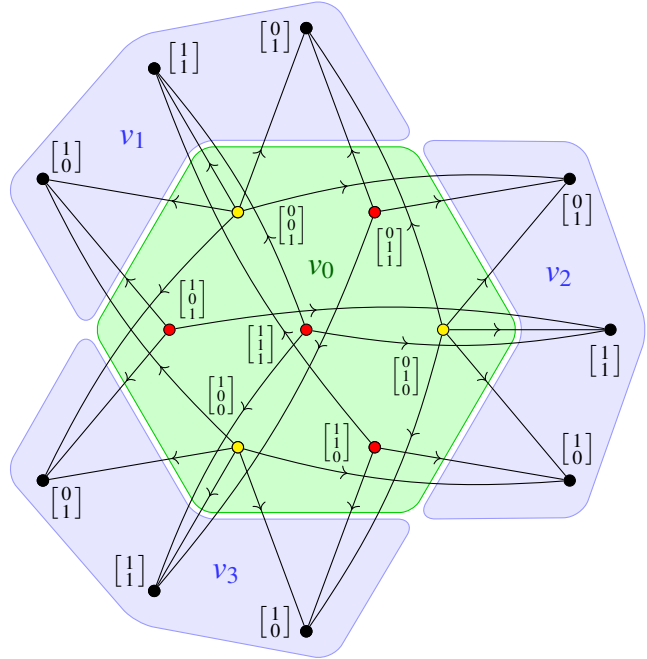
\begin{figure}[ht] 
  \[
   \begin{tikzpicture}[scale=1, font=\footnotesize, vertices/.style={draw, fill=black, circle, inner sep=0pt},decoration={markings,mark=at position 0.33 with {\arrow{>}}}]
    \draw[fill=green!20!white,draw=green!80!black,rounded corners=5pt] (0:2.8cm) -- (60:2.8cm) -- (120:2.8cm) -- (180:2.8cm) -- (240:2.8cm) -- (300:2.8cm) -- cycle;
    \node[color=green!40!black] at (0.2,0.8) {\normalsize $v_0$};
    \draw[fill=blue!10!white,draw=blue!40!white,rounded corners=7pt] (300:2.9cm) -- (0:2.9cm) -- (60:2.9cm) -- (34.5:4.4cm) -- (0:4.5cm) -- (325.5:4.4cm) -- cycle;
    \node[color=blue!80!white] at (12:3.4) {\normalsize $v_2$};
    \draw[fill=blue!10!white,draw=blue!40!white,rounded corners=7pt] (60:2.9cm) -- (120:2.9cm) -- (180:2.9cm) -- (154.5:4.4cm) -- (120:4.5cm) -- (85.5:4.4cm) -- cycle;
    \node[color=blue!80!white] at (132:3.4) {\normalsize $v_1$};
    \draw[fill=blue!10!white,draw=blue!40!white,rounded corners=7pt] (180:2.9cm) -- (240:2.9cm) -- (300:2.9cm) -- (274.5:4.4cm) -- (240:4.5cm) -- (205.5:4.4cm) -- cycle;
    \node[color=blue!80!white] at (252:3.4) {\normalsize $v_3$};
    \draw (0,0) node [draw,circle,inner sep=1.5pt,fill=red] (0) {};
    \foreach \a in {1,...,5}{\draw (0+\a*120: 1.8cm) node [draw,circle,inner sep=1.5pt,fill=yellow] (a\a) {};
                             \draw (180+\a*120: 1.8cm) node [draw,circle,inner sep=1.5pt,fill=red] (b\a) {};
                             \draw (\a*120: 4cm) node [draw,circle,inner sep=1.5pt,fill=black] (c\a) {};
                             \draw (30+\a*120: 4cm) node [draw,circle,inner sep=1.5pt,fill=black] (d\a) {};
                             \draw (-30+\a*120: 4cm) node [draw,circle,inner sep=1.5pt,fill=black] (e\a) {};
                            }
    \foreach \a in {1,...,3} {\setcounter{tikz-counter}{\a};
                              \draw (0) edge [-,postaction={decorate},bend right=10] (c\arabic{tikz-counter});
                              \draw (b\a) edge [-,postaction={decorate},bend left=10] (c\arabic{tikz-counter});
                              \draw (a\a) edge [-,postaction={decorate}] (c\arabic{tikz-counter});
                              \draw (a\a) edge [-,postaction={decorate}] (d\arabic{tikz-counter});
                              \draw (a\a) edge [-,postaction={decorate}] (e\arabic{tikz-counter});
                              \addtocounter{tikz-counter}{1};
                              \draw (b\a) edge [-,postaction={decorate},bend right=0] (d\arabic{tikz-counter});
                              \draw (a\a) edge [-,postaction={decorate},bend right=10] (e\arabic{tikz-counter});
                              \addtocounter{tikz-counter}{1};
                              \draw (a\a) edge [-,postaction={decorate},bend left=10] (d\arabic{tikz-counter});
                              \draw (b\a) edge [-,postaction={decorate},bend left=0] (e\arabic{tikz-counter});
                              }
    \draw (-0.6,-0.2)  node {\tiny $\smalltrivector111$};
    \draw (-25:1.45cm) node {\tiny $\smalltrivector010$};
    \draw ( 45:1.55cm) node {\tiny $\smalltrivector011$};
    \draw ( 98:1.4cm)  node {\tiny $\smalltrivector001$};
    \draw (165:1.55cm) node {\tiny $\smalltrivector101$};
    \draw (218:1.42cm) node {\tiny $\smalltrivector100$};
    \draw (285:1.55cm) node {\tiny $\smalltrivector110$};
    \draw (336:3.9cm)  node {$\smallvector10$};
    \draw (354:3.95cm) node {$\smallvector11$};
    \draw ( 24:3.9cm)  node {$\smallvector01$};
    \draw ( 96:3.9cm)  node {$\smallvector01$};
    \draw (114:3.95cm) node {$\smallvector11$};
    \draw (144:3.9cm)  node {$\smallvector10$};
    \draw (216:3.9cm)  node {$\smallvector01$};
    \draw (234:3.95cm) node {$\smallvector11$};
    \draw (264:3.9cm)  node {$\smallvector10$};
   \end{tikzpicture}
  \] 
  \caption{The quiver $Q'$, which illustrates all $(\Lambda,\K)$-matroids of rank $\br$} 
 \label{fig: quiver matroids of type D4}     
\end{figure}

Note that each red vertex $w$ of $Q'$ determines a unique $(\Lambda, \K)$-matroid $M$ of rank $\textbf{r}$ with $M_{v_0} = M_w$, while for each yellow vertex $w'$ there exist $3$ $(\Lambda, \K)$-matroids $M$ of rank $\textbf{r}$ with $M_{v_0} = M_{w'}$. In total, there are $13$ $(\Lambda, \K)$-matroids of rank $\textbf{r}$.
\end{ex}

\subsection{Duality and minors of quiver matroids}\label{Section: Duality and minors of quiver matroids}
Let $Q = (Q_0,Q_1,s,t)$ be a quiver. The \emph{opposite quiver of $Q$} is the quiver $Q^\ast$ with
\[
 Q_0^* := Q_0, \qquad Q_1^* := Q_1, \qquad s^* := t, \qquad t^* := s.
\]
Let $F$ be a perfect idyll and $M$ a $(Q, F)$-matroid. By \autoref{duality for strong maps}, the transpose $\Phi_\alpha^t: M_{t(\alpha)}^* \rightarrow M_{s(\alpha)}^*$ is a morphism for all $\alpha \in Q_1$.

\begin{df}
\label{duality for quiver matroids}
The \emph{dual} of $M$ is $M^* := \big((M_v^*)_{v\in Q_0^*}, (M_\alpha^t)_{\alpha \in Q_1^*}\big)$, which is a quiver $F$-matroid of type $Q^*$.
\end{df}

Let $\cat{U}_\K(M) = \big((\underline{E_v}), (\underline{M_\alpha})\big)$ be the underlying representation of $M$ and $\Lambda$ a subrepresentation of $\cat{U}_\K(M)$ where $\Lambda_v \subseteq \underline{E_v}$ for all $v \in Q_0$. Define $A_v := \Lambda_v - \{0\}$. By \autoref{thm: minors of morphisms}, for any $\alpha \in Q_1$, the following are morphisms:
\[
\begin{aligned}
M_\alpha/(A_{s(\alpha)} \rightarrow A_{t(\alpha)})&: M_{s(\alpha)}/A_{s(\alpha)} \rightarrow M_{t(\alpha)}/A_{t(\alpha)},
\\
M_\alpha|(A_{s(\alpha)} \rightarrow A_{t(\alpha)})&: M_{s(\alpha)}|A_{s(\alpha)} \rightarrow M_{t(\alpha)}|A_{t(\alpha)},
\\
M_\alpha^t \backslash (A_{s(\alpha)} \rightarrow A_{t(\alpha)})&: M_{t(\alpha)}^* \backslash A_{t(\alpha)} \rightarrow M_{s(\alpha)}^* \backslash A_{s(\alpha)}.
\end{aligned}
\]

\begin{df} We define the following quiver $F$-matroids:
\[
\begin{array}{lccc}
\text{(\emph{Contraction} of $M$ by $\Lambda$)} \; & M / \Lambda & := &   \big((M_v / A_v) \;,\; \big(M_\alpha/(A_{s(\alpha)} \rightarrow A_{t(\alpha)})\big)\big),
\\
\text{(\emph{Restriction} of $M$ to $\Lambda$)} \; & M | \Lambda & := &   \big((M_v | A_v) \;,\; \big(M_\alpha|(A_{s(\alpha)} \rightarrow A_{t(\alpha)})\big)\big),
\\
\text{(\emph{Deletion} of $\Lambda$ from $M^*$)} \; & M^* \backslash \Lambda & := & \big((M_v^* \backslash A_v) \;,\; \big(M_\alpha^t \backslash (A_{t(\alpha)} \rightarrow A_{s(\alpha)})\big)\big).
\end{array}
\]
\end{df}

\begin{rem}
Note that $M / \Lambda$ and $M | \Lambda$ are of type $Q$, and $M^* \backslash \Lambda$ is of type $Q^*$.
\end{rem}


\section{Bands and band schemes}
\label{secion: band schemes}

The vessel of our approach to moduli spaces of quiver matroids are band schemes, which form a streamlined approach to $\Fun$-geometry that is developed in \cite{Baker-Jin-Lorscheid24}. We review this theory in brevity in this section. 

In particular, we tailor certain results about ordered blueprints to bands, which is possible since bands are ordered blueprints of a particularly well-behaved type (see~\autoref{subsubsection: bands as ordered blueprints})---morally speaking, a band is the part of an ordered blueprint that is sensitive to matroid theory. For further details, we refer the reader to \cite{Baker-Jin-Lorscheid24}.

\subsection{Bands}
A band is a generalization of a ring, in the same way that an idyll is a generalization of a field. 

A \emph{pointed monoid} $B$ is a commutative monoid $B$ together with an element $0$ such that $0 \cdot a = 0$ for all $a \in B$. The \emph{ambient semiring} of a pointed monoid $B$ is the semiring
\[\textstyle
 B^+ \ := \ \N[B]\,/\,\gen{0} \ = \ \big\{ \sum a_i \, \big| \, a_i\in B-\{0\}\big\}.
\]
Note that $B$ embeds as a submonoid of $B^+$. An \emph{ideal} of $B^+$ is a subset $I$ such that \ $0 \in I$, \ \ $I + I = I$ \ and \ $B \cdot I = I$. 

A \emph{band} is a pointed monoid $B$ together with an ideal $N_B \subseteq B^+$ (the \emph{null set of $B$}) such that for every $a \in B$, there exists a unique element $b \in B$ such that $a+b\in N_B$, which we call the \emph{additive inverse of $a$} and which we denote by $-a$. We write $a-b$ for $a+(-b)$. A \emph{band morphism} is a multiplicative map $f:B\to C$ of bands that preserves $0$ and $1$ such that $\sum a_i \in N_B$ implies $\sum f(a_i) \in N_C$. We denote the category of bands by $\Bands$.

The null set $N_B$ of a band $B$ is \emph{generated by a subset $S$ of $B^+$} if it is generated by $S$ and $1-1$ as an ideal of $B^+$. We write $N_B=\gen S_{B^+}$ in this case.

\begin{ex}
 The main examples of interest to us are the following:
 \begin{enumerate}
  \item Every idyll is naturally a band. More to the point, an idyll is the same as a band $F$ with $F^\times=F-\{0\}$.
  \item A ring $R$ is a band with null set $N_R=\{\sum a_i\mid \sum a_i=0\text{ in }R\}$.
 \end{enumerate}
\end{ex}

\subsubsection{Free algebras}
Let $k$ be a band. A \emph{$k$-algebra} is a band $B$ together with a band morphism $\alpha_B:k\to B$ (called the \emph{structure map}). A \emph{$k$-linear morphism of $k$-algebras} from $B$ to $C$ is a band morphism $f:B\to C$ such that $\alpha_C=f\circ\alpha_B$. 

The \emph{free $k$-algebra in $T_1,\dotsc,T_n$} is the multiplicative submonoid
\[
 k[T_1,\dotsc,T_n] \ = \ \big\{ a\cdot T_1^{e_1}\dotsb T_n^{e_n} \, \big| \, a\in k,\, e_1,\dotsc,e_n\in\N \big\}
\]
of the polynomial semiring $k[T_1,\dotsc,T_n]^+$ over $k^+$. It contains $k$ as a submonoid and gains the structure of a band with respect to the null set generated by $N_k$. The free algebra $k[T_1,\dotsc,T_n]$ in $T_1,\dotsc,T_n$ satisfies the expected universal property: every map $f_0:\{T_1,\dotsc,T_n\}\to B$ into a $k$-algebra $B$ extends uniquely to a $k$-linear morphism $f:k[T_1,\dotsc,T_n]\to B$.

\subsubsection{Quotients}
A \emph{null ideal} of a band $B$ is an ideal $I$ of the ambient semiring $B^+$ that contains the null set $N_B$ and satisfies the substitution rule:
\begin{enumerate}[label=({SR})]
 \item if $a-c\in I$ and $c+\sum b_j\in I$, then $a+\sum b_j\in I$. \label{eq: axion null ideal}
\end{enumerate}
Given a subset $S$ of $B^+$, we denote the smallest null ideal that contains $S$ by $\gen{S}$. 

The \emph{quotient of $B$ by a null ideal $I$} is the following band $\bandquot BI$: as a set, $\bandquot BI$ is the quotient $B/\sim$ by the equivalence relation given by $a\sim b$ if and only if $a-b\in I$. It is a pointed monoid with product $[a]\cdot[b]=[ab]$. The null set of $\bandquot BI$ is defined as
\[\textstyle
 N_{\bandquot BI} \ = \ \big\{ \sum [a_i] \, \big| \, \sum a_i\in I \big\}.
\]
The association $a\mapsto[a]$ defines a band morphism $\pi_I:B\to\bandquot BI$, which satisfies the following universal property: every band morphism $f:B\to C$ with $\sum f(a_i)\in N_C$ for every $\sum a_i\in I$ factors into $f=\bar f\circ\pi_I$ for a uniquely determined morphism $\bar f:\bandquot BI\to C$.

\subsubsection{Localizations}
A \emph{multiplicative subset} of a band $B$ is a subset $S$ that contains $1$ and is closed under multiplication. Given such $S$, we define the \emph{localization of $B$ at $S$} as 
\[
 S^{-1}B \ = \ (S\times B) \, / \, \sim
\]
where $(s,a)\sim(s',a')$ if and only if $tsa'=ts'a$ for some $t\in S$. We denote by $\frac as$ the class of $(s,a)$ in $S^{-1}B$. The product $\frac as\cdot \frac bt=\frac{ab}{st}$ turns $S^{-1}B$ into a pointed monoid, and the null set 
\[\textstyle
 N_{S^{-1}B} \ = \ \big\{ \sum \frac{a_i}s \, \big| \, s\in S,\ \sum a_i\in N_B \big\}
\]
endows $S^{-1}B$ with the structure of a band.

The association $a\mapsto\frac a1$ defines a band morphism $\iota_S:B\to S^{-1}B$, which satisfies the following universal property: every band morphism $f:B\to C$ with $f(S)\subseteq C^\times$ factors into $f=f_S\circ\iota_S$ for a uniquely determined morphism $f_S:S^{-1}B\to C$.

A \emph{finite localization of $B$} is a localization of the form $B[h^{-1}]=S^{-1}B$ where $h\in B$ and $S=\{h^i\}_{i\in \N}$.

\subsubsection{Ideals}
An \emph{$m$-ideal} of a band $B$ is a subset $I$ of $B$ that contains $0$ such that $IB=I$. An $m$-ideal $\fp$ is \emph{prime} if $S=B-\fp$ is a multiplicative subset of $B$. The \emph{localization of $B$ at $\fp$} is $B_\fp=S^{-1}B$.

Every nonzero band $B$ has a unique maximal $m$-ideal, which is $\fm_B=B-B^\times$ and which is prime.

\subsubsection{Limits and colimits}
The category of a bands contains all limits and colimits. The regular partial field $\Funpm$ is initial and the zero band $\mathbf0=\{0\}$ is terminal in $\Bands$. The product of two bands $B$ and $C$ is the Cartesian product $B\times C$ together with the inherited structure as a band. The tensor product $B\otimes_k C$ of two $k$-algebras $B$ and $C$ is defined as the colimit of $B\leftarrow k\to C$.

\subsubsection{The universal ring}
The \emph{universal ring} of a band $B$ is the ring
\[
 B^+_\Z \ := \ \Z[B] / \gen{N_B} 
\]
where $\gen{N_B}$ is the ideal of $\Z[B]$ generated by all expression in $N_B$, interpreted as a subset of the monoid algebra $\Z[B]$. If $R$ is a ring with associated band $B$, then $B^+_\Z=R$.

The universal ring comes together with a map $B\to B^+_\Z$, which is universal among all maps $f:B\to R$ into a ring $R$ with the property that $\sum f(a_i)=0$ for all $\sum a_i\in N_B$. In other words, $\Rings$ form a reflective subcategory of $\Bands$.  

Note that the canonical map $B\to B^+_\Z$ is in general not injective. For example, $\K^+_\Z$ is the trivial ring $\mathbf0=\{0\}$.

\subsubsection{Bands as ordered blueprints}
\label{subsubsection: bands as ordered blueprints}

For the sake of this section, we assume that the reader is familiar with the notion of an ordered blueprint. We employ the notation from \cite{Baker-Lorscheid21} and \cite{Baker-Jin-Lorscheid24}. 

A band $B$ can be viewed as the ordered blueprint $B^\oblpr$ with underlying monoid $B$, ambient semiring $B^+$ and partial order generated by the relations $0\leq \sum a_i$ for which $\sum a_i\in N_B$. This association defines a fully faithful embedding $\Bands\to\OBlpr$. In fact, this embedding is a coreflection: every ordered blueprint $B=(B^\bullet,B^+,\leq)$ has an associated band $B^\band=B^\bullet$ whose nullset is
\[\textstyle
 N_{B^\band} \ = \ \big\{ \sum a_i \, \big| \, 0\leq \sum a_i \text{ in }B\big\},
\]
and the canonical map $(B^\band)^\oblpr\to B$ establishes an adjunction
\[
 \Hom_{\Bands}(B, \ C^\band) \quad \stackrel\sim\longrightarrow \quad \Hom_{\OBlpr}(B^\oblpr,\ C).
\]

This tight connection between bands and ordered blueprints makes it possible to specialize and transfer many results from ordered blueprints to bands.

\subsection{Band schemes}

The spectrum of a band is a topological space that comes with a structure sheaf of coordinate bands. A band scheme is a space that locally looks like the spectrum of a band. We review these definitions in the following.

\subsubsection{Band spaces}
A \emph{band space} is a topological space $X$ together with a sheaf $\cO_X$ in $\Bands$. Every point $x\in X$ comes with a \emph{stalk}, which is $\cO_{X,x}=\colim\cO_X(U)$ where $U$ ranges over all open neighbourhoods $U$ of $x$. A \emph{morphism of band spaces} is a continuous map $\varphi:X\to Y$ together with a morphism $\varphi^\#:\cO_Y\to\varphi_\ast\cO_X$ of sheaves that is \emph{local} in the sense that for every $x\in X$ and $y=\varphi(x)$, the induced morphism of stalks $\varphi^\#_x:\cO_{Y,y}\to\cO_{X,x}$ maps the maximal ideal $\fm_y$ of $\cO_{Y,y}$ to the maximal ideal $\fm_x$ of $\cO_{X,x}$. This defines the category $\BandSpaces$ of band spaces.

Every open subset $U$ of a band space $X$ is naturally a band space with respect to the restriction of $\cO_X$ to $U$.

\subsubsection{The spectrum}
Let $B$ be a band. The \emph{spectrum of $B$} is the set $X=\Spec B$ of all prime $m$-ideals $\fp$ of $B$, endowed with the topology generated by the \emph{principal opens}
\[
 U_h \ = \ \{ \fp\in X \mid h\notin \fp\}
\]
for $h\in B$ and with the sheaf $\cO_X$ with values $\cO_X(U_h)=B[h^{-1}]$ and stalks $\cO_{X,\fp}=B_\fp$.

Note that $\cO_X$ is uniquely determined by its values on principal opens since the $U_h$ form a basis for the topology of $\Spec B$. The proof of the existence of $\cO_X$ is easier than in usual algebraic geometry since every non-empty spectrum $\Spec B$ contains a unique closed point, which is the maximal ideal $\fm_B=B-B^\times$ of $B$.

A band morphism $f:B\to C$ induces a morphism $\varphi=f^\ast:\Spec C\to\Spec B$ with $f^\ast(\fq)=f^{-1}(\fq)$ and the sheaf morphism $\varphi^\#$ determined by 
\[
 \begin{array}{cccc}
  \varphi^\#(U_h): & B[h^{-1}] & \longrightarrow & C[f(h)^{-1}]. \\[5pt]
                   & \frac{a}{h^i} & \longmapsto & \frac{f(a)}{f(h)^i}
 \end{array}
\]
This defines a contravariant functor $\Spec:\Bands\to\BandSpaces$, which is fully faithful. 

\begin{ex}
 The following are first examples of spectra:
 \begin{enumerate}
  \item An idyll $F$ has a unique prime $m$-ideal, which is $\eta=\{0\}$. Thus $\Spec F=\{\eta\}$ is a singleton.
  \item The \emph{affine space} over a band $B$ is $\A^n=\Spec B[T_1,\dotsc,T_n]$. For an idyll $F$, the prime $m$-ideals of $\fp$ of $F[T_1,\dotsc,T_n]$ are of the form $\fp_I=\gen{T_i\mid i\in I}$ for subsets $I$ of $E=\{1,\dotsc,n\}$. Thus $\A^n_F=\{\fp_I\mid I\subseteq E\}$ as a set, and $\fp_I$ is in the closure of $\fp_J$ if and only if $J\subseteq I$. 
 \end{enumerate}
\end{ex}

\subsubsection{Band schemes}
\label{subsection: band schemes}
An \emph{affine band scheme} is a band space that is isomorphic to the spectrum of a band. A \emph{band scheme} is a band space that has an open covering by \emph{affine opens}, which are open subsets that are affine band schemes. This defines the categories $\BAff$ of affine band schemes and $\BSch$ of band schemes.

Taking global sections defines a (contravariant) functor $\Gamma:\BSch\to\Bands$ that is adjoint to $\Spec$, i.e. $\Hom(B,\Gamma X) = \Hom(X,\Spec B)$ for all bands $B$ and band schemes $X$. The restrictions $\Gamma:\BAff\to\Bands$ and $\Spec:\Bands\to\BAff$ are mutually inverse anti-equivalences of categories.

\begin{rem}[Local nature of a band scheme]
 Since every band has a unique maximal ideal, every affine band scheme has a unique closed point. This fact has strong consequences:
 \begin{enumerate}
  \item Every closed point $x$ of a band scheme $X$ is contained in a unique affine open neighbourhood, which is $U_x=\Spec \cO_{X,x}$. In particular, if $X$ is finite, then the affine opens of the form $U_x$ (with $x\in X$ closed) form the unique minimal affine open covering of $X$.
  \item If a band morphism $f:B\to C$ induces an open immersion $f^\ast:\Spec C\to \Spec B$ of affine band scheme, then $f$ is a finite localization, i.e. $C=B[h^{-1}]$ for some $h\in B$.
  \item Let $\varphi:X\to Y$ be a morphism of band schemes and $X=\bigcup_{i\in I} U_i$ and $Y=\bigcup_{j\in J} V_j$ affine open coverings. Then there is a map $\varphi_0:I\to J$ such that $\varphi(U_i)\subseteq V_{\varphi_0(i)}$ for every $i\in I$.
 \end{enumerate}
\end{rem}

\subsubsection{Base extension to $\Z$}
\label{subsubsection: base extension to Z}
A band scheme $X$ comes with a universal scheme $X^+$ and a canonical inclusion of $R$-rational point sets $X(R)\subseteq X^+(R)$ for every ring $R$, which is a bijection if $R$ is local.

The universal scheme $X^+$ is defined as follows. If $X=\Spec B$ is affine, we define $X^+_\Z:=\Spec B^+_\Z$. For arbitrary $X$, we let $\cU$ be the diagram of all affine opens $U$ of $X$ together with all inclusions $\iota_{V,U}:V\hookrightarrow U$ as subsets of $X$, which are open immersions of affine band schemes. Since $(-)^+_\Z$ is functorial, this yields a diagram $\cU^+_\Z$ of affine schemes. We define
\[
 X^+_\Z \ = \ \colim \cU^+_\Z.
\]
As a matter of fact, all morphisms of $\cU^+_\Z$ are principal open immersions and the canonical inclusions $U^+_\Z\to X^+_\Z$ are open immersions for $U\in\cU$.

The functoriality of this construction defines an injection
\[
 X(R) \ = \ \Hom(\Spec R, X) \ \longrightarrow \ \Hom(\Spec R, X^+_\Z) \ = \ X^+_\Z(R)
\]
for every ring $R$, which is bijective if $R$ is local since in this case every morphism $\Spec R\to X^+_\Z$ factors through some affine open of the form $U^+_\Z$ with $U\in\cU$.

\subsubsection{Fibre products}
The categorical fibre product of two morphisms $\varphi_X:X\to Z$ and $\varphi_Y:Y\to Z$ is represented by a band scheme $X\times_ZY$, which comes with two projections $\pi_X:X\times_ZY\to X$ and $\pi_Y:X\times_ZY\to Y$. In the affine case $X=\Spec B$, $Y=\Spec C$ and $Z=\Spec k$, we have $X\times_ZY=\Spec (B\otimes_kC)$. In general, the fibre product $X\times_ZY$ is covered by affine opens of this form.

\subsubsection{Open and closed subschemes}
An \emph{open subscheme} of a band scheme $X$ is an open subset $U$ together with the restriction of the structure sheaf $\cO_X$ of $X$ to $U$. An \emph{open immersion} is a morphism of band schemes $\varphi:Y\to X$ that factors through an isomorphism $Y \to U$ with $U$ an open subscheme of $X$.

An \emph{affine morphism} is a morphism $\varphi:Y\to X$ of band schemes such that for every affine open $U$ of $X$, the inverse image $\varphi^{-1}(U)=Y\times_XU$ is affine. A \emph{closed immersion} is an affine morphism $\varphi:Y\to X$ such that for every affine open $U$ of $X$ and $V=\varphi^{-1}(U)$, the map $\varphi^\#(U):\Gamma U\to\Gamma V$ is surjective. A \emph{closed subscheme of $X$} is an isomorphism class of closed immersions $\varphi:Y\to X$.

\subsection{Projective geometry}
Similar to usual schemes, projective geometry for band schemes is governed by homogeneous coordinate algebras and the $\Proj$-construction. We introduce all relevant notions in the following.

\subsubsection{Projective space}
Let $k$ be a band. The free algebra $k[T_0,\dotsc,T_n]$ is graded by $\deg T_i=1$ for each $i$, and this grading is inherited by all of its localizations at elements\footnote{Note that all elements of $k[T_0,\dotsc,T_n]$ are homogeneous since they are monomials in $T_0,\dotsc,T_n$.} $h\in k[T_0,\dotsc,T_n]$. 

The \emph{projective $n$-space over $k$} is the band scheme $\P^n_k$ that consists of all prime $m$-ideals of $k[T_0,\dotsc,T_n]$ that do not contain all of $T_0,\dotsc,T_n$, endowed with the topology generated by the \emph{principal opens}
\[
 U_h \ = \ \{\fp\in \P^n_k \mid h\notin\fp\}
\]
for $h\in k[T_0,\dotsc,T_n]$, and whose structure sheaf is characterized by the values
\[
 \cO_{\P^n_k}(U_h) \ = \ \big(k[T_0,\dotsc,T_n][h^{-1}]\big)_0,
\]
on principal opens, where $(-)_0$ stands for the degree $0$ part. The projective $n$-space is covered by the canonical affine opens
\[
 U_i \ = \ U_{T_i} \ = \ \{\fp\in \P^n_k \mid T_i\notin\fp\},
\]
which are isomorphic to the affine space $\A^n_k$ via the morphism associated to the band isomorphism
\[
 \begin{array}{ccccc}
  \Gamma(\A^n_k) & = &k[T_0/T_i,\dotsc,T_n/T_i] & \longrightarrow & \big(k[T_0,\dotsc,T_n][T_i^{-1}]\big)_0 \\
& & T_j/T_i & \longmapsto & T_j/T_i.
 \end{array}
\]

\subsubsection{Projective band schemes}

A \emph{homogeneous element of $B=k[T_0,\dotsc,T_n]^+$ of degree $d$} is a polynomial $\sum a_{\underline{e}} T_1^{e_1}\dotsb T_n^{e_n}$ such that $e_1+\dotsb+e_n=d$ for all indices $\underline{e} =(e_i) \in \N^n$ with $a_{\underline{e}}\neq0$. A \emph{homogeneous null ideal of $B$} is a null ideal $I$ that is generated by homogeneous elements (of possibly different degrees).

A homogeneous null ideal $I$ of $B$ defines the closed subscheme $X=\Proj(\bandquot{B}{I})$ of $\P^n_k$ whose underlying space is the subspace
\[
 X \ = \ \big\{\fp\in\P^n_k \, \big| \, \text{if $a-b\in I$ and $a\in\fp$, then also $b\in\fp$}\big\}
\]
of $\P^n_k$ and whose structure sheaf is determined by its values
\[
 \cO_X\big(U_h\cap X\big) \ = \ \big((\bandquot{B}{I})[h^{-1}]\big)_0
\]
on the intersections with principal opens $U_h$. The inclusion $\iota:X\to\P^n_k$ together with the surjection $\iota^\#:\cO_{\P^n_k}\to\cO_X$ of sheaves is a closed immersion. Conversely, every closed subscheme of $\P^n_k$ is of this form.

\begin{ex}[Grassmannian]
\label{ex: Grassmannian}
 An example of primary interest are Grassmannians $\Gr(r,E)$ over $\Funpm$ for $E=\{1,\dotsc,n\}$ and $r$ an integer. The ambient projective space 
 \[
  \P^{n^r-1} \ = \ \Proj\big(\Funpm[T_\textbf{x}\mid \textbf{x}\in E^r]\big)
 \]
 has homogeneous coordinates $T_\textbf{x}$, indexed by $r$-tuples $\textbf{x}=(x_1,\dotsc,x_r)$ of elements of $E$. The homogeneous null ideal $I$ that defines $\Gr(r,E)$ as a closed subscheme of $\P^{n^r-1}$ is generated by the set of \emph{Pl\"ucker relations} $\mathpzc{Pl}(r,E)$, which consists of the following homogeneous elements (cf. \ref{def: mat GP1} and \ref{def: mat GP2}):
 \begin{align*}
  & T_{\textbf{x}}     && \text{whenever }\#|\textbf{x}|<r; \\
  & T_{\textbf{x}^\sigma} \ - \ \sign(\sigma)\cdot T_{\textbf{x}}   && \text{for every $\sigma\in \mathfrak{S}_r$}; \\
  & \sum_{k=1}^{r+1} (-1)^k ~ T_{\textbf{y}_{\widehat{k}}} \cdot T_{(y_k, \textbf{x})} && \text{for all } \textbf{y} \in E^{r+1} \text{ and } \textbf{x} \in E^{r-1}.
 \end{align*}
 
 We call the closed immersion $\iota:\Gr(r,E)\to\P^{n^r-1}$ the \emph{Pl\"ucker embedding} in analogy with the classical theory.
 
 Note that $\Gr(r,E)(k)$ corresponds to the classical Grassmann variety in the case of a field $k$ (cf.\ \autoref{subsubsection: base extension to Z}).
\end{ex}

\subsubsection{Multigraded bands}
Let $S$ be a finite set and $n_v\in\N$ for $v\in S$. The band $B=k[T_{v,i}\mid v\in S, i=0,\dotsc,n_v]$ is graded in $\Z^{S}$ with multidegree $\deg T_{v,i}=(\delta_{v,w})_{w\in S}$. A \emph{homogeneous element of $B^+$ with multidegree $(d_v)_{v\in S}$} is a polynomial $\sum a_e\prod T_{v,i}^{e_{v,i}}$ with $e_{v,0}+\dotsb+e_{v,n_v}=d_v$ for all indices $e=(e_{v,i})$ with $a_e\neq0$. A \emph{multi-homogeneous null ideal of $B$} is a null ideal $I$ of $B$ that is generated by multi-homogeneous elements.

A multi-homogeneous null ideal $I$ of $B$ defines the following closed subscheme $X=\Proj(\bandquot BI)$ of $\prod_{v\in S} \P^{n_v}_k$. As a subspace of $\prod \P^{n_v}_k$, 
\[\textstyle
 X \ = \ \big\{\fp\in\prod \P^{n_v}_k \, \big| \, \text{if $a-b\in I$ and $a\in\fp$, then also $b\in\fp$}\big\}.
\]
Its structure sheaf $\cO_X$ is determined by its values
\[
 \cO_X(U_{(h_v)}) \ = \ \big((\bandquot BI)[h_v^{-1}\mid v\in S]\big)_0
\]
on principal opens $U_{(h_v)}=\prod U_{h_v}$ of $\prod\P^{n_v}_k$ where $(-)_0$ stands for the subband of multidegree $0$ elements. The inclusion $\iota:X\to\prod\P^{n_v}_k$ together with the canonical projection $\iota^\#:\cO_{\prod\P^{n_v}_k}\to\cO_X$ of sheaves is a closed immersion and turns $X$ into a closed subscheme of $\prod \P^{n_v}_k$.

\begin{rem}
 The Segre embedding of usual algebraic geometry generalizes to band schemes and defines a closed immersion $\iota:\P^n_k\times\P^m_k\to \P^{nm+n+m}_k$, which identifies $\P^n_k\times\P^m_k$ with a closed subscheme of $\P^{nm+n+m}_k$. To be more precise, the Segre embedding $\iota$ is induced by the band morphism $k[T_{i,j}]\to k[T_i,T_j]$ that sends $T_{i,j}$ to $T_i\cdot T_j$ and taking $C$-points (for a $k$-algebra $C$), yields the map
 \[
  \begin{array}{cccc}
   \iota_C: & \P^n_k(C)\times\P^m_k(C) & \longrightarrow  & \P^{nm+n+m}_k(C) \\
            & \big([a_0:\dotsc:a_n],[b_0:\dotsc:b_m]\big) & \longmapsto     & [a_0b_0:\dotsc:a_nb_m].
  \end{array}
 \]
 The homogeneous null ideal of the image of $\iota$ is generated by the quadratic polynomials $T_{i,j}T_{k,l}-T_{i,l}T_{k,j}$ for $i,k\in\{0,\dotsc,n\}$ and $j,l\in\{0,\dotsc,m\}$.
 
 In particular, this shows that closed subvarieties of products of projective spaces are themselves projective varieties.
\end{rem}

\begin{ex}[Flag varieties]\label{ex: flag varieties}
 Flag varieties $\Fl(\textbf{r},E)$ over $\Funpm$ (where $\textbf{r}=(r_1,\dots,r_s)$ is the type of the flag) were introduced in \cite{Jarra-Lorscheid24} as closed subvarieties of $\prod_{i=1}^s \Gr(r_i,E)$. Quiver Grassmannians, as defined in \autoref{subsection: quiver Grassmannian} are a vast generalization of flag varieties.
 
 We exemplify this in the simplest nontrivial example, which is the flag variety $\Fl(\textbf{r},E)$ for $E=\{1,2,3\}$ and $\textbf{r}=(1,2)$. The ambient Grassmannians $\Gr(1,E)$ and $\Gr(2,E)$ are both projective planes $\P^2$ over $\Funpm$, with respective homogeneous coordinates $T_i$ (with $i\in E$ for $\Gr(1,E)$) and $T_{k,l}$ (with $k,l\in E$ and $k\neq l$ for $\Gr(2,E)$). The flag variety $\Fl(\textbf{r},E)$ is the closed subvariety of $\P^2\times\P^2$ defined by (the multi-homogeneous null ideal generated by) the single polynomial
 \[
  T_1\cdot T_{2,3} \ - \ T_2\cdot T_{1,3} \ + \ T_3\cdot T_{1,2}.
 \]
\end{ex}

\subsection{Line bundles and maps to projective space} 

In this section, we introduce $\cO_X$-modules and line bundles, and we explain that global sections that generate a line bundle define a map to a projective space.

\subsubsection{\texorpdfstring{$\cO_X$}{O_X}-modules}

Let $B$ be a band. A $B$-module is a pointed set $M$, with base point $0_M$, together with a {subset} $N_M$ of the free semigroup $M^+=\N[M-\{0_M\}]$ (called the \emph{null set of $M$}) and a map $\theta:B\times M\to M$, written as $a.m=\theta(a,m)$, such that for all $a,a_i,b\in B$ and $m,m_j,n\in M$ (where we identify $0_M$ with the empty sum in $M^+$):
\begin{enumerate}
 \item $1.m=m$, \quad $0.m=0_M$, \quad \text{and} \quad $a.0_M=0_M$;
 \item $(ab).m=a.(b.m)$;
 \item $\sum a_i.m_j\in N_M$ whenever $\sum a_i\in N_B$ or $\sum m_j\in N_M$;
 \item $m+n\in N_M$ if and only if $n=-m:=(-1).m$.
\end{enumerate}
A \emph{$B$-linear morphism between $B$-modules $M$ and $M'$} is a map $f:M\to M'$ such that $f(a.m)=a.f(m)$ and $\sum f(m_j)\in N_{M'}$ whenever $\sum m_j\in N_M$. We denote the category of $B$-modules by $\Mod_B$. Note that an $\Funpm$-module is the same as a pointed set $M$ together with a subset $N_M$ of $M^+$ that contains $0_M$ and such that for every $m\in M$, there is a unique $n\in M$ such that $m+n\in N_M$.

Let $X$ be a band scheme. An \emph{$\cO_X$-module} is a sheaf $\cM$ in $\Mod_\Funpm$ together with a morphism $\theta:\cO_X\times\cM\to\cM$ of sheaves such that $\theta(U):\cO_X(U)\times\cM(U)\to\cM(U)$ endows $\cM(U)$ with the structure of an $\cO_X(U)$-module and such that the restriction maps $\res:\cM(U)\to\cM(V)$ for opens $V\subseteq U$ are $\cO_X(U)$-linear.

\subsubsection{Line bundles}

A \emph{line bundle} is an $\cO_X$-module $\cL$ on $X$ that is locally isomorphic to $\cO_X$, i.e.\ there is an open covering $X=\bigcup U_i$ such that $\cL|_{U_i}\simeq\cO_X|_{U_i}$ as $\cO_X|_{U_i}$-modules for all $i$. We write $\Gamma\cL$ for the $\Gamma X$-module $\Gamma(X,\cL)$. 

The sheaf pullback of a line bundle $\cL$ over $X$ along a morphism $\varphi:Y\to X$ is a line bundle $\varphi^\ast\cL$ over $Y$. The pullback of global sections defines a $\Gamma X$-linear map $\Gamma\varphi:\Gamma\cL\to \Gamma(\varphi^\ast\cL)$.

\begin{ex}[Line bundles on points]\label{ex: line bundles on Spec F}
 Let $F$ be an idyll and $X=\Spec F$ its spectrum, whose unique point is the trivial ideal $\eta=\{0\}$ of $F$. A line bundle $\cL$ over $X$ is characterized by its global sections $\Gamma\cL=\cL(\{\eta\})$, which are isomorphic to $\Gamma X=F$ by the definition of a line bundle. In particular, every line bundle over $X$ is isomorphic to the trivial line bundle $\cO_X$.
\end{ex}

\begin{ex}[Twisted sheaves on projective space] \label{ex: twisted sheaves on projective space}
 Let $d\in\Z$. The \emph{twisted line bundle $\cO(d)$} over $\P^n=\Proj\Funpm[T_0,\dotsc,T_n]$ is defined as the $\cO_{\P^n}$-module with values
 \[
  \cO(d)(U_h) \ = \ \big(\Funpm[T_0,\dotsc,T_n][h^{-1}]\big)_d
 \]
 for homogeneous $h\in \F_1^\pm[T_0,\dotsc,T_n]^+$, where $(-)_d$ refers to the $\cO_{\P^n}(U_h)$-submodule of degree $d$ elements. For example, the \emph{canonical sections} $T_0,\dotsc, T_n$ of $\P^n$ are global sections of the \emph{twisting sheaf $\cO(1)$}.
\end{ex}

\subsubsection{Morphisms to projective space}

A subset $\{s_i\}\subseteq\Gamma \cL$ of global sections is said to \emph{generate $\cL$} if for every $x\in X$, there is an $i$ such that the class of $s_i$ is a unit in $\cO_{X,x}$. Let $\cL$ and $\cL'$ be two line bundles together with respective ordered sets of global sections $(s_0,\dotsc,s_n)$ and $(s_0',\dotsc,s_n')$. We say that $\big(\cL,(s_i)\big)$ and $\big(\cL',(s_i')\big)$ are equivalent if there is an isomorphism $\theta:\cL\to\cL'$ of line bundles such that $\theta(s_i)=s_i'$ for all $i$. We denote by $\mathpzc{M}_n(X)$ the set of all equivalence classes of such $(\cL,(s_i))$ for which $\{s_0,\dotsc,s_n\}$ generates $\cL$.
 
Let $\cO(1)$ be the twisting sheaf on $\P^n=\Proj   (\Funpm[T_0,\dotsc,T_n])$ and $T_0,\dotsc,T_n\in\Gamma\cO(1)$ the canonical sections (see \autoref{ex: twisted sheaves on projective space}). The following result is \cite[Thm.\ 4.20]{Baker-Lorscheid21}, specialized from ordered blue schemes to band schemes.

\begin{prop}\label{prop: characterization of morphisms to projective space by global sections of line bundles}
 Let $X$ be a band scheme and $T_0,\dotsc,T_n$ the canonical global sections of $\P^n$. Then pulling back $T_0,\dotsc,T_n$ along a morphism $\chi:X\to\P^n$ yields a functorial bijection
 \[
  \begin{array}{ccc}
   \Hom(X,\P^n)  & \longrightarrow & \mathpzc{M}_n(X) \\
   \chi:X\to\P^n & \longmapsto     & \big[\chi^\ast\cO(1),(\Gamma\chi(T_i))\big].
  \end{array}
 \]
\end{prop}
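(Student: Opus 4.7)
The strategy is to construct an explicit inverse to the pullback map $\chi \mapsto [\chi^\ast\cO(1), (\Gamma\chi(T_i))]$ by producing, from a class $[\cL, (s_0, \ldots, s_n)] \in \mathpzc{M}_n(X)$, a morphism $\chi : X \to \P^n_\Funpm$ built by gluing local morphisms into the standard affine opens of $\P^n_\Funpm$. Since the embedding $\Bands \hookrightarrow \OBlpr$ is fully faithful and preserves the formation of projective space, this will ultimately reduce to the corresponding result \cite[Thm.\ 4.20]{Baker-Lorscheid21} for ordered blue schemes; the plan below records the direct band-theoretic construction.

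For the construction, set $U_i = \{x \in X \mid s_{i,x} \in \cO_{X,x}^\times\}$. Because being a unit is equivalent to not lying in the unique maximal ideal of each stalk, $U_i$ is open, and the $U_i$ cover $X$ precisely because the sections $s_0, \ldots, s_n$ generate $\cL$. Picking a local trivialization $\cL|_{U_i} \simeq \cO_X|_{U_i}$ identifies each $s_j|_{U_i}$ with an element $\sigma_{ij} \in \cO_X(U_i)$ such that $\sigma_{ii}$ is a unit, and the ratios $s_j/s_i := \sigma_{ij}/\sigma_{ii}$ are independent of the chosen trivialization. The standard affine open $V_i = U_{T_i}$ of $\P^n_\Funpm$ is isomorphic to $\Spec \Funpm[T_0/T_i, \ldots, T_n/T_i]$, and I let $\chi_i : U_i \to V_i$ be the morphism associated to the band morphism $T_j/T_i \mapsto s_j/s_i$.

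The next step is to verify that the $\chi_i$ agree on overlaps and glue to a global morphism $\chi : X \to \P^n_\Funpm$. On $U_i \cap U_j$ both $\chi_i$ and $\chi_j$ land in $V_i \cap V_j$, and compatibility is witnessed by the identity $(T_k/T_j)(T_j/T_i) = T_k/T_i$, which persists after substituting the ratios (since $s_i/s_j = (s_j/s_i)^{-1}$ in $\cO_X(U_i \cap U_j)$). The construction descends to equivalence classes because any isomorphism $\theta : \cL \to \cL'$ matching the sections preserves all ratios. For the bijectivity, starting from $\chi : X \to \P^n_\Funpm$ one checks that on $\chi^{-1}(V_i)$ the ratios of the sections $\Gamma\chi(T_j)$ coincide with $\chi^\#(T_j/T_i)$, so the reconstructed morphism equals $\chi$; conversely, the pullback of the constructed $\chi$ yields a canonical isomorphism to $(\cL, (s_i))$ via the local trivializations used above.

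The main obstacle is the gluing step in the band-scheme setting, where one must be attentive to the fact that elements of a band admit no general addition but only a null-set structure. However, the ratios $s_j/s_i$ are honest elements of the band $\cO_X(U_i)$, and the local morphisms $\chi_i$ are determined by the universal property of the free band $\Funpm[T_0/T_i, \ldots, T_n/T_i]$, so the only subtle point is compatibility on triple overlaps, which is formal and already verified in the ordered blueprint setting of \cite[Thm.\ 4.20]{Baker-Lorscheid21}. Functoriality in $X$ is immediate: for $\psi : Y \to X$, sheaf pullback of $(\cL, (s_i))$ commutes with the formation of $U_i$ and of the ratios $s_j/s_i$, so the constructed morphism for $\psi^\ast(\cL, (s_i))$ is $\chi \circ \psi$.
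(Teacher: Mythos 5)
Your proposal is essentially correct, but it is worth noting that the paper does not actually prove this proposition: it is stated with the remark that it is \cite[Thm.\ 4.20]{Baker-Lorscheid21} ``specialized from ordered blue schemes to band schemes,'' the specialization being justified by the coreflective, fully faithful embedding $\Bands\to\OBlpr$ discussed in the background section. You acknowledge this reduction and then supply the direct band-theoretic argument that the paper omits, which is the standard gluing construction (the analogue of Hartshorne II.7.1): define $U_i$ as the locus where $s_i$ is a stalkwise unit, map $U_i$ into the chart $V_i=\Spec\Funpm[T_0/T_i,\dotsc,T_n/T_i]$ via the universal property of the free band, and glue using the cocycle identity among the ratios $s_j/s_i$. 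This is a legitimate and arguably more self-contained route; what the paper's citation buys is brevity, while your version makes visible exactly where the band-specific features enter (the free band has no relations to check, and stalkwise invertibility is detected by the unique maximal $m$-ideal, so the $U_i$ are open).

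One point deserves slightly more care than you give it: you say ``picking a local trivialization $\cL|_{U_i}\simeq\cO_X|_{U_i}$,'' but it is not given a priori that $\cL$ is trivial on all of $U_i$ (the trivializing cover from the definition of a line bundle need not be subordinate to the $U_i$). The standard fix, which works verbatim here, is that multiplication by $s_i$ itself defines a morphism $\cO_X|_{U_i}\to\cL|_{U_i}$ that is an isomorphism on stalks over $U_i$, hence an isomorphism of $\cO_X|_{U_i}$-modules; equivalently, one defines the ratios $s_j/s_i$ locally on a trivializing cover of $U_i$ and observes they glue. With that adjustment your argument is complete, and the verification of the two compositions and of functoriality is exactly as you describe.
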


\section{Matroid bundles}

Matroid bundles for ordered blue schemes are introduced in \cite{Baker-Lorscheid21}. In this section, we transfer this definition to band schemes, reformulate the moduli property of Grassmannians in this setting and introduce morphisms for matroid bundles, which is a novel concept.

\subsection{Definition}
Let $X$ be a band scheme, $r$ an integer and $E=\{1,\dotsc,n\}$. A \emph{Grassmann-Pl\"ucker function over $X$ (of rank $r$ on $E$)} is a line bundle $\cL$ over $X$ together with a function
\[
 \Delta: \ E^r \ \longrightarrow \ \Gamma\cL
\]
that satisfies the following properties:
\begin{enumerate}
 \item $\{\Delta(\textbf{x}) \mid \textbf{x} \in E^r\}$ generates $\cL$; \\[-10pt]
 \item $\Delta(\textbf{x})=0$ if $\#|\textbf{x}|<r$; \\[-10pt]
 \item $\Delta(\textbf{x})=\sign(\sigma)\cdot \Delta(\textbf{x}^\sigma)$ for every $\sigma\in \mathfrak{S}_r$; \\[-10pt]
 \item for all $x_2,\dotsc,x_r,y_0,\dotsc,y_r\in E$, 
 \[
  \sum_{k=0}^r \ (-1)^k \cdot \Delta(y_0,\dotsc,\widehat{y_k},\dotsc,y_r) \otimes \Delta(y_k,x_2,\dotsc,x_r) \quad \in \quad N_{\Gamma\cL^{\otimes2}}.
 \]
\end{enumerate}

Two Grassmann-Pl\"ucker functions $\mu:E^r\to\Gamma\cL$ and $\mu':E^{r}\to\Gamma\cL'$ (on the same set $E$, over the same band scheme $X$ and with same rank $r$) are \emph{equivalent} if there exists an isomorphism $\varphi:\cL\to\cL'$ such that $\mu'=\Gamma\varphi\circ\mu$. A \emph{matroid bundle over $X$} is an equivalence class $\cM=[\mu:E^r\to\Gamma\cL]$ of a Grassmann-Pl\"ucker function $\mu$ over $X$.

Given a matroid bundle $\cM=[\mu:E^r\to\Gamma\cL]$ over $X$ and a morphism of band schemes $\theta: Y \rightarrow X$, we define the \emph{pullback} of $\cM$ along $\theta$ as the matroid bundle $\theta^*\cM$ over $Y$ given by the equivalence class of the composition 
\[
E^r \overset{\mu}{\longrightarrow} \Gamma\cL \overset{\Gamma \theta}{\longrightarrow} \Gamma (\theta^*\cL).
\]

\begin{ex}[Matroid bundles on points]\label{ex: matroid bundles on points}
 Let $F$ be an idyll, $X=\Spec F$ its spectrum and $\cM=[\mu:E^r\to \Gamma\cL]$ a matroid bundle over $X$. As discussed in \autoref{ex: line bundles on Spec F}, every line bundle over $X$ is isomorphic to $\cO_X$. Thus we may assume that $\cL=\cO_X$ and $\Gamma\cL=F$, which identifies $\mu:E^r\to F$ as a Grassmann-Pl\"ucker function over $F$ (see \autoref{Baker-Bowler theory}). Since every automorphism of $\cO_X$ is given by multiplication with an element in $F^\times$, the class of $\mu$ as a matroid bundle over $X$ and that as an $F$-matroid coincide.
 
 This shows that matroid bundles over $\Spec F$ correspond bijectively and in a canonical way to $F$-matroids.
\end{ex}

\subsection{The moduli property of the Grassmannian}

Let $r$ be an integer, $E:=\{1,\dotsc,n\}$ and $\Gr(r,E)$ the Grassmannian over $\Funpm$, which comes with the Pl\"ucker embedding $\iota:\Gr(r,E)\to\P^{n^r-1}$ into projective space (see \autoref{ex: Grassmannian}). Let $\cO(1)$ be the twisting sheaf on $\P^{n^r-1}$ and $\cL^\univ=\iota^\ast\cO(1)$ its pullback to $\Gr(r,E)$, which we call the \emph{universal line bundle of $\Gr(r,E)$}. The \emph{canonical sections of $\cL^\univ$} are the global sections $s_{\textbf{x}}=\Gamma\iota(T_{\textbf{x}})$ of $\cL^\univ$ for ${\textbf{x}}\in E^r$. 

The following result identifies the Grassmannian $\Gr(r,E)$ with the fine moduli space of matroid bundles of rank $r$ on $E$ (cf.\ \cite[Thm.\ 5.5]{Baker-Lorscheid21}).

\begin{thm}\label{thm: Grassmannian as moduli space of matroid bundles}
 Let $X$ be a band scheme. Sending a morphism $\chi:X\to\Gr(r,E)$ to the matroid bundle $\big[\mu_\chi:E^r\to \Gamma\chi^\ast\cL^\univ\big]$ over $X$, given by $\mu_\chi({\mathbf{x}})=\Gamma\chi(s_{\mathbf{x}})$, defines a bijection
 \[
  \begin{array}{cccc}
   \Phi: & \Hom\big(X,\, \Gr(r,E)\big) & \longrightarrow & \Big\{ \begin{array}{c}\text{matroid bundles of rank $r$}\\ \text{with ground set $E$ over $X$}\end{array} \Big\} 
  \end{array}
 \]
 that is functorial in $X$. If $\Phi(\chi)=\cM$, then we call $\chi$ the \emph{characteristic morphism of $\cM$}.
\end{thm}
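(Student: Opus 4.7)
The plan is to factor the desired bijection $\Phi$ through the Pl\"ucker embedding $\iota:\Gr(r,E)\to\P^{n^r-1}$, reducing it to \autoref{prop: characterization of morphisms to projective space by global sections of line bundles} applied to projective space.

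First, I would identify $\Hom(X,\P^{n^r-1})$ with the set of equivalence classes $[\cL,(s_\bx)_{\bx\in E^r}]$ consisting of a line bundle $\cL$ on $X$ and a generating tuple of global sections indexed by $E^r$; here $s_\bx=\Gamma\chi^\P(T_\bx)$ is the pullback of the canonical section $T_\bx$ of $\cO(1)$ along the corresponding morphism $\chi^\P:X\to\P^{n^r-1}$. The universal line bundle $\cL^\univ$ together with the canonical sections $s_\bx$ on $\Gr(r,E)$ is the restriction of this datum along $\iota$.

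Second, I would show that $\chi^\P$ factors through the closed subscheme $\Gr(r,E)\subseteq\P^{n^r-1}$ exactly when the tuple $(s_\bx)$ satisfies the images of the Pl\"ucker relations $\mathpzc{Pl}(r,E)$ in the appropriate tensor powers of $\Gamma\cL$. Unpacking the three families of generators listed in \autoref{ex: Grassmannian}, this means:
\begin{enumerate}
 \item $s_\bx=0$ whenever $\#|\bx|<r$;
 \item $s_{\bx^\sigma}=\sign(\sigma)\cdot s_\bx$ for every $\sigma\in \mathfrak{S}_r$;
 \item $\displaystyle \sum_{k=0}^{r}(-1)^k\, s_{(y_0,\dotsc,\widehat{y_k},\dotsc,y_r)}\otimes s_{(y_k,x_2,\dotsc,x_r)}\in N_{\Gamma\cL^{\otimes 2}}$.
\end{enumerate}
Together with the generating condition from the first step, this is precisely the data of a Grassmann-Pl\"ucker function $\mu:E^r\to\Gamma\cL$ of rank $r$ on $E$ over $X$, so setting $\mu(\bx):=s_\bx$ turns the pair $(\cL,(s_\bx))$ into a Grassmann-Pl\"ucker function over $X$ and vice versa.

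Third, I would observe that two such pairs $(\cL,(s_\bx))$ and $(\cL',(s_\bx'))$ are equivalent (via an isomorphism $\theta:\cL\to\cL'$ with $\Gamma\theta(s_\bx)=s_\bx'$) if and only if the associated Grassmann-Pl\"ucker functions are equivalent in the sense preceding the theorem. Combining these three steps yields the bijection $\Phi$, and its naturality in $X$ follows from the naturality of the bijection in \autoref{prop: characterization of morphisms to projective space by global sections of line bundles} together with the compatibility of pullbacks of line bundles and global sections. The main obstacle is the second step: one must verify carefully that factoring through the closed subscheme cut out by a multi-homogeneous null ideal translates, on principal affine opens where some $s_\bx$ is a unit, into the null relations (1)--(3) holding in $\Gamma\cL$, respectively $\Gamma\cL^{\otimes 2}$. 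This is best carried out affine-locally, using that on the open preimage where $s_\bx$ is a unit the morphism $X\to\Gr(r,E)$ is given by the band morphism sending the affine coordinates $T_{\bx'}/T_\bx$ to $s_{\bx'}\otimes s_\bx^{-1}\in\cO_X$, whose defining relations are exactly the dehomogenised Pl\"ucker relations.
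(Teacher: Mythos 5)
Your proposal is correct and follows essentially the same route as the source: the paper itself only cites this statement from Baker--Lorscheid (Thm.\ 5.5 there), but its proof of the more general quiver version (\autoref{thm: moduli space of quiver matroid bundles}) proceeds exactly as you describe, namely by invoking \autoref{prop: characterization of morphisms to projective space by global sections of line bundles} for the ambient projective space and then checking affine-locally that factoring through the closed subscheme cut out by the Pl\"ucker null ideal is equivalent to the Grassmann--Pl\"ucker axioms for $\mu(\bx):=s_\bx$. The only point worth making explicit in a write-up is that closed immersions of band schemes are monomorphisms, so the factorization through $\Gr(r,E)$ is unique and the composite of your three identifications is indeed a bijection.
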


\begin{rem}\label{rem: matroid bundles over fields}
 Let $k$ be a field, considered as an idyll, and $X$ a band $k$-scheme. By the functoriality of the base extension $(-)^+$ from band $k$-schemes to (usual) $k$-schemes, we have a canonical injection
 \[
  \Hom_{\cat{BSch}}\big(X,\, \Gr(r,E)\big) \ \longrightarrow \ \Hom_{k}\big( X^+, \, \Gr(r,E)^+_k\big).
 \]
 where $\Gr(r,E)^+_k$ is the usual Grassmann variety over $k$. Together with \autoref{thm: Grassmannian as moduli space of matroid bundles}, this identifies rank $r$ matroid bundles over $X$ on $E=\{1,\dotsc,n\}$ with rank $r$ subbundles of the trivial rank $n$ vector bundle $\cO^n_{X^+}$ over $X^+$.
\end{rem}

\subsection{Morphisms of matroid bundles}
\label{subsection: morphisms of matroid bundles}

Since the notion of vectors for matroid bundles is intricate (see \autoref{rem: composition of morphisms}), it is better for our purposes to describe morphism of matroid bundles in terms of Grassmann-Pl\"ucker functions (cf. \autoref{thm: cryptomorphisms}).

In analogy to the case of perfect idylls, we consider submonomial matrices with entries in a band $B$ in the following. Each submonomial $B$-matrix $\Phi$ induces an $\F_1$-linear map $\underline{\Phi}$ with $\uPhi(j)=i$ if $\Phi_{i,j}\neq0$ and $\uPhi(j)=0$ if $j=0$ or if $\uPhi_{i,j}=0$ for all $i$. Conversely, if $B \neq 0$, each $\F_1$-linear map $f$ induces a unique submonomial $B$-matrix $f_B$ with entries in $\{0,1\}$ that satisfies $\underline{f_B} = f$.

Let $\cN=[\nu:S^w\to\Gamma\cL_\cN]$ and $\cM=[\mu: T^r\to \Gamma\cL_\cM]$ be matroid bundles over $X$. A \emph{morphism $\Phi:\cN\to\cM$ of matroid bundles} is a submonomial $\Gamma X$-matrix $\Phi$ indexed by $T\times S$ such that
\[
 \sum_{k=0}^{w} (-1)^k \cdot \Phi_{\uPhi(y_k),y_k} \cdot \nu(y_0,\dotsc,\widehat{y_k},\dotsc,y_w) \otimes \widetilde{\mu}\big(\underline{\Phi}(y_k),x_2,\dotsc,x_r\big) \quad \in \quad N_{\Gamma (\cL_\cN\otimes\cL_\cM)}
\]
for all $y_0,\dotsc,y_w\in S$ and $x_2,\dotsc,x_r\in T$, where $\widetilde{\mu}: \underline{T}^r \to \Gamma \cL_{\cM}$ is given by $\widetilde{\mu}(\textbf{z}) := \mu(\textbf{z})$ if $\textbf{z} \in T^{r}$ and $\widetilde{\mu}(\textbf{z}):= 0$ if $0 \in |\textbf{z}|$ (cf.~\autoref{df: pointed Grassmann-Plucker function}).

\begin{ex}[Morphisms over points]
\label{ex: morphisms over points}
 Let $F$ be a perfect idyll, $X=\Spec F$ and $\cN=[\nu:S^w\to\Gamma\cO_X]$ and $\cM=[\mu: T^r\to \Gamma\cO_X]$ two matroid bundles over $X$ (recall from \autoref{ex: matroid bundles on points} that every line bundle over $X$ is trivial). Since $\Gamma\cO_X=F$, the Grassmann-Pl\"ucker function $\nu:S^w\to F$ defines an $F$-matroid $N$ and $\mu:T^r\to F$ defines an $F$-matroid $M$. By the very definition of morphisms for $F$-matroids and for matroid bundles, a submonomial matrix $\Phi$ indexed by $T\times S$ is a morphism of matroid bundles $\Phi:\cN\to\cM$ if and only if it is a morphism of $F$-matroids $\Phi:N\to M$.
\end{ex}

\begin{rem}[Composition of morphisms]\label{rem: composition of morphisms}
 Morphisms of matroid bundles are in general not composable. This fails already for matroids over idylls that are not perfect (see \cite[Ex. 2.19]{Jarra-Lorscheid24}).
  
 This failure goes back to the fact that vectors are in general not orthogonal to covectors. This issue can be resolved by an intricate categorical construction, which is the theme of a forthcoming work. In particular, this leads to a richer class of morphisms, which are composable and which contain morphisms as defined in this text as a subclass.
\end{rem}

\subsection{The dual of a matroid bundle}
\label{subsection: the dual of a matroid bundle}
Let $E = \{1,\dotsc,n\}$ and $r$ an integer. For each $\textbf{x} \in E^{r^*}$ with $\#|\textbf{x}| = r^* := n-r$, we fix $\textbf{x}' \in E^r$ such that $|\textbf{x}| \sqcup |\textbf{x}'| = E$. Let $X$ be a band scheme and $\cM=[\mu:E^r\to\Gamma\cL]$ a matroid bundle over $X$. The dual of $\cM$ is the matroid bundle $\cM^\vee:=[\mu^\vee: E^{r^\ast}\to\Gamma\cL]$ represented by the Grassmann-Pl\"ucker function $\mu^\vee$ given by
\[
\mu^\vee({\bf x}) \ := \ \begin{cases}
                        0 & \text{ if } \#|\textbf{x}| < r^*\\
                        \sign({\bf x},{\bf x}')\cdot \varphi({\bf x}') &\text{ otherwise}
\end{cases}
\]
where, for $\textbf{z} \in E^n$, $\sign(\textbf{z})$ is the sign of the unique permutation $\sigma \in \mathfrak{S}_n$ such that $z_{\sigma(1)}<\dots<z_{\sigma(n)}$ if $|\textbf{z}| = E$, and $0$ if $\#|\textbf{z}| < n$. 

Duality of matroid bundles is induced by an isomorphism between Grassmannians. The isomorphism
\[
\begin{array}{ccc}
\bandgenquot{\Funpm[T_{\bf x}\mid {\bf x}\in E^{r^\ast}]}{\mathpzc{Pl}(r^\ast,E)} & \longrightarrow & \bandgenquot{\Funpm[T_{\bf x}\mid {\bf x}\in E^r]}{\mathpzc{Pl}(r,E)} \\
&&\\
T_{\bx} & \longmapsto & \begin{cases}
                        0 & \text{if } \#|\bx| < r^\ast\\
                        \sign(\bx,\bx')\cdot T_{\bx'} & \text{otherwise}
                        \end{cases}
\end{array}
\]
of graded bands induces an isomorphism $\delta_{r,E}:\Gr(r,E)\to\Gr(r^\ast,E)$ of band schemes. The following is \cite[Prop.\ 5.8]{Baker-Lorscheid21}.

\begin{prop}\label{prop: duality of matroid bundles and their characteristic morphisms}
 Let $X$ be a band scheme and $\cM$ a matroid bundle with characteristic morphism $\chi_\cM:X\to\Gr(r,E)$. Let $\cM^\vee$ be the dual of $\cM$. Then the characteristic morphism of $\cM^\vee$ is $\chi_{\cM^\vee}=\delta_{r,E}\circ\chi_\cM$, i.e.\ the diagram
 \[
  \begin{tikzcd}[column sep=80pt, row sep=5pt]
                                                           & \Gr(r,E) \ar[dd,"\delta_{r,E}"] \\
   X \ar[ru,"\chi_\cM"] \ar[rd,pos=0.6,"\chi_{\cM^\vee}"'] \\
                                                           & \Gr(r^\ast,E)
  \end{tikzcd}
 \]
 commutes.
\end{prop}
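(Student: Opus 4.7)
The plan is to exploit the moduli interpretation of the Grassmannian from \autoref{thm: Grassmannian as moduli space of matroid bundles}: two morphisms $X \to \Gr(r^\ast,E)$ coincide if and only if they classify the same matroid bundle. Hence it suffices to show that the matroid bundle over $X$ associated to $\delta_{r,E} \circ \chi_\cM$ is equal to $\cM^\vee$.

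First, I would set up the universal data on both Grassmannians. Let $\cL^\univ_r$ on $\Gr(r,E)$ with canonical sections $s_{\mathbf{x}}$ (for $\mathbf{x}\in E^r$) be the pullback of $\cO(1)$ along the Pl\"ucker embedding $\Gr(r,E)\hookrightarrow \P^{n^r-1}$, and analogously $\cL^\univ_{r^\ast}$ with canonical sections $s'_{\mathbf{x}}$ (for $\mathbf{x}\in E^{r^\ast}$) on $\Gr(r^\ast,E)$. The key identification comes from the graded band isomorphism that induces $\delta_{r,E}$: since this morphism sends the homogeneous coordinate $T_{\mathbf{x}}$ on $\Gr(r^\ast,E)$ to $\sign(\mathbf{x},\mathbf{x}')\cdot T_{\mathbf{x}'}$ when $\#|\mathbf{x}|=r^\ast$ and to $0$ otherwise, pulling back global sections along $\delta_{r,E}$ gives an isomorphism of line bundles $\delta_{r,E}^\ast \cL^\univ_{r^\ast}\cong \cL^\univ_r$ under which
\[
 \Gamma\delta_{r,E}(s'_{\mathbf{x}}) \ = \ \begin{cases} \sign(\mathbf{x},\mathbf{x}')\cdot s_{\mathbf{x}'} & \text{if } \#|\mathbf{x}|=r^\ast,\\ 0 & \text{otherwise.}\end{cases}
\]

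Next, I would apply the rule of \autoref{thm: Grassmannian as moduli space of matroid bundles} to the composition $\delta_{r,E} \circ \chi_\cM$. For $\mathbf{x}\in E^{r^\ast}$ we compute
\[
 \mu_{\delta_{r,E}\circ\chi_\cM}(\mathbf{x}) \ = \ \Gamma(\delta_{r,E}\circ\chi_\cM)(s'_{\mathbf{x}}) \ = \ \Gamma\chi_\cM\bigl(\Gamma\delta_{r,E}(s'_{\mathbf{x}})\bigr).
\]
Substituting the formula from the previous step and using $\mu(\mathbf{x}')=\Gamma\chi_\cM(s_{\mathbf{x}'})$, this equals $\sign(\mathbf{x},\mathbf{x}')\cdot \mu(\mathbf{x}')$ when $\#|\mathbf{x}|=r^\ast$ and $0$ otherwise, which is exactly the Grassmann-Pl\"ucker function $\mu^\vee$ defining $\cM^\vee$. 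Hence $\delta_{r,E}\circ\chi_\cM$ and $\chi_{\cM^\vee}$ classify the same matroid bundle, and the uniqueness part of \autoref{thm: Grassmannian as moduli space of matroid bundles} gives the desired equality of morphisms.

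The only genuine point of care is verifying the independence of the construction from the choice of complementary tuples $\mathbf{x}'$ (recalling that $\mu^\vee$ was defined after such a choice) and the compatibility of the isomorphism $\delta_{r,E}^\ast\cL^\univ_{r^\ast}\cong\cL^\univ_r$ at the level of global sections. Both amount to the fact that different choices of $\mathbf{x}'$ differ by a permutation whose sign is absorbed into the alternating property of the Grassmann-Pl\"ucker function, so the equivalence class of $\mu_{\delta_{r,E}\circ\chi_\cM}$ is well-defined and equals $[\mu^\vee]$. This is the part requiring the most careful bookkeeping, but it is purely a sign calculation modelled on the idyll-level proof that $(M^\ast)^\ast=M$ in \autoref{Baker-Bowler theory}.
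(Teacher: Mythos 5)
Your argument is correct. Note that the paper does not prove this proposition itself but cites it as \cite[Prop.\ 5.8]{Baker-Lorscheid21}; your proof via the moduli property of \autoref{thm: Grassmannian as moduli space of matroid bundles} -- computing that $\Gamma(\delta_{r,E}\circ\chi_\cM)(s'_{\mathbf{x}})=\sign(\mathbf{x},\mathbf{x}')\cdot\Gamma\chi_\cM(s_{\mathbf{x}'})=\mu^\vee(\mathbf{x})$ and invoking the bijectivity of $\Phi$ -- is exactly the expected route, and the one point needing care (that the graded band morphism inducing $\delta_{r,E}$ identifies $\delta_{r,E}^\ast\cL^\univ_{r^\ast}$ with $\cL^\univ_r$ and matches canonical sections up to the fixed signs) is handled correctly, with the choice of complementary tuples $\mathbf{x}'$ absorbed by the alternating relations.
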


\subsection{The dual of a morphism}

Duality for matroid bundles extends to a duality on morphisms.

\begin{prop}\label{prop: duality for morphisms of matroid bundles}
 Let $X$ be a band scheme and $\Phi:\cN\to\cM$ a morphism of matroid bundles from $\cN=[\nu:S^w\to\Gamma\cL_\cN]$ to $\cM=[\mu:T^r\to\Gamma\cL_\cM]$ over $X$. Then the transpose of $\Phi$ is a morphism $\Phi^t:\cM^\vee\to\cN^\vee$ of matroid bundles.
\end{prop}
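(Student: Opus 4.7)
The proof adapts \autoref{duality for strong maps} to the band scheme setting. The key observation is that the equivalence, established in Case~2 of the proof of \autoref{thm: cryptomorphisms}, between the morphism Grassmann-Pl\"ucker relation and a pairing-in-nullset condition on fundamental circuits and cocircuits uses only Grassmann-Pl\"ucker relations together with the symmetry of the inner product under transpose---never perfection of the ground idyll. Both ingredients carry over verbatim to matroid bundles, with pairings now valued in $\Gamma(\cL_\cN \otimes \cL_\cM)^+$ in place of $\N[F^\times]$.

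I would first establish the band-theoretic analog of \autoref{inner product}: for a submonomial $\Gamma X$-matrix $\Phi$ indexed by $T \times S$, any $Y \in (\Gamma \cL_\cN)^S$ and $Z \in (\Gamma \cL_\cM)^T$ satisfy $(\Phi \cdot Y) \cdot Z = Y \cdot (\Phi^t \cdot Z)$ in $\Gamma(\cL_\cN \otimes \cL_\cM)^+$, by the same index manipulation as in \autoref{inner product}. For each pair of tuples $\mathbf{y} \in S^{w+1}$ and $\mathbf{x} \in T^{r-1}$, I would then choose an auxiliary tuple $\mathbf{z} \in T^{r^\ast+1}$ and define the formal pair $G \in (\Gamma\cL_\cN)^S$, $H \in (\Gamma\cL_\cM)^T$ exactly as in Case~2 of the proof of \autoref{thm: cryptomorphisms}, namely with $G_{y_j} = (-1)^j \cdot \nu(\mathbf{y}_{\widehat{j}})$ and $H_{z_i} = (-1)^i \cdot \mu^\vee(\mathbf{z}_{\widehat{i}})$ and zero elsewhere. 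Transcribing the sign computation of that Case symbolically into $\Gamma(\cL_\cN \otimes \cL_\cM)^+$ yields
\[
G \cdot (\Phi^t \cdot H) \ = \ \pm \sum_{k=0}^w (-1)^k \cdot \Phi_{\uPhi(y_k), y_k} \cdot \nu(\mathbf{y}_{\widehat{k}}) \otimes \widetilde{\mu}\big(\uPhi(y_k), \mathbf{x}\big),
\]
so the Grassmann-Pl\"ucker-morphism relation for $\Phi:\cN\to\cM$ at $(\mathbf{y}, \mathbf{x})$ lies in $N_{\Gamma(\cL_\cN \otimes \cL_\cM)}$ if and only if $G \cdot (\Phi^t \cdot H)$ does.

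By the duality formula $\mu^\vee(\mathbf{z}) = \sign(\mathbf{z}, \mathbf{z}') \cdot \mu(\mathbf{z}')$, the tuple $H$ is (up to a global sign) a fundamental circuit of $\cM^\vee$ at $\mathbf{z}$, and $G$ is a fundamental cocircuit of $\cN^\vee$ at $\mathbf{y}$. Running the equivalence of the previous paragraph in reverse for $\Phi^t:\cM^\vee\to\cN^\vee$, and applying the transpose identity $G \cdot (\Phi^t \cdot H) = H \cdot (\Phi \cdot G)$ together with the canonical identification $N_{\Gamma(\cL_\cM \otimes \cL_\cN)} = N_{\Gamma(\cL_\cN \otimes \cL_\cM)}$, we conclude that the morphism relation for $\Phi^t$ at each admissible pair $(\mathbf{z}, \mathbf{u})$ lies in the null set---because the corresponding relation for $\Phi$ does, by hypothesis. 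The main obstacle is the careful sign bookkeeping required to identify $G$ and $H$ with fundamental circuits and cocircuits of the dual bundles and to match the index sets $(\mathbf{y}, \mathbf{x})$ and $(\mathbf{z}, \mathbf{u})$, but this parallels the sign analysis of \autoref{thm: cryptomorphisms} and introduces no conceptually new ingredients.
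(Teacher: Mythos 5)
Your proposal is correct and follows essentially the same route as the paper's proof: the paper likewise reduces the morphism relation for $\Phi^t$ at a pair of tuples to the morphism relation for $\Phi$ at the complementary pair by expanding $\mu^\vee$ and $\nu^\vee$ via the sign formulas for dual Grassmann--Pl\"ucker functions and reindexing along the bijection $\mathfrak{b}$ induced by $\underline{\Phi}$ between the two index sets, which is exactly the computation you package through the formal circuits $G,H$ and the transpose identity for the inner product. The only point you gloss over is the degenerate case where $\mathbf{z}$ or $\mathbf{u}$ has a repeated entry, which the paper dispatches in one line and which is needed because the morphism condition quantifies over all tuples, not just the ``admissible'' ones.
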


\begin{proof}
Let $\cL := \cL_{\cM^\vee} \otimes \cL_{\cN^\vee} = \cL_\cN \otimes \cL_\cM$. Let $\textbf{y} \in T^{r^*+1}$ and $\textbf{x} \in S^{w^*-1}$. If $|\textbf{y}| < r^*+1$ or $|\textbf{x}| < w^*-1$, then 
\[
\underset{k = 1}{\overset{w+1}{\sum}} (-1)^k ~ \mu^\vee({\bf y}_{\widehat{k}}) ~ \otimes {\Phi^t_{y_k}} ~ \widetilde{\nu^\vee} \big(\underline{\Phi^t}(y_k),{\bf x}\big) \; \in \; N_{\Gamma \cL}.
\]

Suppose that $|\textbf{y}| = r^*+1$ and $|\textbf{x}| = w^*-1$, and fix $\textbf{z} \in T^{r-1}$ and $\textbf{u} \in S^{w+1}$ such that $|\textbf{y}| \sqcup |\textbf{z}| = T$ and $|\textbf{x}| \sqcup |\textbf{u}| = S$. Let $U := |\textbf{u}| \cap \underline{\Phi^t}(|{\bf y}|) \subseteq S$ and define
\[
  I \ := \ \big\{i \in \{1,\dotsc, w+1\}\, \big| \, u_i\in U\big\} \quad \text{and} \quad J \ := \ \big\{j \in \{1,\dotsc, n-r+1\} \,\big|\, \underline{\Phi}(y_j)\in U\big\}.  
\]
There exists a bijection $\mathfrak{b}: J\rightarrow I$ such that $\underline{\Phi^t}(y_j)=u_{\mathfrak{b}(j)}$, for each $j\in J$. Note that
\[
\begin{aligned}
    & \underset{k = 1}{\overset{w+1}{\sum}} (-1)^k ~ \mu^\vee({\bf y}_{\widehat{k}}) ~ \otimes {\Phi^t_{y_k}} ~ \widetilde{\nu^\vee} \big(\underline{\Phi^t}(y_k),{\bf x}\big)
\\
    = & \underset{j \in J}{\sum} (-1)^j ~ \mu^\vee({\bf y}_{\widehat{j}}) ~ \otimes {\Phi^t_{y_j}} ~ {\nu^\vee} \big(\underline{\Phi^t}(y_j),{\bf x}\big)
\\
    = & \; \underset{j \in J}{\sum} (-1)^j ~ \sign({\textbf{y}}_{\widehat{j}}, y_j, \textbf{z}) ~ \mu(y_j, \textbf{z}) ~ \otimes {\Phi_{u_{\mathfrak{b}(j)}}} ~ \sign\big(u_{\mathfrak{b}(j)}, {\textbf{x}}, \textbf{u}_{\widehat{\mathfrak{b}(j)}}\big) ~ {\nu}(\textbf{u}_{\widehat{\mathfrak{b}(j)}})
\\
    = & \; (-1)^{r^*+1} \sign({\textbf{y}}, \textbf{z}) \cdot \underset{i \in I}{\sum}  ~ \mu(y_{\mathfrak{b}^{-1}(i)}, \textbf{z}) ~ \otimes {\Phi_{u_i}} ~ \sign\big(u_i, {\textbf{x}}, \textbf{u}_{\widehat{i}}\big) ~ {\nu}(\textbf{u}_{\widehat{i}})
\\
    = & \; (-1)^{r^*+1} \sign({\textbf{y}}, \textbf{z}) \cdot (-1)^{w^*-1} \sign({\textbf{x}}, \textbf{u}) \cdot \underset{i \in I}{\sum}  ~ (-1)^i ~ {\nu}(\textbf{u}_{\widehat{i}}) ~ \otimes {\Phi_{u_i}} ~ \mu(\underline{\Phi}(u_i), \textbf{z}),
\end{aligned}
\]
which is in $N_{\Gamma \cL}$, because, as $\Phi: \cN \rightarrow \cM$ is a morphism, one has
\[
\begin{aligned}
    & \; \underset{i \in I}{\sum}  ~ (-1)^i ~ {\nu}(\textbf{u}_{\widehat{i}}) ~ \otimes {\Phi_{u_i}} ~ \mu(\underline{\Phi}(u_i), \textbf{z})
\\
    = & \; \underset{k=1}{\overset{w+1}{\sum}}  ~ (-1)^k ~ {\nu}(\textbf{u}_{\widehat{k}}) ~ \otimes {\Phi_{u_k}} ~ \widetilde{\mu}(\underline{\Phi}(u_k), \textbf{z}) \; \in \; N_{\Gamma \cL},
\end{aligned}
\]
which concludes the proof.
\end{proof}


\section{The moduli space of quiver matroids}
\label{section: The moduli space of quiver matroids}

In \cite[Section 3]{Jarra-Lorscheid24}, the first and second authors construct the moduli space for flag matroids with coefficients. Here we generalize this construction to the case of \emph{quiver matroids}. In this section we assume that every quiver is finite, i.e. it has a finite set of vertices and arrows.

\subsection{Classical quiver Grassmannians}
\label{subsection: classical quiver Grassmannians}
Quiver Grassmannians appeared for the first time in a paper of Schofield (\cite{Schofield1992}), but they did not gain much attention until Caldero and Chapoton exhibit in \cite{Caldero-Chapoton06} an explicit formula for cluster variables in terms of the Euler characteristics of quiver Grassmannians, which turned quiver Grassmannians and their Euler characteristics into objects of central interest for representation theory. 

Let $k$ be a field, $Q$ be a quiver and $\Lambda$ a finite dimensional representation of $Q$ over $k$. Let $\textbf{e}=(e_v)_{v\in Q_0} \in \Z^{Q_0}$ be a dimension vector for $Q$. As a set, the \emph{quiver Grassmannian} $\Gr_{\textbf{e}}(\Lambda)(k)$ is defined as the collection of all subrepresentations $M$ of $\Lambda$ with dimension vector $\textbf{e}$. It gains its structure as a projective variety from the embedding
\[
    \begin{tikzcd}[row sep = -5pt, /tikz/column 1/.append style={anchor=base east}, /tikz/column 2/.append style={anchor=base west}]
        \Gr_{\textbf{e}}(\Lambda)(k) \arrow[r, hook] & \displaystyle\prod_{v\in Q_0}\Gr(e_v, \Lambda_v)(k) \\
        (M_v)_{v\in Q_0} \arrow[r, mapsto] & (M_v)_{v \in Q_0}
    \end{tikzcd}
\]
as a closed subset of the product of classical Grassmann varieties $\Gr(e_v, \Lambda_v)(k)$ over $k$.

The quiver Grassmannian gains the structure of a $k$-scheme in terms of the following set of defining equations (see \cite{Lorscheid-Weist2019}). For each vertex $v$, let $\cB_v$ be an ordered basis of $\Lambda_v$, which yields homogeneous coordinates $[\dotsc:\Delta_I:\dotsc]$ for the Grassmannian $\Gr(e_v, \Lambda_v)$, where $I$ is an $e_v$-subset of $\cB_v$. For each arrow $\alpha$, let $[a_{i,j}]$ be the matrix which represents $\Lambda_\alpha$ with respect to the bases $\cB_{s(\alpha)}$ and $\cB_{t(\alpha)}$. For $I \subseteq \cB_v$ and $i \in \cB_v$, define 
\[
 \epsilon(i,I) \ = \ \#\, \{i'\in I \mid i'\le i\}. 
\]
The \emph{quiver Pl\"ucker relations} are the equations
\[
    E(\alpha,I,J): \quad \sum_{i\in\cB_v-I,\, j\in J}(-1)^{\epsilon(i,I)+\epsilon(j,J)} ~ a_{i,j} ~ \Delta_{I\cup\{i\}} ~ \Delta_{J-\{j\}} \; = \;0,
\]
where $\alpha$ varies through $Q_1$, $I$ varies through all $(e_{s(\alpha)}-1)$-subsets of $\cB_{s(\alpha)}$ and $J$ varies through all $(e_{t(\alpha)}+1)$-subsets of $\cB_{t(\alpha)}$. These equations define $\Gr_{\textbf{e}}(\Lambda)$ as a closed subscheme of the product of Grassmannians $\prod \Gr(e_v, \Lambda_v)$.

\subsection{The moduli functor of quiver matroids} 
In the following, we introduce quiver matroid bundles and show that the quiver Pl\"ucker relations define a band model of the quiver Grassmannian, which turns out to be the moduli space of quiver matroid bundles.

For the remainder of this section, we fix a finite quiver $Q$ and a representation $\Lambda$ of $Q$ over $\cat{Vect}_{\F_1}$. We assume that $E_v:=\Lambda_v-\{0\}$ is finite for all $v\in Q_0$.

Let $B$ be a band. We denote by $\Lambda_{\alpha,B}$ the submonomial $B$-matrix with coefficients
\[
 (\Lambda_{\alpha,B})_{i,j} \ = \ \begin{cases}
                                1 & \text{if }\Lambda(j)=i\neq0, \\
                                0 & \text{if there is no $i\in E_{t(\alpha)}$ such that $\Lambda(j)=i$},
                                \end{cases}
\]
where $(i,j)$ ranges through $E_{t(\alpha)}\times E_{s(\alpha)}$; cf.\ \autoref{subsection: morphisms of matroid bundles}.

\begin{df} 
 Let $X$ be a band scheme. A \emph{$\Lambda$-matroid bundle over $X$}, or $(\Lambda,X)$-matroid bundle for short, is a tuple $\cM=(\cM_v)_{v \in Q_0}$ of matroid bundles $\cM_v$ over $X$ with ground set $E_v$ such that the submonomial $\Gamma X$-matrix $\Lambda_{\alpha,\Gamma X}: \cM_{s(\alpha)} \to \cM_{t(\alpha)}$ is a morphism for all $\alpha \in Q_1$. The \emph{rank vector} of $\cM$ is the tuple $\big(\rk(\cM_v)\big)_{v \in Q_0}$.
\end{df}

Let $\textbf{r} = (r_v)_{v \in Q_0} \in \Z^{Q_0}$ be a dimension vector for $Q$. We define the functor $\mathpzc{Gr}_{\textbf{r}}(\Lambda)(-): \cat{BSch}^{\textup{op}} \rightarrow \cat{Sets}$ as follows: for a band scheme $X$, let
\[
\mathpzc{Gr}_{\textbf{r}}(\Lambda)(X)  \; := \; \{(\Lambda, X)\text{-matroid bundles of rank }\textbf{r}\},
\]
and for a morphism of band schemes $\varphi: X \rightarrow Y$, define
\[
\begin{array}{cccc}
\mathpzc{Gr}_{\textbf{r}}(\Lambda)(\varphi): & \mathpzc{Gr}_{\textbf{r}}(\Lambda)(Y) & \longrightarrow & \mathpzc{Gr}_{\textbf{r}}(\Lambda)(X)\\
& \cM = (\cM_v)_{v\in Q_0} & \longmapsto & \varphi^*(\cM) := (\varphi^*\cM_v)_{v\in Q_0}.
\end{array}
\]
In \autoref{thm: moduli space of quiver matroid bundles}, we prove that this functor is representable.

\subsection{Quiver Grassmannian of matroids}
\label{subsection: quiver Grassmannian}

Let $Q$ be a quiver, $\Lambda$ an $\Fun$-representation of $Q$ and $\textbf{r}=(r_v)_{v\in Q_0}$ a rank vector for $Q$. We define the multi-homogeneous band
\[
 B \ = \ \bigotimes_{v \in Q_0} \ \bandquot{\F_1^\pm[T_\mathbf{x} \mid \mathbf{x} \in E_v^{r_v}]}{\qr_{\Lambda,\textbf{r}}}
\]
with grading $\deg T_\bx=(\delta_{v,w})_{w\in Q_0}$ for $\bx\in E_v^{r_v}$, where $\qr_{\Lambda,\textbf{r}}$ is generated by the following elements:
\begin{align}
 \tag{qr1}\label{qr1} 
 & T_{{\bf x}} 
 && \text{for all $\mathbf{x} \in E_v^{r_v}$ with $\#|\mathbf{x}| < r_v$;}
 \\
 \tag{qr2}\label{qr2} 
 & T_{{\bf x}^\sigma} \ - \ \sign(\sigma)\cdot T_{{\bf x}} 
 && \text{for all $v \in Q_0$, $\mathbf{x} \in E_v^{r_v}$ and permutation $\sigma$;} 
 \\
 \tag{qr3}\label{qr3} 
 & \sum_{k=1}^{r_{v}+1} \ (-1)^k \cdot T_{{\bf y}_{\widehat{k}}} \otimes T_{(y_k,{ \bf x })} 
 && \text{for all $v \in Q_0$, $\mathbf{y} \in E_v^{r_v+1}$ and $\mathbf{x} \in E_v^{r_v-1}$;}
 \\
 \tag{qr4}\label{qr4} 
 & \sum_{k=1}^{r_{s(\alpha)}+1} \ (-1)^k \cdot T_{{\bf y}_{\widehat{k}}} \otimes T_{(\Lambda_{\alpha}(y_k),{ \bf x })}
 && \text{for all $\alpha \in Q_1$, $\mathbf{y} \in E_{s(\alpha)}^{r_{s(\alpha)}+1}$ and $\mathbf{x} \in E_{t(\alpha)}^{r_{t(\alpha)}-1}$,}
\end{align}
where $T_{(0, \textbf{x})}:=0$ for $\textbf{x} \in E^{r_v-1}_v$ in \eqref{qr4}.

\begin{rem}
 Since $(\Lambda_{\alpha,\Gamma X})_{\Lambda_\alpha(y), y}$ is equal to $1$ for $T_{(\Lambda_{\alpha}(y),{ \bf x })}\neq 0$, we can omit the factor $(\Lambda_{\alpha,\Gamma X})_{\Lambda_\alpha(y), y}$ from the quiver Pl\"ucker relation \eqref{qr4}, in contrast to the more general case considered in \autoref{subsection: morphisms of matroid bundles}. 
\end{rem}

\begin{df}
\label{def: quiver Grassmannian}
The \emph{quiver Grassmannian for $\Lambda$ with rank vector $\mathbf{r}$} is the closed subscheme
\[
 \Gr_{\textbf{r}}(\Lambda) \ = \ \Proj \Big( \bigotimes_{v \in Q_0} \ \bandquot{\F_1^\pm[T_\mathbf{x} \mid \mathbf{x} \in E_v^{r_v}]}{\qr_{\Lambda,{\textbf{r}}}} \Big)
\]
of $\mathbb{P}(\Lambda)=\prod_{v\in Q_0}\P^{\#E_v^{r_v}-1}$. The closed immersion
\[
\pl: \Gr_{{\textbf{r}}}(\Lambda) \ \longrightarrow \ \mathbb{P}(\Lambda)
\]
is called the \emph{Pl\"ucker embedding}.
\end{df}

\begin{rem}
 By \eqref{qr1}--\eqref{qr3}, the Pl\"ucker embedding above factors through 
 \[
  \Gr_{\textbf{r}}(\Lambda) \ \longrightarrow \ \prod_{v \in Q_0} \Gr(r_v, E_v) \ \longrightarrow \ \mathbb{P}(\Lambda).
 \]
\end{rem}

\begin{ex}[usual Grassmannians and flag varieties]\label{ex: Quiver Grass.}
    Quiver Grassmannians are a vast generalization of Grassmannians. Namely, a Grassmannian $\Gr(r,E)$, as in \autoref{ex: Grassmannian}, is a quiver Grassmannian $\Gr_{\br}(\Lambda)$, where $\Lambda$ is the $\F_1$-representation of the quiver with a single vertex $v$ and no arrows that satisfies $\Lambda_v = \uE$, and $\br = (r)$.
    
    Quiver Grassmannians also generalize flag varieties. To illustrate, consider the quiver
    \begin{equation*}
        \begin{tikzcd}[column sep = 25pt] Q: ~ v_1 \arrow["\alpha"]{r} & v_2 \end{tikzcd}
    \end{equation*}
    which is a Dynkin quiver of type $A_2$. Let $\Lambda$ be the $\Fun$-representation of $Q$ given by $\Lambda_{v_1}=\{0,1,2,3\}$, $\Lambda_{v_2}=\{0,4,5,6\}$ and $\Lambda_\alpha:\Lambda_{v_1}\to\Lambda_{v_2}$ with
    \[
     \Lambda_\alpha(1) \ = \ 4, \qquad \Lambda_\alpha(2) \ = \ 5, \qquad \Lambda_\alpha(3) \ = \ 6.
    \]
 The $\Fun$-representation $\Lambda$ is also determined by its coefficient quiver
 (cf. \autoref{subsection: the Euler characteristic and the Tits space}), as illustrated in \autoref{fig: coefficient quiver A_2-1}.
\begin{figure}[ht]
  \begin{tikzpicture}[scale=1, x=2cm, y=0.5cm, font=\normalsize, vertices/.style={draw, fill=black, circle, inner sep=0pt},decoration={markings,mark=at position 0.5 with {\arrow{>}}}]
   \draw (1,6) node (1) {$1$};
   \draw (1,5) node (2) {$2$};
   \draw (1,4) node (3) {$3$};
   \draw (2.5,6) node (4) {$4$};
   \draw (2.5,5) node (5) {$5$};
   \draw (2.5,4) node (6) {$6$};
   \draw[->] (1) -- (4);
   \draw[->] (2) -- (5);
   \draw[->] (3) -- (6);
   \draw (1,2)     node (v1) {$v_1$};
   \draw (2.5,2)   node (v2) {$v_2$};
   \draw[->] (v1) -- node[above] {\footnotesize $\alpha$} (v2);
   \draw (-.4,5) node (G) {$\Gamma$};
   \draw (-.4,2) node (Q) {$Q$};
   \draw[->] (G) -- (Q);
   \end{tikzpicture}
    \caption{Coefficient quiver $\Gamma$ of $\Lambda$, and the projection to $Q$}
    \label{fig: coefficient quiver A_2-1}
\end{figure}
  
Consider the rank vector $\br = (r_1, r_2)=(1,2)$. Then
$$
    \Gr_{\br}(\Lambda) \ = \ \Proj\big(\bandquot{\Funpm[T_1,T_2,T_3,T_{45},T_{46},T_{56}]}{\langle T_1T_{56}-T_2T_{46}+T_3T_{45}\rangle}\big),
$$
where $T_1,T_2$ and $T_3$ have multidegree $(1,0)$ and $T_{45},T_{46}$ and $T_{56}$ have multidegree $(0,1)$. This recovers the flag variety $\Fl(\textbf{r},E)$ from \autoref{ex: flag varieties} as the quiver Grassmannian $\Gr_{\br}(\Lambda)$.
\end{ex}

\begin{ex}[Degenerate flag variety]
 Degenerate flag varieties (as in \cite{CerulliIrelli-Feigin-Reineke12}) also have $\Funpm$-models as quiver Grassmannians. The simplest interesting example is the following. 

 Let $Q$ be the same quiver as in \autoref{ex: Quiver Grass.}, which has two vertices $v_1$ and $v_2$ and a unique arrow $\alpha:v_1\to v_2$. Let $\Lambda$ be an $\Fun$-representation of $Q$ with $\Lambda_{v_1}=\{0,1,2\}$, $\Lambda_{v_2}=\{0,3,4\}$ and $\Lambda_\alpha:\Lambda_{v_1}\to\Lambda_{v_1}$ given by
 \[
  \Lambda_\alpha(1) \ = \ 3, \qquad \Lambda_\alpha(2) \ = 0,
 \]
 whose coefficient quiver (cf.\ \autoref{subsection: the Euler characteristic and the Tits space}) is illustrated in \autoref{fig: coefficient quiver A_2-2}.
 
\begin{figure}[ht]
  \begin{tikzpicture}[scale=1, x=2cm, y=0.5cm, font=\normalsize, vertices/.style={draw, fill=black, circle, inner sep=0pt},decoration={markings,mark=at position 0.5 with {\arrow{>}}}]
   \draw (1,5) node (1) {$1$};
   \draw (1,4) node (2) {$2$};
   \draw (2.5,5) node (3) {$3$};
   \draw (2.5,4) node (4) {$4$};
   \draw[->] (1) -- (3);
   \draw (1,2)     node (v1) {$v_1$};
   \draw (2.5,2)   node (v2) {$v_2$};
   \draw[->] (v1) -- node[above] {\footnotesize $\alpha$} (v2);
   \draw (-.4,4.5) node (G) {$\Gamma$};
   \draw (-.4,2) node (Q) {$Q$};
   \draw[->] (G) -- (Q);
   \end{tikzpicture}
    \caption{Coefficient quiver $\Gamma$ of $\Lambda$, and the projection to $Q$}
    \label{fig: coefficient quiver A_2-2}
\end{figure}

Consider the rank vector $\br = (r_1,r_2)=(1,1)$. The Pl\"ucker embedding
$$
    \begin{tikzcd}
        \Gr_{\br}(\Lambda)=\Proj\big(\bandquot{\Funpm[T_1,T_2,T_3,T_4]}{\langle T_1T_4\rangle}\big) \arrow[hookrightarrow]{r} & \P^1 \times \P^1.
    \end{tikzcd}
$$
identifies the quiver Grassmannian $\Gr_{\br}(\Lambda)$ with the closed subscheme of $\P^1\times\P^1$ that is the union of two projective lines (given by $T_1=0$ and $T_4=0$, respectively) that meet in one point (where $T_1=T_4=0$). In other words,
\[
 \Gr_\br(\Lambda,F) \ = \ \big\{ \, [a_1:a_2|a_3:a_4] \in\P^1(F)\times\P^1(F) \ \big| \ a_1\cdot a_4=0 \, \big\}
\]
for every idyll $F$.
\end{ex}

\begin{rem}[Affine covering of the Quiver Grassmannian of matroids]
\label{subsubsection - Affine covering of the Quiver Grassmannian of matroids}

For each tuple $\textbf{J} = (J_v)_{v \in Q_0}$, where $J_v \in E_v^{r_v}$ for all $v$, define the affine band scheme
\[
\mathbb{A}_{\textbf{J}} := \Spec \bigg( \underset{v \in Q_0}{\bigotimes} \F_1^\pm[T_\textbf{x} / T_{J_v} \mid \textbf{x} \in E_v^{r_v}] \bigg)
\]
and its closed subscheme $U_{\textbf{J}} := \mathbb{A}_{\textbf{J}} \times_{\mathbb{P}(\Lambda)} \Gr_{\textbf{r}}(\Lambda)$. As $\{\mathbb{A}_{\textbf{J}}\}$ is an affine open covering of $\mathbb{P}(\Lambda)$, the collection $\{U_{\textbf{J}}\}$ is an affine covering of $\Gr_{\textbf{r}}(\Lambda)$. Note that 

\[
U_{\textbf{J}} = \Spec \bigg(\Big(\underset{v \in Q_0}{\bigotimes} \F_1^\pm[T_\textbf{x} / T_{J_v} \mid \textbf{x} \in E_v^{r_v}]\Big) \big/\hspace{-4pt}\big/ \mathpzc{qr}_{\textbf{J}, \Lambda} \bigg),
\]
where $\mathpzc{qr}_{\textbf{J}, \Lambda}$ is generated by the elements:

\begin{enumerate}
    \item\label{QR1} $\dfrac{T_{{\bf x}^\sigma}}{T_{J_v}} - \sign(\sigma)\dfrac{T_{{\bf x}}}{T_{J_v}}$ for all $v \in Q_0$, $\mathbf{x} \in E_v^{r_v}$ and permutation $\sigma$;
    \\[5pt]
    \item\label{QR2}  $\dfrac{T_{{\bf x}}}{T_{J_v}}$ for all $\mathbf{x} \in E_v^{r_v}$ with $\#|\mathbf{x}| < r_v$;
    \\[5pt]
    \item\label{QR3} $\displaystyle \underset{k=1}{\overset{r_{v}+1}{\sum}} (-1)^k ~ \dfrac{T_{{\bf y}_{\widehat{k}}}}{T_{J_{v}}} \otimes \dfrac{T_{(y_k,{ \bf x })}}{T_{J_{v}}}$ for all for all $v \in Q_0$, $\mathbf{y} \in E_v^{r_v+1}$ and $\mathbf{x} \in E_v^{r_v-1}$;
    \\[5pt]
    \item\label{QR4} $\displaystyle \underset{k=1}{\overset{r_{s(\alpha)}+1}{\sum}} (-1)^k ~ \dfrac{T_{{\bf y}_{\widehat{k}}}}{T_{J_{s(\alpha)}}} \otimes \dfrac{T_{(\Lambda_{\alpha}(y_k),{ \bf x })}}{T_{J_{t(\alpha)}}}$ for all $\alpha \in Q_1$, $\mathbf{y} \in E_{s(\alpha)}^{r_{s(\alpha)}+1}$ and $\mathbf{x} \in E_{t(\alpha)}^{r_{t(\alpha)}-1}$,
\end{enumerate}
where $T_{(0, \textbf{x})}:=0$ for $\textbf{x} \in E^{r_v-1}_v$ in \eqref{QR4}.

Note that if $\#|J_v| < r_v$ for some $v\in Q_0$, then $U_{\mathbf{J}}$ is empty. 
\end{rem}

Let $\pi_v: \P(\Lambda) \rightarrow \mathbb{P}^{(\#E_v^{r_v} - 1)}$ be the canonical projection for $v \in Q_0$ and $\pl_v=\pi_v\circ\pl$. Let $\cL^\univ_v:=\textup{pl}_v^*\big(\cO(1)\big)$, which is a line bundle on $\Gr_{\textbf{r}}(\Lambda)$.
 
\begin{df}
 The \emph{universal line bundle on $\Gr_{\mathbf{r}}(\Lambda)$} is $\cL^\univ_v:=\textup{pl}_v^*\big(\cO(1)\big)$. The \emph{universal} $\Lambda$\emph{-matroid bundle} of rank ${\textbf{r}}$ is the $\Lambda$-matroid bundle
\[
{\cM}^{\textnormal{univ}} \; := \; \big([\mu_v^{\textnormal{univ}}: E_v^{r_v} \rightarrow \Gamma(\Gr_{\textbf{r}}(\Lambda), \cL^\univ_v)]\big)_{v \in Q_0}
\]
over $\Gr_{\textbf{r}}(\Lambda)$, where $\mu_v^{\textnormal{univ}}({\bf x}) := \Gamma\textup{pl}_v(T_{\bf x})$.
\end{df}

\begin{thm}\label{thm: moduli space of quiver matroid bundles}
The quiver Grassmannian $\Gr_{\mathbf{r}}(\Lambda)$ is the fine moduli space of $\Lambda$-matroid bundles, i.e. the maps
\[
\begin{array}{cccc}
\Phi_X: & \textup{Hom}\big(X, \Gr_{\mathbf{r}}(\Lambda)\big) & \longrightarrow & \mathpzc{Gr}_{\mathbf{r}}(\Lambda)(X)\\
 & \beta & \longmapsto & \beta^*({\cM}^{\textnormal{univ}}).
\end{array}
\]
(where $X$ is a band scheme) define an isomorphism $\Hom(-,\Gr_{\mathbf{r}}(\Lambda))\to\mathpzc{Gr}_{\mathbf{r}}(\Lambda)$ of functors.
\end{thm}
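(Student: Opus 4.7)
The plan is to combine the moduli property of the ordinary Grassmannian (\autoref{thm: Grassmannian as moduli space of matroid bundles}) with the observation that the quiver Pl\"ucker relation \eqref{qr4} for an arrow $\alpha$ is precisely the matroid-bundle morphism condition from \autoref{subsection: morphisms of matroid bundles} applied to the submonomial matrix $\Lambda_{\alpha,\Gamma X}$. By construction, $\Gr_\br(\Lambda)$ is the multi-homogeneous Proj cut out of $\P(\Lambda)=\prod_v\P^{\#E_v^{r_v}-1}$ by \eqref{qr1}--\eqref{qr4}, and the subset \eqref{qr1}--\eqref{qr3} already cuts out the product $\prod_v\Gr(r_v,E_v)$. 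This separates the analysis into per-vertex matroid data and per-arrow compatibility.

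\textbf{From morphisms to $\Lambda$-matroid bundles.} Given $\beta\colon X\to\Gr_\br(\Lambda)$, composition with the factorwise projection yields morphisms $\beta_v\colon X\to\Gr(r_v,E_v)$ for $v\in Q_0$. By \autoref{thm: Grassmannian as moduli space of matroid bundles}, each $\beta_v$ corresponds to a matroid bundle $\cM_v=[\mu_v\colon E_v^{r_v}\to\Gamma\beta^\ast\cL^\univ_v]$ of rank $r_v$ on $E_v$ with $\mu_v(\bx)=\Gamma\beta(\mu_v^\univ(\bx))$, and jointly $(\cM_v)_{v\in Q_0}=\beta^\ast\cM^\univ$. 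It remains to check that $\Lambda_{\alpha,\Gamma X}\colon\cM_{s(\alpha)}\to\cM_{t(\alpha)}$ is a morphism of matroid bundles for each $\alpha\in Q_1$. Since $\Lambda_{\alpha,\Gamma X}$ is submonomial with entries in $\{0,1\}$ and underlying map $\Lambda_\alpha$, the morphism condition from \autoref{subsection: morphisms of matroid bundles} reduces to
\[
 \sum_{k=1}^{r_{s(\alpha)}+1}(-1)^k\cdot\mu_{s(\alpha)}(\by_{\widehat k})\otimes\widetilde{\mu_{t(\alpha)}}\bigl(\Lambda_\alpha(y_k),\bx\bigr)\in N_{\Gamma(\beta^\ast\cL^\univ_{s(\alpha)}\otimes\beta^\ast\cL^\univ_{t(\alpha)})},
\]
with the convention $\widetilde{\mu_{t(\alpha)}}(0,\bx):=0$. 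This is the image under $\Gamma\beta$ of the quiver Pl\"ucker relation \eqref{qr4}, which vanishes in $\Gamma(\cL^\univ_{s(\alpha)}\otimes\cL^\univ_{t(\alpha)})$ by the definition of $\Gr_\br(\Lambda)$. Hence $\Phi_X(\beta)\in\mathpzc{Gr}_\br(\Lambda)(X)$.

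\textbf{Inverse and functoriality.} Conversely, given a $(\Lambda,X)$-matroid bundle $\cM=(\cM_v)$ with $\cM_v=[\mu_v\colon E_v^{r_v}\to\Gamma\cL_v]$, \autoref{thm: Grassmannian as moduli space of matroid bundles} produces characteristic morphisms $\chi_v\colon X\to\Gr(r_v,E_v)$, which assemble into a morphism $\chi\colon X\to\P(\Lambda)$ classified, via \autoref{prop: characterization of morphisms to projective space by global sections of line bundles}, by the tuple of line bundles $(\cL_v)$ together with the sections $\mu_v(\bx)$. The Grassmann-Pl\"ucker axioms of each $\mu_v$ are equivalent to the vanishing of the pulled-back \eqref{qr1}--\eqref{qr3}, and the hypothesis that each $\Lambda_{\alpha,\Gamma X}$ is a morphism is equivalent, by the computation above, to the vanishing of the pulled-back \eqref{qr4}. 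Hence $\chi$ factors uniquely through a morphism $\beta_\cM\colon X\to\Gr_\br(\Lambda)$, inverse to $\Phi_X$. Naturality in $X$ follows from the functoriality of pullback of line bundles and matroid bundles.

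\textbf{Main obstacle.} The principal technical point is the faithful translation between the quiver Pl\"ucker relation \eqref{qr4}, formulated inside the multigraded coordinate band of $\P(\Lambda)$, and the morphism condition of \autoref{subsection: morphisms of matroid bundles}, formulated in $\Gamma(\cL_{s(\alpha)}\otimes\cL_{t(\alpha)})$. The match is exact because $\Lambda_{\alpha,\Gamma X}$ has $\{0,1\}$-entries, so the matrix coefficient $\Phi_{\underline\Phi(y_k),y_k}$ collapses to the indicator of $\Lambda_\alpha(y_k)\neq 0$, and the convention $T_{(0,\bx)}:=0$ appearing in \eqref{qr4} matches the convention $\widetilde{\mu_{t(\alpha)}}(0,\bx):=0$ for the augmented Grassmann-Pl\"ucker function; once this correspondence is established, the remainder is formal bookkeeping with the moduli property of the ordinary Grassmannian.
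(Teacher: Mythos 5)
Your proposal is correct and follows essentially the same route as the paper: reduce to the moduli property of the ordinary Grassmannian (\autoref{thm: Grassmannian as moduli space of matroid bundles}) vertex by vertex, identify the quiver Pl\"ucker relation \eqref{qr4} with the matroid-bundle morphism condition for $\Lambda_{\alpha,\Gamma X}$ arrow by arrow, and use \autoref{prop: characterization of morphisms to projective space by global sections of line bundles} to assemble and factor the resulting map through the closed subscheme $\Gr_\br(\Lambda)\subseteq\P(\Lambda)$. The only cosmetic difference is that the paper verifies the factorization concretely on the affine cover $\{U_{\mathbf J}\}$ and then checks the two compositions by explicit bookkeeping with pulled-back sections, where you invoke the universal property of the closed subscheme directly; both are sound.
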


\begin{proof}
Let $X$ be a band scheme. As a first step, we construct the inverse $\Psi_X: \mathpzc{Gr}_{\textbf{r}}(\Lambda)(X) \rightarrow  \textup{Hom}\big(X, \Gr_{\textbf{r}}(\Lambda)\big)$ to $\Phi_X$. Consider a $(\Lambda, X)$-matroid bundle 
\[
\cM = \big([\mu_v: E_v^{r_v} \rightarrow \Gamma(X,\cL_v)]\big)_{v\in Q_0}
\]
with rank vector $\textbf{r}$. For $v \in Q_0$, the set of global sections $\{\mu_v(\mathbf{x}) \mid \mathbf{x} \in E_v^{r_v}\}$ generates $\mathcal{L}_v$. Thus by \autoref{prop: characterization of morphisms to projective space by global sections of line bundles}, there exists a unique morphism of band schemes $\psi_v: X \rightarrow \mathbb{P}^{(\#E_v^{r_v} - 1)}$ and an isomorphism $\iota_v : \psi_v^*\big(\cL^\univ\big)\rightarrow\cL_v$ satisfying $\iota_v \big(\Gamma \psi_v(T_{\mathbf{x}})\big) = \mu_v(\mathbf{x})$ for all $\mathbf{x} \in E_v^{r_v}$. Consider the product of the $\psi_v$, to obtain a morphism 
\[
\psi:  X \rightarrow \P(\Lambda).
\]
As $\cM$ is a $\Lambda$-matroid bundle, $\psi|_{\psi^{-1}(\A_{\mathbf{J}})}: \psi^{-1}(\A_{\mathbf{J}}) \rightarrow \A_{\mathbf{J}}$ has image in $U_\mathbf{J}$, for all $\mathbf{J} = (J_v)_{v \in Q_0}$. Thus $\psi$ factorizes through $\pl: \Gr_{\textbf{r}}(\Lambda) \hookrightarrow \mathbb{P}(\Lambda)$, i.e. there exists a unique morphism $\gamma: X \rightarrow \Gr_{\textbf{r}}(\Lambda)$ such that the diagram
\[
\begin{tikzcd}
X \arrow[r, "\psi"] \arrow[d, dashrightarrow, swap, "\gamma"] & \P(\Lambda)
\\
\Gr_{\textbf{r}}(\Lambda) \arrow[ru, hook, "\pl", swap]
\end{tikzcd}
\]
commutes. Define $\Psi_X(\cM) := \gamma$. Next we show that $\Psi_X = \Phi_X^{-1}$.

For a morphism $\beta: X \rightarrow \Gr_{\textbf{r}}(\Lambda)$, note that $\pl_v \circ \beta: X \rightarrow \mathbb{P}^{(\#E_v^{r_v} - 1)}$ is a morphism of band schemes that satisfies 
$$
(\pl_v \circ \beta)^*\big(\cO(1)\big) = \beta^*(\cL^\univ_v) \quad \text{ and } \quad \Gamma(\pl_v \circ \beta)(T_{\mathbf{x}}) = \Gamma\beta \big(\Gamma\pl_v(T_{\mathbf{x}})\big)
$$
for all $\mathbf{x} \in E_v^{r_v}$ and all $v \in Q_0$. Therefore
\[
\beta = \Psi_X\big(\beta^*(\cM^{\textnormal{univ}})\big) = \Psi_X \circ \Phi_X(\beta).
\]

For a $(\Lambda, X)$-matroid bundle $\cM = \big([\mu_v: E_v^{r_v} \rightarrow \Gamma(X,\cL_v)]\big)_{v\in Q_0}$, consider the morphisms $\psi: X \rightarrow \P(\Lambda)$ and $\psi_v = \pi_v \circ \psi: X \rightarrow \mathbb{P}^{(\#E_v^{r_v} - 1)}$ of band schemes, and the morphism $\iota_v : \psi_v^*\big(\cO(1)\big) \rightarrow \cL_v$ of lines bundles over $X$, as above. Note that 
\[
\cM = \big(\big[\Gamma\psi_v(T_{-}): E_v^{r_v} \rightarrow \Gamma\big(X,\psi_v^*(\cO(1))\big)\big]\big)_{v\in Q_0},
\]
and let $\gamma := \Psi_X(\cM)$. As $(\pl_v \circ \gamma)^*\big(\mathcal{O}(1)\big) = \psi_v^*\big(\cO(1)\big)$ and $\Gamma(\pl_v \circ \gamma)(T_{\mathbf{x}}) = \Gamma\psi_v(T_{\mathbf{x}})$ for all $\mathbf{x} \in E_v^{r_v}$ and $v \in Q_0$, it follows that 
\[
\cM = \Phi_X(\gamma) = \Phi_X \circ \Psi_X(\cM).
\]

For a morphism $\eta: Y \rightarrow X$ of band schemes and $\beta \in \textup{Hom}\big(X, \Gr_{\textbf{r}}(\Lambda)\big)$, 
\[
\Phi_Y(\beta \circ \eta) = (\beta \circ \eta)^*({\cM}^{\textnormal{univ}}) = \eta^*\big(\beta^*({\cM}^{\textnormal{univ}})\big) = \eta^*\big(\Phi_X(\beta)\big).
\]
Therefore $\Phi: \textup{Hom}\big(-, \Gr_{\textbf{r}}(\Lambda)\big) \Rightarrow \mathpzc{Gr}_{\textbf{r}}(\Lambda)(-)$ is a natural isomorphism.
\end{proof}

\begin{cor}
\label{cor: F-rational points of the quiver Grassmannian}
If $F$ is a perfect idyll, then the $F$-rational points of $\Gr_{\mathbf{r}}(\Lambda)$ are in bijection with the set of $(Q, F)$-matroids $M$ with rank vector $\textbf{r}$, underlying $\Fun$-representation $\cat{U}_F(M) = \Lambda$ and such that $M_\alpha = \Lambda_{\alpha, F}$ for all $\alpha$.
\end{cor}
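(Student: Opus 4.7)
The plan is to deduce the corollary by specializing \autoref{thm: moduli space of quiver matroid bundles} (which identifies $\Gr_{\mathbf{r}}(\Lambda)$ as a fine moduli space) to the band scheme $X = \Spec F$, and then unpacking what a $(\Lambda, X)$-matroid bundle is in this special case by appealing to \autoref{ex: matroid bundles on points} and \autoref{ex: morphisms over points}.

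First, I would apply \autoref{thm: moduli space of quiver matroid bundles} with $X=\Spec F$, obtaining a bijection
\[
 \Gr_{\mathbf{r}}(\Lambda)(F) \ = \ \Hom(\Spec F, \Gr_{\mathbf{r}}(\Lambda)) \ \longrightarrow \ \mathpzc{Gr}_{\mathbf{r}}(\Lambda)(\Spec F),
\]
so that $F$-rational points correspond to $(\Lambda,\Spec F)$-matroid bundles of rank $\mathbf{r}$. Next, I would unpack such a bundle. Since $\Gamma(\Spec F)=F$, by \autoref{ex: matroid bundles on points} each component matroid bundle $\cM_v$ over $\Spec F$ with ground set $E_v$ corresponds canonically to an $F$-matroid $M_v$ on $E_v$; and the condition defining a $(\Lambda,\Spec F)$-matroid bundle is precisely that each submonomial $F$-matrix $\Lambda_{\alpha,F}$ be a morphism of matroid bundles $\cM_{s(\alpha)}\to\cM_{t(\alpha)}$, which by \autoref{ex: morphisms over points} is the same as being a morphism $\Lambda_{\alpha,F}:M_{s(\alpha)}\to M_{t(\alpha)}$ of $F$-matroids. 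Thus the data of a $(\Lambda,\Spec F)$-matroid bundle of rank $\mathbf{r}$ is exactly a $(Q,F)$-matroid $M=\big((M_v),(\Lambda_{\alpha,F})\big)$ of rank vector $\mathbf{r}$ with $M_\alpha=\Lambda_{\alpha,F}$ for all $\alpha\in Q_1$.

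It remains to verify that such a $(Q,F)$-matroid $M$ has underlying $\Fun$-representation $\cat{U}_F(M)=\Lambda$. Since $M_v$ has ground set $E_v=\Lambda_v-\{0\}$, we have $\cat{U}_F(M)_v=\underline{E_v}=\Lambda_v$ for every $v\in Q_0$. By the definition of $\Lambda_{\alpha,F}$ recalled at the start of \autoref{section: The moduli space of quiver matroids}, the underlying $\Fun$-linear map $\underline{\Lambda_{\alpha,F}}$ agrees with $\Lambda_\alpha$; hence $\cat{U}_F(M)_\alpha=\underline{M_\alpha}=\Lambda_\alpha$. Chaining the bijection from \autoref{thm: moduli space of quiver matroid bundles} with these identifications yields the desired bijection.

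There is no genuine obstacle here, since both ingredients have already been established: the representability theorem does the heavy lifting, and the two point-level examples provide the dictionary between matroid bundles on $\Spec F$ (resp.\ their morphisms) and $F$-matroids (resp.\ $F$-morphisms). The only mild care required is to keep track of the fact that when one restricts the quiver Pl\"ucker relations \eqref{qr4} to the trivial line bundle case, the factor $(\Lambda_{\alpha,F})_{\Lambda_\alpha(y),y}$ is suppressed precisely because $\Lambda_{\alpha,F}$ has all nonzero entries equal to $1$; this is what pins the resulting morphism to be $\Lambda_{\alpha,F}$ itself (rather than an arbitrary submonomial $F$-matrix with underlying map $\Lambda_\alpha$), which in turn is what forces $M_\alpha = \Lambda_{\alpha, F}$ in the conclusion.
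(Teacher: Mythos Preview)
Your proposal is correct and follows essentially the same approach as the paper: specialize \autoref{thm: moduli space of quiver matroid bundles} to $X=\Spec F$, then invoke \autoref{ex: matroid bundles on points} and \autoref{ex: morphisms over points} to translate $(\Lambda,\Spec F)$-matroid bundles into $(Q,F)$-matroids with $M_\alpha=\Lambda_{\alpha,F}$. Your write-up is in fact slightly more careful than the paper's, since you spell out why $\cat{U}_F(M)=\Lambda$ and why the morphism is pinned to $\Lambda_{\alpha,F}$.
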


\begin{proof}
 Let $X := \Spec F$. By \autoref{thm: Grassmannian as moduli space of matroid bundles}, the $F$-rational points of $\Gr_{\mathbf{r}}(\Lambda)$ are in bijection with the set of $(\Lambda, \Spec F)$-matroid bundles of rank $\mathbf{r}$. By \autoref{ex: matroid bundles on points}, tuples $(\mathcal{M}_v)_{v \in Q_0}$ of matroid bundles on $X$, with $\rk(\mathcal{M}_v) = r_v$ for all $v \in Q_0$, are in bijection with tuples $(M_v)_{v \in Q_0}$ of $F$-matroids, with $\rk(M_v) = r_v$ for all $v \in Q_0$. Thus the result follows from the application of \autoref{ex: morphisms over points} to all arrows $\alpha \in Q_1$.
\end{proof}

\begin{df}
Given a $(\Lambda, X)$-matroid bundle $\cM$ of rank $\textbf{r}$, its \emph{characteristic morphism} $\chi_{\cM}$ is the (unique) morphism $X \to \Gr_{\textbf{r}}(\Lambda)$ that corresponds to $\cM$, i.e. $\chi_{\cM} := \Psi_X (\cM)$ (see \autoref{thm: moduli space of quiver matroid bundles}).
\end{df}

\subsection{Duality}
Let $Q$, $\Lambda$ and $\textbf{r}$ as above. Assume that $E_v=\{1, \dotsc, n_v\}$ for all $v \in Q_0$. Define $\textbf{r}^* := (r_v^*)_{v \in Q_0} \in \Z^{Q_0}$ where  $r^\ast_v= n_v - r_v$.

The isomorphisms $\delta_{r_v, E_v}: \Gr(r_v, E_v) \rightarrow \Gr(r_v^*, E_v)$ (see \autoref{subsection: the dual of a matroid bundle}) induce an isomorphism
\[
\widehat{\delta}_{\textbf{r}, \Lambda}: \underset{v \in Q_0}{\prod} \Gr(r_v, E_v) \longrightarrow \underset{v \in Q_0}{\prod} \Gr(r_v^*, E_v).
\]

\begin{df}
The \emph{dual} of $\Lambda$ is $\Lambda^* := \big((\Lambda_v), (\Lambda_\alpha^t)\big)$, which is a representation of $Q^*$ over $\cat{Vect}_{\F_1}$.
\end{df}

\begin{prop}
The morphism $\widehat{\delta}_{\mathbf{r}, \Lambda}$ restricts to an isomorphism
\[
\delta_{\mathbf{r}, \Lambda}: \Gr_{\mathbf{r}}(\Lambda) \longrightarrow \Gr_{\mathbf{r}^*}(\Lambda^*).
\]
\end{prop}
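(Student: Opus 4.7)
The plan is to use the moduli-theoretic description of quiver Grassmannians from \autoref{thm: moduli space of quiver matroid bundles} together with the duality properties of matroid bundles and their morphisms established in \autoref{prop: duality of matroid bundles and their characteristic morphisms} and \autoref{prop: duality for morphisms of matroid bundles}. In this way the claim is reduced to a pointwise verification, where duality essentially does all the work.

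First, I would unpack what $\widehat{\delta}_{\textbf{r}, \Lambda}$ does on functors of points. By construction it is the product of the duality isomorphisms $\delta_{r_v,E_v} : \Gr(r_v,E_v) \to \Gr(r_v^*,E_v)$, each of which, by \autoref{prop: duality of matroid bundles and their characteristic morphisms}, sends the characteristic morphism of a rank $r_v$ matroid bundle $\cM_v$ over a band scheme $X$ to the characteristic morphism of its dual $\cM_v^\vee$, which has rank $r_v^*$. Hence, at the level of $X$-points, $\widehat{\delta}_{\textbf{r},\Lambda}$ implements the assignment $(\cM_v)_{v\in Q_0} \mapsto (\cM_v^\vee)_{v\in Q_0}$.

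Next, I would check that this pointwise bijection restricts to a bijection between the subfunctors cut out by the quiver Pl\"ucker relations. A tuple $(\cM_v)$ lies in $\mathpzc{Gr}_{\textbf{r}}(\Lambda)(X)$ exactly when $\Lambda_{\alpha,\Gamma X} : \cM_{s(\alpha)} \to \cM_{t(\alpha)}$ is a morphism of matroid bundles for every $\alpha \in Q_1$. By \autoref{prop: duality for morphisms of matroid bundles}, this holds if and only if the transpose $(\Lambda_{\alpha,\Gamma X})^t : \cM_{t(\alpha)}^\vee \to \cM_{s(\alpha)}^\vee$ is a morphism. Comparing entries one sees that $(\Lambda_{\alpha,\Gamma X})^t = (\Lambda_\alpha^t)_{\Gamma X} = (\Lambda^*_\alpha)_{\Gamma X}$, and in $Q^*$ the arrow $\alpha$ has source $t(\alpha)$ and target $s(\alpha)$. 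Therefore the condition is precisely that $(\cM_v^\vee)_{v\in Q_0^*}$ lies in $\mathpzc{Gr}_{\textbf{r}^*}(\Lambda^*)(X)$.

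Combining these two steps yields, for every band scheme $X$, a natural bijection $\mathpzc{Gr}_{\textbf{r}}(\Lambda)(X) \to \mathpzc{Gr}_{\textbf{r}^*}(\Lambda^*)(X)$ induced by $\widehat{\delta}_{\textbf{r},\Lambda}$. Invoking \autoref{thm: moduli space of quiver matroid bundles} (or Yoneda applied to the closed immersions into $\prod_v \Gr(r_v,E_v)$), this natural transformation is represented by a unique morphism of band schemes $\delta_{\textbf{r},\Lambda}:\Gr_{\textbf{r}}(\Lambda) \to \Gr_{\textbf{r}^*}(\Lambda^*)$, and a symmetric argument with $\widehat{\delta}_{\textbf{r}^*,\Lambda^*}$ provides its inverse, using the involutivity $(\cM_v^\vee)^\vee \simeq \cM_v$ recalled in \autoref{subsection: the dual of a matroid bundle}. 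The only point requiring genuine care, rather than bookkeeping, is the identification $(\Lambda_{\alpha,\Gamma X})^t = (\Lambda^*_\alpha)_{\Gamma X}$ together with the matching of source and target conventions when passing to $Q^*$; once this is in place, the rest is formal.
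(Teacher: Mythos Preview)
Your proof is correct and takes a genuinely different route from the paper's. The paper argues by an explicit computation on affine coordinate charts: it writes down the band morphism $\Theta_{\mathbf I}$ induced by $\widehat{\delta}_{\mathbf r,\Lambda}$ and verifies by a direct sign-and-index calculation that the quiver Pl\"ucker generators of $\mathpzc{qr}_{\mathbf I',\Lambda^*}$ are mapped into $\mathpzc{qr}_{\mathbf I,\Lambda}$, then concludes by the symmetry $(\widehat{\delta}_{\mathbf r,\Lambda})^{-1}=\widehat{\delta}_{\mathbf r^*,\Lambda^*}$. You instead work at the level of functors of points via \autoref{thm: moduli space of quiver matroid bundles}, reducing the statement to the equivalence ``$\Lambda_{\alpha,\Gamma X}$ is a morphism $\Leftrightarrow$ $(\Lambda_{\alpha,\Gamma X})^t$ is a morphism'', which is exactly \autoref{prop: duality for morphisms of matroid bundles} applied twice together with double duality. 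Your argument is cleaner and avoids repeating computations: the sign manipulation in the paper's proof here is essentially the same one already carried out in the proof of \autoref{prop: duality for morphisms of matroid bundles}, so you are reusing that work rather than redoing it. The paper's approach, on the other hand, is independent of the moduli theorem and makes the factorization through the closed subscheme visible on the level of ideals. One small point: the involutivity $(\cM^\vee)^\vee\simeq\cM$ you invoke is not stated explicitly in \autoref{subsection: the dual of a matroid bundle}, though it follows immediately from the formula for $\mu^\vee$ (or from $\delta_{r^*,E}\circ\delta_{r,E}=\id$); you might want to note this.
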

\begin{proof}
Note that $(\textbf{r}^*)^* = \textbf{r}$, $(\Lambda^*)^* = \Lambda$ and $(\widehat{\delta}_{\textbf{r}, \Lambda})^{-1} = \widehat{\delta}_{\textbf{r}^*, \Lambda^*}$. Thus it is enough to show that $\widehat{\delta}_{\textbf{r}, \Lambda}\big(\Gr_{\textbf{r}}(\Lambda)\big) \subseteq \Gr_{\textbf{r}^*}(\Lambda^*)$.

Let $\textbf{I} = (I_v)_{v \in Q_0}$ be a tuple such that $I_v \in E_v^{r_v}$ and $\#|I_v| = r_v$ for all $v$. For each $v$, fix $I_v' \subseteq E_v^{r_v^*}$ such that $|I_v| \sqcup |I_v'| = E_v$ and define $\textbf{I}' := (I_v')_{v \in Q_0}$.

Note that $\widehat{\delta}_{\textbf{r}, \Lambda} (U_\textbf{I}) \subseteq \A_{\textbf{I}'} \subseteq \mathbb{P}(\Lambda^*)$, and the restriction $\widehat{\delta}_{\textbf{r}, \Lambda}: U_\textbf{I} \rightarrow \A_{\textbf{I}'}$ is induced by the morphism of $\F_1^\pm$-algebras
\[
\begin{array}{cccc}
\Theta_\mathbf{I}: & \underset{v \in Q_0}{\bigotimes} \F_1^\pm[T_\textbf{y} / T_{I_v'} \mid \textbf{y} \in E_v^{r_v^*}] & \longrightarrow& \Big(\underset{v \in Q_0}{\bigotimes} \F_1^\pm[T_\textbf{x} / T_{I_v} \mid \textbf{x} \in E_v^{r_v}]\Big) \big/\hspace{-4pt}\big/ \mathpzc{qr}_{\textbf{I}, \Lambda}\\
&&&\\
 & \underset{v \in Q_0}{\otimes} T_{\textbf{y}_v} / T_{I_v'} &\longmapsto & \begin{cases}
                \;\;\; 0  \quad \text{ if $\#|\textbf{y}_v| < r_v^*$ for some $v$}\\
                \\
                \underset{v \in Q_0}{\otimes} \bigg( \dfrac{\sign({\bf y}_v,{\bf y}_v')\cdot T_{{\bf y}'_v}}{\sign({I_v'},I_v)\cdot T_{I_v}}\bigg) \;\;\text{otherwise},  
                \end{cases}
\end{array}
\]
where, for $\textbf{y}_v \in E_v^{r_v^*}$ with $\#|\textbf{y}_v| = r_v^*$, the tuple ${\bf y}_v' \in E_v^{r_v}$ satisfies $E_v = |{\bf y}_v|\sqcup |{\bf y}_v'|$. Next we show that $\Theta_\mathbf{I}$ factorizes through 
\[
    \begin{tikzcd}[column sep = 20pt]
        \underset{v \in Q_0}{\bigotimes} \F_1^\pm[T_\textbf{y} / T_{I_v'} \mid \textbf{y} \in E_v^{r_v^*}]\big/\hspace{-4pt}\big/ \mathpzc{qr}_{\textbf{I}', \Lambda} \arrow[two heads]{r} & \Big( \underset{v \in Q_0}{\bigotimes} \F_1^\pm[T_\textbf{x} / T_{I_v} \mid \textbf{y} \in E_v^{r_v}]\Big)\big/\hspace{-4pt}\big/ \mathpzc{qr}_{\textbf{I}, \Lambda}.
    \end{tikzcd}
\]

Let $\textbf{x} \in E_v^{r_v^* + 1}$ and $\mathbf{z} \in E_w^{r_w^* - 1}$. If $\#|\mathbf{x}| < r_v^* + 1$ or $\#|\mathbf{z}| < r_w^* - 1$, one has
\[
\begin{aligned}
    &\Theta_\mathbf{I}\bigg(\underset{k=1}{\overset{r_v^* + 1}{\sum}} (-1)^k ~ \dfrac{T_{{\textbf{x}}_{\widehat{k}}}}{T_{I'_{v}}} \otimes \dfrac{T_{(\Lambda_\alpha^t(x_k),{ \textbf{z} })}}{T_{I'_{w}}}\bigg) \; \in \; \mathpzc{qr}_{\mathbf{I}, \Lambda}.
\end{aligned}
\]

Assume $\#|\mathbf{x}| = r_v^* + 1$ and $\#|\mathbf{z}| = r_w^* - 1$. Let $\mathbf{x}' \in E_v^{r_v - 1}$ such that $|\text{$\mathbf{x}$}| \sqcup |\mathbf{x}'| = E_v$, and $\mathbf{z}' \in E_w^{r_w + 1}$ such that $|\mathbf{z}| \sqcup |\mathbf{z}'| = E_w$.

For the following computation, we define $\mathbf{z'}_{\widehat{c}} := \mathbf{z'}_{\widehat{\ell}}$ where $\ell$ is the index such that $z'_\ell=c$. Note that $\bx_{\widehat{k}}$ has the usual meaning.

Note that
\[
\begin{aligned}
    & \Theta_\mathbf{I}\bigg(\underset{k=1}{\overset{r_v^* + 1}{\sum}} (-1)^k ~ \dfrac{T_{\mathbf{x}_{\widehat{k}}}}{T_{I'_{v}}} \otimes \dfrac{T_{(\Lambda^t_{\alpha}(x_k),{ \textbf{z} })}}{T_{I'_{w}}}\bigg)
\\
    =\; & \underset{k=1}{\overset{r_v^* + 1}{\sum}} (-1)^k ~ \bigg(\dfrac{\sign(\text{$\mathbf{x}$}_{\widehat{k}},x_k, \text{$\mathbf{x}$}')\cdot T_{(x_k, \text{$\mathbf{x}$}')}}{\sign({I_v'},I_v)\cdot T_{I_v}}\bigg) \otimes \left(\dfrac{\sign\big(\Lambda^t_{\alpha}(x_k),{ \textbf{z} },{\textbf{z}}'_{\widehat{\Lambda^t_{\alpha}(x_k)}}\big)\cdot T_{{\textbf{z}}'_{\widehat{\Lambda^t_{\alpha}(x_k)}}}}{\sign({I_w'},I_w)\cdot T_{I_w}}\right)
\end{aligned}
\]
As $(-1)^k \sign(\mathbf{x}_{\widehat{k}},x_k, \mathbf{x}')$ is the same for any $k$, up to a global sign, the expression above is equal to
\[
\begin{aligned}
    &\underset{k=1}{\overset{r_v^* + 1}{\sum}} \bigg( \dfrac{T_{(x_k, \text{$\mathbf{x}$}')}}{T_{I_v}}\bigg) \otimes \left( \dfrac{\sign\big({ \textbf{z} }, \Lambda^t_{\alpha}(x_k),{\textbf{z}}'_{\widehat{\Lambda^t_{\alpha}(x_k)}}\big)\cdot T_{{\textbf{z}}'_{\widehat{\Lambda^t_{\alpha}(x_k)}}}}{T_{I_w}}\right)
\\
    =\; & \underset{b \notin |\textbf{x}'|}{\sum} \bigg( \dfrac{T_{(b, \text{$\mathbf{x}$}')}}{T_{I_v}}\bigg) \otimes \left( \dfrac{\sign\big({ \textbf{z} }, \Lambda^t_{\alpha}(b),{\textbf{z}}'_{\widehat{\Lambda^t_{\alpha}(b)}}\big)\cdot T_{{\textbf{z}}'_{\widehat{\Lambda^t_{\alpha}(b)}}}}{T_{I_w}}\right)
\\
    =\; & \underset{a \notin |\mathbf{z}'|}{\sum} \bigg( \dfrac{\sign\big({ \textbf{z} }, a,{\textbf{z}}'_{\widehat{a}}\big)\cdot T_{{\textbf{z}}'_{\widehat{a}}}}{T_{I_w}}\bigg) \otimes \left( \dfrac{T_{(\Lambda_{\alpha}(a), \text{$\mathbf{x}$}')}}{T_{I_v}}\right)
\\
    =\; & \sign(\mathbf{z}, \mathbf{z}') ~ \underset{k=1}{\overset{r_{w}+1}{\sum}} (-1)^k ~ \dfrac{T_{{\textbf{z}}'_{\widehat{k}}}}{T_{I_w}} \otimes \dfrac{T_{(\Lambda_\alpha(z_k'),{\bf x})}}{T_{I_{v}}}, 
\end{aligned}
\]
which is in $\mathpzc{qr}_{\mathbf{I}, \Lambda}$, because $\underset{k=1}{\overset{r_{w}+1}{\sum}} (-1)^k ~ \dfrac{T_{{\textbf{z}}'_{\widehat{k}}}}{T_{I_w}} \otimes  \dfrac{T_{(\Lambda_\alpha(z_k'),{\bf x})}}{T_{I_{v}}} \in \mathpzc{qr}_{\mathbf{I}, \Lambda}$. Thus $\widehat{\delta}_{\textbf{r}, \Lambda} (U_\mathbf{I}) \subseteq (U_{\mathbf{I}'})$. Therefore $\widehat{\delta}_{\textbf{r}, \Lambda}\big(\Gr_{\textbf{r}}(\Lambda)\big) \subseteq \Gr_{\textbf{r}^*}(\Lambda^*)$.
\end{proof}

\begin{thm}
Let $\cM = (\cM_v)_{v \in Q_0}$ be a $(\Lambda, X)$-matroid bundle with rank vector $\mathbf{r}$. Then the characteristic morphism of $\cM^*$ is equal to the composition
\[
\begin{tikzcd}
\chi_{\cM^*}: X \arrow[r, "\chi_{\cM}"] & \Gr_{\mathbf{r}}(\Lambda) \arrow[r, "\delta_{\mathbf{r}, \Lambda}"] & \Gr_{\mathbf{r}^*}(\Lambda^*).
\end{tikzcd}
\]
\end{thm}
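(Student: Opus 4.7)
The strategy is to reduce the claim, vertex by vertex, to the duality result for single matroid bundles (\autoref{prop: duality of matroid bundles and their characteristic morphisms}), using the moduli interpretation of $\Gr_{\mathbf r}(\Lambda)$ supplied by \autoref{thm: moduli space of quiver matroid bundles}. First, I would record the compatibility between the characteristic morphism of a quiver matroid bundle and those of its components: by construction of $\chi_{\cM}$ in the proof of \autoref{thm: moduli space of quiver matroid bundles}, the composition $\pi_v \circ \pl \circ \chi_{\cM}$ equals the characteristic morphism $\chi_{\cM_v}:X\to\Gr(r_v,E_v)$ of the individual matroid bundle $\cM_v$, for every $v\in Q_0$.

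Next, I would unravel both sides at each vertex. The morphism $\delta_{\mathbf r,\Lambda}\circ\chi_{\cM}$, post-composed with the closed immersion $\pl:\Gr_{\mathbf{r}^*}(\Lambda^*)\hookrightarrow\prod_v\Gr(r_v^*,E_v)$ and projected to the $v$-th factor, equals $\delta_{r_v,E_v}\circ\chi_{\cM_v}$, because $\delta_{\mathbf r,\Lambda}$ is defined as the restriction of $\widehat\delta_{\mathbf r,\Lambda}=\prod_v\delta_{r_v,E_v}$. The morphism $\chi_{\cM^*}$, treated the same way, projects to $\chi_{(\cM^*)_v}=\chi_{(\cM_v)^\vee}$, since the dual quiver matroid bundle is defined componentwise. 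Applying \autoref{prop: duality of matroid bundles and their characteristic morphisms} to each $\cM_v$ gives $\chi_{(\cM_v)^\vee}=\delta_{r_v,E_v}\circ\chi_{\cM_v}$, so the two morphisms agree after projection to each factor.

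Finally, I would conclude by two universal-property arguments. Since $\prod_v\Gr(r_v^*,E_v)$ is a product, agreement on every factor forces equality of the two morphisms $X\to\prod_v\Gr(r_v^*,E_v)$. Since $\pl:\Gr_{\mathbf{r}^*}(\Lambda^*)\hookrightarrow\prod_v\Gr(r_v^*,E_v)$ is a monomorphism (closed immersion), this implies equality of $\chi_{\cM^*}$ and $\delta_{\mathbf r,\Lambda}\circ\chi_{\cM}$ as morphisms $X\to\Gr_{\mathbf{r}^*}(\Lambda^*)$.

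There is no serious obstacle here: the only thing to verify with care is that $\delta_{\mathbf r,\Lambda}\circ\chi_{\cM}$ actually lands in $\Gr_{\mathbf{r}^*}(\Lambda^*)$ (which is automatic since $\delta_{\mathbf r,\Lambda}$ is defined as a restriction of $\widehat\delta_{\mathbf r,\Lambda}$ to $\Gr_{\mathbf r}(\Lambda)$) and that the componentwise characteristic morphisms of a quiver matroid bundle indeed reassemble into $\chi_{\cM}$ via the Pl\"ucker embedding---both points are essentially tautological given the construction of $\chi_{\cM}$ in \autoref{thm: moduli space of quiver matroid bundles}.
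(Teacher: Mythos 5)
Your proposal is correct and follows essentially the same route as the paper's proof: the paper likewise reduces to the componentwise identity $\chi_{\cM_w^*}=\delta_{r_w,E_w}\circ\chi_{\cM_w}$ from \autoref{prop: duality of matroid bundles and their characteristic morphisms} and concludes via the commutativity of the diagram relating $\Gr_{\mathbf r}(\Lambda)$, the product $\prod_v\Gr(r_v,E_v)$ and its factors. Your write-up merely makes explicit the product/monomorphism arguments that the paper leaves implicit.
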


\begin{proof}
As the diagram
\[
\begin{tikzcd}
\Gr_{r}(\Lambda) \arrow[r, hook] \arrow[d, swap, "\delta_{\textbf{r}, \Lambda}"] & \underset{v \in Q_0}{\prod} \Gr(r_v, E_v) \arrow[d, "\widehat{\delta}_{\textbf{r}, \Lambda}"] \arrow[r, "\pi_w"] & \Gr(r_w, E_w) \arrow[d, "\delta_{r_w,  E_w}"]
\\
\Gr_{\textbf{r}^*}(\Lambda^*) \arrow[r, hook] &\underset{v \in Q_0}{\prod} \Gr(r_v^*, E_v) \arrow[r, "\pi_w"] & \Gr(r_w^*, E_w)
\end{tikzcd}
\]
commutes and $\chi_{\cM_w^*} = \delta_{r_w,  E_w} \circ \chi_{\cM_w}$ for all $w \in Q_0$, the result follows.
\end{proof}


\section{Euler characteristics of quiver Grassmannians}
\label{section: Euler characteristics}

In this section, we extend previous comparison results between the complex Euler characteristic and the number of $\Fun$-rational points from flag varieties to a larger class of quiver Grassmannians. Throughout this section we assume that every quiver is finite, i.e. it has a finite set of vertices and arrows.

More precisely, let $X$ be a band scheme and $X(\C)=X^+_\C(\C)$  be the complex analytic space of $\C$-rational points. The Tits space of $X$ is a certain subspace of $X(\K)$. In general the Euler characteristic of $X(\C)$ differs from the cardinality of $X^\Tits$, but in nice situations they agree, e.g.\ for flag varieties over $\Funpm$; cf.\ \cite{Lorscheid-Thas23} and \cite[Section 3.4]{Baker-Jin-Lorscheid24}. 

In this section, we extend this result to a larger class of quiver Grassmannians $\Gr_{\textbf{r}}(\Lambda)$. Our method of proof uses a torus action on $\Gr_{\textbf{r}}(\Lambda)$ whose fixed points in $\Gr_{\textbf{r}}(\Lambda)(\C)$ are coordinate vectors. By a result of Bia\l ynicki-Birula (\cite{Bialynicki-Birula73}), the Euler characteristic of $\Gr_{\textbf{r}}(\Lambda)(\C)$ agrees with the number of these coordinate vectors. An analysis of the combinatorics of $\Lambda$ (in terms of its coefficient quiver) eventually leads to an identification of the coordinate vectors with the points of $X^\Tits$. This analysis relies on some basic matroid theoretic constructions.

\subsection{The fine topology}
\label{subsection: fine topology}

Let $F$ be an idyll with a topology for which the multiplication of $F$ and the inversion of $F^\times$ are continuous and such that $0$ is a closed point. The examples of main interest are the complex numbers $\C$ with the usual topology and the Krasner hyperfield $\K$ with the topology for which $0$ is closed, but not $1$.

Then we can topologize the sets of $F$-rational points $X(F)$ of a band scheme $X$ in the following way (cf.\ \cite[Thm.\ 3.5]{Baker-Jin-Lorscheid24}).

\begin{thm}\label{thm: fine topology}
 There is a unique enrichment of $h_F=\Hom(\Spec F,-)$ to a functor $h_F:\BSch\to\Top$ such that the following properties hold (where we write $X(F)$ for $h_F(X)$):
 \begin{enumerate}
  \item The canonical bijection $\A^n(F)\to F^n$ is a homeomorphism.
  \item If $\iota:Y\to X$ is an open (closed) immersion, then $\iota_F:Y(F)\to X(F)$ is an open (closed) topological embedding.
  \item If $X=\bigcup U_i$ is an open covering, then $X(F)=\bigcup U_i(F)$ is an open topological covering.
  \item If $X=\Spec B$, then the canonical bijection $X(F)\to\Hom(B,F)$ is a homeomorphism where $\Hom(B,F)$ is endowed with the compact-open topology with respect to the discrete topology for $B$.
 \end{enumerate}
 This topology for $X(F)$ is called the \emph{fine topology}.
\end{thm}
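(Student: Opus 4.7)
The plan is to construct the enrichment by dictation of property (4) on affine pieces, then glue via (3), and finally verify the remaining properties.

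First I would establish that uniqueness is automatic: property (4) prescribes the topology on $X(F)$ whenever $X$ is affine, and property (3) (applied to any affine open covering, which exists by \autoref{subsection: band schemes}) then forces the topology on every band scheme. So only existence requires work. For an affine band scheme $X=\Spec B$, I would endow $X(F)=\Hom(B,F)$ with the compact-open topology for the discrete topology on $B$. Since $B$ is discrete, the compact subsets of $B$ are precisely the finite subsets, so this coincides with the subspace topology inherited from the product $F^B$ with pointwise convergence. Property (1) is then immediate: evaluation at the generators $T_1,\dotsc,T_n$ gives a continuous bijection $\Hom(\Funpm[T_1,\dotsc,T_n],F)\to F^n$ whose inverse sends $(a_i)$ to the unique band morphism extending $T_i\mapsto a_i$, and continuity in both directions follows because $F^B$ is the product and the $T_i$ are finite in number.

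Next I would verify property (2) in the affine case. For an open immersion between affines, the results of \autoref{subsection: band schemes} identify it with a finite localization $B\to B[h^{-1}]$ for some $h\in B$. The induced map on $F$-points sends a morphism $f:B\to F$ with $f(h)\in F^\times$ to its unique extension $B[h^{-1}]\to F$. Its image is $\{f\in\Hom(B,F)\mid f(h)\in F^\times\}$, which is open because $F^\times=F-\{0\}$ is open in $F$ by the standing hypothesis that $\{0\}$ is closed; moreover, because every element of $B[h^{-1}]$ has the form $a/h^i$, continuity of multiplication and inversion in $F$ ensures that pullback along $B\to B[h^{-1}]$ is a homeomorphism onto this open subset. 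For a closed immersion corresponding to a surjective morphism $B\twoheadrightarrow B'=B\sslash I$, the image in $\Hom(B,F)$ is $\{f\mid f(a)=f(b)\text{ whenever }a-b\in I\}$, which is closed in the product topology, and the induced bijection onto its image is again a homeomorphism since it is the restriction of a coordinate projection.

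Then I would glue: for an arbitrary band scheme $X$, pick an affine open covering $X=\bigcup U_i$ and declare $S\subseteq X(F)$ open iff $S\cap U_i(F)$ is open in $U_i(F)$ for every $i$. To see this is independent of the covering, it suffices to check that for any affine open $V\subseteq U_i\cap U_j$, the two topologies induced on $V(F)$ from $U_i(F)$ and $U_j(F)$ agree, and both agree with the intrinsic compact-open topology on $V(F)$; this reduces to the already established affine case since $V\hookrightarrow U_i$ and $V\hookrightarrow U_j$ are open immersions of affine band schemes. Properties (3) and (4) then hold by construction, and property (2) in general follows locally.

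Finally I would verify functoriality: a morphism $\varphi:Y\to X$ induces $\varphi_F:Y(F)\to X(F)$ by composition with $\Spec F\to Y$. Choosing compatible affine open coverings $Y=\bigcup V_j$ with $\varphi(V_j)\subseteq U_{i(j)}$, continuity reduces to the affine statement that a band morphism $\psi:B\to C$ makes $\psi^\ast:\Hom(C,F)\to\Hom(B,F)$ continuous for the compact-open topologies, which is obvious since pre-composition by $\psi$ is coordinatewise continuous in $F^B$. The main technical obstacle I anticipate is precisely the gluing compatibility in the previous paragraph — one must carefully unpack the compact-open topology on a localization $B[h^{-1}]$ and verify that it matches the subspace topology inherited from $\Hom(B,F)$ via the open subset $\{f(h)\in F^\times\}$; this uses the continuity of inversion on $F^\times$ and the fact that elements of $B[h^{-1}]$ are monomials in $h^{-1}$ and elements of $B$, so continuity of evaluation at any fixed element of $B[h^{-1}]$ follows from continuity of finitely many operations in $F$.
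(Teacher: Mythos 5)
The paper does not actually prove this statement: it is recalled from \cite[Thm.\ 3.5]{Baker-Jin-Lorscheid24}, so there is no internal proof to compare against. Your overall architecture --- uniqueness from (3) plus (4), existence by endowing affines with the compact-open topology (equivalently the subspace topology from $F^B$), reduction of open immersions of affines to finite localizations $B\to B[h^{-1}]$, gluing over affine open coverings, and checking independence of the covering by the affine case --- is the natural route and is sound in those parts.

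The genuine gap is the closed-immersion half of property (2). First, the image of $\Hom(B',F)\to\Hom(B,F)$ for a surjection $B\twoheadrightarrow B'$ is not $\{f\mid f(a)=f(b)\text{ whenever }a-b\in I\}$: the universal property of a quotient $\bandquot{B}{I}$ says that $f$ factors if and only if $\sum f(a_i)\in N_F$ for \emph{every} $\sum a_i\in I$, a condition on sums of arbitrary length that is not captured by pairwise equalities (and a general surjection of bands may also identify monoid elements, which must be treated separately). Second, and more seriously, the asserted closedness of this locus ``in the product topology'' does not follow from the standing hypotheses on $F$ (continuous multiplication, continuous inversion on $F^\times$, $\{0\}$ closed): your set is a preimage of the diagonal of $F$, which is closed only when $F$ is Hausdorff, and the corrected locus requires $\{(x_1,\dotsc,x_k)\in F^k\mid \sum x_i\in N_F\}$ to be closed. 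Both fail for the Krasner hyperfield $\K$ with its Sierpi\'nski topology, one of the two examples this section is built around: for instance $\{(x,y)\in\K^2\mid x+y\in N_\K\}=\K^2\setminus\{(1,0),(0,1)\}$ is not closed in $\K^2$. So as written your argument does not establish (2) for closed immersions over $\K$; one needs either an additional hypothesis on the topology of $F$ or a different mechanism (e.g.\ using that a closed immersion is already closed on the underlying topological spaces of the band schemes), and this is exactly the point where the product-topology shortcut breaks down.
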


In particular, the topology of $X(\C)=X^+_\C(\C)$ coincides with the usual complex topology for every band scheme $X$. The space of $\K$-rational points 
\[
 \P^n(\K) \ = \ \big\{ [a_0:\dotsc:a_n] \, \big| \, a_0,\dotsc,a_n\in \K \big\}
\]
is finite and its topology is determined by the rule that $[a_0:\dotsc:a_n]$ is in the closure of $[b_0:\dotsc:b_n]$ if and only if $b_i=0$ implies $a_i=0$ for all $i$. According to \autoref{thm: fine topology}, the product $(\prod\P^{n_i})(\K)=\prod\big(\P^{n_i}(\K)\big)$ of projective spaces carries the product topology and the $\K$-points $X(\K)$ of a closed subscheme $X$ carry the subspace topology. This applies to all quiver Grassmannians.

\subsection{The Tits space}
\label{subsection: the Tits space}

The \emph{Tits space $X^\Tits$} of a band scheme $X$ is defined as the subset of closed points of $X(\K)$.

For example, the Tits space of $\P^n$ is the subspace
\[
 (\P^n)^\Tits \ = \ \big\{ [1:0:\dotsc:0], \dotsc, [0:\dotsc:0:1] \big\}
\]
of $\P^n(\K)=\{ [a_0:\dotsc:a_n] \mid a_0,\dotsc,a_n\in \K \}$. More generally,
\[
 \Gr(r,E)^\Tits \ = \ \big\{ [\mu_{\textbf{x}}]_{{\textbf{x}}\in E^r} \, \big| \, \text{there is an $r$-subset $I\subseteq E$ such that }\mu_{\textbf{x}}=0\text{ if }|{\textbf{x}}|\neq I \big\}
\]
as a subspace of $\P^{n^r-1}(\K)$. Since for given $I\subseteq E$, there is a unique point $[\mu_{\textbf{x}}]\in\Gr(r,E)(\K)$ with $\mu_{\textbf{x}}=0$ for $|{\textbf{x}}|\neq I$, the Tits space of $\Gr(r,E)$ has $\binom nr$ points, which is equal to the Euler characteristic of the complex Grassmann variety $\Gr(r,E)(\C)$.

\subsection{The initial matroid}
\label{subsection: the limit matroid}

The initial matroid of a valuated matroid was introduced in Dress-Wenzel's seminal paper on valuated matroids (see \cite[Prop.\ 2.9]{Dress-Wenzel92b}). The term ``initial matroid" does not appear in this paper, however, but makes its first appearance in \cite[Def. 4.2.7]{MR3287221}, to the best of our knowledge; also see \cite{Brandt-Speyer22}. 

In this section, we consider only particular types of initial matroids, cf.\ \autoref{rem: valuated matroids from gradings} for details on the relation to valuated matroids.

\begin{prop}
\label{grading on matroids}
 Let $M$ be a $\K$-matroid on $E$ and $\partial: E \to \Z$ a map. For $A \subseteq E$, define $\partial(A) := \sum_{a \in A} \partial(a)$ and $m= \min\{\partial(B) \mid B \in \mathcal{B}(M)\}$. Then 
 \[
  \mathcal{B} \ = \ \{B \in \mathcal{B}(M) \mid \partial(B) = m\}
 \]
 is the set of bases of a $\K$-matroid.
\end{prop}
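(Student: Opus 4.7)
The plan is to verify directly that $\mathcal B$ satisfies the basis exchange axiom of matroid theory, which by \autoref{ex: usual matroid and Krasner-matroid} suffices to produce a (unique) $\K$-matroid with bases $\mathcal B$. Non-emptiness of $\mathcal B$ is immediate because $\mathcal B(M)$ is finite and non-empty, so the minimum $m$ is attained.

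For the exchange axiom, let $B_1, B_2 \in \mathcal B$ and $x \in B_1 \setminus B_2$. I would invoke the \emph{strong} (symmetric) basis exchange axiom for the classical matroid $\underline M$: there exists $y \in B_2 \setminus B_1$ such that
\[
 B_1' \ := \ (B_1 \setminus \{x\}) \cup \{y\} \quad \text{and} \quad B_2' \ := \ (B_2 \setminus \{y\}) \cup \{x\}
\]
both lie in $\mathcal B(M)$. The key arithmetic observation is that
\[
 \partial(B_1') + \partial(B_2') \ = \ \partial(B_1) + \partial(B_2) \ = \ 2m,
\]
since the contributions of $x$ and $y$ cancel. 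By the minimality of $m$, each of $\partial(B_1')$ and $\partial(B_2')$ is at least $m$, so both equal $m$. Hence $B_1', B_2' \in \mathcal B$, which verifies the exchange axiom for $\mathcal B$.

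Because there are no genuine coefficients to track over $\K$ (the only nonzero coefficient is $1$), there is no obstacle beyond this arithmetic balancing trick; the proof reduces entirely to classical matroid theory. I expect no serious difficulty: the only subtlety is to use the \emph{symmetric} exchange (which simultaneously produces bases on both sides) rather than the one-sided form, so that the cancellation of $\partial(x)$ and $\partial(y)$ in the sum forces both exchanged sets to remain $\partial$-minimizing.
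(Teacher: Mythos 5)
Your proof is correct and follows essentially the same route as the paper's: both invoke the symmetric basis exchange property to produce $y$ with both exchanged sets $B_1'$ and $B_2'$ bases of $\underline M$, and then use the identity $\partial(B_1')+\partial(B_2')=2m$ together with minimality to conclude both lie in $\mathcal B$. No gaps.
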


\begin{proof}
Let $B_1, B_2 \in \mathcal{B}$ and $x \in B_1 - B_2$. As $M$ is a $\K$-matroid, there exists $y \in B_2 - B_1$ such that $A_1 := B_1 \cup \{y\} - \{x\}$ and $A_2 := B_2 \cup \{x\} - \{y\}$ are bases of $M$. Note that $\partial(A_1) + \partial(A_2) = \partial(B_1) + \partial(B_2) = 2m$ and $m \leq \partial(A_1), \partial(A_2)$, thus $\partial(A_1) = m = \partial(A_2)$, i.e. $A_1, A_2 \in \mathcal{B}$.
\end{proof}

\begin{df}
\label{def - initial matroid}
Let $M$ be a $\K$-matroid $M$ on $E$ and $\partial: E \rightarrow \Z$ a map. The \emph{initial matroid with respect to $\partial$} is the $\K$-matroid $\min_\partial(M)$ whose set of basis is 
\[
 \mathcal{B}\big(\min\nolimits_{\partial}(M)\big) \ = \ \big\{B \in \mathcal{B}(M) \, \big| \, \partial(B) \leq \partial(X) \textup{ for all } X \in \mathcal{B}(M) \big\}.
\]
Let $(\partial_0, \dotsc, \partial_n)$ be a finite sequence of maps $\partial_i:E\to\Z$. The \emph{initial matroid with respect to $(\partial_0, \dotsc, \partial_n)$} is the $\K$-matroid 
\[\textstyle
 \min_{\partial_n, \dotsc, \partial_0}(M)=\min_{\partial_n} \circ \dots \circ \min_{\partial_0}(M).
\]
\end{df}

\begin{rem}\label{rem: valuated matroids from gradings}
 The relation to initial matroids in the sense of \cite{Dress-Wenzel92b} and \cite{MR3287221} is as follows. Let $M$ be a matroid with Grassmann-Pl\"ucker function $\Delta:E^r\to\K$ and $v=\iota\circ\Delta:R^r\to\T$ the trivial valuation where $\iota:\K\to\T$ is the natural inclusion. The choice of a function $\partial:E\to\Z$ and a $q>1$ defines a rescaling $v_{q,\partial}:E^r\to\T$ of $v$, defined by $v_{q,\partial}(x_1,\dotsc,x_r)=q^{\partial(x_1)+\dotsb+\partial(x_r)}\cdot v(x_1,\dotsc,x_r)$. The initial matroid $\min_\partial(M)$ (in our sense) is the same as the initial matroid of the valuated matroid $[v_{q,\partial}]$ in the sense of \cite{Dress-Wenzel92b} and \cite{MR3287221} (which does not depend on the choice of $q>1$).
\end{rem}

 Since $\mathcal{B}\big(\min_\partial(M)\big) \subseteq \mathcal{B}(M)$, we have $\rk(M) = \rk\big(\min_\partial(M)\big)$. The identity map on $\uE$ is a {weak map} (in the sense of \cite{Kung-Nguyen86}) between the underlying matroids of $M$ and $\min_\partial(M)$.

 If we interpret $M$ and $\min_\partial(M)$ as points of $\Gr(r,E)(\K)$ via \autoref{thm: Grassmannian as moduli space of matroid bundles}, then this means that $\min_\partial(M)$ is contained in the closure of $M$ with respect to the natural topology from \autoref{subsection: fine topology}.

\begin{lemma}
\label{lemma: distinguishes elements}
Let $M$ be a $\K$-matroid on $E$ and $\partial_0, \dotsc, \partial_n: E \rightarrow \Z$ maps such that for any $a,b\in E$ there is an $i$ with $\partial_i(a) \neq \partial_i(b)$. Then $\min_{\partial_n, \dotsc, \partial_0}(M) = M$ if, and only if, $M$ has only one basis.
\end{lemma}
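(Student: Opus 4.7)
The easy direction is $(\Leftarrow)$: if $\mathcal{B}(M) = \{B\}$ consists of a single basis, then for any $\partial: E \to \Z$, the minimum $\min\{\partial(B') \mid B' \in \mathcal{B}(M)\}$ is attained only at $B$, so $\min_\partial(M) = M$. Iterating along the sequence $\partial_0, \dotsc, \partial_n$ gives $\min_{\partial_n, \dotsc, \partial_0}(M) = M$.

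For the non-trivial direction $(\Rightarrow)$, I would set $M_0 := M$ and $M_{i+1} := \min_{\partial_i}(M_i)$. Since by definition $\mathcal{B}(M_{i+1}) \subseteq \mathcal{B}(M_i)$, the hypothesis $M_{n+1} = M$ forces the chain of inclusions
\[
 \mathcal{B}(M) \ = \ \mathcal{B}(M_{n+1}) \ \subseteq \ \mathcal{B}(M_n) \ \subseteq \ \dotsb \ \subseteq \ \mathcal{B}(M_0) \ = \ \mathcal{B}(M)
\]
to be a chain of equalities. Hence $M_i = M$ for all $i$, which (by the definition of $\min_{\partial_i}$) is equivalent to saying that $\partial_i$ takes a constant value $c_i$ on $\mathcal{B}(M)$ for every $i = 0, \dotsc, n$.

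To finish, I would argue by contradiction: assume $M$ has two distinct bases $B_1, B_2$. Applying the basis exchange axiom, pick $a \in B_1 - B_2$ and $b \in B_2 - B_1$ such that $B_1' := (B_1 - \{a\}) \cup \{b\} \in \mathcal{B}(M)$. By the distinguishing assumption on $(\partial_0, \dotsc, \partial_n)$, there exists an index $i$ with $\partial_i(a) \neq \partial_i(b)$. But then
\[
 c_i \ = \ \partial_i(B_1') \ = \ \partial_i(B_1) - \partial_i(a) + \partial_i(b) \ = \ c_i - \partial_i(a) + \partial_i(b),
\]
forcing $\partial_i(a) = \partial_i(b)$, a contradiction. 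Thus $M$ has a unique basis (recall $M = [\varphi]$ has $\varphi \neq 0$, so at least one basis exists).

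The main (indeed, the only) conceptual step is the reduction of the composed statement to the per-step statement via the nesting of basis sets; after that, the basis exchange axiom combined with the distinguishing hypothesis closes the argument immediately. I do not anticipate any real obstacle.
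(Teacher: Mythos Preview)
Your proof is correct and follows essentially the same route as the paper: both arguments reduce the hypothesis to the statement that each $\partial_i$ is constant on $\mathcal{B}(M)$, and then derive a contradiction from basis exchange together with the distinguishing hypothesis. You are slightly more explicit than the paper about why the chain of inclusions $\mathcal{B}(M_{i+1}) \subseteq \mathcal{B}(M_i)$ collapses to equalities, and you include the easy $(\Leftarrow)$ direction, which the paper omits.
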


\begin{proof}
Assume $\min_{\partial_n, \dotsc, \partial_0}(M) = M$ and let $B_1, B_2 \in \mathcal{B}(M)$. Note that $\partial_i$ is constant in $\mathcal{B}(M)$ for every $i$. If $B_1 \neq B_2$, fix $x \in B_1 - B_2$. Then there exists $y \in B_2 - B_1$ such that $B_1 \cup\{y\} - \{x\}$ is a basis of $M$.

Let $\ell$ such that $\partial_\ell(x) \neq \partial_\ell(y)$. Then 
\[
\partial_\ell(B_1) \;=\; \partial_\ell(B_1 - \{x\}) + \partial_\ell(x) \;\neq\; \partial_\ell(B_1 - \{x\}) + \partial_\ell(y) \;=\; \partial_\ell(B_1 \cup\{y\} - \{x\}).
\]
Since $\partial_\ell$ is constant on $\mathcal{B}(M)$, one has a contradiction.
\end{proof}

\begin{lemma}
\label{lemma: grading on morphisms of matroids}
Let $N$ and $M$ be two $\K$-matroids with respective ground sets $S$ and $T$. Let $\Phi: N \rightarrow M$ be a morphism and $\partial: S \sqcup T \rightarrow \Z$ a map such that 
\[
\partial(s_1) - \partial\big(\underline{\Phi}(s_1)\big) \ = \ \partial(s_2) - \partial\big(\underline{\Phi}(s_2)\big)
\]
for all $s_1, s_2 \in \underline{\Phi}^{-1}(T)$. Then $\Phi: \min_{\partial|_S}(N) \rightarrow \min_{\partial|_T}(M)$ is a morphism.
\end{lemma}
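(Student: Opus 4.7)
The plan is to prove this via the Grassmann–Plücker cryptomorphism \eqref{crypto3} of \autoref{thm: cryptomorphisms}. For $\K$-matroids, a representing Grassmann–Plücker function $\nu$ for $N$ is (up to alternating sign) the indicator function of $\mathcal{B}(N)$, and one for $N_0 := \min_{\partial|_S}(N)$ is the indicator of $\mathcal{B}(N_0)$; in particular $\nu_0(\mathbf{y}) \neq 0$ if and only if $\nu(\mathbf{y}) \neq 0$ and $|\mathbf{y}|$ attains $m_N := \min\{\partial(B) \mid B \in \mathcal{B}(N)\}$, and similarly for $M_0 := \min_{\partial|_T}(M)$ with minimum $m_M$. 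Since $-1=1$ in $\K$, the cryptomorphic sum reduces to an unsigned sum of $\{0,1\}$-valued summands in $\N$, and the condition \eqref{crypto3} becomes: the sum must not equal $1$.

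First I would fix $y_0,\dotsc,y_w \in S$ and $x_2,\dotsc,x_r \in T$ and denote
\[
S_k \;=\; \nu(y_0,\dotsc,\widehat{y_k},\dotsc,y_w) \cdot \Phi_{\underline\Phi(y_k),y_k} \cdot \widetilde\mu\bigl(\underline\Phi(y_k),x_2,\dotsc,x_r\bigr),
\]
and analogously $S_k^0$ using $\nu_0,\mu_0$. The key observation, which uses the grading hypothesis, is that whenever $S_k\neq 0$ the factor $\Phi_{\underline\Phi(y_k),y_k}$ forces $\underline\Phi(y_k)\in T$, so by hypothesis $c := \partial(y_k) - \partial(\underline\Phi(y_k))$ is a constant independent of $k$. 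Consequently the combined $\partial$-weight
\[
W_k \;=\; \partial\bigl(|y_0,\dotsc,\widehat{y_k},\dotsc,y_w|\bigr) + \partial\bigl(|\underline\Phi(y_k),x_2,\dotsc,x_r|\bigr) \;=\; \sum_j \partial(y_j) + \sum_i \partial(x_i) - c
\]
takes a single common value $W$ across all indices $k$ with $S_k\neq 0$.

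With this constancy in hand, I would close the argument by a dichotomy. Since $W \geq m_N + m_M$ and each summand individually satisfies the lower bound $\partial(\cdot)\geq m_N$ resp.\ $\geq m_M$, equality $W = m_N+m_M$ is equivalent to \emph{both} bases being $\partial$-minimum simultaneously for every nonzero $S_k$. Hence if $W = m_N + m_M$, then $S_k^0 = S_k$ for all $k$ and so $\sum_k S_k^0 = \sum_k S_k \in N_\K$ by the morphism hypothesis on $\Phi:N\to M$; whereas if $W > m_N + m_M$, then at every nonzero $S_k$ at least one basis is not $\partial$-minimum, forcing $S_k^0=0$ for all $k$, so the sum is $0 \in N_\K$. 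In either case \eqref{crypto3} is satisfied for $\Phi$ between $N_0$ and $M_0$.

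The proof is essentially combinatorial bookkeeping, and the only conceptually nontrivial step is the constancy of $W_k$—this is precisely where the grading hypothesis $\partial(s_1)-\partial(\underline\Phi(s_1))=\partial(s_2)-\partial(\underline\Phi(s_2))$ is used. Without this assumption different terms could acquire different total weights and the sum could collapse from a value $\geq 2$ in $N_\K$ to the forbidden value $1$; the hypothesis rules this out and makes the passage $(\nu,\mu)\to(\nu_0,\mu_0)$ act uniformly on all nonzero terms.
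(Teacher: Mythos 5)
Your proposal is correct and follows essentially the same route as the paper: both arguments verify condition \eqref{crypto3} of \autoref{thm: cryptomorphisms} for the initial matroids and hinge on the observation that the grading hypothesis makes the total $\partial$-weight of $|\mathbf{y}_{\widehat k}|$ together with $\{\underline\Phi(y_k)\}\cup|\mathbf{x}|$ constant over all indices $k$ with nonzero term, so that the passage to $\partial$-minimal bases either preserves every nonzero summand or kills all of them. The paper phrases this as ``if one primed term survives, then a second one must too,'' while you phrase it as a global dichotomy on $W$ versus $m_N+m_M$; the content is identical.
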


\begin{proof}
 Let $\nu$ and $\mu$ be Grassmann-Pl\"ucker functions for $N$ and $M$, respectively. Write $\min_{\partial|_S}(N) = [\nu']$, $\min_{\partial|_T}(M) = [\mu']$, $w := \rk(N)$ and $r:=\rk(M)$. Let $\mathbf{y} \in S^{w+1}$ and $\mathbf{x} \in T^{r-1}$. Suppose that there exists $\ell \in \{1, \dotsc, w+1\}$ such that 
\[
\nu'(\mathbf{y}_{\widehat{\ell}}) \ \Phi_{y_\ell} \ \mu'\big(\underline{\Phi}(y_\ell), \mathbf{x}\big) = 1.
\]

In particular, $|\mathbf{y}_{\widehat{\ell}}|$ is a basis of $N$, $\{\underline{\Phi}(y_\ell)\} \cup |\mathbf{x}|$ is a basis of $M$ and $\Phi_{y_\ell} = 1$, i.e.
\[
\nu(\mathbf{y}_{\widehat{\ell}}) \ \Phi_{y_\ell} \ \mu\big(\underline{\Phi}(y_\ell), \mathbf{x}\big) = 1.
\]
As $\Phi: N \rightarrow M$ is a morphism and $N_\K = \N - \{1\}$, there exists $m \neq \ell$ such that 
\[
\nu(\mathbf{y}_{\widehat{m}}) \ \Phi_{y_m} \ \mu\big(\underline{\Phi}(y_m), \mathbf{x}\big) = 1.
\]

Next, we prove that $|\mathbf{y}_{\widehat{m}}|$ is basis of $\min_{\partial|_S}(N)$ and $\{\underline{\Phi}(y_\ell)\} \cup |\mathbf{x}|$ is a basis of $\min_{\partial|_T}(M)$. Let $m_N:= \min\{\partial(B) \mid B \in \mathcal{B}(N)\}$ and $m_M:= \min\{\partial(B) \mid B \in \mathcal{B}(M)\}$. Note that 
\[
m_N + m_M \;= \;\partial(|\mathbf{y}_{\widehat{\ell}}|) + \partial(\{\underline{\Phi}(y_\ell)\} \cup |\mathbf{x}|) \;= \;\partial(|\mathbf{y}| \cup |\mathbf{x}|) \;=\; \partial(|\mathbf{y}_{\widehat{m}}|) + \partial(\{\underline{\Phi}(y_m)\} \cup |\mathbf{x}|).
\]
As $m_N \leq \partial(|\mathbf{y}_{\widehat{m}}|)$ and $m_M \leq \partial(\{\underline{\Phi}(y_m)\} \cup |\mathbf{x}|)$, these must be equalities, thus $|\mathbf{y}_{\widehat{m}}|$ is basis of $\min_{\partial|_S}(N)$ and $\{\underline{\Phi}(y_\ell)\} \cup |\mathbf{x}|$ is a basis of $\min_{\partial|_T}(M)$, i.e.
\[
\nu'(\mathbf{y}_{\widehat{m}}) \ \Phi_{y_m} \ \mu'\big(\underline{\Phi}(y_m), \mathbf{x}\big) = 1.
\]
By \autoref{thm: cryptomorphisms}, $\Phi$ is a morphism from $\min_{\partial|_S}(N)$ to $\min_{\partial|_T}(M)$.
\end{proof}

\subsection{Nice gradings}
\label{subsection: nice gradings}

Nice gradings of quiver representations are a tool introduced by Cerulli Irelli (\cite{CerulliIrelli11}) to compute the Euler characteristic of quiver Grassmannians in terms of subquivers of the coefficient quivers. They were further refined by Haupt (\cite{Haupt12}) and by Jun-Sistko (\cite{Jun-SistkoEuler}). The latter paper links coefficient quivers that are windings (as they appear in \cite{Haupt12}) to $\Fun$-representations; cf.\ \autoref{rem: windings}.

For the remainder of this section, we fix a finite quiver $Q$ and an $\Fun$-representation $\Lambda$ of $Q$. We write $E_v$ for the (unpointed) set $\Lambda_v-\{0\}$ and $n_v$ for its cardinality. For a dimension vector ${\textbf{r}}\in\Z^{Q_0}$ of $Q$, we define ${\textbf{r}}^*=(r_v^*)_{v \in Q_0}$ with $r^\ast=n_v-r_v$.

\begin{df}[{\cite[Def. 4.1]{Jun-SistkoEuler}}]\label{def: gradings}
 A \emph{grading of $\Lambda$} is a map $\partial: {\bigsqcup}_{v \in Q_0} E_v \rightarrow \Z$. A grading $\partial$ of $\Lambda$ is \emph{nice} if 
 \[
  \partial(a) - \partial\big(\Lambda_\alpha(a)\big) = \partial(b) - \partial\big(\Lambda_\alpha(b)\big)
 \]
 for all $\alpha\in Q_1$ and $a, b \in \Lambda_{s(\alpha)}$ with $\Lambda_\alpha(a)\neq0$ and $\Lambda_\alpha(b) \neq 0$.
\end{df}

A nice grading $\partial$ of $\Lambda$ extends to $Q_1$ via the rule
\[
 \partial(\alpha) := \begin{cases}
                      \partial(a) - \partial\big(\Lambda_\alpha(a)\big) &\text{for } a \in \Lambda_{s(\alpha)} \text{ with } \Lambda_\alpha(a) \neq 0,\\
                      {} 0 &\text{if there is no }a \in \Lambda_{s(\alpha)} \text{ with } \Lambda_\alpha(a) \neq 0.
                     \end{cases}
\]

Note that the value $\partial(\alpha)$ does not depend on the choice of $a\in\Lambda_{s(\alpha)}$ since $\partial$ is nice.

\begin{df}
 Let $M$ be a $(\Lambda,\K)$-matroid and $\partial: {\bigsqcup}_{v \in Q_0} E_v \rightarrow \Z$ a grading. The \emph{initial matroid of $M$ with respect to $\partial$} is the tuple $\min_\partial(M) := \big(\min_{\partial|_{E_v}} (M_v)\big)_{v \in Q_0}$ of $\K$-matroids.
\end{df}

\begin{prop}
\label{prop - del(M) is a quiver matroid}
 Let $M$ be a $(\Lambda,\K)$-matroid. If $\partial: {\bigsqcup}_{v \in Q_0} E_v \rightarrow \Z$ a nice grading, then $\min_\partial(M)$ is a $(\Lambda,\K)$-matroid. 
\end{prop}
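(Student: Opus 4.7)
The plan is to reduce the statement to the already-established \autoref{lemma: grading on morphisms of matroids} applied arrow by arrow. A $(\Lambda,\K)$-matroid $M$ is a tuple $\big((M_v)_{v\in Q_0},(\Lambda_{\alpha,\K})_{\alpha\in Q_1}\big)$ where $M_v$ has ground set $E_v$ and each $\Lambda_{\alpha,\K}:M_{s(\alpha)}\to M_{t(\alpha)}$ is a morphism of $\K$-matroids. Since the construction $\min_{\partial|_{E_v}}$ does not alter the ground set (only the basis family), $\min_\partial(M)$ is a tuple of $\K$-matroids with the same underlying sets $E_v$. To conclude that it is a $(\Lambda,\K)$-matroid, it therefore suffices to show that for every $\alpha\in Q_1$, the same submonomial matrix $\Lambda_{\alpha,\K}$ is still a morphism
\[
 \Lambda_{\alpha,\K}: \ \min\nolimits_{\partial|_{E_{s(\alpha)}}}(M_{s(\alpha)}) \ \longrightarrow \ \min\nolimits_{\partial|_{E_{t(\alpha)}}}(M_{t(\alpha)}).
\]

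First I would fix $\alpha\in Q_1$ and write $S=E_{s(\alpha)}$, $T=E_{t(\alpha)}$ and $\Phi=\Lambda_{\alpha,\K}$, so that $\underline{\Phi}$ is the restriction of $\Lambda_\alpha$ to a map $\uS\to\uT$. By definition of $\Lambda_{\alpha,\K}$, the preimage $\underline\Phi^{-1}(T)$ is precisely the set of $a\in S$ with $\Lambda_\alpha(a)\neq 0$, and for such $a$ we have $\underline\Phi(a)=\Lambda_\alpha(a)$. The niceness of $\partial$ (in the form of \autoref{def: gradings}) is then exactly the statement that
\[
 \partial(s_1)-\partial\big(\underline\Phi(s_1)\big) \ = \ \partial(s_2)-\partial\big(\underline\Phi(s_2)\big) \qquad \text{for all } s_1,s_2\in\underline\Phi^{-1}(T),
\]
i.e.\ both sides are equal to $\partial(\alpha)$. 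This is precisely the hypothesis of \autoref{lemma: grading on morphisms of matroids} applied to the grading $\partial|_{S\sqcup T}$.

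Applying the lemma yields that $\Phi$ is a morphism between the initial matroids at source and target. Since this works uniformly for every $\alpha\in Q_1$, the tuple $\min_\partial(M)=\big((\min_{\partial|_{E_v}}M_v),(\Lambda_{\alpha,\K})\big)$ satisfies the defining condition of a $(\Lambda,\K)$-matroid, completing the proof. There is no real obstacle here beyond the bookkeeping needed to identify $\underline{\Lambda_{\alpha,\K}}$ with $\Lambda_\alpha$ on nonzero elements and to recognize that the niceness condition on $\partial$ matches the hypothesis of \autoref{lemma: grading on morphisms of matroids}; all the work has already been done in that lemma.
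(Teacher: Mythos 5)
Your proposal is correct and takes exactly the route of the paper, which proves this proposition by direct appeal to \autoref{lemma: grading on morphisms of matroids}; you have simply written out the (straightforward) bookkeeping that identifies the niceness condition of \autoref{def: gradings} with the hypothesis of that lemma for each arrow $\alpha\in Q_1$.
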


\begin{proof}
This follows at once from \autoref{lemma: grading on morphisms of matroids}.
\end{proof}

\begin{df}
 A \emph{coordinate matroid} is a $\K$-matroid $M$ with a unique basis. For $A \subseteq E$, we denote by $\textup{Co}(A,E)$ the coordinate matroid on $E$ that has $A$ as its unique basis.
\end{df}

\begin{lemma}
\label{coordinate morphisms}
Let $N = \textup{Co}(A,S)$ and $M = \textup{Co}(B,T)$ and $f: \underline{S} \rightarrow \underline{T}$ an $\F_1$-linear map. Then $f$ is a strong map from $N$ to $M$ if, and only if, $f(\underline{A^c}) \subseteq \underline{B^c}$.
\end{lemma}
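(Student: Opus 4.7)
The plan is to reduce the statement to an explicit description of the flats of $\widetilde{N}$ and $\widetilde{M}$ and then to test the strong map condition on a single distinguished flat, which turns out to be sharp.

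First I would describe the flats of a coordinate matroid. If $\textup{Co}(A,S)$ has unique basis $A$, then every element of $S-A$ is a loop, so $\rk(X) = \#(X\cap A)$ and $\textup{cl}(X) = (X\cap A)\cup (S-A)$ for all $X\subseteq S$. Passing to $\widetilde{N}$ (which adds $0$ as a further loop) shows that the loops of $\widetilde{N}$ are exactly $\underline{A^c}=(S-A)\cup\{0\}=\underline{S}-A$, and consequently the flats of $\widetilde{N}$ are precisely the sets of the form $I\cup\underline{A^c}$ with $I\subseteq A$. The analogous statement holds for $\widetilde{M}$.

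Next I would prove necessity: assume $f:N\to M$ is a strong map. The set $\underline{B^c}$ is itself a flat of $\widetilde{M}$ (take $J=\emptyset$ above), so $f^{-1}(\underline{B^c})$ must be a flat of $\widetilde{N}$, hence it must contain all loops of $\widetilde{N}$. Therefore $\underline{A^c}\subseteq f^{-1}(\underline{B^c})$, which is precisely $f(\underline{A^c})\subseteq\underline{B^c}$.

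For sufficiency, assume $f(\underline{A^c})\subseteq\underline{B^c}$ and let $F=J\cup\underline{B^c}$ be an arbitrary flat of $\widetilde{M}$. Since $f$ is base-point preserving and $\F_1$-linear, the preimage $f^{-1}(F)$ is a well-defined subset of $\underline{S}$; moreover by hypothesis $\underline{A^c}\subseteq f^{-1}(\underline{B^c})\subseteq f^{-1}(F)$. Thus
\[
 f^{-1}(F) \ = \ \bigl(f^{-1}(F)\cap A\bigr)\,\cup\,\underline{A^c},
\]
which is a flat of $\widetilde{N}$ by the description of flats above. Hence $f:N\to M$ is a strong map.

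No serious obstacle is expected; the only care needed is to keep track of the base point $0$ on both sides and to recall that flats of a matroid automatically contain all loops, which is what makes the single test flat $\underline{B^c}$ determine all other flats.
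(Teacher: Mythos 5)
Your proof is correct, but it takes a different route from the paper. The paper's proof is a one-line reduction to \autoref{duality - classical strong maps}: since the circuits of $N$ are the singletons in $S-A$ (loops) and the cocircuits of $M$ are the singletons in $B$ (coloops), the orthogonality condition $f(\cC_N)\perp\cC_M^\ast$ unwinds to $f(x)\neq y$ for all $x\in S-A$ and $y\in B$, i.e.\ $f(\underline{A^c})\subseteq\underline{B^c}$. You instead verify the flat-preimage definition of a strong map directly, after observing that the flats of $\widetilde{\textup{Co}(A,S)}$ are exactly the sets $I\cup\underline{A^c}$ with $I\subseteq A$. Both arguments are sound; yours is self-contained and elementary (it does not even use the $\F_1$-linearity of $f$, since every base-point-preserving map satisfies the equivalence by your flat computation), while the paper's leverages the circuit/cocircuit machinery it has already set up, which keeps the lemma's proof to a single sentence and stays consistent with how morphisms are tested elsewhere in the text (via \autoref{thm: cryptomorphisms}). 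One cosmetic remark: the preimage $f^{-1}(F)$ is well defined for any map, so the appeal to $\F_1$-linearity in your sufficiency step is superfluous.
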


\begin{proof}
 Since the circuits of $N$ are the elements of $S - A$ and the cocircuits of $M$ are the elements of $B$, the result follows from \autoref{duality - classical strong maps}.
\end{proof}

\begin{df}
 A \emph{subrepresentation of $\Lambda$} is a tuple $\Omega=(\Omega_v)_{v\in Q_0}$ of pointed subsets $\Omega_v\subseteq\Lambda_v$ such that $\Lambda_\alpha({\Omega_{s(\alpha)}}) \subseteq \Omega_{t(\alpha)}$ for all $\alpha\in Q_1$ (cf. \autoref{subsection: quiver representations}). The \emph{dimension vector of $(\Omega_v)$} is the tuple 
 \[
  \mathbf{\dim}(\Omega) \ = \ (\# \Omega_v-1)_{v \in Q_0} \quad \in \quad \Z^{Q_0}.
 \]
\end{df}

\begin{prop}
\label{prop: coordinate matroids and F1-subrepresentations}
Let $\mathbf{r}\in\Z^{Q_0}$ be a dimension vector for $Q$ and $\mathbf{r}^* := (\#E_v - r_v)_{v \in Q_0}$. Then the map
\[
 \begin{array}{ccc}
  \bigg\{ \begin{array}{c}\text{subrepresentations of }\Lambda\\ \text{of dimension } \mathbf{r}^* \end{array}\bigg\} & \longrightarrow & \bigg\{ \begin{array}{c}\text{coordinate $(\Lambda,\K)$-matroids}\\ \text{of rank } \mathbf{r} \end{array}\bigg\} \\[10pt]
  ({\Omega_v})_{v \in Q_0} & \longmapsto & \big(\textup{Co}(\Omega_v^{c},E_v)\big)_{v \in Q_0}
 \end{array}
\]
is a bijection, where $\Omega_v^c:=\Lambda_v-\Omega_v$.
\end{prop}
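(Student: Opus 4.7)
The plan is to verify three things in sequence: that the map is well-defined (i.e., $(\textup{Co}(\Omega_v^c, E_v))_{v\in Q_0}$ really is a $(\Lambda,\K)$-matroid of rank $\mathbf r$), that it is injective, and that it is surjective. Well-definedness is the main content; injectivity and surjectivity are essentially bookkeeping once the cardinality arithmetic is set up. The cardinality arithmetic is straightforward: the condition $\bdim(\Omega) = \mathbf r^\ast$ means $\#\Omega_v = n_v - r_v + 1$, so $\Omega_v^c = \Lambda_v - \Omega_v \subseteq E_v$ has cardinality $r_v$, which is exactly what is needed for $\textup{Co}(\Omega_v^c, E_v)$ to be a $\K$-matroid of rank $r_v$ on $E_v$.

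The crux is checking that for every arrow $\alpha\in Q_1$, the submonomial $\K$-matrix $\Lambda_{\alpha,\K}$ is a morphism from $\textup{Co}(\Omega_{s(\alpha)}^c, E_{s(\alpha)})$ to $\textup{Co}(\Omega_{t(\alpha)}^c, E_{t(\alpha)})$. I will reduce this to a strong-map statement via \autoref{thm: K-matroids and F1-linear spaces}: a morphism of $\K$-matroids is the same as an $\F_1$-linear strong map between the underlying matroids, and the underlying $\F_1$-linear map of $\Lambda_{\alpha,\K}$ is precisely $\Lambda_\alpha$. I will then apply \autoref{coordinate morphisms} with $A = \Omega_{s(\alpha)}^c$ and $B = \Omega_{t(\alpha)}^c$: this says $\Lambda_\alpha$ is a strong map from $\textup{Co}(A,E_{s(\alpha)})$ to $\textup{Co}(B,E_{t(\alpha)})$ if and only if $\Lambda_\alpha(\underline{A^c}) \subseteq \underline{B^c}$. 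But $\underline{A^c} = \underline{E_{s(\alpha)} - \Omega_{s(\alpha)}^c} = \Omega_{s(\alpha)}$ and similarly $\underline{B^c} = \Omega_{t(\alpha)}$, so the condition becomes $\Lambda_\alpha(\Omega_{s(\alpha)}) \subseteq \Omega_{t(\alpha)}$, which is exactly the defining condition for $\Omega$ to be a subrepresentation. This is the only nontrivial step, and the expected obstacle is merely keeping straight which complement is taken inside $E_v$ and which inside $\uline{E_v}$.

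For injectivity, two subrepresentations $\Omega, \Omega'$ giving the same tuple of coordinate matroids must have $\Omega_v^c = (\Omega_v')^c$ for every $v$ (since these are uniquely recovered as the unique basis of the $v$-component), hence $\Omega_v = \Omega_v'$. For surjectivity, given a coordinate $(\Lambda,\K)$-matroid $M = (M_v)$ of rank $\mathbf r$, let $B_v \subseteq E_v$ denote the unique basis of $M_v$ and set $\Omega_v := \Lambda_v - B_v$; then $\#\Omega_v - 1 = r_v^\ast$, and the same calculation with \autoref{thm: K-matroids and F1-linear spaces} and \autoref{coordinate morphisms}, applied in reverse to the morphism hypothesis for $\Lambda_{\alpha,\K}: M_{s(\alpha)} \to M_{t(\alpha)}$, yields $\Lambda_\alpha(\Omega_{s(\alpha)}) \subseteq \Omega_{t(\alpha)}$. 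Thus $\Omega$ is a subrepresentation of dimension $\mathbf r^\ast$ whose image under the proposed map is $M$, completing the bijection.
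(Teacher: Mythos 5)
Your proof is correct and follows the same route as the paper: the paper's proof consists precisely of citing \autoref{thm: K-matroids and F1-linear spaces} and \autoref{coordinate morphisms}, and you have correctly spelled out how these combine (including the key identification $\underline{A^c}=\Omega_{s(\alpha)}$, which turns the condition of \autoref{coordinate morphisms} into the subrepresentation condition) together with the routine cardinality and injectivity/surjectivity checks.
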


\begin{proof}
 This follows from \autoref{thm: K-matroids and F1-linear spaces} and \autoref{coordinate morphisms}.
\end{proof}

\begin{rem}\label{rem: coordinate matroids are closed points of the quiver Grassmannian}
 By \autoref{thm: moduli space of quiver matroid bundles}, a $(\Lambda,\K)$-matroid $M$ of rank $\textbf{r}$ corresponds to a $\K$-rational point of the quiver Grassmannian $\Gr_{\textbf{r}}(\Lambda)$. If $M$ is a coordinate $(\Lambda,\K)$-matroid, then each matroid $M_v$ has a unique basis and is thus a closed point of $\Gr(r_v,E_v)(\K)$. This defines an injection
 \[
   \bigg\{ \begin{array}{c}\text{coordinate $(\Lambda,\K)$-matroids}\\ \text{of rank } {\textbf{r}} \end{array}\bigg\} \ \longrightarrow \ \Gr_{\textbf{r}}(\Lambda)^\Tits
 \]
 In the remainder of this section, we exhibit conditions that guarantee that this map is a bijection.
\end{rem}

\begin{df}[{\cite[Def.\ 4.1 and 4.6]{Jun-SistkoEuler}}]
\label{def: nice seqence of grandings}
 Let $\partial_1, \dotsc, \partial_n$ be gradings of $\Lambda$. A \emph{nice $(\partial_1, \dotsc, \partial_n)$-grading} of $\Lambda$ is a grading $\partial$ such that for all $\alpha \in Q_1$ and $a, b \in E_{s(\alpha)}$ with $\Lambda_\alpha(a), \Lambda_\alpha(b) \neq 0$ and with
\[
\partial_i(a) = \partial_i(b) \quad \text{and} \quad \partial_i(\Lambda_\alpha(a)\big) = \partial_i(\Lambda_\alpha(b)\big) \quad \text{for all} \quad i = 1, \dotsc, n,
\]
it satisfies
\[
\partial\big(\Lambda_\alpha(a)\big) - \partial\big(\Lambda_\alpha(b)\big) \ = \ \partial(a) - \partial(b).
\]
A \emph{nice sequence} for $\Lambda$ is a sequence $\underline{\partial} = \{\partial_i\}_{i = 0}^{\infty}$ of gradings of $\Lambda$ such that $\partial_0$ is nice and $\partial_i$ is a nice $(\partial_0, \dotsc, \partial_{i-1})$-grading for all $i > 0$. We say that $\underline{\partial}$ \emph{distinguishes elements} if for any $a, b \in {\bigsqcup}_{v \in Q_0} E_v$ with $a \neq b$, we have $\partial_i(a) \neq \partial_i(b)$ for some $i$.
\end{df}

\begin{thm}\label{thm: rational points and Euler characteristics}
 If there exists a nice sequence of gradings for $\Lambda$ that distinguishes elements, then 
 \[
  \chi\big(\Gr_{\mathbf{r}^*}(\Lambda_\C)\big) \ = \ \#\, \Gr_{\mathbf{r}}(\Lambda)^\Tits
 \]
 for every ${\mathbf{r}} \in \Z^{Q_0}$.
\end{thm}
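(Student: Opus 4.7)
The plan is to convert the geometric sketch from the introduction into a rigorous argument that matches the Euler characteristic of $\Gr_{\mathbf{r}^*}(\Lambda_\C)$ with the cardinality of $\Gr_{\mathbf{r}}(\Lambda)^\Tits$ by routing through the set $\cat{S}$ of coordinate $(\Lambda,\K)$-matroids of rank $\mathbf{r}$. By \autoref{prop: coordinate matroids and F1-subrepresentations}, $\cat{S}$ is in bijection with the set of dimension $\mathbf{r}^*$ subrepresentations of $\Lambda$, and by \autoref{rem: coordinate matroids are closed points of the quiver Grassmannian}, $\cat{S}$ embeds into $\Gr_{\mathbf{r}}(\Lambda)^\Tits$. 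The entire theorem reduces to showing (a) $\chi\big(\Gr_{\mathbf{r}^*}(\Lambda_\C)\big) = \#\,\cat{S}$ and (b) this embedding is surjective.

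For (a), I would use the nice sequence $\underline{\partial}=\{\partial_i\}_{i=0}^n$ (which can be taken finite since $\bigsqcup E_v$ is finite and $\underline{\partial}$ distinguishes elements) to produce a descending chain of $\G_m$-actions. Given a grading $\partial_i$, assign the Pl\"ucker coordinate $T_\mathbf{x}$ at vertex $v$ the weight $\partial_i(x_1)+\dotsb+\partial_i(x_{r_v})$. The niceness of $\partial_0$ implies the quiver Pl\"ucker relation \eqref{qr4} is homogeneous in this grading (the factor $\Lambda_{\alpha,\Gamma X}$ forces $\partial_0(\Lambda_\alpha(y_k))$ to differ from $\partial_0(y_k)$ by the constant $\partial_0(\alpha)$), so the action descends to a morphism $\theta_0:\G_m\times \Gr_{\mathbf{r}}(\Lambda)\to \Gr_{\mathbf{r}}(\Lambda)$. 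Let $X_1 := \Gr_{\mathbf{r}}(\Lambda)$ and define $X_{i+1}$ inductively as the (scheme-theoretic) fixed locus of $\theta_i$; the condition that $\partial_{i+1}$ is a nice $(\partial_0,\dotsc,\partial_i)$-grading guarantees that $\partial_{i+1}$ preserves $X_{i+1}$ and defines a $\G_m$-action $\theta_{i+1}$ on it. Over $\C$, Bia\l ynicki-Birula's theorem \cite{Bialynicki-Birula73} yields
\[
 \chi\big(\Gr_{\mathbf{r}^*}(\Lambda_\C)\big) \ = \ \chi\big(X_2(\C)\big) \ = \ \dotsb \ = \ \chi\big(X_{n+1}(\C)\big).
\]
Under the moduli interpretation of \autoref{thm: moduli space of quiver matroid bundles}, a $\C$-point of $X_{n+1}$ is a $(\Lambda,\C)$-matroid whose Pl\"ucker coordinates are supported on a single multi-degree in each $\partial_i$; since $\underline{\partial}$ distinguishes elements, this support is concentrated on a single tuple per vertex, so $X_{n+1}(\C) = \cat{S}$ as a set and is zero-dimensional. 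Hence $\chi\big(X_{n+1}(\C)\big) = \#\,\cat{S}$.

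For (b), I would show that every $x\in\Gr_{\mathbf{r}}(\Lambda)^\Tits$ lies in $\cat{S}$ by a tropical-degeneration argument. Using the inclusion $\K\hookrightarrow\T$ of idylls, view $x$ as a $\T$-point, corresponding to a $(\Lambda,\T)$-matroid $M^{\T}$ whose underlying $(\Lambda,\K)$-matroid is $M$. Suppose $M$ is not coordinate: then some matroid $M_v$ admits two distinct bases, and since $\underline{\partial}$ distinguishes elements, there is a smallest index $i$ such that $\partial_i$ is non-constant on the bases of $M_v$. For $t\in\G_m(\T)=\R_{>0}$ less than $1$, the $\T$-point $\theta_{i,\T}(t,x)$ has the same support as $x$ but rescaled Pl\"ucker coordinates, and as $t\to 0$ its limit is well-defined in $\Gr_{\mathbf{r}}(\Lambda)(\T)$, with underlying $\K$-point $x_0$ corresponding to the initial matroid $\min_{\partial_i}(M)$ of \autoref{def - initial matroid} (applied vertexwise). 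By construction $x_0\neq x$ but $x_0$ is in the closure of $\{x\}$ for the fine topology (the supports are strictly contained), contradicting closedness of $x$. Therefore $M\in\cat{S}$, which completes the proof.

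The main obstacle is the last step: making rigorous the claim that the tropical limit $\lim_{t\to 0}\theta_{i,\T}(t,x)$ yields a $\K$-point that lies in the Tits-closure of $x$. This requires carefully checking that the quiver Pl\"ucker relations over $\T$ persist under this one-parameter degeneration, that the resulting $\T$-point factors through the subidyll $\K\subseteq\T$ (so it defines a genuine $\K$-rational point), and that the specialization in the fine topology on $\Gr_{\mathbf{r}}(\Lambda)(\K)$ is governed precisely by the matroid-theoretic construction $\min_{\partial_i}$ of \autoref{grading on matroids}. This is essentially a compatibility lemma between the $\G_m$-action on the band scheme, the combinatorics of initial matroids, and the topology of \autoref{thm: fine topology}; all three ingredients have been prepared in the preceding sections, so the verification should be routine but requires care in bookkeeping.
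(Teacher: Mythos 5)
Your overall architecture is the same as the paper's: reduce everything to the set $\cat{S}$ of coordinate $(\Lambda,\K)$-matroids via \autoref{prop: coordinate matroids and F1-subrepresentations} and \autoref{rem: coordinate matroids are closed points of the quiver Grassmannian}, then prove (a) $\chi=\#\cat{S}$ and (b) surjectivity onto the Tits space. But you execute both halves by a harder route than the paper. For (a), the paper does not construct the torus actions or invoke Bia\l ynicki-Birula at all; it simply observes that a nice sequence distinguishing elements for $\Lambda$ is a nice sequence for the associated winding and cites \cite[Prop.\ 5.2]{Jun-SistkoEuler} to get that $\chi\big(\Gr_{\mathbf{r}^*}(\Lambda_\C)\big)$ equals the number of dimension $\mathbf{r}^*$ subrepresentations. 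Your plan to re-derive this (homogeneity of \eqref{qr4}, descent of the action to the fixed loci, identification of $X_{n+1}(\C)$ with $\cat{S}$) is essentially reproving the cited result, and those verifications are not routine enough to leave as asserted.

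The more substantive divergence is in (b). You route through $\T$-points and a limit $\lim_{t\to0}\theta_{i,\T}(t,x)$, and you correctly identify that justifying this limit is the "main obstacle" — but as written this is a genuine gap, since neither the $\G_m$-action on the band scheme nor the compatibility of its tropical limits with the fine topology has been established anywhere. The paper avoids all of this: it applies \autoref{lemma: distinguishes elements} and \autoref{prop - del(M) is a quiver matroid} to conclude that $\min_{\partial_n,\dotsc,\partial_0}(M)$ is a coordinate $(\Lambda,\K)$-matroid, and the closure statement needed for surjectivity is immediate from the combinatorics, with no degeneration argument: since $\mathcal{B}\big(\min_\partial(M_v)\big)\subseteq\mathcal{B}(M_v)$, the support of the Pl\"ucker vector of $\min_{\partial_n,\dotsc,\partial_0}(M)$ is contained in that of $M$ at every vertex, and by \autoref{thm: fine topology} containment of supports is exactly the closure relation in $\prod\P^{N_v}(\K)$. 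Hence a closed point $M$ must satisfy $\min_{\partial_n,\dotsc,\partial_0}(M)=M$ and so has a unique basis at each vertex. I recommend replacing your tropical-limit step with this direct argument; it turns your acknowledged obstacle into a one-line observation and makes the whole proof go through.
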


\begin{proof}
 Since nice sequences of gradings of $\Lambda$ in the sense of \autoref{def: nice seqence of grandings} are nice sequences of gradings for the winding associated to the coefficient quiver of $\Lambda$ in the sense of \cite[Def. 4.1]{Jun-SistkoEuler}, $\chi\big(\Gr_{\textbf{r}^*}(\Lambda_\C)\big)$ is equal to the number of subrepresentations of $\Lambda$ of dimension ${\textbf{r}^*}$ by \cite[Prop. 5.2]{Jun-SistkoEuler}, which is the same as the number of coordinate $(\Lambda,\K)$-matroids of rank ${\textbf{r}}$ by \autoref{prop: coordinate matroids and F1-subrepresentations}.

Let $\underline{\partial} = \{\partial_i\}_{i = 0}^{\infty}$ be a nice sequence of gradings for $\Lambda$ that distinguishes elements. Since $Q_0$ is finite, there exists an $n>0$ such that for all distinct $a, b \in  {\bigsqcup} \; E_v$, we have $\partial_i(a) \neq \partial_i(b)$ for some $0 \leq i \leq n$. Consider a $(\Lambda,\K)$-matroid $M \in \Gr_{\textbf{r}}(\Lambda)(\K)$. By \autoref{lemma: distinguishes elements} and \autoref{prop - del(M) is a quiver matroid}, the initial $(\Lambda,\K)$-matroid $\min_{\partial_n,\dotsc,\partial_0}(M)$ is a coordinate matroid and thus lies in the closure of $M$. This shows that the injection
\[
  \bigg\{ \begin{array}{c}\text{coordinate $(\Lambda,\K)$-matroids}\\ \text{of rank } {\textbf{r}}\end{array}\bigg\} \ \longrightarrow \ \Gr_{\textbf{r}}(\Lambda)^\Tits
\]
from \autoref{rem: coordinate matroids are closed points of the quiver Grassmannian} is a bijection under the assumption of the theorem. 

Combining these identifications shows that the Euler characteristic of $\Gr_{\textbf{r}^*}(\Lambda_\C)$ is equal to the cardinality of $\Gr_{\textbf{r}}(\Lambda)^\Tits$.
\end{proof}

\begin{rem}\label{rem: windings}
 In \cite{CerulliIrelli11} and \cite{Haupt12}, one finds the condition that the coefficient quiver of $\Lambda_\C$ is a \emph{winding}, which means that the matrices $\Lambda_{\C,\alpha}$ are submonomial. Since the matrices stemming from $\Fun$-representations are by definition submonomial, the condition of a winding does not appear in our context.
\end{rem}

\subsection{A formula for the Euler characteristic}
\label{subsection: the Euler characteristic and the Tits space}

Let $Q$ be a finite quiver and $\Lambda$ an $\Fun$-representation of $Q$ with $E_v=\Lambda_v-\{0\}$ finite for all $v\in Q_0$. For a dimension vector ${\textbf{r}}$, we denote by ${\textbf{r}}^\ast=(\#\, E_v-r_v)_{v\in Q_0}$ its codimension vector.

The \emph{coefficient quiver of $\Lambda$} is the quiver $\Gamma$ with vertex set 
\[
 \Gamma_0 \ = \ \bigsqcup_{v\in Q_0} \ E_v
\]
and arrow set
\[
 \Gamma_1 \ = \ \big\{ (\alpha,i,j) \, \big| \, \alpha\in Q_1,\, j\in\Lambda_{s(\alpha)},\, i=\Lambda_\alpha(j)\neq0 \big\}.
\]
We say that $\Gamma$ is a \emph{tree} if its underlying graph is a tree and that $\Gamma$ is a \emph{primitive cycle} if its underlying graph is a single cycle and if $\#\,\Lambda_v\leq2$ for all $v\in Q_0$.

\begin{thm}\label{thm: Euler characteristics for tree modules}
 If the coefficient quiver of $\Lambda$ is a tree or a primitive cycle, then 
 \[
  \chi\big(\Gr_{\mathbf{r}}(\Lambda)(\C)\big) \ = \ \#\, \Gr_{\mathbf{r}}(\Lambda)^\Tits
 \]
 for any dimension vector ${\mathbf{r}}$.
\end{thm}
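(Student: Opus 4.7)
The plan is to apply \autoref{thm: rational points and Euler characteristics}, reducing the claim in each case to the construction of a nice sequence of gradings of $\Lambda$ that distinguishes elements.

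\textbf{Primitive cycle case.} The hypothesis $\#\Lambda_v \leq 2$ for all $v \in Q_0$ forces $\#E_v \leq 1$, so for every $\alpha \in Q_1$ at most one element of $E_{s(\alpha)}$ is mapped to something nonzero under $\Lambda_\alpha$. Both the niceness condition of \autoref{def: gradings} and the nice-$(\partial_0,\dotsc,\partial_{i-1})$-condition of \autoref{def: nice seqence of grandings} will therefore be vacuous, and any sequence of gradings will automatically be nice. Enumerating $\Gamma_0 = \{v_1,\dotsc,v_N\}$ and setting $\partial_i(v_j) = \delta_{i,j}$ will yield a nice sequence that distinguishes elements.

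\textbf{Tree case.} Here we exploit the acyclicity of $\Gamma$ to produce sufficiently many nice gradings. A single nice grading need not distinguish all vertices of $\Gamma$: when several edges of $\Gamma$ share an underlying arrow in $Q$, the niceness condition can force $\partial(a) = \partial(b)$ for distinct $a,b$. However, in a nice sequence each additional grading enjoys strictly more freedom, since any niceness equation involving a pair already separated by an earlier $\partial_j$ drops out of the constraints on $\partial_i$. Starting from an arbitrary nice $\partial_0$, we would construct the sequence inductively: at each step, the remaining constraints decompose along subtrees of $\Gamma$ (as removing an edge disconnects a tree), and this freedom allows us to build a further nice $(\partial_0,\dotsc,\partial_{i-1})$-grading separating at least one new pair. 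Since $\Gamma_0$ is finite, the process will terminate after finitely many steps with a nice sequence distinguishing all elements. Alternatively, one may invoke the analysis of tree modules in \cite[Section~5]{Jun-SistkoEuler}, whose setup of windings with coefficient-quiver trees is exactly the present one.

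The main obstacle will be the tree case: the nontrivial niceness constraints must be tracked carefully through the recursive construction, but the acyclicity of $\Gamma$ will guarantee termination. Once the nice sequence is in hand, \autoref{thm: rational points and Euler characteristics} will yield the claimed equality.
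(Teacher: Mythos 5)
Your proposal follows the paper's proof exactly: the paper deduces the theorem from \autoref{thm: rational points and Euler characteristics} together with \cite[Cor.\ 5.18, Thm.\ 5.20]{Jun-SistkoEuler}, which supply the required nice distinguishing sequences of gradings for trees and primitive cycles. Your direct verification in the primitive-cycle case is correct (the niceness conditions are indeed vacuous once $\#E_v\le 1$), whereas your inductive sketch for the tree case leaves its key step --- that each stage of the recursion separates at least one new pair --- unjustified; that step is precisely the content of \cite[Cor.\ 5.18]{Jun-SistkoEuler}, so your fallback citation is what actually carries that case, just as in the paper.
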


\begin{proof}
 This follows at once from \autoref{thm: rational points and Euler characteristics} and \cite[Cor.\ 5.18, Thm.\ 5.20]{Jun-SistkoEuler}.
\end{proof}

Recall that $\Lambda_\C$ is the complex representation of $Q$ with $\Lambda_{\C,v}=\C^{E_v}$ and whose matrices $\Lambda_{\C,\alpha}$ have coefficients
\[
 (\Lambda_{\C,\alpha})_{i,j} \ = \ \begin{cases}
                                    1 & \text{if } i=\Lambda_\alpha(j) \\
                                    0 & \text{otherwise}
                                   \end{cases}
\]
for $(i,j)\in E_{t(\alpha)}\times E_{s(\alpha)}$.

\begin{cor}\label{cor: Euler characteristic for quivers with at most one cycle}
 If $Q$ is a simply laced (extended) Dynkin quiver and $\Lambda_\C$ is irreducible, then 
 \[
  \chi\big(\Gr_{\mathbf{r}}(\Lambda)(\C)\big) \ = \ \#\, \Gr_{\mathbf{r}}(\Lambda)^\Tits
 \]
 for any dimension vector ${\mathbf{r}}$.
\end{cor}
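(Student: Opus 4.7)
The plan is to reduce the corollary to Theorem \ref{thm: Euler characteristics for tree modules} by establishing that, under the stated hypotheses, the coefficient quiver $\Gamma$ of $\Lambda$ is either a tree or a primitive cycle. Once this structural claim is in place, the Euler characteristic formula follows directly from that theorem.

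The key observation is that the coefficient quiver $\Gamma$ comes equipped with a natural quiver morphism $\pi: \Gamma \to Q$ sending each $e \in E_v$ to $v$ and each arrow $(\alpha, i, j)$ to $\alpha$, and this morphism is a \emph{winding} (locally injective on arrows at each vertex), owing to the $\Fun$-linearity of the maps $\Lambda_\alpha$. Moreover, if $\Gamma$ were disconnected, then $\Lambda_\C$ would decompose as a direct sum along the connected components of $\Gamma$, contradicting irreducibility; hence $\Gamma$ is connected.

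The simply laced Dynkin case is straightforward: the underlying graph of $Q$ is a tree, and any connected winding of a tree is itself a tree (a cycle in $\Gamma$ would project to a cycle in $Q$). Thus $\Gamma$ is a tree, and the first branch of Theorem \ref{thm: Euler characteristics for tree modules} applies. Alternatively, one may invoke the classical fact (Gabriel--Ringel) that every indecomposable representation of a simply laced Dynkin quiver is a tree module.

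The extended Dynkin case is the main obstacle. Now the underlying graph of $Q$ has exactly one cycle, so $\pi$ may either completely unfold this cycle (yielding $\Gamma$ a tree, which again reduces us to the tree branch of Theorem \ref{thm: Euler characteristics for tree modules}) or preserve some cycle structure. In the latter subcase, I would argue that irreducibility of $\Lambda_\C$ forces $\Gamma$ to be a primitive cycle: any vertex $v \in Q_0$ with $\#\Lambda_v \geq 3$, or the presence of more than one cycle in $\Gamma$, would allow one to split off a nontrivial direct summand of $\Lambda_\C$. This can be made rigorous either by invoking the classification of indecomposable representations of tame hereditary algebras (Auslander--Reiten theory, Dlab--Ringel), or, more directly, by adapting the case analysis of Jun and Sistko in \cite{Jun-SistkoEuler}, who essentially carry out this reduction when constructing nice sequences of gradings for such $\Lambda$. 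Once $\Gamma$ is identified as a primitive cycle, the second branch of Theorem \ref{thm: Euler characteristics for tree modules} concludes the proof. \qed
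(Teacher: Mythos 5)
Your proposal follows essentially the same route as the paper: connectivity of $\Gamma$ from irreducibility, the observation that a winding of a tree is a tree, and the reduction of the remaining cyclic case to strings and primitive cycles via the classification of indecomposables, all feeding into \autoref{thm: Euler characteristics for tree modules}. One small correction: the extended Dynkin quivers of types $\widetilde D_n$, $\widetilde E_6$, $\widetilde E_7$, $\widetilde E_8$ are themselves trees, so only type $\widetilde A_n$ has a cycle in its underlying graph; your ``extended Dynkin case'' should really be ``the $\widetilde A_n$ case,'' with all other (extended) Dynkin types absorbed into the tree branch --- which is exactly how the paper organizes the argument.
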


\begin{proof}
 Let $\Gamma$ be the coefficient quiver of $\Lambda$, and therefore of $\Lambda_\C$ with respect to the canonical basis of $\Lambda_\C$. Since $\Lambda_\C$ is irreducible, $\Gamma$ is connected.
 
 If $Q$ is a tree, then also $\Gamma$ is a tree, thus the result follows from \autoref{thm: Euler characteristics for tree modules} in this case. If $Q$ is not a tree, then $Q$ must be of extended Dynkin type $\widetilde{A}_n$, i.e.\ the underlying graph of $Q$ is a single cycle. Then every irreducible and finite dimensional representation of $Q$ is either a string (which is a tree) or a primitive cycle. Thus also in this case, the result follows from \autoref{thm: Euler characteristics for tree modules}.
\end{proof}

\begin{ex}
\label{ex - last example}
Let $Q$ be a Dynkin quiver of type $D_4$ and $\Lambda$ the $\Fun$-representation of $Q$ with dimension vector $(3,2,2,2)$ and coefficient quiver $\Gamma$ as in \autoref{fig: coefficient quiver Gamma of Lambda}.

\begin{figure}[ht] 
  \begin{tikzpicture}[scale=1, x=2cm, y=0.5cm, font=\normalsize, vertices/.style={draw, fill=black, circle, inner sep=0pt},decoration={markings,mark=at position 0.5 with {\arrow{>}}}]
   \draw (.4,6.5) node (2) {$2$};
   \draw (.4,5.5) node (5) {$5$};
   \draw (1.2,4.5) node (7) {$7$};
   \draw (1.2,3.5) node (9) {$9$};
   \draw (2,6.0) node (1) {$1$};
   \draw (2,5.0) node (4) {$4$};
   \draw (2,4.0) node (6) {$6$};
   \draw (3.6,5.5) node (3) {$3$};
   \draw (3.6,4.5) node (8) {$8$};
   \draw[->] (1) -- (2);
   \draw[->] (1) -- (3);
   \draw[->] (4) -- (5);
   \draw[->] (4) -- (7);
   \draw[->] (6) -- (8);
   \draw[->] (6) -- (9);
   \draw (2,1.0)     node (v0) {$v_0$};
   \draw (.4,1.5)    node (v1) {$v_1$};
   \draw (3.6,1.0)   node (v2) {$v_2$};
   \draw (1.2,0.5)   node (v3) {$v_3$};
   \draw[->] (v0) -- node[above] {\footnotesize $\alpha_1$} (v1);
   \draw[->] (v0) -- node[above] {\footnotesize $\alpha_2$} (v2);
   \draw[->] (v0) -- node[pos=0.4,below] {\footnotesize $\alpha_3$} (v3);
   \draw (-.4,5.5) node (G) {$\Gamma$};
   \draw (-.4,1) node (Q) {$Q$};
   \draw[->] (G) -- (Q);
   \end{tikzpicture}
  \caption{Coefficient quiver $\Gamma$ of $\Lambda$, and the projection to $Q$} 
  \label{fig: coefficient quiver Gamma of Lambda}    
\end{figure}
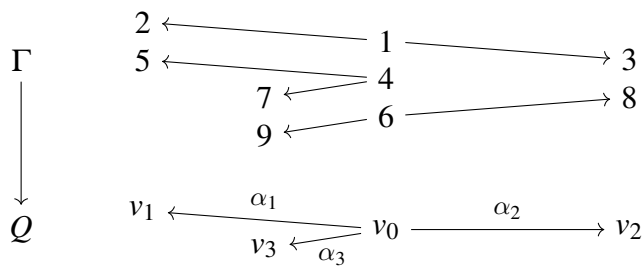 

Consider the rank vector $\br=(r_0,r_1,r_2,r_3)=(2,1,1,1)$. The Pl\"ucker embedding $\Gr_{\textbf{r}}(\Lambda) \to \P(\Lambda)$ is given by
$$
    \begin{tikzcd}[column sep = 23pt]
        \Gr_{\textbf{r}}(\Lambda) = \Proj\big(\Funpm[T_{14},T_{16},T_{46},T_2,T_5,T_3,T_8,T_7,T_9]\big)/\hspace{-4pt}/\mathpzc{qr}_{\Lambda,\br} \arrow[hook]{r} 
        & \P^2\times \P^1 \times \P^1 \times \P^1  
    \end{tikzcd}
$$
with multidegrees of coordinates given by 
\[
\begin{array}{ccccccc}
    (1,0,0,0) & = & \deg(T_{14}) & = & \deg(T_{16}) & = & \deg(T_{46}),\\
    (0,1,0,0) & = & \deg(T_2) & = & \deg(T_5), &&\\
    (0,0,1,0) & = & \deg(T_3) & = & \deg(T_8), &&\\
    (0,0,0,1) & = & \deg(T_7) & = & \deg(T_9) &&
\end{array}
\]
and $\mathpzc{qr}_{\Lambda,\br}$ generated by
\begin{eqnarray*} 
    T_2 T_{46} - T_5 T_{16} \quad \quad (\Lambda_{\alpha_1}),  \\
    T_3 T_{46} - T_6 T_{14} \quad  \quad (\Lambda_{\alpha_2}), \\
    T_7 T_{16} - T_9 T_{14} \quad \quad (\Lambda_{\alpha_3}).
\end{eqnarray*}

The quiver Grassmannian $\Gr_{\mathbf{r}^*}(\Lambda_\C) \simeq \Gr_{\mathbf{r}}((\Lambda^*)_\C)$ is a del Pezzo surface of degree $6$, see \cite[Example 5.1]{Lorscheid-Weist2019}. A del Pezzo surface is the blow-up of $\P^2$ in $3$ points, see \cite[Prop. 1.1 (d)]{Patrick_Corn}. Since the Euler characteristic of $\P^2$ is $3$, and it increases by $1$ for each exceptional curve of the blow-up, we conclude that $\chi\big(\Gr_{\textbf{r}^*}(\Lambda_\C)\big) = 6$.

On the other hand, consider the grading $\partial: \Gamma_0 \to \Z$ defined by $i \mapsto i$. If we extend this grading to $\partial: Q_1 \to \Z$ by $\alpha_i \mapsto i$, then 
$$
    \partial\big(\Lambda_{\alpha_i}(j)\big) - \partial(j) \ = \ \partial(\alpha_i)
$$ 
for each $\alpha_i \in Q_1$ and $j \in \Lambda_{s(\alpha_i)}$ with $\Lambda_{\alpha_i}(j) \neq 0$. This shows that $\partial$ is a nice grading of $\Lambda$. It follows from the proof of \autoref{thm: rational points and Euler characteristics} that
$$
    \Gr_{\textbf{r}}(\Lambda)^{\Tits} \cong  
    \begin{Bmatrix}
        \text{coordinate } (\Lambda, \K)\text{-matroids} 
        \\
        \text{of rank } \br
    \end{Bmatrix}.
$$

There are $6$ coordinate $(\Lambda,\K)$-matroids of rank $\br$, which are represented by the connected full subquivers of $Q''$ with exactly one vertex per colored area in \autoref{fig: coordinate A-Lambda-matroids}. Note that the quiver $Q''$ is obtained from the quiver $Q'$ from \autoref{ex: quiver matroids of type D4} by deleting the matroids with more than one basis, i.e. by deleting the matroids which are not coordinate matroids. The corresponding $\Fun$-subrepresentations are illustrated in \autoref{fig: Subrepresentations of Lambda}. 

This shows that 
$$
    \#\Gr_{\mathbf{r}}(\Lambda)^{\Tits} \ = \ 6 \ = \ \chi\big(\Gr_{\mathbf{r}^*}(\Lambda_{\C})\big),
$$
which re-establishes \autoref{thm: rational points and Euler characteristics} in this example.

\begin{figure}[H] 
  \[
   \begin{tikzpicture}[scale=1, font=\footnotesize, vertices/.style={draw, fill=black, circle, inner sep=0pt},decoration={markings,mark=at position 0.33 with {\arrow{>}}}]
    \draw[fill=green!20!white,draw=green!80!black,rounded corners=5pt] (0:2.8cm) -- (60:2.8cm) -- (120:2.8cm) -- (180:2.8cm) -- (240:2.8cm) -- (300:2.8cm) -- cycle;
    \node[color=green!40!black] at (0,0) {\normalsize $v_0$};
    \draw[fill=blue!10!white,draw=blue!40!white,rounded corners=7pt] (300:2.9cm) -- (0:2.9cm) -- (60:2.9cm) -- (34.5:4.4cm) -- (0:4.5cm) -- (325.5:4.4cm) -- cycle;
    \node[color=blue!80!white] at (0:3.6) {\normalsize $v_2$};
    \draw[fill=blue!10!white,draw=blue!40!white,rounded corners=7pt] (60:2.9cm) -- (120:2.9cm) -- (180:2.9cm) -- (154.5:4.4cm) -- (120:4.5cm) -- (85.5:4.4cm) -- cycle;
    \node[color=blue!80!white] at (120:3.6) {\normalsize $v_1$};
    \draw[fill=blue!10!white,draw=blue!40!white,rounded corners=7pt] (180:2.9cm) -- (240:2.9cm) -- (300:2.9cm) -- (274.5:4.4cm) -- (240:4.5cm) -- (205.5:4.4cm) -- cycle;
    \node[color=blue!80!white] at (240:3.6) {\normalsize $v_3$};
    \foreach \a in {1,...,5}{\draw (0+\a*120: 1.8cm) node [draw,circle,inner sep=1.5pt,fill=yellow] (a\a) {};
                             \draw (30+\a*120: 4cm) node [draw,circle,inner sep=1.5pt,fill=black] (d\a) {};
                             \draw (-30+\a*120: 4cm) node [draw,circle,inner sep=1.5pt,fill=black] (e\a) {};
                            }
    \foreach \a in {1,...,3} {\setcounter{tikz-counter}{\a};
                              \draw (a\a) edge [-,postaction={decorate}] (d\arabic{tikz-counter});
                              \draw (a\a) edge [-,postaction={decorate}] (e\arabic{tikz-counter});
                              \addtocounter{tikz-counter}{1};
                              \draw (a\a) edge [-,postaction={decorate}] (e\arabic{tikz-counter});
                              \addtocounter{tikz-counter}{1};
                              \draw (a\a) edge [-,postaction={decorate}] (d\arabic{tikz-counter});
                              }
    \draw (0:1.45cm) node {\tiny $\smalltrivector010$};
    \draw (120:1.4cm)  node {\tiny $\smalltrivector001$};
    \draw (240:1.4cm) node {\tiny $\smalltrivector100$};
    \draw (336:3.9cm)  node {$\smallvector10$};
    \draw ( 24:3.9cm)  node {$\smallvector01$};
    \draw ( 96:3.9cm)  node {$\smallvector01$};
    \draw (144:3.9cm)  node {$\smallvector10$};
    \draw (216:3.9cm)  node {$\smallvector01$};
    \draw (264:3.9cm)  node {$\smallvector10$};
   \end{tikzpicture}
  \] 
  \caption{The quiver $Q''$, which illustrates all coordinate $(\Lambda,\K)$-matroids of rank $\br$} 
 \label{fig: coordinate A-Lambda-matroids}     
\end{figure}
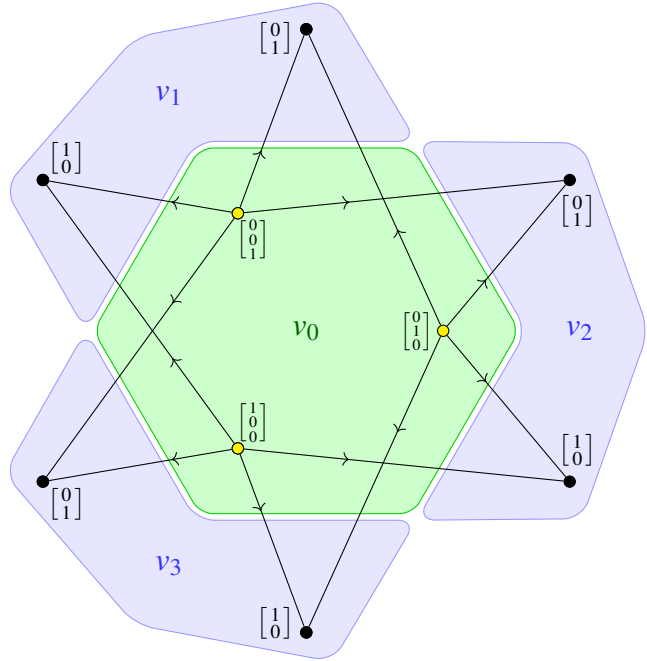

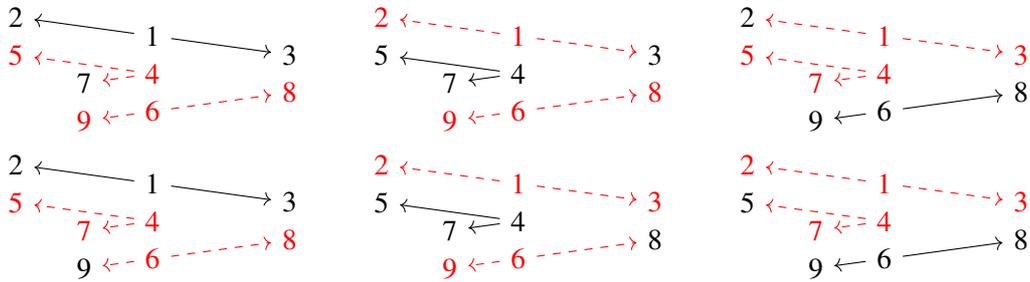
\begin{figure}[H]
    \centering
    \begin{tikzpicture}[scale=1, x=1.8cm, y=0.5cm, font=\small, vertices/.style={draw, fill=black, circle, inner sep=0pt},decoration={markings,mark=at position 0.5 with {\arrow{>}}}]
    \draw      (0,6.5)  node (2) {$2$};
    \draw[red] (0,5.5)  node (5) {$5$};
    \draw      (.5,4.75) node (7) {$7$};
    \draw[red] (.5,3.75) node (9) {$9$};
    \draw      (1,6.0)  node (1) {$1$};
    \draw[red] (1,5.0)  node (4) {$4$};
    \draw[red] (1,4.0)  node (6) {$6$};
    \draw      (2,5.5)  node (3) {$3$};
    \draw[red] (2,4.5)  node (8) {$8$};
    \draw[->] (1) -- (2);
    \draw[->] (1) -- (3);
    \draw[red, dashed, ->] (4) -- (5);
    \draw[red, dashed, ->] (4) -- (7);
    \draw[red, dashed, ->] (6) -- (8);
    \draw[red, dashed, ->] (6) -- (9);
    \end{tikzpicture}
    \hspace{15pt}
    \begin{tikzpicture}[scale=1, x=1.8cm, y=0.5cm, font=\small, vertices/.style={draw, fill=black, circle, inner sep=0pt},decoration={markings,mark=at position 0.5 with {\arrow{>}}}]
    \draw[red] (0,6.5) node (2) {$2$};
    \draw      (0,5.5) node (5) {$5$};
    \draw      (.5,4.75) node (7) {$7$};
    \draw[red] (.5,3.75) node (9) {$9$};
    \draw[red] (1,6.0) node (1) {$1$};
    \draw      (1,5.0) node (4) {$4$};
    \draw[red] (1,4.0) node (6) {$6$};
    \draw      (2,5.5) node (3) {$3$};
    \draw[red] (2,4.5) node (8) {$8$};
    \draw[red, dashed, ->] (1) -- (2);
    \draw[red, dashed, ->] (1) -- (3);
    \draw[->]              (4) -- (5);
    \draw[->]              (4) -- (7);
    \draw[red, dashed, ->] (6) -- (8);
    \draw[red, dashed, ->] (6) -- (9);
    \end{tikzpicture}
    \hspace{15pt}
    \begin{tikzpicture}[scale=1, x=1.8cm, y=0.5cm, font=\small, vertices/.style={draw, fill=black, circle, inner sep=0pt},decoration={markings,mark=at position 0.5 with {\arrow{>}}}]
    \draw      (0,6.5)  node (2) {$2$};
    \draw[red] (0,5.5)  node (5) {$5$};
    \draw[red] (.5,4.75) node (7) {$7$};
    \draw      (.5,3.75) node (9) {$9$};
    \draw[red] (1,6.0)  node (1) {$1$};
    \draw[red] (1,5.0)  node (4) {$4$};
    \draw      (1,4.0)  node (6) {$6$};
    \draw[red] (2,5.5) node (3) {$3$};
    \draw      (2,4.5) node (8) {$8$};
    \draw[red, dashed, ->] (1) -- (2);
    \draw[red, dashed, ->] (1) -- (3);
    \draw[red, dashed, ->] (4) -- (5);
    \draw[red, dashed, ->] (4) -- (7);
    \draw[->]              (6) -- (8);
    \draw[->]              (6) -- (9);
    \end{tikzpicture}
    \begin{tikzpicture}[scale=1, x=1.8cm, y=0.5cm, font=\small, vertices/.style={draw, fill=black, circle, inner sep=0pt},decoration={markings,mark=at position 0.5 with {\arrow{>}}}]
    \draw (0,6.5) node (2) {$2$};
    \draw[red] (0,5.5)  node (5) {$5$};
    \draw[red] (.5,4.75) node (7) {$7$};
    \draw      (.5,3.75) node (9) {$9$};
    \draw      (1,6.0)  node (1) {$1$};
    \draw[red] (1,5.0)  node (4) {$4$};
    \draw[red] (1,4.0)  node (6) {$6$};
    \draw      (2,5.5)  node (3) {$3$};
    \draw[red] (2,4.5)  node (8) {$8$};
    \draw[->] (1) -- (2);
    \draw[->] (1) -- (3);
    \draw[red, dashed, ->] (4) -- (5);
    \draw[red, dashed, ->] (4) -- (7);
    \draw[red, dashed, ->] (6) -- (8);
    \draw[red, dashed, ->] (6) -- (9);
    \end{tikzpicture}
    \hspace{15pt}
    \begin{tikzpicture}[scale=1, x=1.8cm, y=0.5cm, font=\small, vertices/.style={draw, fill=black, circle, inner sep=0pt},decoration={markings,mark=at position 0.5 with {\arrow{>}}}]
    \draw[red] (0,6.5)  node (2) {$2$};
    \draw      (0,5.5)  node (5) {$5$};
    \draw      (.5,4.75) node (7) {$7$};
    \draw[red] (.5,3.75) node (9) {$9$};
    \draw[red] (1,6.0)  node (1) {$1$};
    \draw      (1,5.0)  node (4) {$4$};
    \draw[red] (1,4.0)  node (6) {$6$};
    \draw[red] (2,5.5)  node (3) {$3$};
    \draw (2,4.5) node (8) {$8$};
    \draw[red, dashed, ->] (1) -- (2);
    \draw[red, dashed, ->] (1) -- (3);
    \draw[->]              (4) -- (5);
    \draw[->]              (4) -- (7);
    \draw[red, dashed, ->] (6) -- (8);
    \draw[red, dashed, ->] (6) -- (9);
    \end{tikzpicture}
    \hspace{15pt}
    \begin{tikzpicture}[scale=1, x=1.8cm, y=0.5cm, font=\small, vertices/.style={draw, fill=black, circle, inner sep=0pt},decoration={markings,mark=at position 0.5 with {\arrow{>}}}]
    \draw[red] (0,6.5)  node (2) {$2$};
    \draw      (0,5.5)  node (5) {$5$};
    \draw[red] (.5,4.75) node (7) {$7$};
    \draw      (.5,3.75) node (9) {$9$};
    \draw[red] (1,6.0)  node (1) {$1$};
    \draw[red] (1,5.0)  node (4) {$4$};
    \draw      (1,4.0)  node (6) {$6$};
    \draw[red] (2,5.5)  node (3) {$3$};
    \draw      (2,4.5)  node (8) {$8$};
    \draw[red, dashed, ->] (1) -- (2);
    \draw[red, dashed, ->] (1) -- (3);
    \draw[red, dashed, ->] (4) -- (5);
    \draw[red, dashed, ->] (4) -- (7);
    \draw[->] (6) -- (8);
    \draw[->] (6) -- (9);
    \end{tikzpicture}
    \caption{The $6$ subrepresentations of $\Lambda$ with dimension vector $\br^*=(1,1,1,1)$}
    \label{fig: Subrepresentations of Lambda}
\end{figure}
\end{ex}

\begin{small}
 \bibliographystyle{alpha}
 \bibliography{matroids2}
\end{small}

\end{document}